\numberwithin{equation}{section}
\newtheorem{theorem}{Theorem}[section]
\newtheorem{cor}[theorem]{Corollary}
\newtheorem{proposition}[theorem]{Proposition}
\newtheorem{lemma}[theorem]{Lemma}
\newtheorem{prop}[theorem]{Proposition}
\theoremstyle{definition}
\newtheorem{definition}[theorem]{Definition}
\newtheorem{remark}[theorem]{Remark}
\newtheorem{example}[theorem]{Example}
\newcommand\q{\mathfrak q}
\newcommand\I{\Cal I}
\newcommand\half{\tfrac{1}{2}}
\newcommand\be{\beta}
\newcommand\g{\mathfrak g}
\newcommand\ga{\widehat{\mathfrak g}}
\newcommand\h{\mathfrak h}
\newcommand\ha{\widehat{\mathfrak h}}
\newcommand\n{\mathfrak n}
\newcommand\D{\Delta}
\renewcommand\l{\lambda}
\newcommand\Dp{\Delta^+}
\newcommand\Da{\widehat\Delta}
\newcommand\Pia{\widehat\Pi}
\newcommand\Dap{\widehat\Delta^+}
\newcommand\Wa{\widehat{W}}
\renewcommand\d{\delta}
\renewcommand\t{\mathfrak t}
\renewcommand\a{\alpha}
\renewcommand\aa{\mathfrak a}
\newcommand{\Z}{\mathbb Z}
\renewcommand\th{\theta}
\newcommand\nat{\mathbb N}
\newcommand\ganz{\mathbb Z}
\newcommand\s{\sigma}
\renewcommand\L{\Lambda}
\renewcommand\aa{\mathfrak a}
\newcommand\e{\epsilon}
\newcommand\C{\mathbb C}
\newcommand\R{\mathbb R}
\renewcommand\I{\sqrt{-1}}
\renewcommand\ha{\widehat{\mathfrak h}}
\newcommand{\fg}{\mathfrak{g}}
\newcommand\p{\mathfrak p}
\newcommand{\rank}{{\rm rank}}
\newcommand{\ZZ}{\mathbb{Z}}
\newcommand{\sdim}{\text{\rm sdim}}
\newcommand{\vac}{{\bf 1}}
\newcommand{\bea}{\begin{eqnarray}}
\newcommand{\eea}{\end{eqnarray}}
\begin{document}
\title{Unitarity of minimal $W$--algebras and their representations I}
\author[Victor~G. Kac, Pierluigi M\"oseneder Frajria,  Paolo  Papi]{Victor~G. Kac\\Pierluigi M\"oseneder Frajria\\Paolo  Papi}

\begin{abstract} We begin a systematic study of unitary  representations of minimal $W$--algebras. In particular, we classify unitary minimal $W$--algebras and make substantial progress in classification of their unitary irreducible highest weight modules.  We also compute the characters of these modules.\end{abstract}
%\date{\today}
\keywords{Almost compact involution, unitary representation of a vertex algebra, 
minimal $W$-algebra, free field realization, generalized Fairle 
construction, extremal weight.}
\maketitle

\tableofcontents

\section{Introduction}
In the present paper we study unitarity of minimal $W$--algebras and of their representations. 
Minimal $W$--algebras  are the simplest conformal vertex algebras  among the simple vertex algebras
$W_k(\g,x,f),$ constructed in \cite{KRW}, \cite{KW1}, associated to a datum $(\g,x,f)$ and $k\in\R$. Here $\g=\g_{\bar 0}\oplus \g_{\bar 1}$ is a basic  Lie superalgebra, i.e. $\g$ is simple, its even part $\g_{\bar 0}$ is a reductive Lie algebra and $\g$ carries an even invariant non-degenerate supersymmetric bilinear form $(.|.)$, $x$ is an $ad$--diagonalizable element of $\g_{\bar 0}$ with eigenvalues in $\tfrac{1}{2}\Z$, $f\in\g_{\bar 0}$ is such that $[x,f]=-f$ and the eigenvalues of $ad\,x$ on the centralizer $\g^f$ of $f$ in $\g$ are non-positive, and  $k\ne -h^\vee$, where $h^\vee$ is the dual Coxeter number of $\g$. The most important examples are provided by $x$ and $f$ to be part of an $sl_2$ triple $\{e,x,f\}$, where $[x,e]=e, [x,f]=-f, [e,f]=x$. In this case $(\g,x,f)$ is called a {\it Dynkin datum}.
Recall that $W_k(\g,x,f)$ is the unique simple quotient  of the universal $W$-algebra, denoted by $W^k(\g,x,f),$ which is  freely strongly generated by elements labeled by a basis of the centralizer of  $f$ in $\g$ \cite{KW1}.\par
We proved in \cite[Lemma 7.3]{KMP}  that if $\phi$ is a conjugate linear involution of $\g$ such that
\begin{equation}\label{primas} \phi(x)=x,\quad \phi(f)=f\text{ and }\overline{(\phi(a)|\phi(b))}=(a|b),\,a,b\in\g,\end{equation} then $\phi$ induces a conjugate linear involution of the vertex algebra $W^k(\g,x,f)$, which descends to $W_k(\g,x,f)$.\par

We also proved in \cite[Proposition 7.4]{KMP} that if $\phi$ is a conjugate linear involution of $W_k(\g,x,f)$, this vertex algebra   carries a non-zero $\phi$--invariant Hermitian form $H(\cdot,\cdot)$ for all $k\ne - h^\vee$ if and only if  $(\g,x,f)$ is  a Dynkin datum; moreover, such $H$  is unique, up to a real constant factor, and we normalize it by the condition $H(\vac,\vac)=1$. A module $M$ for a vertex algebra $V$ is called {\it unitary} if  there is a conjugate linear involution $\phi$ of $V$ such that there is a positive definite $\phi$--invariant Hermitian form on $M$. The vertex algebra $V$ is called unitary if the adjoint module is.
% corresponding $\phi$--invariant Hermitian form $H$ is positive definite.
\par
For some levels $k$ the vertex algebra $W_k(\g,x,f)$ is trivial, i.e. isomorphic to $\C$; then it is trivially unitary. Another easy case is when  $W_k(\g,x,f)$ ``collapses'' to the affine part.
In both cases we will say that $k$ is {\it collapsing level}.\par
In the case of a Dynkin datum let $\g^\natural$ be the centralizer of the $sl_2$ subalgebra $\mathfrak s=span\, \{e,x,f\}$ in $\g_{\bar 0}$; it is a reductive subalgebra. 
 If $\phi$ satisfies the first two conditions in \eqref{primas}, it fixes 
$e,x,f$, hence $\phi(\g^\natural)=\g^\natural$. It is easy to see that unitarity of $W_k(\g,x,f)$ implies, when $k$ is not collapsing,  that $\phi_{|[\g^\natural,\g^\natural]}$ is a compact involution.\par
In the present paper we consider only {\it minimal} data $(\g,x,f)$, defined by the property that for the $ad\,x$--gradation $\g=\bigoplus\limits_{j\in\tfrac{1}{2}\mathbb Z}\g_j$ one has 
\begin{equation}\label{seconda}\g_j=0\text{ \ if $|j|>1$, and } \g_{-1}=\C f.\end{equation}
In this case $(\g,x,f)$ is automatically a Dynkin datum. The corresponding $W$--algebra is called {\it minimal}.
The element $f\in\g$ is a root vector attached to a root $-\theta$ of $\g$, and we shall normalize the invariant bilinear form on $\g$ by the usual condition 
$(\theta|\theta)=2$, which is equivalent to  $(x|x)=\tfrac{1}{2}$.
Recall that the dual Coxeter number $h^\vee$ of $\g$  is half of the eigenvalue of its Casimir element of $\g$, attached to the bilinear form $(.|.)$.
We shall denote by   $W_k^{\min}(\g)$ the minimal $W$--algebra, corresponding to $\g$ and $k\ne -h^\vee$, and by $W^k_{\min}(\g)$  the corresponding universal $W$-algebra.
\par
We proved in \cite[Proposition 7.9]{KMP} that, if  $W^{\min}_k(\g)$ is  unitary and $k$ is not a collapsing level, then the parity of $\g$ is compatible with the $ad\,x$--gradation, i.e. the parity of the whole subspace $\g_j$ is $2j\mod 2$.

%We proved in \cite{KMP} that if the $W$-algebra $W_k(\g,x,f)$ is non-trivial  unitary, and $k$ is not a collapsing level, then, in addition, the  parity of $\g$  is compatible with the $\frac{1}{2}\Z$-gradation, i.e. the parity of the whole $ad\,x$-eigenspace $\g_j$ is $2j\mod 2$. Thus, in order to classify unitary $W$-algebras, we need to consider only those of them, which
%are associated to a Dynkin datum, and such that the corresponding  $ad\,x$-gradation is compatible with the parity of $\g$.
%\par
%In this paper we consider only the {\it minimal}  such Dynkin datum, namely, we require that 
%$$\g_j=0\text{ \ if $|j|>1$, and } \g_{-1}=\C f.$$ 
\par
It follows from \cite{KRW}, \cite{KW1} that for each basic simple Lie superalgebra $\g$ there is at most one minimal Dynkin datum, compatible with parity, and  the  complete list of the  $\g$ which admit such a datum is as follows:
\begin{align}\label{ssuper}
\begin{split}
sl(2|m)\text{ for $m\ge3$},\quad psl(2|2),\quad  spo(2|m)\text{ for $m\geq 0$,}\\
osp(4|m)\text{ for $m>2$ even},\quad D(2,1;a) \text{ for }a\in \C,\quad  F(4),\quad G(3).
\end{split}
\end{align}
The even part $\g_{\bar 0}$ of $\g$ in this case is isomorphic to the direct sum of the  reductive Lie algebra $\g^\natural$ and $\mathfrak s\cong sl_2$.

%{\color{red}Recall that a module M for a conformal vertex algebra $V$ is said to be unitary if there is a conjugate linear involution $\psi$ of $V$ and a $\psi$--invariant positive definite Hermitian form on $M$. The vertex algebra is said to be unitary if  It turns out that, by Proposition \ref{converse}, a minimal $W$-algebra $W_k^{\min}(\g)$ with non-collapsing $k$ can be unitary only if 
% the conjugate linear involution is unduced by a conjugate linear involution  $\phi$ on $\g$  which is {\it almost compact}, 
%according to the following definition.}
One of our conjectures (see Conjecture 4 in Section 8) states  that any unitary $W^k_{\min}(\g)$-module descends to $W_k^{\min}(\g)$. In fact, it is tempting to conjecture that for  any conformal vertex algebra $V$ any  unitary $V$-module  descends to the simple quotient of $V$.\par
It turns out (cf. Proposition \ref{converse}) that  a conjugate linear  involution of the universal  minimal $W$--algebra $W^k_{\min}(\g)$ at non-collapsing  level $k$ is necessarily  induced by a  conjugate linear  involution $\phi$ of $\g$. Moreover,  by Proposition
\ref{casi},
if $W^k_{\min}(\g)$ admits a unitary highest weight module and $k$ is not collapsing, then $\g^\natural$ has to  be semisimple. As explained above, the involution $\phi$ of $\g$ must be {\it almost compact}, 
according to the following definition.
\begin{definition}\label{ac} A conjugate linear involution $\phi$ on $\g$ is called almost compact if
\begin{enumerate}
\item[(i)] $\phi$ fixes $e,x,f$;
\item[(ii)] $\phi$ is a compact conjugate linear involution of $\g^\natural$.
\end{enumerate}
\end{definition}
\noindent Indeed (i) is equivalent to the first two requirements in \eqref{primas}, and the third requirement in \eqref{primas} follows from Lemma \ref{31} in Section 3.

So, in order to study unitarity of highest weight modules, it is not restrictive to assume that the conjugate linear involution  of $W^k_{\min}(\g)$ is induced by an almost compact conjugate linear involution of $\g$.

We prove in Sections \ref{2} and \ref{4} that an almost compact conjugate linear involution $\phi$ exists for all $\g$ from the list \eqref{ssuper}, except that $a$ must lie in $\R$ in case of  $D(2,1;a)$, and is 
essentially unique. 
\par
It was shown in \cite{KW1} that the central charge of $W_k^{\min}(\g)$ equals
\begin{equation}\label{cc} c(k)=\frac{k\,d}{k+h^\vee}-6k+h^\vee-4,\text{ where $d=\sdim\g$.}\end{equation}
Here is another useful way to write this formula:
\begin{equation}\label{ccc} c(k)=7h^\vee+d-4-12 \sqrt{\phantom{x}} - 6\frac{(k+h^\vee-\sqrt{\phantom{x}})^2}{k+h^\vee},\text{ where $\sqrt{\phantom{x}}=\sqrt{\frac{d\,h^\vee}{6}}$.}\end{equation}

Recall that the most important superconformal algebras in conformal field theory are the simple minimal $W$--algebras or are obtained from them by a simple modification:
\begin{enumerate}
\item[(a)] $W^{\min}_k(spo(2|N))$ is the Virasoro vertex algebra for $N=0$, the Neveu-Schwarz vertex algebra for $N=1$, the $N=2$ vertex algebra for $N=2$, and becomes the $N=3$ vertex algebra 
after tensoring with one fermion; it is the Bershadsky-Knizhnik algebra for $N>3$;
\item[(b)]  $W^{\min}_k(psl(2|2))$ is the $N=4$ vertex algebra;
\item[(c)]  $W^{\min}_k(D(2,1;a))$ tensored with four fermions and one boson is the big $N=4$ vertex algebra.
%\item[(d)]  $W^{\min}_k(F(4))$ and $W^{\min}_k(G(3))$ are the Shatashvili-Vafa  vertex algebras.
\end{enumerate}\par
The unitary Virasoro ($N=0$), Neveu-Schwarz ($N=1$) and  $N=2$ simple vertex algebras, along with their irreducible unitary modules,  were classified in the mid 80s.  Up to isomorphism,  these vertex algebras depend only on the central charge $c(k)$, given by  \eqref{cc}. Putting $k=\frac{1}{p}-1$ in \eqref{ccc} in all three cases, we obtain 
\begin{align}
\label{1}c(k)&=1-\frac{6}{p(p+1)} \quad \text{for   Virasoro vertex algebra,}\\
\label{22}c(k)&=\frac{3}{2}\left(1-\frac{8}{p(p+2)}\right)\quad \text{for   Neveu-Schwarz vertex algebra,}\\
\label{3}c(k)&=3\left(1-\frac{2}{p}\right)\quad \text{for   $N=2$ vertex algebra.}
\end{align}
The following theorem is a result of several papers, published in the 80s in physics and mathematics literature, see e.g. \cite{ET3} for references.
\begin{theorem}The complete list of unitary $N=0,1,$ and $2$ vertex algebras is as follows: either $c(k)$ is given by \eqref{1}, \eqref{22}, or \eqref{3}, respectively, for $p\in \mathbb Z_{\ge 2},$  or  $c(k)\ge 1, \frac{3}{2}$ or $3$, respectively. 
\end{theorem}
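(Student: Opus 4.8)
The plan is to treat $N=0,1,2$ in parallel, since in each case the simple vertex algebra $W_k^{\min}(spo(2|N))$ is determined up to isomorphism by its central charge $c(k)$, and to split the statement into a necessity part (no other value of $c(k)$ occurs) and a sufficiency part (each value in the list is realized unitarily). First I would recall the determinant formula for the contravariant Hermitian form on the Verma module $M(c,h)$ over the Virasoro, Neveu--Schwarz and $N=2$ algebras (due, respectively, to Kac, to Kac--Wakimoto, and to Boucher--Friedan--Kent), together with the explicit curves $h=h_{p,q}(c)$ in the $(c,h)$-plane on which that form degenerates at level $pq$. Unitarity of the vertex algebra is the statement that the form on the irreducible vacuum module $L(c,0)$ is positive definite; the standard route embeds this into the determination of all unitary pairs $(c,h)$ and runs the Friedan--Qiu--Shenker continuity argument: the set of unitary $(c,h)$ is closed, the lowest-level vanishing curves bound open regions in which a suitably chosen principal minor of the Gram matrix is negative, and any $(c,h)$ with $c$ strictly below the threshold $1$, $\tfrac{3}{2}$ or $3$ and not on the corresponding discrete list can be joined, through such regions, to a point where negativity is manifest. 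This forces $c(k)$ to be given by \eqref{1}, \eqref{22} or \eqref{3} with $p\in\Z_{\ge 2}$ below threshold, and leaves the half-lines $c(k)\ge 1,\tfrac{3}{2},3$ as the only remaining possibilities.

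For sufficiency there are two regimes. When $c(k)$ lies on or above the threshold I would exhibit free-field (Fock space) realizations carrying a manifestly positive definite Hermitian form --- a single free boson for $N=0$, a free boson together with a neutral free fermion for $N=1$, and a complex free boson together with a complex free fermion for $N=2$ --- realizing $L(c,0)$ for $c$ at the threshold, and then propagate positivity over the whole region above threshold using that the signature of the contravariant form is locally constant off the zero set of the determinant formula and is positive for $c$ large; this shows every such $c$ is attained through \eqref{cc}. When $c(k)$ is one of the discrete values I would invoke the Goddard--Kent--Olive coset construction, realizing the vertex algebra as the commutant of a diagonally embedded affine vertex algebra inside a tensor product of affine vertex algebras at positive integral levels. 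Since integrable highest weight modules over affine Kac--Moody algebras are unitary and unitarity passes to tensor products, to submodules, and to commutant subalgebras, the discrete-series vertex algebras are unitary, and their unitary modules are obtained by branching.

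The main obstacle is the necessity direction below threshold: the determinant formula only detects where the form is \emph{degenerate}, not where it is \emph{indefinite}, so the exclusion genuinely requires the deformation argument --- fixing a level $N$, tracking a hypothetical unitary point as $(c,h)$ varies, and showing it is trapped between consecutive vanishing curves across which a chosen minor must change sign. Carrying this out uniformly for all three algebras, and in particular isolating for the $N=2$ algebra the correct reduced family of vanishing curves (its determinant formula has both "charged" and "uncharged" factors, and the relevant neutral vacuum sector must be singled out), is the delicate point. By contrast the sufficiency side is a matter of assembling standard facts --- positivity of oscillator/Fock representations and unitarity of integrable affine modules --- that are by now well documented.
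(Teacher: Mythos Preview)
The paper does not prove this theorem; it merely states it as a known result, writing ``The following theorem is a result of several papers, published in the 80s in physics and mathematics literature, see e.g.\ \cite{ET3} for references,'' and gives no further argument. Your proposal is not to be compared against anything in the paper itself.

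That said, what you have outlined is precisely the standard proof found in that literature: the Kac/Kac--Wakimoto/Boucher--Friedan--Kent determinant formulas for necessity, the Friedan--Qiu--Shenker continuity/sign-tracking argument to exclude the region below threshold off the discrete list, free-field realizations for sufficiency at and above threshold, and the Goddard--Kent--Olive coset construction for the discrete series. Your identification of the genuine subtlety --- that the determinant formula locates degeneracy but not indefiniteness, so the exclusion below threshold requires the deformation argument tracking minors across vanishing curves --- is exactly right, and your remark about isolating the correct factors in the $N=2$ determinant is apt. So your outline is correct and faithful to the classical arguments; it simply goes well beyond what the paper itself undertakes.
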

The above three cases cover all minimal $W$--algebras, associated with $\g$, such that the  eigenspace $\g_0$ of $ad\,x$ is abelian. Thus, we may assume that $\g_0$ is not abelian. 

In order to study unitarity of the simple minimal $W$--algebra $W_k^{\min}(\g)$, one needs to consider the more general framework of representation theory of universal minimal $W$--algebras $W^k_{\min}(\g)$. Of course, unitarity of $W^k_{\min}(\g)$ is equivalent to that of $W_k^{\min}(\g)$. 
It is therefore natural to study unitarity of irreducible $W^k_{\min}(\g)$--modules. For that purpose, we  take, 
in Section \ref{freeb}, a long detour  to  develop a general theory of invariant Hermitian forms on modules over the vertex algebra of free bosons, which will be eventually applied to our main object of interest. As a byproduct  we obtain a field theoretic version of  the {\it Fairlie construction}, which yields
explicit models of unitary representations of the Virasoro algebra for certain values of the highest weight (cf. \cite[3.4]{KR},  Example \ref{FC}).
% Let $W^k_{\min}(\g)$ be the universal minimal $W$--algebra of level $k\ne- h^\vee$ for $\g$, so that $W_k^{\min}(\g)$ is its unique simple quotient. Obviously, an irreducible $W^k_{\min}(\g)$-module is a module over $W_k^{\min}(\g)$, and unitarity of  $W^k_{\min}(\g)$  is equivalent to that of   $W_k^{\min}(\g)$.

We  consider in Section \ref{8}   the free field realization $\Psi: W^k_{\min}(\g)\to \mathcal V^k=V^{k+h^\vee}(\C x)\otimes V^{\alpha_k}(\g^\natural)\otimes F(\g_{1/2})$  introduced  in \cite{KW1} (here $V^\gamma(\aa)$ denotes the universal affine vertex algebra associated to the Lie algebra $\aa$ and to a 2-cocycle $\gamma$,   $\a_k$ is the 2--cocycle defined in \eqref{eq:5.16}, and $F(\g_{1/2})$ is the fermionic vertex algebra 
``attached'' to $\g_{1/2}$). Let $M(\mu)$ be the Verma module of highest weight $\mu\in\C$ for the bosonic vertex algebra $V^{k+h^\vee}(\C x)$ and  consider the $\mathcal V^k$--module
 $N(\mu)=M(\mu)\otimes V^{\a_k}(\g^\natural)\otimes F(\g_{1/2})$. Applying to $N(\mu)$  results from Section \ref{freeb}, we obtain in Proposition \ref{FCW} a generalization of the Fairlie construction to universal minimal $W$--algebras.

The  conformal vertex algebras $(W^k_{\min}(\g), L)$ and $(\mathcal V^k, \widehat L(0))$  (see \eqref{nuovovir}) both admit 
Hermitian invariant forms $H(\cdot, \cdot)_W$ and  $H(\cdot, \cdot)_{free}$, respectively. Unfortunately, the embedding 
$\Psi$ is not conformal, i.e., $\Psi(L)\ne \widehat L(0)$, in particular $\Psi$ is not an isometry (which was erroneously claimed in \cite{wrong}). So, though the vertex algebra
$\mathcal V^k$ is unitary, this  does not imply the
unitarity of $W^k_{\min}(\g)$. A few explicit computations suggest the following conjecture, which we were unable to prove.
\vskip5pt
\noindent{\sl  {\bf Conjecture 1.} For each $w\in W^k_{\min}(\g)$, $H(w,w)_W\geq H(\Psi(w),\Psi(w))_{free}.$
In  particular if $\mathcal V^k$ is unitary, then  $ W^k_{\min}(\g)$ is unitary.}
\vskip5pt
 We start the study of unitary modules over minimal $W$-algebras  in Section 8  by 
introducing    the irreducible  highest weight $W^k_{\min}(\g)$--modules $L^W(\nu,\ell_0)$  with highest weight $(\nu,\ell_0)$,  where $\nu$ is a real  weight of $\g^\natural$ and $\ell_0\in\mathbb R$ is the minimal eigenvalue of $L_0$. We prove that $L^W(\nu,\ell_0)$ admits a  $\phi$--invariant nondegenerate Hermitian form (unique up to normalization), see Lemma \ref{ef}.
%,  and that there exists a unitary highest weight module over  $W^k_{\min}(\g)$ if and only if  the levels  $M_i(k)$ of the affine algebra $\widehat \g_i^\natural$ in $W^k_{\min}(\g)$, given in Table 2 (in Section 7), where $\g_i^\natural$ are simple components of $\g^\natural$,
%(defined in \eqref{Mi}) 
% are non-negative integers: see  Proposition \ref{necessary}. 
In Section \ref{Necessary} we also determine necessary conditions for the unitarity of $L^W(\nu,\ell_0)$.   Part of the necessary conditions is  displayed in Proposition \ref{l0nec}. They say that unitarity of $L^W(\nu, \ell_0)$ implies that the levels $M_i(k)$ of the affine Lie algebras $\widehat \g_i^\natural$ in $W^k_{\min}(\g)$ (given in Table 2, Section \ref{7}), 
where $\g_i^\natural$ are the simple components of $\g^\natural$, are non-negative integers, $\nu$ is dominant integral of levels $M_i(k)$, and the inequality \eqref{eh} below holds.
Proposition \ref{boundary} provides a further necessary condition, which says  that \eqref{eh} must be an equality when $\nu$ is an ``extremal'' weight. See Theorem \ref{nec} (1) below for a precise statement.\par
% Proposition \ref{boundary} provides further conditions which hold when $\nu$ has an ``extremality property''.  In Proposition \ref{sufficient} we prove a partial converse  result by giving sufficient conditions for unitarity.
 %These conditions, together with  some useful data, 
%are displayed  in each case in Subsections \ref{1111}-\ref{fff}.  \par 
%Summing up, we obtain the following result 
In Section 10, using the generalization of the Fairlie construction, developed in Section 9, we prove a partial converse result: if $M_i(k)+\chi_i\in\mathbb Z_+$, where $\chi_i$ are negative integers, displayed in Table 2, and $\nu$ is  dominant integral weight for $\g^\natural$ which is not extremal, then the $W^k_{\min}(\g)$--module $L^W(\nu,\ell_0)$ is unitary for $l_0$ sufficiently large, see Proposition \ref{sufficient}.\par
In Section 11 we prove our central Theorem \ref{u??}, which claims that actually Proposition \ref{sufficient} holds for $l_0$ satisfying the inequality \eqref{eh}, provided that $\nu$ is not extremal.
This is established by  the following construction. Let $\ga$ be the affinization of $\g$. We introduce  in \eqref{modulo} a highest weight module $\overline M(\widehat \nu_h)$ over $\ga$, whose  
highest weight $\widehat\nu_h$ depends on $h\in \C$, with the following two properties
\begin{enumerate}
\item $\overline M(\widehat \nu_h)$ is irreducible, except possibly for an explicit set $J$ of values of $h$.
\item For  the quantum Hamiltonian reduction functor $H_0$, the $W^k_{\min}(\g)$-module $H_0(\overline M(\widehat\nu))$ admits a Hermitian form, depending polynomially on $h$.
\end{enumerate}
Using the irreducibility theorem   by Arakawa \cite{Araduke}, we deduce that  $H_0(\overline M(\widehat \nu_h))=L^W(\nu, \ell(h))$ for $h\notin J$, where $\ell(h)$ is defined by \eqref{p}.
%We  show that the determinant $P(l_0)$ of the Hermitian form on  $L^W(\nu,\ell_0)$ is positive if the inequality \eqref{eh} is strict.
It turns out that, miraculously, 
if $h\in J$, then $\ell(h)$ does not satisfy \eqref{eh}.  Moreover   $L^W(\nu,\ell_0)$ is  unitary for $l_0\gg 0$.
%Moreover, due to another miracle,  for each $h'\leq B(k,\nu)$ there exists $l_0$, satisfying \eqref{eh}, such that $h'=q(l_0)$ so that $L^W(\nu,h')$ is  unitary. 
By continuity, the determinant  of the Hermitian form on  $L^W(\nu,\ell_0)$ is positive if the inequality \eqref{eh} holds. See Theorem \ref{nec} (2) below for a precise statement.

Let us state our main results. First of all, if $\g=sl(2|m)$ with $m\ge 3$ or $osp(4|m)$ with $m\ge 2$ even, then none of the $W^k_{\min}(\g)$--modules $L^W(\nu, \ell_0)$ are unitary for a non-collapsing level $k$. For the remaining $\g$ from the list \eqref{ssuper} the Lie algebra $\g^\natural$ is semisimple (actually simple, except for $\g=D(2,1;a)$, when $\g^\natural =sl_2\oplus sl_2$). Let $
 \theta_i^\vee$ be the coroots of the highest roots $\theta_i$ of the simple components $\g^\natural_i$ of $\g^\natural$. Let $2\rho^\natural$ be the sum of positive roots of $\g^\natural $, and let $\xi$ be a  highest weight
of the $\g^\natural$--module $\g_{-1/2}$ (this module is irreducible, except for  $\g=psl(2|2)$ when it is $\C^2\oplus \C^2$). 
%Let $\chi_i$ be the negative integers, displayed in  Table 2. 
Let $\nu$ be a dominant integral weight for $\g^\natural$ and $l_0\in\mathbb R$.
We prove the following theorem.
\begin{theorem}\label{nec} Let  $L^W(\nu,\ell_0)$ be  an irreducible highest weight $W^k_{\min}(\g)$--module for  $\g= psl(2|2),$ $ spo(2|m)$ with $m\ge 3, \, D(2,1;a),\, F(4)$ or $G(3)$.
\begin{enumerate}
\item This module can be unitary  only if the following conditions hold:
\begin{enumerate}
\item $M_i(k)$ are non-negative integers,
\item $\nu(\theta_i^\vee) \leq M_i(k)$ for all $i$,
\item
\begin{equation}\label{eh}
l_0\ge \frac{(\nu|\nu+2\rho^\natural)}{2(k+h^\vee)}+\frac{(\xi|\nu)}{k+h^\vee}((\xi|\nu)-k-1),
\end{equation}
 and equality holds in \eqref{eh} if $\nu(\theta^\vee_i)>M_i(k)+\chi_i$ for $i=1$ or $2$.
 \end{enumerate}
\item This module is unitary if the  following conditions hold:
\begin{enumerate}
\item $M_i(k)+\chi_i\in\mathbb Z_+$ for all $i$,
\item  $\nu(\theta^\vee_i) \leq M_i(k)+\chi_i$ for all $i$ (i.e. $\nu$ is not extremal),
\item  inequality \eqref{eh} holds.
\end{enumerate}\end{enumerate}
\end{theorem}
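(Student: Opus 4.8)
The plan is to treat the two parts separately, assembling the intermediate results announced in the introduction.

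\emph{Part (1): necessity.} The affine vertex subalgebra of $W^k_{\min}(\g)$ generated by $\g^\natural$ acts on $L^W(\nu,\ell_0)$ at level $M_i(k)$ on each simple ideal $\g_i^\natural$; since $\phi$ is almost compact it restricts to a compact involution of $\g^\natural$, so the (essentially unique) $\phi$-invariant form on $L^W(\nu,\ell_0)$ restricts to a $\phi$-invariant form on the cyclic $\widehat{\g_i^\natural}$-submodules generated by the highest weight vector. Positivity forces these to be integrable highest weight modules, which yields (a) $M_i(k)\in\ZZ_{\ge 0}$ and (b) $\nu(\theta_i^\vee)\le M_i(k)$; evaluating the form on the conformal-weight-one vectors built from $\g_{-1/2}$ produces the lower bound \eqref{eh}, the $(\xi|\nu)$-terms arising from the action of the odd generators attached to $\g_{\pm1/2}$. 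This is Proposition~\ref{l0nec}. The refinement that \eqref{eh} must be an \emph{equality} when $\nu(\theta_i^\vee)>M_i(k)+\chi_i$ is Proposition~\ref{boundary}: at such an extremal $\nu$ the relevant vector is null exactly on the hyperplane \eqref{eh}, so strict inequality would give it negative norm. Combining the two propositions gives Part (1).

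\emph{Part (2): sufficiency.} Under hypotheses (a), (b) the free field realization $\Psi\colon W^k_{\min}(\g)\to\mathcal V^k$ together with the generalized Fairlie construction of Proposition~\ref{FCW} endows $L^W(\nu,\ell_0)$ with a unitary structure for all sufficiently large $\ell_0$; this is Proposition~\ref{sufficient}. To push the threshold down to the sharp value in \eqref{eh} I would argue by continuity in a one-parameter family. Let $\ga$ be the affinization of $\g$ and let $\overline M(\widehat\nu_h)$, $h\in\CC$, be the highest weight $\ga$-module of \eqref{modulo}, rigged so that the quantum Hamiltonian reduction $H_0(\overline M(\widehat\nu_h))$ is a highest weight $W^k_{\min}(\g)$-module with minimal $L_0$-eigenvalue $\ell(h)$ given by \eqref{p} and with graded dimensions independent of $h$. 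Two inputs are needed. First, $\overline M(\widehat\nu_h)$ is irreducible for $h$ outside an explicit finite set $J$ (located by the zeros of the relevant Shapovalov-type determinant for $\ga$); by Arakawa's exactness and irreducibility theorem for $H_0$ \cite{Araduke} this identifies $H_0(\overline M(\widehat\nu_h))=L^W(\nu,\ell(h))$ for $h\notin J$. Second, the contravariant Hermitian form on $H_0(\overline M(\widehat\nu_h))$, induced from the Shapovalov form of $\overline M(\widehat\nu_h)$, has matrix depending polynomially on $h$ on each finite-dimensional weight space. Since $\ell(h)$ ranges over the whole half-line of $\ell_0$ for which \eqref{eh} holds, and --- the numerical miracle to be verified --- every $h\in J$ gives an $\ell(h)$ that \emph{violates} \eqref{eh}, along the relevant part of the family the module really is the irreducible $L^W(\nu,\ell(h))$, hence its invariant form is nondegenerate. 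A nondegenerate Hermitian form depending continuously on $h$ has constant signature; as it is positive definite for $\ell(h)\gg0$ by Proposition~\ref{sufficient}, it is positive definite for every $\ell_0$ satisfying \eqref{eh}, which is Part (2).

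\emph{Main obstacle.} The heart of the matter is the deformation in Part (2): choosing the highest weight of $\overline M(\widehat\nu_h)$ so that $H_0$ lands exactly on $L^W(\nu,\ell(h))$, pinning down the exceptional set $J$ precisely enough (via a Kac--Kazhdan-type analysis) that Arakawa's theorem applies for all other $h$, and above all verifying the numerical coincidence that $J$ lies strictly outside the region defined by \eqref{eh}. The polynomial dependence of the form on $h$ and the surjectivity of $h\mapsto\ell(h)$ onto the relevant half-line are conceptually routine but must be arranged carefully for the continuity argument to be valid.
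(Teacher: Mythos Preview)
Your outline is close to the paper's, but there are two places where your mechanism differs from what actually happens.

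\textbf{Part (1), the extremal refinement.} Your explanation of Proposition~\ref{boundary} is not correct. The norm formula \eqref{GGv} gives $\Vert G^{\{v\}}_{-1/2}v_{\nu,\ell_0}\Vert^2 = -2(k+h^\vee)\bigl(\ell_0-A(k,\nu)\bigr)\langle\phi(v),v\rangle$, and since $k+h^\vee<0$ and $\langle\phi(v),v\rangle>0$, \emph{strict} inequality $\ell_0>A(k,\nu)$ would make this norm \emph{positive}, not negative. The actual argument is different: when $\nu$ is extremal, $G^{\{v\}}_{-1/2}v_{\nu,\ell_0}$ is a singular vector for the affine $\g^\natural$-subalgebra of weight $\nu+\xi\notin P^+_k$; unitarity forces the $V^{\beta_k}(\g^\natural)$-action to decompose into integrable pieces, so such a singular vector must vanish. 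Its norm is then zero, and \eqref{GGv} forces $\ell_0=A(k,\nu)$.

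\textbf{Part (2), the carrier of the Hermitian form.} You place the Hermitian form directly on $H_0(\overline M(\widehat\nu_h))$, ``induced from the Shapovalov form of $\overline M(\widehat\nu_h)$''. The paper does not do this and it is not clear that it can be done: there is no obvious functoriality sending the contravariant form on the $\ga$-module through $H_0$ to the $\phi$-invariant form on the $W$-algebra module, and the character of $\overline M(\widehat\nu_h)$ (hence of $H_0(\overline M(\widehat\nu_h))$) is only known to be given by \eqref{chf} under hypothesis \eqref{gi}, so a uniform basis in $h$ is not immediate. Instead the paper runs two families in parallel. The free-field family $N(y,\nu)\subset M(y)\otimes L^\natural(\nu)\otimes F(\g_{1/2})$ is free over $\C[y]$ by construction, so the specializations $N(\mu,\nu)$ have weight spaces of dimension independent of $\mu$, and the \emph{intrinsic} $\phi$-invariant Hermitian form on the highest weight $W^k_{\min}(\g)$-module $N(\mu,\nu)$ then has determinants that are polynomials in $\ell_0$. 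The family $H_0(\overline M(\widehat\nu_h))$ is used only to establish irreducibility: Lemma~\ref{sign2} (your ``numerical miracle'') combined with Arakawa's theorem gives that $H_0(\overline M(\widehat\nu_h))=L^W(\nu,\ell(h))$ for $\ell(h)>A(k,\nu)$, and a character comparison (Lemma~\ref{basisfree}) then shows $N(\mu,\nu)=L^W(\nu,\ell_0)$ in that range. Positivity for $\ell_0\gg0$ comes from Proposition~\ref{sufficient}, and the nonvanishing polynomial determinant propagates positivity down to $\ell_0>A(k,\nu)$; the boundary $\ell_0=A(k,\nu)$ is handled by semidefiniteness on $N(\mu,\nu)$ descending to definiteness on the irreducible quotient $L^W(\nu,A(k,\nu))$.

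Your identification of Lemma~\ref{sign2} as the technical heart is right, and your awareness that the polynomial dependence ``must be arranged carefully'' is exactly the point --- but the arrangement is via the free-field family, not via $H_0(\overline M)$. Also, the exceptional set $J$ is not finite; what Lemma~\ref{sign2} shows is that \emph{every} $h$ with $\eqref{gi}$ failing satisfies $\ell(h)\le A(k,\nu)$.
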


\vskip10pt
\noindent{\sl  {\bf Conjecture 2.} The modules $L^W(\nu,\ell_0)$ are unitary if $\nu$ is extremal and $l_0= R.H.S.\text{ of }\eqref{eh}$. In other words, the necessary conditions of unitarity in Theorem  \ref{nec} (1) are sufficient.}
\vskip10pt
We were able to prove this conjecture only for $\g=psl(2|2)$ and $spo(2|3)$, obtaining thereby a complete classification of unitary simple
highest weight $W^k_{\min}(\g)$--modules in these two cases. Note that 
papers  \cite{ET1}, \cite{ET2}  and \cite{M}  respectively claim  (without proof) these results.\par
Since $\nu=0$ is extremal iff $k$ is collapsing, we obtain   the following   complete classification of minimal simple unitary $W$--algebras:\par\noindent
\begin{theorem} The  simple minimal $W$--algebra $W_{-k}^{\min}(\g)$ with $k\ne h^\vee$ and $\g_0$ non-abelian is non-trivial unitary if and only if 
\begin{enumerate}
\item $\g=sl(2|m),\ m\ge 3$, $k=1$ (in this case the $W$--algebra is a free boson);
\item  $\g=psl(2|2)$, $k\in\mathbb N +1$;
\item  $\g=spo(2|3)$, $k\in\frac{1}{4}(\mathbb N+2)$;
\item $\g=spo(2|m)$, $m>4$, $k\in\frac{1}{2}(\mathbb N+1)$;
%\item   $\g=D(2,1;--\frac{m}{m+n})$, $k=\frac{mn}{m+n}\mathbb N$, \text{ where $m,n\in\mathbb N$ are coprime,  $k\neq \frac{1}{2}$};
\item   $\g=D(2,1;\frac{m}{n})$, $k\in\frac{mn}{m+n}\mathbb N$, \text{ where $m,n\in\mathbb N$ are coprime,  $k\neq \frac{1}{2}$};
\item   $\g=F(4)$, $k\in\frac{2}{3}(\mathbb N+1)$;
\item  $\g=G(3)$, $k\in\frac{3}{4}(\mathbb N+1)$.
\end{enumerate}
\end{theorem}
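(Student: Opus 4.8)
The plan is to derive the classification from Theorem \ref{nec} applied to the vacuum module, supplemented by a direct treatment of the collapsing levels. Recall that $W_{-k}^{\min}(\g)$ is, by definition, unitary precisely when its adjoint module is, that unitarity of the simple algebra is equivalent to unitarity of the universal algebra $W^{-k}_{\min}(\g)$, and that the relevant conjugate linear involution is forced by Propositions \ref{converse} and \ref{casi} (and the discussion around Definition \ref{ac}) to be the one induced by the essentially unique almost compact involution $\phi$, which exists for every $\g$ in \eqref{ssuper} (with $a\in\RR$ in the $D(2,1;a)$ case). Since $W_{-k}^{\min}(\g)$ is simple, its adjoint module is exactly $L^W(0,0)$, i.e. the highest weight module with $\nu=0$ and $\ell_0=0$. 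So the whole problem is to decide, for $k\ne h^\vee$ and $\g_0$ non-abelian, when $L^W(0,0)$ is unitary and the algebra is non-trivial.

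I would then split into the non-collapsing and the collapsing cases. For $\g=sl(2|m)$ ($m\ge 3$) and $\g=osp(4|m)$ ($m$ even) no module $L^W(\nu,\ell_0)$ is unitary at a non-collapsing level, so in particular $W_{-k}^{\min}(\g)$ is not unitary there, and only collapsing levels can contribute. For the remaining algebras — $psl(2|2)$, $spo(2|m)$ with $m\ge 3$, $D(2,1;a)$ with $a\in\RR$, $F(4)$, $G(3)$ — Theorem \ref{nec} applies, and I evaluate it at $\nu=0$, $\ell_0=0$. With $\nu=0$ the inequality \eqref{eh} becomes $0\ge 0$ and holds with equality, so its ``extremal'' clause imposes no condition; hence part (1) says unitarity forces $M_i(k)\in\ZZ_{\ge 0}$, while the fact that $\nu=0$ is extremal exactly when $k$ is collapsing means that at a non-collapsing $k$ one automatically has $M_i(k)+\chi_i\ge 0$, so that $M_i(k)+\chi_i\in\ZZ_{\ge 0}$. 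Conversely, if $M_i(k)+\chi_i\in\ZZ_{\ge 0}$ for all $i$ then all three hypotheses of Theorem \ref{nec}(2) hold for $(\nu,\ell_0)=(0,0)$, so $L^W(0,0)$ is unitary, and it is non-trivial because $k$ is then non-collapsing. Thus, for these $\g$ and non-collapsing $k$, $W_{-k}^{\min}(\g)$ is non-trivial unitary iff $M_i(k)+\chi_i\in\ZZ_{\ge 0}$ for every simple component $\g^\natural_i$. Reading the affine functions $M_i(\cdot)$ and the integers $\chi_i$ off Table 2 (at the level $-k$) and substituting turns this into the explicit arithmetic conditions on $k$; a short case-by-case computation then yields precisely items (2)–(7) (for instance, for $D(2,1;\tfrac mn)$ the two conditions on $M_1(k)+\chi_1$ and $M_2(k)+\chi_2$ combine, using $\gcd(m,n)=1$, into $k\in\tfrac{mn}{m+n}\NN$).

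Finally one must dispose of the collapsing levels, and this is where the argument needs most care. I would invoke the classification of collapsing levels for minimal $W$-algebras: at such a level $W_{-k}^{\min}(\g)$ is either isomorphic to $\CC$ — trivial, hence excluded — or collapses to a simple affine vertex algebra on $\g^\natural$ (or a Heisenberg vertex algebra when $\g^\natural$ degenerates), whose unitarity is decided by an integrality condition on its level together with the availability of the compact form. Running through \eqref{ssuper}, the only non-trivial unitary algebra arising from a collapsing level is the rank-one free boson obtained for $\g=sl(2|m)$, $m\ge 3$, at $k=1$, which is item (1); for $osp(4|m)$ the collapsing levels give only trivial or non-unitary algebras and contribute nothing, and for $psl(2|2)$, $spo(2|m)$, $D(2,1;a)$, $F(4)$, $G(3)$ the collapsing levels either reproduce entries already present in (2)–(7) or have to be removed — this is the origin of the condition $k\ne\tfrac12$ in item (5), the value $k=\tfrac12$ (which occurs for $D(2,1;1)\cong osp(4|2)$) being collapsing and not yielding a non-trivial unitary algebra. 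Combining the non-collapsing description with this collapsing-level bookkeeping gives the seven cases. I expect the only genuine obstacle to be exactly this last step: correctly determining which collapsing levels produce non-trivial unitary vertex algebras and reconciling the overlapping parametrizations (notably $osp(4|2)\cong D(2,1;1)$); the rest is a direct application of Theorem \ref{nec} and of Table 2.
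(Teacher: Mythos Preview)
Your overall plan matches the paper's: apply Theorem~\ref{nec} to $(\nu,\ell_0)=(0,0)$, use that $\nu=0$ is extremal exactly at collapsing levels, and treat collapsing levels directly. The paper packages this as Corollary~\ref{u?}, proved via Proposition~\ref{810}, Theorem~\ref{u??} and Remark~\ref{7.4}.

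There is, however, a genuine gap in your execution. You assert that the non-collapsing condition $M_i(-k)+\chi_i\in\ZZ_{\ge 0}$ alone ``yields precisely items (2)--(7)'', and in particular that for $D(2,1;\tfrac mn)$ it reduces to $k\in\tfrac{mn}{m+n}\nat$. It does not: writing $k=\tfrac{mn}{m+n}N$ with $N\in\nat$, one finds $M_1(-k)+\chi_1=mN-2$ and $M_2(-k)+\chi_2=nN-2$, so $N=1$ is excluded whenever $m=1$ or $n=1$ (with the other $\ge 2$). Likewise for $spo(2|3)$, where $\chi_1=-2$, your condition gives $4k-4\ge 0$, i.e.\ $k\ge 1$, missing $k=\tfrac34$. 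These missing values are precisely collapsing levels at which the simple $W$-algebra is a non-trivial integrable simple affine vertex algebra (e.g.\ $V_1(sl_2)$ for $spo(2|3)$ at level $-\tfrac34$, or $V_{n-1}(sl_2)$ for $D(2,1;\tfrac1n)$ at level $-\tfrac{n}{n+1}$), hence unitary, and they do belong to the stated lists. So your collapsing bookkeeping must \emph{add} these levels, not merely verify overlap or remove $k=\tfrac12$. The paper avoids this subtlety by building the unitarity range in Proposition~\ref{810} from the weaker necessary condition $M_i(-k)\in\ZZ_{\ge 0}$, which already contains the full lists; one then removes only the levels giving the trivial algebra $\C$ and checks (via Theorem~\ref{oldresults}) that the remaining collapsing levels in the range collapse to simple affine vertex algebras at non-negative integer level, which are automatically unitary.
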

This result, along with all known results on unitarity of vertex algebras,  leads to the following general conjecture.
\vskip5pt
\noindent{\sl  {\bf Conjecture 3.}  A CFT type vertex operator algebra admitting a invariant Hermitian  form and  having a unitary module is unitary.}
\vskip5pt
%This conjecture  holds for affine vertex algebras, Virasoro and  $N=1,2,3,4$ superconformal algebras.
%%The case (1) is just the free boson vertex algebra. 
%The results (2) and  (3) of Theorem \ref{12} are consistent with the results 
%of \cite{ET1} and \cite{M} respectively.\par
%By Theorem \ref{main} (b) the vertex algebra $W_k^{\min}(\g)$, with $k\neq --h^\vee$ and 
%non--collapsing, has a
%non--trivial unitary module only when $W_k^{\min}(\g)$ is a unitary vertex 
%algebra. 

%We are planning to classify its unitary modules and compute their characters in a subsequent publication.\par

In the final Section \ref{14} we provide character formulas for all unitary $W^k_{\min}\!(\g)$-modules $L^W\!(\nu,\ell_0)$, which are obtained  by applying the quantum Hamiltonian reduction to the corresponding irreducible highest weight modules over the affinization $\ga$ of $\g$. There are two cases to consider. In the first case, called massive (or typical), when  inequality \eqref{eh} is strict, this character formula is easy to prove (see the proof of Proposition \ref{irr}), which leads to the character formula \eqref{ff1}.  In the second case, called massless (or atypical), when the  inequality \eqref{eh} is equality, there is a general KW-formula for maximally atypical tame  integrable $\ga$-modules, conjectured in \cite{KWNT} and proved  in \cite{GK2} for all $\g$ in question, except for $\g=D(2,1;\frac{n}{m})$,  $\nu\ne0$, which leads to the character formula  \eqref{ff2}.  Character formulas were also given in \cite{ET2}  (resp. \cite{M}) for the $N=4$   superconformal algebra  (resp. for $W^k_{\min}(spo(2|3))$, hence for the $N=3$  superconformal algebra).  The proofs  given in  these papers are incomplete since they assume that their list of singular vectors is complete and that in the  usual argument of inclusion-exclusion of Verma modules subsingular vectors cancel out. 
Their formulas for both massive and massless representations coincide with  \eqref{ff1} and \eqref{ff2}, respectively.   

In our next paper of this series we will study unitarity
of twisted representations of minimal W-algebras.

\vskip10pt
Throughout the paper the base field is $\C$, and $\mathbb Z_+$ and $\mathbb N$ stand  for the set of non negative and positive integers, respectively.\vskip5pt

\section{Setup}
\subsection{Basic  Lie superalgebras} Let  $\g=\g_{\bar 0}\oplus \g_{\bar 1}$ be a basic  finite-dimensional Lie superalgebra over $\C$ as in 
\eqref{ssuper}.
%Recall that this means that the even part $\g_{\bar 0}$  of  $\g$ is a proper reductive subalgebra and that $\g$ carries a non-degenerate bilinear  supersymmetric invariant form $(\cdot, \cdot)$. Recall that a complete list of such superalgebras consists of $sl(m|n)$ for $1\le m<n$, $osp(m|n)\cong spo(n|m)$ for $m,n\ge 1$, $n$ even, $psl(n|n)$  for $n\ge 2$, $F(4), G(3)$ and $D(2,1;a)$ for $a\ne 0,-1$ \cite{Kacsuper}; we shall assume that $a\in\R$.\par
Choose a Cartan subalgebra  $\h$ of $\g_{\bar 0}$. It is a maximal $ad$--diagonalizable subalgebra of $\g$, for which the root space decomposition is of the form
\begin{equation}\label{rs}
\g=\h\oplus\bigoplus_{\a\in\D}\g_\a,
\end{equation}
where $\D\subset\h^*\setminus\{0\}$ is the set of roots. In all cases, except for $\g\cong psl(2|2)$,
the root spaces have dimension $1$. In the case $\g=psl(2|2)$ one can achieve this property by embedding in $pgl(2|2)$ and replacing 
\eqref{rs} by the root space decomposition with respect to a Cartan subalgebra of $pgl(2|2)$, which we will do.\par
Let $\Dp$ be a subset of positive roots and $\Pi=\{\a_1,\ldots,\a_r\}$ be the corresponding set of simple roots. We will denote by $\Pi_{\bar 0},\,\Pi_{\bar 1},$ the sets of even and odd simple roots, respectively.
%Let $\Pi_1$ be the set of simple odd roots.
For each $\a\in\Dp$ choose $X_\a\in \g_\a$ and $X_{-\a}\in\g_{-\a}$ such that $(X_\a|X_{-\a})=1$,
 and let $h_\a=[X_\a,X_{-\a}]$. Let  $e_i=X_{\a_i}, f_i=X_{-\a_i},\,i=1,\ldots,r$.  The set $\{e_i, f_i, h_{\a_i}\mid i=1,\ldots,r\}$ 
generates $\g$, and satisfies the following  relations 
\begin{equation} \label{r1}
[e_i,f_j]=\d_{ij}h_{\a_i},\quad
[h_{\a_i},e_j]=(\a_i|\a_j)e_j,\quad
[h_{\a_i},f_j]=-(\a_i|\a_j)f_j.
\end{equation}
The Lie superalgebra $\tilde \g$ on generators  $\{e_i, f_i, h_{\a_i}\mid i=1,\ldots,r\}$ subject to relations \eqref{r1} is a (infinite-dimensional) $\Z$--graded Lie algebra, where the grading is defined by $\deg h_{\a_i}=0, \deg e_i=-\deg f_i=1$,   with a unique $\Z$--graded  maximal ideal, and $\g$ is the quotient of $\tilde \g$ by this ideal.
We assume that 
$(\a_i|\a_j)\in\R$ for all $\a_i,\a_j\in\Pi$.\par
\subsection{Conjugate linear involutions and real forms}\label{222} In the above setting, given a collection of  complex numbers $\L=\{\l_1,\ldots,\l_r\}$ such that $\l_i\in \sqrt{-1}\R$ if $\a_i$ is an odd root and $\l_i\in \R$ if $\a_i$ is an even root,
we can define an antilinear involution $\omega_\L:\g\to\g$ setting
\begin{equation}\label{omega}
\omega_\L(e_i)=\l_if_i,\quad\omega_\L(f_i)={\bar\l_i}^{-1}e_i,\quad
\omega_\L(h_{\a_i})=-h_{\a_i},\ 1\leq i\leq r.
\end{equation}
Since $\omega_\L$ preserves  relations \eqref{r1}, it induces  an antilinear involution of $\tilde\g$, and, since $\omega_\L$ preserves the $\Z$--grading of $\tilde\g$, it preserves its unique maximal ideal, hence it induces an antilinear involution of $\g$.

%Clearly $\omega_\L(X_\a)$ is a multiple of  $X_{-\a}$. 
Set $\sigma_\a=-1$ if $\a$ is an odd negative root and  $\sigma_\a=1$ otherwise, so that 
$(X_\a|X_{-\a})=\sigma_\a$. Let 
$$\xi_\a=\begin{cases}sgn(\a|\a) \quad&\text{if $\a$ is an even root,} \\ 1\quad &\text{if $\a$ is an odd root.}\end{cases}$$
%It was shown in \cite[Lemma 3.8]{YK}  that there is a choice of root vectors $\{X_\a\}$ such that the  assignment 
%$X_\a\mapsto \sqrt{-1}^{p(\a)}\s_\a X_{-\a}, h\mapsto h, h\in\h$, defines an anti-involution of $\g$. We fix such a choice of the $\{X_\alpha\}$, which we shall call 
%the Iohara-Koga root vectors.
%, one can choose the $X_\a$ in such a way that 
% \begin{equation}\label{norm}
%\omega_{\L_0}(X_\a)=\sqrt{-1}^{p(\a)}\s_\a X_{-\a}, \quad \a\in\D.\end{equation}
%Hereafter $p(\alpha) =0\in\mathbb Z$ (resp. $1\in\mathbb Z$) if $\alpha$ is even (resp. odd).
%We fix such a choice of the $X_\a$.
%We define
%$\l_\a\in\C$ for each $\a\in\D$ by 
%$$\l_\a=\begin{cases}-\I\quad&\text{if $\a$ is odd}\\-sgn(\a|\a)\quad&\text{if $\a$ is even}\end{cases}.$$ 
Then in \cite[(4.13), (4.15)]{GKMP} it is proven (using results from \cite{YK}), that one can choose  root vectors $X_\a$ in such a way that 
\begin{equation}\label{ox}
\omega_{\L}(X_\a)=-\s_\a\xi_\a\l_\a X_{-\a},
\end{equation}
where 
\begin{equation} \label{lambdaalfa}
\l_\a=\prod_i(-\xi_{\a_i}\l_i)^{n_i}\text{ \ for $\a=\sum_{i=1}^{r}n_i\a_i$.}
\end{equation} 
We shall call this a {\it good choice} of root vectors.

\subsection{Invariant Hermitian forms on vertex algebras} Let $V$ be a conformal vertex algebra with conformal vector $L=\sum_{n\in\mathbb Z} L_nz^{-n-2}$ (see \cite{KMP} for the definition and undefined notation).   Let $\phi$ be a conjugate linear involution of $V$. A Hermitian form $H(\, . \,\, , \, .\, )$ on $V$ is called $\phi$--{\it invariant} if, for all $a\in V$, one has \cite{KMP}
\begin{equation}
H(v,Y(a,z)u)=H(Y(A(z)a,z^{-1})v,u),\quad u,v\in V.
\end{equation}
Here the linear map $A(z):V\to V((z))$ is defined by 
\begin{equation}\label{12}
A(z)=e^{zL_1}z^{-2L_0}g,
\end{equation}
where
\begin{equation}\label{113}
g(a)=e^{-\pi\sqrt{-1}(\tfrac{1}{2}p(a)+\D_a)}\phi(a),\quad a\in V,
\end{equation}
$\D_a$ stands for the   $L_0$--eigenvalue of $a$, and 
$$p(a)=\begin{cases}0\in\Z&\text{ if $a\in \g_{\bar{0}}$},\\ 1 \in\Z&\text{ if $a\in \g_{\bar{1}}$. }
\end{cases}
$$

%\begin{definition}\label{unidef} We say that a conformal vertex algebra $V$ is {\sl unitary} if there exists a conjugate linear involution $\phi$ of $V$ and a 
%$\phi$--invariant positive definite Hermitian form on $V$.
%\end{definition}
\section{The almost compact conjugate linear involution of $\g$}\label{2}
 From now on we let $\g$ be a basic simple finite-dimensional Lie superalgebra such that 
 \begin{equation}\label{goss}
\g_{\bar 0}=\mathfrak s\oplus \g^\natural.
\end{equation}
where  $\mathfrak s\cong sl_2$ and $\g^\natural$ is the centralizer of $\mathfrak s $ in $\g$.

This corresponds to consider $\g$  as in  Table 2 of \cite{KW1}. We will also assume that $\g^\natural$ is not abelian; this condition rules out   $\g=spo(2|m),\ m=0,1,2$. The explicit list is given in the leftmost column of Table 1. Note that $sl(2|1)$ and $osp(4|2)$ are missing there since $sl(2|1)\cong spo(2|2)$ and
$osp(4|2)\cong D(2,1;a)$ with $a=1, -2$ or $-\tfrac{1}{2}$.
\par
First, we prove the simple lemma mentioned in the Introduction, which states that the first two conditions of \eqref{primas} imply the third one.
\begin{lemma}\label{31} Let $\g$ be a simple Lie superalgebra with an invariant supersymmetric bilinear form $(.|.)$, let $x\in \g$, and let $\phi$ be a conjugate linear involution of $\g$, such that 
\begin{equation}\label{aux} 
(x|x)\text{ is a non-zero real number, and } \phi(x)=x.
\end{equation} Then 
\begin{equation}\label{forma}\overline{(\phi(a)|\phi(b))}=(a|b),\text{ for all  $a,b\in\g$.}\end{equation}
\end{lemma}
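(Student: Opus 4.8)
The plan is to exploit the uniqueness (up to scalar) of an invariant supersymmetric bilinear form on a simple Lie superalgebra. First I would observe that the form $B(a,b) := \overline{(\phi(a)\mid \phi(b))}$ is again a bilinear form on $\g$ (it is bilinear, not sesquilinear, because the conjugate-linearity of $\phi$ in each slot is cancelled by the overall complex conjugation). Next I would check that $B$ is invariant: for $a,b,c\in\g$ we have $B([a,b],c) = \overline{([\phi(a),\phi(b)]\mid\phi(c))}$ using that $\phi$ is a homomorphism of Lie superalgebras, and then invariance of $(\cdot\mid\cdot)$ together with a conjugation gives $B([a,b],c)=\pm B(a,[b,c])$ with the correct sign rule, so $B$ is an invariant supersymmetric even bilinear form. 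One must also note $B$ is non-degenerate, since $\phi$ is bijective and $(\cdot\mid\cdot)$ is non-degenerate. By the uniqueness of such a form on a basic simple $\g$, there is a scalar $\lambda\in\C$ with $B = \lambda(\cdot\mid\cdot)$, i.e. $\overline{(\phi(a)\mid\phi(b))}=\lambda\,(a\mid b)$ for all $a,b$.

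It then remains to show $\lambda = 1$. For this I would use the hypothesis on $x$: setting $a=b=x$ gives $\overline{(\phi(x)\mid\phi(x))} = \lambda (x\mid x)$, and since $\phi(x)=x$ the left side is $\overline{(x\mid x)} = (x\mid x)$, because $(x\mid x)$ is a non-zero real number by assumption. Dividing by $(x\mid x)\neq 0$ yields $\lambda = 1$, which is \eqref{forma}.

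\textbf{Main obstacle.} The only genuine point requiring care is the uniqueness statement for the invariant form: for a basic simple Lie superalgebra the space of invariant supersymmetric even bilinear forms is one-dimensional, and one should be slightly careful that $B$ is even (it pairs $\g_{\bar 0}$ with $\g_{\bar 0}$ and $\g_{\bar 1}$ with $\g_{\bar 1}$ nontrivially, and kills mixed pairs) and supersymmetric with the right sign on $\g_{\bar 1}$ — both follow from the corresponding properties of $(\cdot\mid\cdot)$ since $\phi$ preserves parity. The sign bookkeeping in verifying invariance of $B$ (tracking the Koszul signs through $\phi$ and through complex conjugation) is routine but is the place where an error could creep in; everything else is immediate.
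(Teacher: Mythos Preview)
Your proof is correct and follows exactly the same approach as the paper: observe that $\overline{(\phi(a)\mid\phi(b))}$ is again an invariant supersymmetric bilinear form, invoke uniqueness up to scalar on a simple Lie superalgebra, and pin down the scalar as $1$ by evaluating at $x$. The paper's version is simply a two-sentence compression of what you wrote; your extra remarks on bilinearity, evenness, and sign bookkeeping are all correct and make explicit what the paper leaves to the reader.
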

\begin{proof} Note that $\overline{(\phi(a)|\phi(b))}$ is an  invariant supersymmetric bilinear form as well, hence it is proportional to $(a|b)$ since $\g$ is simple. Due to \eqref{aux}
these two bilinear forms coincide.
\end{proof}

We now discuss the existence of an almost compact involution of $\g$ (see Definition \ref{ac}).
\begin{prop} \label{constructphi} For any $sl_2$--triple $\mathfrak s=\{e,x,f\}$, such that $[e,f]=x, [x,e]=e, [x,f]=-f,$ and \eqref{goss} holds, an almost compact involution exists.
\end{prop}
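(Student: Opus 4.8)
The plan is to construct $\phi$ as an $\omega_\L$ from \S\ref{222} for a carefully chosen triple $\L$ adapted to the datum $\mathfrak s=\{e,x,f\}$, and then verify it is almost compact. First I would choose a ``distinguished'' set of simple roots: since $f$ is a root vector attached to $-\theta$ for the highest root $\theta$, and the $ad\,x$--grading satisfies \eqref{seconda}, one may pick $\Pi$ so that exactly one simple root, say $\a_0$, is non-orthogonal to $x$ (equivalently, $\a_0(x)$ accounts for the whole $\g_{1}$ piece and $\theta$ appears with coefficient $1$ over $\a_0$). All other simple roots then vanish on $x$ and hence lie in $\g^\natural$ together with the corresponding $e_i,f_i,h_{\a_i}$; they form a root system for $\g^\natural$ (this is the standard picture in Table 2 of \cite{KW1}, which we may invoke). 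With this choice, setting $e=e_0$ up to scalar, $\mathfrak s$ is generated by the $\a_0$--triple.

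Next I would fix the $\l_i$. For the $\g^\natural$ simple roots I want $\omega_\L$ to restrict to the compact involution of $\g^\natural$: this is exactly the classical statement that a simple Lie algebra (here each simple component of $\g^\natural$, which is an ordinary Lie algebra since $\g^\natural\subset\g_{\bar 0}$) admits a compact real form, realized by $\omega_\L$ with all relevant $\l_i=1$ after a good choice of root vectors; for the odd roots of $\g^\natural$ one takes $\l_i\in\sqrt{-1}\,\R$ so that, via \eqref{ox}, the form $H$ comes out positive on those root spaces — this is precisely condition (ii) of Definition \ref{ac} and is where one uses that $\g^\natural$ itself carries a compact conjugate linear involution. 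For $\a_0$ I would choose $\l_0$ so that $\omega_\L(e_0)=\l_0 f_0$ matches $\phi(e)=e$ after rescaling $e,f$ inside the $sl_2$; concretely, since $\{e,x,f\}$ spans an $sl_2$ and $x$ is a real multiple of $h_{\a_0}$, the condition $\phi(x)=x$ forces $\omega_\L(h_{\a_0})=-h_{\a_0}$ (already built in), and then a real scaling of $e_0,f_0$ makes $\phi(e)=e$, $\phi(f)=f$; one checks $(x|x)=\tfrac12$ is real so Lemma \ref{31} applies and the third condition of \eqref{primas} is automatic. This gives (i).

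The involutivity $\phi^2=\mathrm{id}$ is automatic from \eqref{omega}, and the fact that $\phi$ descends from $\tilde\g$ to $\g$ is the general remark in \S\ref{222}. So the construction produces a conjugate linear involution fixing $e,x,f$ and restricting to a compact involution on $\g^\natural$, i.e. an almost compact involution. For $D(2,1;a)$ one has to be slightly careful: the bilinear form and the structure constants involve $a$, and reality of the relevant $(\a_i|\a_j)$ — needed for the whole $\omega_\L$ machinery and for $\l_\a$ in \eqref{lambdaalfa} to behave — requires $a\in\R$; for $a\in\R$ the argument goes through verbatim, which is consistent with the statement in the Introduction that $a$ must be real. (The existence statement here does not claim uniqueness, which is handled separately in Sections \ref{2} and \ref{4}.)

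\textbf{Main obstacle.} The genuinely non-formal point is verifying condition (ii): that one can choose the root vectors $X_\a$ (a ``good choice'', \eqref{ox}) and the signs/phases $\l_i$ for the \emph{odd} roots of $\g^\natural$ so that the resulting Hermitian contravariant form is positive definite on $\g^\natural$, i.e. that $\omega_\L|_{\g^\natural}$ is actually compact and not merely an involution. This is really the assertion that each relevant $\g^\natural$ admits a compact real form compatible with the chosen Cartan and with the structure constants inherited from $\g$; it is classical for semisimple Lie algebras, but one must check the inherited normalization of the form $(.|.)$ (fixed by $(\theta|\theta)=2$) does not obstruct positivity, and in the $D(2,1;a)$ case that this survives for all real $a$. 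I expect this to be dispatched by a direct appeal to \cite[(4.13),(4.15)]{GKMP} / \cite{YK} together with a short case check against the list in \eqref{ssuper} (equivalently Table 1).
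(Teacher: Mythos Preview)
Your proposal has a genuine gap in condition (i) of Definition \ref{ac}. Any $\omega_\L$ from \S\ref{222} satisfies $\omega_\L(h_{\a_i})=-h_{\a_i}$ for all $i$, hence acts by $-1$ on the whole Cartan subalgebra $\h$. So if $x$ lies in $\h$ --- as you assert when you write ``$x$ is a real multiple of $h_{\a_0}$'' --- then $\phi(x)=-x$, not $\phi(x)=x$. Likewise, by \eqref{ox}, $\omega_\L$ sends each root space $\g_\a$ into $\g_{-\a}$, so no nonzero root vector is fixed; in particular the Chevalley generators $e_0,f_0$ attached to your $\a_0$ satisfy $\phi(e_0)\in\C f_0$, and no rescaling can make $\phi(e_0)=e_0$. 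Thus the step ``a real scaling of $e_0,f_0$ makes $\phi(e)=e$, $\phi(f)=f$'' cannot succeed.

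The paper resolves this by a different move. It first observes that it suffices to produce an almost compact $\phi$ for \emph{some} $sl_2$-triple satisfying \eqref{goss}, since any two such triples are conjugate by an inner automorphism of $\g$ and one can conjugate $\phi$ accordingly. Then, having set $\phi=\omega_{\L_0}$ with the explicit choice \eqref{lambda0}, it constructs a \emph{rotated} $sl_2$-triple inside $\mathfrak s=\langle X_\theta,X_{-\theta},h_\theta\rangle$ via \eqref{efx}:
\[
x=\tfrac{\sqrt{-1}}{2}(X_\theta-X_{-\theta}),\quad e=\tfrac{1}{2}(X_\theta+X_{-\theta}+\sqrt{-1}\,h_\theta),\quad f=\tfrac{1}{2}(X_\theta+X_{-\theta}-\sqrt{-1}\,h_\theta).
\]
Since $\phi(X_\theta)=X_{-\theta}$ and $\phi(h_\theta)=-h_\theta$, these elements are $\phi$-fixed. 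There are also two smaller inaccuracies: with the distinguished simple roots of Table 1 there are typically \emph{two} odd simple roots not orthogonal to $x$ (those lying in $\g_{1/2}$), not one; and $\g^\natural\subset\g_{\bar 0}$ is an ordinary Lie algebra, so ``odd roots of $\g^\natural$'' do not occur. The verification of (ii), which you flag as the main obstacle, is dispatched in the paper via a $\Z$-grading (even simple roots get degree $0$, odd ones degree $\pm1$) that identifies the degree-$0$ piece with $\g^\natural$, together with the compactness criterion of \cite[Proposition 4.5]{GKMP}.
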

\begin{proof}  Choose  a Cartan subalgebra $\mathfrak t$ of $\g_{\bar 0}$. We observe that if we prove the existence of an almost compact involution $\phi$ for
a special choice of $\{e,x,f\}$, then an almost compact involution exists for any choice of the $sl_2$--triple. Indeed, if $\{e',x',f'\}$ is another 
$sl_2$--triple, then there is an inner automorphism $\psi$ of $\mathfrak s$ mapping $\{e,x,f\}$ to $\{e',x',f'\}$, which extends to an inner automorphism of $\g$. Therefore 
$\phi'=\psi\phi\psi^{-1}$ is an almost compact involution for $\{e',x',f'\}$.
%According to Definition \ref{ac}, we  have to verify that there exists a conjugate linear
%automorphism $\phi$  of $\g$  and a $sl_2$--triple  $\{e,f,x\}$ in $\mathfrak s$ such that $(\g^\natural)^\phi$, the set of fixed points of $\phi$ in $\g^\natural$,  is a compact form of $\g^\natural$ and  $\phi(f)=f, \phi(x)=x$,  $\phi(e)=e$. 
The construction of  $\{e,x,f\}$ and $\phi$ and the verification of properties (i)--(iii) in Definition \ref{ac} will be done in four steps:
\begin{enumerate}
\item make a suitable choice of positive roots for $\g$ with respect to  $\mathfrak t$; 
\item define $\phi$ by specializing \eqref{omega};
\item construct $\{e,f,x\}$ and verify that  $\phi(f)=f, \phi(x)=x$, $\phi(e)=e$;
\item check  that  $\phi$ is a compact involution for  $\g^\natural$;
\end{enumerate}
{\sl Step 1.} We need some preparation. Let 
$\D^\natural$ be the set of roots  of  $\g^\natural$ with respect to the Cartan subalgebra $\mathfrak t\cap \g^\natural$.
Let $\{\pm\theta\}$ be the $\mathfrak t\cap \mathfrak s$--roots of $\mathfrak s$. Then $R_{\bar 0}=\{\pm\theta\}\cup\D^\natural$ is the set of roots of $\g_{\bar 0}$ with respect to $\t$. 
%Fix a set of positive roots $\Dp_0$ of $\D_0$ such that $\theta\in \Dp_0$.

Let $R$ be the set of roots of $\g$ with respect to $\t$, let  $R^+$ be the subset of positive roots whose corresponding set of simple roots $S=\{\a_1,\ldots,\a_r\}$ is displayed in Table 1. 
%When $\g=psl(2|2)$ we display  a positive system in $gl(2|2)$ (see the discussion in  \cite[4.2]{GKMP}).
\begin{table}\label{Teble1}
{\scriptsize
\begin{tabular}{c | c| c |c }
$\g$&$S$&\text{ $(. |.)$} & $\theta$\\
\hline
$psl(2|2)$&
$\{\e_1-\d_1,\d_1-\d_2,\d_2-\e_2\}$&$(\e_i|\e_j)=\d_{i,j}=-(\d_i|\d_j)$ & $\e_1-\e_2$\\
& &$(\e_i|\d_j)=0$
\\\hline
$sl(2|m), m>2$&
$\{\e_1-\d_1,\d_1-\d_2,\ldots,\d_m-\e_2\}$&$(\e_i|\e_j)=\d_{i,j}=-(\d_i|\d_j)$ & $\e_1-\e_2$\\
& &$(\e_i|\d_j)=0$
\\\hline
$osp(4|m), m>2$&$\{\e_1-\e_2,\e_2-\d_1,\d_1-\d_2,\ldots,\d_{m-1}-\d_m,2\d_m\}$&$(\e_i|\e_j)=\d_{i,j}=-(\d_i|\d_j)$ & $\e_1+\e_2$\\
& &$(\e_i|\d_j)=0$\\\hline
$spo(2|2m+1), m\ge1$&
$\{\d_1-\e_1,\e_1-\e_2,\ldots,\e_{m-1}-\e_m,\e_m\}$&$(\e_i|\e_j)=-\tfrac{1}{2}\d_{i,j}, (\d_1|\d_1)=\tfrac{1}{2},\,(\e_i|\d_1)=0$& $2\d_1$
\\\hline
$spo(2|2m), m\ge 3$&
$\{\d_1-\e_1,\e_1-\e_2,\ldots,\e_{m-1}-\e_m,\e_{m-1}+\e_m\}$&$(\e_i|\e_j)=-\tfrac{1}{2}\d_{i,j}, (\d_1|\d_1)=\tfrac{1}{2},\,(\e_i|\d_1)=0$& $2\d_1$
%\\\hline
%$D(2,1;a)$&$\{\e_1-\e_2, \e_2-\delta, 2\delta\}$& $(\e_i|\e_j)=\delta_{i,j}, (\delta|\delta)=-(1+a)/2$ & $\e_1+\e_2$\\
% & & $(\e_1|\delta)=(\e_2|\delta)=(1-a)/4$\\\hline
 \\\hline
$D(2,1;a)$&$\{\e_1-\e_2-\e_3, 2\e_2, 2\e_3\}$& $(\e_1|\e_1)=\frac{1}{2}, (\e_2|\e_2)=\frac{-1}{2(1+a)},  (\e_3|\e_3)=\frac{-a}{2(1+a)}$ & $2\e_1$\\
 && $(\e_1|\e_2)=(\e_1|\e_3)=(\e_2|\e_3)=0$ \\\hline
$F(4)$&$\{\tfrac{1}{2}(\d_1-\e_1-\e_2-\e_3), \e_3,\e_2-\e_3,\e_1-\e_2\}$&$(\e_i|\e_j)=-\tfrac{2}{3}\d_{i,j}, (\d_1|\d_1)=2$& $\d_1$\\
& &$(\e_i|\d_1)=0$
\\\hline
$G(3)$&$\{\d_1+\e_3, \e_1,\e_2-\e_1\}$&$(\e_i|\e_j)=\frac{1-3\d_{i,j}}{4}, (\d_1|\d_1)=\frac{1}{2}$& $2\d_1$\\
& &$(\e_i|\d_1)=0,\ \e_1+\e_2+\e_3=0$
\end{tabular}
}

\caption{Simple roots, invariant form, and highest root of $\g$}

\end{table}
Note that   $\theta$ is the  highest root of 	$R$.
\vskip5pt
\noindent{\sl Step 2.} Define 
\begin{equation}\L_0=\{\l_1,\ldots,\l_r\},\quad\l_i=\begin{cases}- sgn (\a_i|\a_i)\quad&\text{if $\a_i$ is  even,}\\
-\sqrt{-1}\quad&\text{if $\a_i$ is  odd.}\end{cases}\label{lambda0}\end{equation}
 Set 
  $\phi=\omega_{\L_0}$ (see \eqref{omega}).
% $S_1$ the set of odd roots in $S$:
%  \begin{equation}\label{omega0}
 % \l_i=-sign(\a_i|\a_i)\text{ for $\a_i\notin S_1$,}\quad  \l_i=\sqrt{-1}\text{ for $\a_i\in S_1$}.
 % \end{equation} 
 % By \eqref{ox}, \eqref{lambdaalfa}
%  \begin{equation}\label{ancora}
% \phi(X_\a)=-\s_\a\xi_\a\prod_i(-\xi_{\a_i}\l_i)^{n_i} X_{-\a}, \quad\text{where  $\a=\sum_{i=1}^{r}n_i\a_i$.}
% \end{equation}

 \vskip5pt
\noindent{\sl Step 3.} Consider a good choice of root vectors $X_\a$ for $\L_0$.
Set 
\begin{equation}\label{efx} x=\tfrac{\sqrt{-1}}{2}(X_\theta-X_{-\theta}), \quad 
e=\tfrac{1}{2}(X_\theta+X_{-\theta}+\sqrt{-1}h_\theta), \quad f=\tfrac{1}{2}(X_\theta+X_{-\theta}-\sqrt{-1}h_\theta).\end{equation}
% $$X_\theta=-\tfrac{\sqrt-1}{2}x+\tfrac{1}{2}(e+f),\quad X_{-\theta}=\tfrac{\sqrt-1}{2}x+\tfrac{1}{2}(e+f)$$
If  $\theta=\sum_{i=1}^{r}m_i\a_i$, then, by our special choice of $\Dp$, we have  either $m_i=2$ for exactly one odd simple root $\a_i$, or $m_i=m_j=1$ for exactly two odd distinct simple roots $\a_i,\a_j$ (this corresponds to the fact that $R^+$ is distinguished, in the terminology of \cite{GKMP}). By \eqref{ox} we have
\begin{equation}\label{phixtheta}\phi(X_\theta)=- (\sqrt-1)^2 X_{-\theta}=X_{-\theta}. 
\end{equation}
Since $h_\theta=\sum_{i=1}^r m_ih_{\a_i}$ and $\phi(h_{\a_i})=-h_{\a_i}$, it is clear from \eqref{efx}
 that $\phi$ fixes $e, f, x$. One checks directly that $\{e,f,x\}$ is an $sl_2$--triple.

\vskip5pt
\noindent{\sl Step 4.} Endow $\g$ with the $\ganz$--grading
\begin{equation}\label{grading}\g=\bigoplus_{i\in\ganz} \mathfrak q_i\end{equation}
which assigns   degree $0$ to $h\in\t$ and to $e_i$ and $f_i$ if $\a_i$ is even,
and degree $1$ to $e_i$ and degree $-1$ to $f_i$, if $\a_i$ is odd.

A direct check on Table 1 shows that $\mathfrak q_0=\g^\natural$.
Recall from \cite[Proposition 4.5]{GKMP} that the fixed points of $\phi$ in $\mathfrak q_0$ are a compact form of $\q_0$ if and only  if 
$\l_i(\a_i|\a_i)<0$ for all $\a_i\in S\setminus S_1$. Step 4 now follows from \eqref{lambda0}.
\end{proof}
\section{Explicit expressions for almost compact real forms}\label{4}
%Recall that we are considering Lie superalgebras for which \eqref{goss} holds. In this setting, we 
% say that a conjugate linear involution of $\g$  is {\it almost compact} if
 %\begin{enumerate}
%\item  it is compact on $\g^\natural$;
%\item   it is the identity on $\mathfrak s$.
%\end{enumerate}\par
 In this section we exhibit explicitly  an almost compact involution 
$\phi$ in each case and discuss its uniqueness. If $\phi$ is an almost compact involution of $\g$,  we denote by $\g^{ac}$ the corresponding real form  (the fixed point set of $\phi$).
We can define $\g^{ac}$ by specifying a real form $\g^{ac}_{\bar 0}$ of $\g_{\bar 0}$ and a real form $\g^{ac}_{\bar 1}$ of  $\g_{\bar 1}$.
% (cf. \cite[Proposition 5.3.2]{Kacsuper}). 
%We chose  $\phi$ to be the conjugation with respect to this real form, which we denote by $\g^{ac}$.

(1)  $\g=spo(2|m)$. Then $\g_{\bar 0}=sl_2\oplus so_m$ and $\g_{\bar 1}=\C^2\otimes \C^m$ as $\g_{\bar 0}$--module. We set 
$$\g^{ac}_{\bar 0}=sl_2(\mathbb R)\oplus so_m(\mathbb R),\quad \g^{ac}_{\bar 1}=\R^2\otimes \R^m.
$$
Explicitly, let $B$ be a non-degenerate $\R$--valued bilinear form of the superspace $\R^{2|m}$ with matrix
$\left(\begin{array}{c c|c} 0 &1 &0\\-1 & 0& 0\\\hline  0& 0& I_m\end{array}\right)$. Then for $\g=spo(2|m)$ we have:
$$\g^{ac}=\{A \in sl(m|n;\R)\mid B(Au,v)+(-1)^{p(A)p(u)}B(u,Av)=0\}.
$$

(2) $\g=psl(2|2)$. 
Let $H$ be a $\C$--valued non-degenerate sesquilinear form on the superspace $\C^{2|2}$ whose matrix is $diag(\I,-\I,1,1)$. Set
$$\tilde\g^{ac}=\{A \in sl(2|2;\C)\mid H(Au,v)+(-1)^{p(A)p(u)}H(u,Av)=0\}.
$$
Then 
$$\g^{ac}=\tilde\g^{ac}/\R \sqrt{-1} I.$$
Explicitly, we have 
 $\g_{\bar 0}=sl_2\oplus sl_2$ and $\g_{\bar 1}=\left\{\left(\begin{array}{c|c} 0 & B \\\hline C & 0\end{array}\right)\mid B,C\in M_{2,2}(\C)\right\}$ as a $\g_{\bar 0}$--module. Then 
\begin{align*}\tilde \g^{ac}_{\bar 0}&=\left\{\left(\begin{array}{c|c} A & 0 \\\hline 0& D\end{array}\right)\mid  A\in su(1,1), D\in su_2\right\},\\
\tilde \g^{ac}_{\bar 1}&=\left\{\left(\begin{array}{c c |c}0 & 0 & u \\ 0 & 0  & v\\\hline
\I \bar u^t &-\I \bar v^t & 0\end{array}\right)\mid  u, v\in \C^2\right\}.
\end{align*}

(3)  $\g=D(2,1;a)$. Then $\g_{\bar 0}=sl_2\oplus sl_2\oplus sl_2= so(4,\C)\oplus sl_2$ and $\g_{\bar 1}=\C^2\otimes \C^2\otimes \C^2=
\C^4\otimes \C^2$ as $\g_{\bar 0}$--module. We set 
$$\g^{ac}_{\bar 0}=so(4,\R)\oplus span_\R\{e,f,x\},\quad \g^{ac}_{\bar 1}=\R^4\otimes \R^2.
$$
To get an explicit realization, consider
the contact Lie superalgebra (see \cite{Kacsuper} for more details)  $$K(1, 4)=\C[t,\xi_1,\xi_2,\xi_3,\xi_4]$$ where $t$ is an even variable and $\xi_i, 1\leq i\leq 4$, are  odd variables. Introduce on the associative superalgebra $K(1, 4)$ a $\Z$--grading by letting 
$$\deg' t=2,\quad \deg'\xi_i=1,$$
and the bracket
$$\{F,G\}=(2-\sum_{i=1}^4\xi_i\partial_i)F\partial_t G-\partial_t F(2-\sum_{i=1}^4\xi_i\partial_i)G+\sum_{i=1}^4
(-1)^{p(F)}\partial _iF\partial_iG,
$$
where $\partial_i=\partial_{\xi_i}$. This is a $\Z$--graded Lie superalgebra with compatible grading 
$\deg  F=\deg'F-2$. We have
$$K(1,4)=\bigoplus_{j\geq -2} K(1,4)_j,$$
where
\begin{align*}&K(1,4)_{-2}=\C 1,  &&K(1,4)_{-1}=span_\C(\xi_i\mid 1\leq i \leq 4), \\&
K(1,4)_0=span_\C(\xi_i\xi_j, t\mid 1\leq i,j \leq 4),
&&K(1,4)_1=\g_1'\oplus \g_1'',\text{ where}\\&\g_1'=span_\C(t\xi_i\mid 1\leq i\leq 4), &&\g_1''=span_\C(\xi_i\xi_j\xi_k\mid 1\leq i,j,k \leq 4).
\end{align*}
Note that $span_\C(\xi_i\xi_j\mid 1\leq i,j \leq 4)=\Lambda^2\C^4\cong so(4,\C)$,  that $\g'_1$ is isomorphic to the standard representation $\C^4$ of $so(4,\C)$ and that $\g''_1$ is isomorphic to
$\Lambda^3\C^4$, so that $K(1,4)_1=\C^4\oplus \C^4$ as $so(4,\C)$--module. Also notice that $\{\g_1',\g_1'\}=\C t^2, \{\g_1'',\g_1''\}=0$.
Fix now a copy $\tilde \g_b$ of  an $so(4,\C)$--module $\C^4$  in $\C^4\oplus \C^4$, depending on a constant $b\in \R$,  as follows. Set, for $1\leq i\leq 4$,
$$a_i=t\xi_i+ b \hat \xi_i,\text{ where }  \hat \xi_i=(-1)^{i+1}\prod_{j\ne i} \xi_j,$$
and define
$$\tilde \g_b=\sum_{i=1}^4\C a_i.$$
 Let $b\in \R$. Note that, setting $\xi=\xi_1\xi_2\xi_3\xi_4$, we have 
\begin{align*}
\{t\xi_i+ b \hat \xi_i,t\xi_j+ b \hat \xi_j\}&=\d_{ij}(-t^2+2b\xi).
\end{align*}
Hence, if we set
$$e=-t^2+2b\xi,\quad f=-1,\quad x=t/2,$$
then $\{e,x,f\}$ is an $sl_2$--triple. Set 
$$\g^{ac}=\R. 1\oplus \left(\sum_{i=1}^4\R \xi_i\right)\oplus\left( \sum_{i,j=1}^4\R \xi_i\xi_j\oplus \R\tfrac{t}{2}\right)\oplus\left( \sum_{i=1}^4\R a_i\right)\oplus \R(-t^2+2b\xi). $$
Then $\g^{ac}$ is an almost compact form  of  $D(2,1;\frac{1+b}{1-b})$. To prove this, it suffices to calculate the  Cartan matrix for a choice of Chevalley generators of the complexification of $\g^{ac}$.
Fix a Cartan subalgebra in $\g^\natural = so(4,\C)$ as the span of  $v_2=-\I\xi_1\xi_2, v_3=-\I\xi_3\xi_4$. Set  $v_1=t$; then $\{v_1,v_2,v_3\}$ is a basis of a  Cartan subalgebra of $\g$. Let $\{\e_1,\e_2,\e_3\}$ be the   dual basis  to $\{v_1,v_2,v_3\}$. One can choose $\{\a_1=\e_2-\e_1,\a_2=\e_1-\e_3,\a_3=\e_1+\e_3\}$ as a set of simple roots. The associated Chevalley generators are 
{\small
\begin{align*}
 &e_1=-\I a_1+a_2 &&\!\!e_2=\xi_1\xi_3+\xi_2\xi_4+\I(\xi_1\xi_4-\xi_2\xi_3)&&e_3=\xi_1\xi_3-\xi_2\xi_4-\I(\xi_1\xi_4+\xi_2\xi_3)\\
 &f_1=\I \xi_1+\xi_2  &&\!\!f_2=\xi_1\xi_3+\xi_2\xi_4-\I(\xi_1\xi_4-\xi_2\xi_3)&&f_3=\xi_1\xi_3-\xi_2\xi_4+\I(\xi_1\xi_4+\xi_2\xi_3)\\
 &h_1=-2v_1+2v_2+2b\, v_3 &&\!\!h_2=4v_1-4v_3 &&h_3=4v_1+4v_3
\end{align*}
}
and the corresponding Cartan matrix, normalized as in \cite{Kacsuper},
is 
$\begin{pmatrix} 0 &1 &\frac{1+b}{1-b}\\ -1&2&0\\-1 & 0 &2\end{pmatrix}$. Hence $a=\frac{1+b}{1-b}$ and therefore all $a\neq -1$ occur in this 
construction. Since this subalgebra is $17$--dimensional, it is isomorphic to $D(2,1;a)$.
\begin{remark} 
Note that $a=0$ for $b=-1$. In this case, $D(2,1;0)$ contains a $11$--dimensional   solvable  ideal generated by $f_1$, which is spanned by
$h_1$ and the root vectors relative to roots having $\a_1$ in their support.
If  we replace $a_i$ by $a_i/b$ and $h_1$ by $h_1/b$, and let $b$ tend to  $+\infty$, we  recover also the Lie superalgebra of derivations of $psl(2|2)$, and its almost compact real form.
%; the real form is
%$$\g^{ac}=\R. 1\oplus \left(\sum_{i=1}^4\R \xi_i\right)\oplus\left( \sum_{i,j=1}^4\R \xi_i\xi_j\oplus \R\tfrac{t}{2}\right)\oplus\left( \sum_{i=1}^4\R \hat\xi_i\right)\oplus \R\xi. $$
\end{remark}
(4)  $\g=G(3)$. Then $\g_{\bar 0}=sl_2\oplus G_2$ and $\g_{\bar 1}=\C^2\otimes L_{\min}$, where  $L_{\min}$ is the complex 7--dimensional irreducible representation  of $G_2$, and we let 
$$\g^{ac}_{\bar 0}=sl_2(\mathbb R)\oplus G_{2,0},\quad \g^{ac}_{\bar 1}=\R^2\otimes  L_{\min,0}.
$$
where $G_{2,0}$ is the real compact form of $G_2$ and $L_{\min,0}$ is the real 7--dimensional  irreducible  representation  of $G_{2,0}$ whose complexification is $L_{\min}$.

(5)  $\g=F(4)$. Then $\g_{\bar 0}=sl_2\oplus so_7$ and $\g_{\bar 1}=\C^2\otimes spin_7$, where $spin_7$ is the complex spinor representation of $so_7,$ and we let
$$\g^{ac}_{\bar 0}=sl_2(\mathbb R)\oplus so_7(\R),\quad \g^{ac}_{\bar 1}=\R^2\otimes  spin (\R^7), 
$$
where $spin(\R^7)$ is the spinor representation of the compact group $so_7(\R)$.

It is proved in \cite[Proposition 5.3.2]{Kacsuper} that in both cases (4) and (5) $\g^{ac}=\g^{ac}_{\bar 0}\oplus \g^{ac}_{\bar 1}$ is an almost compact form of $\g$.
\subsection{Uniqueness of the almost compact involution} 
 \begin{proposition}  
%Let $\mathfrak n=\g_0+\g_{-1/2}+\g_{-1}$ be a Lie superalgebra with compatible
%$\tfrac{1}{2} \mathbb Z$-grading, such that $\g_0$ is reductive, its representation on $\g_{-1/2}$
%is irreducible, $\g_{-1}=\C f$ is 1-dimensional, and $[\g_{-1/2},\g_{-1/2}]=\g_{-1}$.
%Let $\phi$ be a conjugate linear involution of $\mathfrak n$, which preserves the 
%grading, fixes $f$, and induces a compact involution on $\g_0$.
%Then the Hermitian form $H$ defined on $\g_{-1/2}$ by
%$$H(u,v)=\langle\phi(u),v\rangle$$
%is invariant with respect to  the compact form of $\g_0$, defined by $\phi$,
%and therefore is positive definite. Moreover, 
An almost compact involution  is uniquely determined up to a sign by its action on $\g_0$, provided that the $\g_0$--module $\g_{1/2}$ is irreducible.\end{proposition}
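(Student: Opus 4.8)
\emph{Proof strategy.} The plan is to reduce the statement to Schur's lemma for the irreducible $\g_0$--module $\g_{1/2}$. First I would collect the structural facts needed. An almost compact involution $\phi$ fixes $x$, hence commutes with $\ad x$ and preserves the eigenspace decomposition $\g=\g_{-1}\oplus\g_{-1/2}\oplus\g_0\oplus\g_{1/2}\oplus\g_1$; as it fixes $e$ and $f$ it is the identity on $\g_1=\C e$ and $\g_{-1}=\C f$; and, parity being compatible with the $\ad x$--grading, $\g_{\bar0}=\g_{-1}\oplus\g_0\oplus\g_1$ and $\g_{\bar1}=\g_{-1/2}\oplus\g_{1/2}$. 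I would also use two consequences of the embedded $\mathfrak s\cong sl_2$: the map $\ad e\colon\g_{-1/2}\to\g_{1/2}$ is injective (since $\g_{-1/2}\oplus\g_{1/2}$ is a sum of copies of the two--dimensional $\mathfrak s$--module), and the bracket $\g_{1/2}\times\g_{-1/2}\to\g_0$ is not identically zero, because the invariant form pairs $\g_{1/2}$ nondegenerately with $\g_{-1/2}$ while $(x\,|\,[u,v])=\tfrac12(u\,|\,v)$, so $[u,v]\neq 0$ whenever $(u\,|\,v)\neq 0$.

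Next, given two almost compact involutions $\phi_1,\phi_2$ with $\phi_1|_{\g_0}=\phi_2|_{\g_0}$, I would consider $\psi=\phi_1\circ\phi_2$. Being a composition of two conjugate linear automorphisms of $\g$, the map $\psi$ is a $\C$--linear automorphism of the Lie superalgebra $\g$; it fixes $x$, hence respects the grading, and it is the identity on $\g_{\pm1}$ and on $\g_0$ (for $a\in\g_0$ one has $\psi(a)=\phi_1(\phi_2(a))=\phi_2(\phi_2(a))=a$, since $\phi_2$ preserves $\g_0$ and $\phi_1,\phi_2$ agree there). Hence $\psi|_{\g_{1/2}}$ is a $\g_0$--module automorphism of the irreducible module $\g_{1/2}$, so $\psi|_{\g_{1/2}}=c\cdot\mathrm{id}$ with $c\in\C^\times$ by Schur's lemma; and, as $\psi$ is an automorphism fixing $e$, it commutes with $\ad e$, so the injectivity of $\ad e\colon\g_{-1/2}\to\g_{1/2}$ gives $\psi|_{\g_{-1/2}}=c\cdot\mathrm{id}$ as well.

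Finally, to determine $c$ I would choose $u\in\g_{1/2}$, $v\in\g_{-1/2}$ with $[u,v]\neq0$ and compute $[u,v]=\psi([u,v])=[\psi(u),\psi(v)]=c^{2}[u,v]$, so $c^{2}=1$. If $c=1$ then $\psi=\mathrm{id}$ and $\phi_1=\phi_2$. If $c=-1$ then $\psi$ equals $+\mathrm{id}$ on $\g_{\bar0}$ and $-\mathrm{id}$ on $\g_{\bar1}$, i.e. $\psi$ is the parity automorphism of $\g$, so that $\phi_1=\psi\circ\phi_2$; since the parity automorphism commutes with $\phi_2$ and fixes $e,x,f$ and $\g^\natural$ pointwise, $\psi\circ\phi_2$ is again an almost compact involution, differing from $\phi_2$ exactly by a sign on $\g_{\bar1}$. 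This is the asserted uniqueness up to a sign.

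The load--bearing inputs are the two $\mathfrak s$--theoretic facts in the first step — injectivity of $\ad e$ on $\g_{-1/2}$ and nonvanishing of $[\g_{1/2},\g_{-1/2}]$ in $\g_0$ — and supplying (or citing) these from the theory of minimal gradings is the only real, and routine, obstacle; the remainder is Schur's lemma plus grading bookkeeping. Note that the irreducibility hypothesis is used precisely at the Schur step, and that it is also what excludes $\g=psl(2|2)$, where $\g_{1/2}\cong\C^2\oplus\C^2$ and, a priori, a larger family of almost compact involutions could arise.
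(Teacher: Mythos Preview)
Your argument is correct and follows essentially the same line as the paper's: form the ratio $\psi=\phi_1\circ\phi_2$, observe it is a $\C$--linear automorphism acting trivially on $\g_0$, invoke Schur's lemma to get a scalar, and use a bracket identity to force that scalar to satisfy $c^2=1$. The paper's proof is marginally more economical: it applies Schur directly to $\g_{-1/2}$ (irreducible iff $\g_{1/2}$ is) and then uses $[\g_{-1/2},\g_{-1/2}]=\C f$ together with $\psi(f)=f$ to obtain $c^2=1$, which spares you the $\ad e$ transfer step and the nonvanishing argument for $[\g_{1/2},\g_{-1/2}]$.
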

\begin{proof} 
%Hermitian form with respect to a compact group is unique up to a 
%real factor, and always exists a definite one. 
%For the second claim,  
If there are two different extensions of the compact involution, then their ratio $\psi$, say,  is 
identical on $\g_0$, hence, by Schur's lemma, $\psi$ acts as a scalar on 
$\g_{-1/2}$. Since $\phi(f)=f$, we conclude that this scalar is $\pm 1$.
\end{proof}
It remains to discuss the cases $\g=sl(2|m),\,m\geq 3$, and  $psl(2|2)$, since in all other cases of Table 1 the $\g_0$--module $\g_{1/2}$ is irreducible. In this cases $\g$ is of type I, that is $\g_{\bar 1}=\g_{\bar 1}^+\oplus \g_{\bar 1}^-$ where  $\g_{\bar 1}^{\pm}$ are contragredient 
irreducible $\g_{\bar 0}$--modules and $[\g_{\bar 1}^{\pm},\g_{\bar 1}^{\pm}]=0$.
Let $\d_\l$ be the linear  map on $\g$ defined by setting 
\begin{equation}\label{deltal}
{\d_\l}_{|\g_{\bar 0}}=Id,\quad {\d_\l}_{|\g_{\bar 1}^+}=\l Id,\quad{\d_\l}_{|\g_{\bar 1}^-}=\l^{-1}Id.\end{equation} 
Then $\d_\l$ is an automorphism of $\g$ for any $\l\in\C$.   Suppose that $\phi'$ is another conjugate almost compact linear involution such that $\phi'_{|\g_{\bar 0}}=\phi$. Then
$\phi'= \phi\circ \gamma$ with $\gamma$ an automorphism of $\phi$ such that $\gamma_{|\g_{\bar 0}}=Id.$ If $\g=sl(2|m)$, by \cite[Lemmas 1 and 2]{SERG}, we have  $\gamma=\d_\l$.
Since $\phi(\g_{\bar 1}^+)=\g_{\bar 1}^-$ and $(\phi')^2=Id$ we have that  $\l\in\R$.
If $\g=psl(2|2)$, then $\gamma$ belongs to a three-parameter family of  automorphisms explicitly described in  \cite[\S 4.6]{GKMP}, and contained in $SL(2,\C)$. This $SL(2,\C)$ is the group of automorphisms of $\g$ corresponding to the Lie algebra $sl_2$ of outer derivations of $\g$.
\par

\begin{remark} Note that if $\phi$ is an almost compact involution, then 
$$\tilde \phi(a)=(-1)^{2j}\phi(a),\ a\in\g_{j}$$
is again an almost compact involution.
\end{remark}
\section{The bilinear form $\langle\cdot,\cdot\rangle$ on $\g_{-1/2}$}
Let $\mathfrak s=\{e,x,f\}$ be an $sl_2$--triple as in Proposition \ref{constructphi}. Consider  the following symmetric bilinear  forms on $\g_{-1/2}$ and  $\g_{1/2}$ respectively: 
\begin{align}\label{sesq}\langle u,v\rangle&=(e|[u,v]),\ u,v\in\g_{-1/2},\\\langle u,v\rangle_{ne}&=(f|[u,v]),\ u,v\in\g_{1/2}.\label{ne}
\end{align}
Note that, since $[f,\g_{-1/2}]=0$, we have
\begin{equation}\label{negative}
\langle [e,u],[e,v]\rangle_{ne}=-\tfrac{1}{2}\langle u,v\rangle,\ u,v\in\g_{-1/2}.
\end{equation}
We want to prove the following 
\begin{proposition}\label{fh} We can choose an almost  compact involution such that the bilinear form $\langle . ,.\rangle$ is positive definite on $\g^{ac}\cap\g_{-1/2}$.
In particular, the Hermitian form $\langle \phi(u),v\rangle$ (resp. $\langle \phi(u),v\rangle_{ne}$) is  positive definite (resp, negative definite) on $\g^{ac}\cap\g_{-1/2}$ (resp. $\g^{ac}\cap\g_{1/2}$).
\end{proposition}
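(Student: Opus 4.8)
The plan is to build on the almost compact involution $\phi$ produced by Proposition \ref{constructphi}, and to show that, after possibly replacing $\phi$ by the almost compact involution $\tilde\phi$ given by $\tilde\phi(a)=(-1)^{2j}\phi(a)$ for $a\in\g_j$ (cf.\ the last Remark of Section \ref{4}), the restriction of $\langle\cdot,\cdot\rangle$ to the real form becomes positive definite. First I would collect the elementary properties of $\langle\cdot,\cdot\rangle$: it is symmetric (since $\g_{-1/2}$ is odd, as the parity is compatible with the grading), it is $\g^\natural$-invariant (apply the Jacobi identity together with $[\g^\natural,e]=0$), and it is nondegenerate (standard for minimal data, cf.\ \cite{KW1}). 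Since $\phi(x)=x$ and $\phi$ is conjugate linear, $\phi$ preserves each $\g_j$; in particular $V:=\g^{ac}\cap\g_{-1/2}$ is a real form of $\g_{-1/2}$, stable under the compact form $\g^{ac}\cap\g^\natural$ of $\g^\natural$. Moreover, by Lemma \ref{31} the form $(\cdot|\cdot)$ is real valued on $\g^{ac}$; since $e\in\g^{ac}$ and $[V,V]\subseteq\g^{ac}$, the form $\langle\cdot,\cdot\rangle$ is real valued on $V$.

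The heart of the argument is to show $\langle\cdot,\cdot\rangle|_V$ is definite. The space of $(\g^{ac}\cap\g^\natural)$--invariant real symmetric bilinear forms on $V$ has complexification equal to the space of $\g^\natural$--invariant complex symmetric bilinear forms on $\g_{-1/2}$ (a complex bilinear form is $\g^\natural$--invariant if and only if it is invariant under the compact form). The latter space is one--dimensional: when the $\g^\natural$--module $\g_{-1/2}$ is irreducible it is self-dual of orthogonal type --- self-dual because $\g_{1/2}\cong\g_{-1/2}^*$ via $(\cdot|\cdot)$ and $\ad e\colon\g_{-1/2}\to\g_{1/2}$ is a $\g^\natural$--isomorphism, orthogonal because it carries the nonzero symmetric invariant form $\langle\cdot,\cdot\rangle$ --- and in the remaining cases ($\g=sl(2|m)$, where $\g_{-1/2}$ is the sum of two contragredient irreducibles, and $\g=psl(2|2)$, where $\g_{-1/2}=\C^2\oplus\C^2$) one checks the same one-dimensionality directly from the module structures read off from Table 1. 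On the other hand, averaging an arbitrary inner product over the compact connected group with Lie algebra $\g^{ac}\cap\g^\natural$ yields a positive-definite invariant form on $V$. Hence $\langle\cdot,\cdot\rangle|_V$, being a nonzero element of a one-dimensional space that contains a positive-definite form, is itself positive or negative definite. If it is negative definite, replace $\phi$ by $\tilde\phi$: then $\tilde\g^{ac}\cap\g_{-1/2}=\sqrt{-1}\,V$, and on $\sqrt{-1}\,V$ the form $\langle\cdot,\cdot\rangle$ equals $-\langle\cdot,\cdot\rangle|_V$, hence is positive definite. This proves the first assertion.

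For the ``in particular'' part, using $\overline{(c|d)}=(\phi(c)|\phi(d))$ (Lemma \ref{31}) one checks $\overline{\langle a,b\rangle}=\langle\phi(a),\phi(b)\rangle$, so that $H(u,v):=\langle\phi(u),v\rangle$ is a Hermitian form on $\g_{-1/2}$; its restriction to the real form $\g^{ac}\cap\g_{-1/2}$ coincides with $\langle\cdot,\cdot\rangle$, so $H$ is positive definite on all of $\g_{-1/2}$. Finally, since $\phi(e)=e$, the map $\ad e$ sends $\g^{ac}\cap\g_{-1/2}$ isomorphically onto $\g^{ac}\cap\g_{1/2}$, and by \eqref{negative} it intertwines $\langle\cdot,\cdot\rangle$ with $-2\langle\cdot,\cdot\rangle_{ne}$; hence $\langle\cdot,\cdot\rangle_{ne}$ is negative definite on $\g^{ac}\cap\g_{1/2}$, and, as above, $\langle\phi(u),v\rangle_{ne}$ is a negative-definite Hermitian form on $\g_{1/2}$.

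The step I expect to be the main obstacle is the one-dimensionality of the space of invariant symmetric forms on $\g_{-1/2}$ together with fixing the correct sign: the $\g^\natural$--module structure of $\g_{-1/2}$ must be inspected (the irreducible cases are uniform, but $sl(2|m)$ and $psl(2|2)$ need separate bookkeeping), and one has to keep track of which of $\phi,\tilde\phi$ actually produces positive definiteness. An alternative, more computational route would be to verify definiteness of $\langle\cdot,\cdot\rangle$ directly on the explicit real forms exhibited in Section \ref{4}.
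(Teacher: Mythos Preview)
Your argument is correct and takes a genuinely different, more structural route than the paper. The paper works entirely by explicit computation with the specific $\phi=\omega_{\Lambda_0}$ and $sl_2$-triple of Proposition~\ref{constructphi}: it introduces the basis $u_\alpha=X_\alpha+\sqrt{-1}\,N_{-\theta,\alpha}X_{\alpha-\theta}$ of $\g_{-1/2}$ indexed by positive odd roots, computes $\phi(u_\alpha)=-N_{-\theta,\alpha}u_{\theta-\alpha}$ via the identities \eqref{rss}, \eqref{Npm1} (and the derived relation $N_{-\theta,\alpha}N_{-\theta,\theta-\alpha}=1$), and then exhibits explicit vectors $v_\alpha\in\g^{ac}\cap\g_{-1/2}$ satisfying $\langle v_\alpha,v_\beta\rangle=2\delta_{\alpha\beta}$. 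This is uniform across all entries of Table~1, yields positive (not merely definite) signature for the chosen $\phi$ directly, and never needs the sign-flip $\tilde\phi$. Your approach trades the root-vector calculus for the one-dimensionality of the space of $\g^\natural$-invariant symmetric forms on $\g_{-1/2}$ together with averaging over a compact group, which isolates the conceptual reason the form must be definite; the price is the small case analysis for the reducible modules and the possible passage to $\tilde\phi$. One minor point to tighten: for $\g=sl(2|m)$ the connected group with Lie algebra $\g^{ac}\cap\g^\natural$ is not compact (the center $\C\varpi$ contributes an $\R$-factor), so you cannot literally average over it; instead average over the compact semisimple part and then observe that any $sl_m$-invariant symmetric form on $U\oplus U^*$ is automatically $\varpi$-invariant --- which is implicit in your one-dimensionality check for that case. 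The alternative ``more computational route'' you mention at the end is essentially what the paper does.
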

 The proof requires a detailed analysis of the action of an almost compact involution on $\g_{-1/2}$. Define structure constants  $N_{\a,\beta}$  for a good choice of  root vectors (see Subsection \ref{222}) by the relation $$[X_\a,X_\beta]=N_{\a,\beta}X_{\a+\beta}.$$
Observe that $\{X_{-\theta}, X_{\theta}, \tfrac{1}{2}h_\theta\}$ is a $sl_2$--triple in $\mathfrak s$. Let 
$$\g=\C X_{\theta} \oplus \tilde \g_{1/2} \oplus \tilde \g_0 \oplus \tilde \g_{-1/2} \oplus \C X_{-\theta}$$
be the decomposition into $ad\,\tfrac{1}{2}h_\theta$ eigenspaces. By the $sl_2$ representation theory, $ad\,X_{\pm \theta}: \tilde\g_{\mp1/2}\to \tilde\g_{\pm 1/2}$ is an isomorphism 
of $\g^\natural$--modules. Moreover,  by our choice of $R^+$ in Section 3, the roots of $\tilde \g_{-1/2}$ (resp.  $\tilde \g_{1/2}$)  are precisely the negative (resp. positive) odd roots.  In particular, the map $\a\mapsto -\theta+\a$ defines a bijection between the positive and negative odd roots.
We shall need the following properties.
\begin{lemma} For a positive odd root $\a$ we have
\begin{align}
\label{rss} &N_{-\theta,\a}N_{\th,\a-\th}=1,\\
&\label{Npm1} N_{-\theta,\a}^2=1.
\end{align}
In particular $N_{\theta,\a}$ is real. \end{lemma}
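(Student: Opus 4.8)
The plan is to obtain \eqref{rss} from the Jacobi identity and then \eqref{Npm1} from \eqref{rss} combined with the almost compact involution $\phi=\omega_{\L_0}$ and the invariance of $(\cdot|\cdot)$.

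First I would prove \eqref{rss}. Since $X_\alpha\in\tilde\g_{1/2}$, the weight $\theta+\alpha$ would lie in $\tilde\g_{3/2}=0$, so $[X_\theta,X_\alpha]=0$; as $X_{\pm\theta}$ are even, the Jacobi identity gives
\[
[X_\theta,[X_{-\theta},X_\alpha]]=[[X_\theta,X_{-\theta}],X_\alpha]+[X_{-\theta},[X_\theta,X_\alpha]]=[h_\theta,X_\alpha]=\alpha(h_\theta)\,X_\alpha .
\]
Because the $\tfrac{1}{2}h_\theta$--eigenvalue of $X_\alpha$ is $\tfrac{1}{2}$ we have $\alpha(h_\theta)=1$, so the right--hand side equals $X_\alpha$; expanding the left--hand side through $[X_{-\theta},X_\alpha]=N_{-\theta,\alpha}X_{\alpha-\theta}$ and $[X_\theta,X_{\alpha-\theta}]=N_{\theta,\alpha-\theta}X_\alpha$ yields $N_{-\theta,\alpha}N_{\theta,\alpha-\theta}=1$.

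For \eqref{Npm1} I would first record that a short computation with \eqref{lambdaalfa}, using the shape of $\theta$ (the even simple roots contribute a factor $1$ to $\lambda_\beta$ while the odd ones contribute a single $\sqrt{-1}$), gives $\phi(X_\gamma)=-\sqrt{-1}\,X_{-\gamma}$ for every odd root $\gamma$, while $\phi(X_{-\theta})=X_\theta$ (by \eqref{phixtheta} and the fact that $\phi$ is an involution). Applying $\phi$ to $[X_{-\theta},X_\alpha]=N_{-\theta,\alpha}X_{\alpha-\theta}$ and comparing the coefficients of $X_{\theta-\alpha}$ gives $\overline{N_{-\theta,\alpha}}=N_{\theta,-\alpha}$. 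Independently, invariance of $(\cdot|\cdot)$ — pairing each bracket with the opposite root vector and using $(X_\gamma|X_{-\gamma})=\sigma_\gamma$ — together with the symmetry $[X_\mu,X_\nu]=[X_\nu,X_\mu]$ of brackets of two odd root vectors, produces the chain $N_{\theta,-\alpha}=N_{-\alpha,\alpha-\theta}=N_{\alpha-\theta,-\alpha}=N_{\theta,\alpha-\theta}$. Feeding this together with \eqref{rss} back in gives $N_{-\theta,\alpha}\,\overline{N_{-\theta,\alpha}}=1$; since the structure constants of a good choice are real (cf. \cite{GKMP}), we conclude $N_{-\theta,\alpha}=\pm1$, hence $N_{-\theta,\alpha}^2=1$, and then $N_{\theta,\alpha-\theta}=N_{-\theta,\alpha}^{-1}=N_{-\theta,\alpha}$ by \eqref{rss}, so both are real.

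The sign bookkeeping in the invariance chain (the values of $\sigma_\gamma$ and $\xi_\gamma$, supersymmetry of $(\cdot|\cdot)$, symmetry of odd--odd brackets) is routine; one should only check that nothing collapses in the exceptional case $2\alpha=\theta$ (which occurs for $spo(2|2m+1)$ with $\alpha=\delta_1$), where $\alpha=\theta-\alpha$ and several of the identities above degenerate to tautologies. The one step that is not purely formal is the passage from $|N_{-\theta,\alpha}|=1$ to $N_{-\theta,\alpha}^2=1$: the structural relations by themselves only fix the modulus, and it is the reality of the structure constants of a good choice that excludes the value $\pm\sqrt{-1}$.
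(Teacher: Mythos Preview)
Your argument is correct. The paper's own proof is much terser: it simply cites \cite[Lemma 4.3]{GKMP} for \eqref{rss} and \cite[(4.8)]{GKMP} for \eqref{Npm1}, the latter being a structure--constant identity of the form $N_{\beta,\gamma}^2=(\text{root--string data})$, which gives $N_{-\theta,\alpha}^2=1$ directly once one notes that the $-\theta$--string through $\alpha$ is $\{\alpha,\alpha-\theta\}$.

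Your derivation of \eqref{rss} via the Jacobi identity is a clean, self--contained replacement for the citation and is exactly the argument behind \cite[Lemma 4.3]{GKMP}. For \eqref{Npm1} your route is genuinely different: you extract $|N_{-\theta,\alpha}|=1$ from the compatibility of the good root vectors with $\phi$ and the invariance of $(\cdot|\cdot)$, and then pass to $N_{-\theta,\alpha}^2=1$ by invoking reality of the structure constants of a good choice from \cite{GKMP}. This works, but it is more roundabout than the paper's approach: the string--length identity \cite[(4.8)]{GKMP} yields $N^2=1$ in one stroke, with no separate appeal to reality needed. In short, your proof of \eqref{rss} is an improvement in transparency, while your proof of \eqref{Npm1} trades one citation for a longer computation that still ends with a citation of comparable weight. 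Your caveat about the degenerate case $2\alpha=\theta$ (which occurs for $spo(2|2m+1)$ with $\alpha=\delta_1$) is well placed; in the paper's approach this case requires no special handling since the string--length formula applies uniformly.
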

\begin{proof}
Relation \eqref{rss} is proven in \cite[Lemma 4.3]{GKMP}. Equation \eqref{Npm1} follows from \cite[(4.8)]{GKMP}, noting that the $-\theta$--string through $\a$ has length $1$.
 \end{proof} 

Arguing as in  Proposition \ref{constructphi}, we can assume in the proof of Proposition \ref{fh} that   $\{e,x,f\}$ is the $sl_2$--triple defined in \eqref{efx}; $ad\, x$ defines on $\g$ a  minimal grading
\begin{equation}\label{mg}
\g=\C f\oplus\g_{-1/2}\oplus \g_0\oplus \g_{1/2}\oplus\C e. 
\end{equation}
Set, for an odd root $\a\in R^+$ 
\begin{equation}\label{u}u_\a=X_\a+\sqrt{-1} N_{-\theta,\a}X_{\a-\theta}.\end{equation}
Note that 
$$[x,u_\a]=\tfrac{\sqrt{-1}}{2}[X_\theta-X_{-\theta},X_\a+\sqrt{-1} N_{-\theta,\a}X_{\a-\theta}]
=\!\!\tfrac{1}{2}N_{-\theta,\a}N_{\theta,\a-\theta}X_{\a}-\tfrac{\sqrt{-1}}{2}N_{-\theta,\a}X_{\a-\theta}=-\tfrac{1}{2}u_\a,
$$
hence 
$\{u_\a\mid \a\in R^+, \a \text{ odd}\}$ is a basis of $\g_{-1/2}$. 
\begin{lemma}
 If $\a$ is a positive odd root then 
\begin{equation}\label{secondaa}\phi(u_\a)= -N_{-\theta,\a} u_{\theta-\a}.\end{equation}
%Moreover, there is a choice of the  root vectors $X_{\pm \theta}$ such that $N_{-\theta,\a}=1$ for any  positive odd root $\a$.
\end{lemma}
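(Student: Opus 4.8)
The plan is to compute $\phi(u_\a)$ directly from the definition \eqref{u}, using that $\phi$ is conjugate linear, that $\phi$ is built from $\omega_{\L_0}$ with $\L_0$ as in \eqref{lambda0}, and that the root vectors $X_\a$ are a good choice for $\L_0$. First I would apply $\phi$ to $u_\a=X_\a+\sqrt{-1}N_{-\theta,\a}X_{\a-\theta}$, pull the conjugate linearity through: since $N_{-\theta,\a}$ is real by the Lemma just proved (it equals $\pm1$), we get $\phi(u_\a)=\phi(X_\a)-\sqrt{-1}N_{-\theta,\a}\,\phi(X_{\a-\theta})$. Now I invoke \eqref{ox}, namely $\omega_{\L_0}(X_\beta)=-\s_\beta\xi_\beta\l_\beta X_{-\beta}$, for the two roots $\beta=\a$ and $\beta=\a-\theta$. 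Here $\a$ is a positive odd root and $\a-\theta$ is a negative odd root (the bijection $\a\mapsto-\theta+\a$ between positive and negative odd roots, recalled in the excerpt), so $\xi_\a=\xi_{\a-\theta}=1$ by definition of $\xi$, while $\s_\a=1$ and $\s_{\a-\theta}=-1$.

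The main computational point is to evaluate the scalars $\l_\a$ and $\l_{\a-\theta}$ from \eqref{lambdaalfa} and relate them. Writing $\a=\sum n_i\a_i$, one has $\l_\a=\prod(-\xi_{\a_i}\l_i)^{n_i}$, and from \eqref{lambda0} each factor $-\xi_{\a_i}\l_i$ is $+1$ if $\a_i$ is even (since $\l_i=-\mathrm{sgn}(\a_i|\a_i)=-\xi_{\a_i}$) and $\sqrt{-1}$ if $\a_i$ is odd (since $\xi_{\a_i}=1$, $\l_i=-\sqrt{-1}$). Because $R^+$ is distinguished, $\theta$ has either one odd simple root appearing with coefficient $2$ or two odd simple roots each with coefficient $1$; in both cases $\prod_i(-\xi_{\a_i}\l_i)^{(m_i)}$ over the odd part contributes $(\sqrt{-1})^2=-1$, consistent with \eqref{phixtheta}. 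Then $\l_{\a-\theta}=\l_\a\l_{-\theta}=\l_\a\cdot\l_\theta^{-1}=-\l_\a$ (using that the even-root factors are $\pm1$ hence self-inverse and that $\theta$ is a sum of the $\a_i$ with the $m_i$, so $\l_\theta=\l_\a$-type product; more carefully $\l_{-\theta}$ is the reciprocal, which equals $\l_\theta^{-1}=-1$ if $\l_\theta=-1$). So I expect $\l_{\a-\theta}=-\l_\a$. Also the number of odd simple roots in the support of $\a$, combined with the condition that $u_\a$ has a well-defined meaning, forces $\l_\a$ to be $\pm\sqrt{-1}$ when $\a$ itself is odd; let me just track it symbolically.

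Putting the pieces together: $\phi(X_\a)=-\s_\a\xi_\a\l_\a X_{-\a}=-\l_\a X_{-\a}$ and $\phi(X_{\a-\theta})=-\s_{\a-\theta}\xi_{\a-\theta}\l_{\a-\theta}X_{\theta-\a}=-(-1)(1)(-\l_\a)X_{\theta-\a}=-\l_\a X_{\theta-\a}$. Hence
\[
\phi(u_\a)=-\l_\a X_{-\a}-\sqrt{-1}N_{-\theta,\a}(-\l_\a)X_{\theta-\a}
=-\l_\a\bigl(X_{-\a}-\sqrt{-1}N_{-\theta,\a}X_{\theta-\a}\bigr).
\]
Now I must recognize the parenthesis as (a multiple of) $u_{\theta-\a}$. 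By definition $u_{\theta-\a}=X_{\theta-\a}+\sqrt{-1}N_{-\theta,\theta-\a}X_{(\theta-\a)-\theta}=X_{\theta-\a}+\sqrt{-1}N_{-\theta,\theta-\a}X_{-\a}$. Using \eqref{rss}, $N_{-\theta,\theta-\a}N_{\theta,-\a}=1$; combined with the antisymmetry $N_{\theta,-\a}=-N_{-\theta,\a}$ (standard relation among structure constants, following from $[X_\theta,X_{-\a}]=-[X_{-\a},X_\theta]$ together with $N_{-\a,\theta}$ versus $N_{\theta,-\a}$ signs), I get $N_{-\theta,\theta-\a}=-N_{-\theta,\a}^{-1}=-N_{-\theta,\a}$ since $N_{-\theta,\a}^2=1$ by \eqref{Npm1}. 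Therefore $u_{\theta-\a}=X_{\theta-\a}-\sqrt{-1}N_{-\theta,\a}X_{-\a}=-\,N_{-\theta,\a}\bigl(X_{-\a}-\sqrt{-1}N_{-\theta,\a}^{-1}X_{\theta-\a}\bigr)$; using $N_{-\theta,\a}=N_{-\theta,\a}^{-1}$ this is $-N_{-\theta,\a}\bigl(X_{-\a}-\sqrt{-1}N_{-\theta,\a}X_{\theta-\a}\bigr)$, i.e. the parenthesis above equals $-N_{-\theta,\a}^{-1}u_{\theta-\a}=-N_{-\theta,\a}u_{\theta-\a}$. Substituting back, $\phi(u_\a)=-\l_\a\cdot(-N_{-\theta,\a})u_{\theta-\a}=\l_\a N_{-\theta,\a}u_{\theta-\a}$, and finally evaluating $\l_\a$: for $\a$ odd positive the product \eqref{lambdaalfa} has an odd number of $\sqrt{-1}$ factors only when... — here I should pin down $\l_\a=-1$ directly, which follows because $\a$ and $\theta-\a$ are both odd yet $\a+(\theta-\a)=\theta$ has $\l_\theta=-1$ and $\l_\a\l_{\theta-\a}=\l_\theta$ with $\l_{\theta-\a}=\l_\a$ forcing $\l_\a^2=-1$... no: more simply, the distinguished choice makes each positive odd root's $\l$-value equal to $-1$ via \eqref{phixtheta}-type reasoning since exactly the odd-simple-root factors contribute and their product over $\mathrm{supp}(\a)$ is the relevant sign; granting $\l_\a=-1$ gives $\phi(u_\a)=-N_{-\theta,\a}u_{\theta-\a}$, which is exactly \eqref{secondaa}.

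The step I expect to be the real obstacle is the bookkeeping of $\l_\a$: showing cleanly that $\l_\a=-1$ for every positive odd root $\a$ (equivalently that the product of the $-\xi_{\a_i}\l_i$ over $\mathrm{supp}(\a)$, weighted by multiplicities, is $-1$), and correctly handling the two distinguished configurations of odd simple roots in $\theta$. Everything else is the structure-constant identities \eqref{rss}, \eqref{Npm1} and the good-choice formula \eqref{ox}, applied mechanically. I would organize the write-up as: (1) apply $\phi$ using conjugate linearity and reality of $N_{-\theta,\a}$; (2) substitute \eqref{ox} for $X_\a$ and $X_{\a-\theta}$, computing signs $\s$ and $\xi$; (3) determine $\l_\a$ and $\l_{\a-\theta}=-\l_\a$; (4) re-expand $u_{\theta-\a}$ via \eqref{rss}–\eqref{Npm1} and match.
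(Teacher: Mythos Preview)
Your overall strategy matches the paper's, but there are two genuine errors that derail the argument.

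First, $\lambda_\a=-1$ is wrong. You were on the right track when you noted $\lambda_\a^2=-1$: in the grading \eqref{grading} one has $\q_0=\g^\natural$ (even) and $\theta$ has $\q$-degree $2$, so the grading runs only from $-2$ to $2$; every positive odd root therefore has $\q$-degree exactly $1$, giving $\lambda_\a=(\sqrt{-1})^1=\sqrt{-1}$. The paper simply records this as $\phi(X_\a)=-\sqrt{-1}X_{-\a}$ for $\a$ odd positive.

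Second, your ``antisymmetry'' $N_{\theta,-\a}=-N_{-\theta,\a}$ is not justified: $[X_\theta,X_{-\a}]=-[X_{-\a},X_\theta]$ only yields $N_{\theta,-\a}=-N_{-\a,\theta}$, a different relation. The sign you deduce, $N_{-\theta,\theta-\a}=-N_{-\theta,\a}$, is incorrect; the true identity is $N_{-\theta,\a}N_{-\theta,\theta-\a}=1$, hence $N_{-\theta,\theta-\a}=+N_{-\theta,\a}$. Your two sign errors do not cancel (and your factorization of $u_{\theta-\a}$ is algebraically inconsistent as written).

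The paper sidesteps this structure-constant bookkeeping entirely. After computing
\[
\phi(u_\a)=-N_{-\theta,\a}\bigl(X_{\theta-\a}+\sqrt{-1}\,N_{-\theta,\a}^{-1}X_{-\a}\bigr),
\]
it observes that $\phi(x)=x$ forces $\phi(u_\a)\in\g_{-1/2}$. Within the span of $X_{\theta-\a}$ and $X_{-\a}$ the only vector in $\g_{-1/2}$ is (a multiple of) $u_{\theta-\a}=X_{\theta-\a}+\sqrt{-1}\,N_{-\theta,\theta-\a}X_{-\a}$, which \emph{forces} $N_{-\theta,\a}^{-1}=N_{-\theta,\theta-\a}$; this is exactly \eqref{ns}, and \eqref{secondaa} follows immediately. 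This eigenspace argument replaces your attempted direct derivation of the structure-constant relation and is what you should use.
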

\begin{proof} 
By \eqref{ox}, $\phi(X_\a)=-\I X_{-\a}$ if $\a$ is an odd positive root, hence, by \eqref{Npm1}, since $N_{-\theta,\a}$ is real,
\begin{align}\label{primaa}\begin{split}\phi(u_\a)&=\phi(X_\a+\sqrt{-1}N_{-\theta,\a}X_{\a-\theta})= 
-(\I X_{-\a}+N_{-\theta,\a}X_{\theta-\a})\\&=
-N_{-\theta,\a}(X_{\theta-\a}+\I N_{-\theta,\a}^{-1}X_{-\a}).
\end{split}\end{align}
Note that, since $\phi(x)=x$,  $\phi(u_\a)$ has to belong to $\g_{-1/2}$. This forces 
\begin{equation} \label{ns}
N_{-\theta,\a}N_{-\theta,\theta-\a}=1,
\end{equation} 
and    \eqref{primaa} becomes \eqref{secondaa}.\end{proof}
\begin{proof}[Proof of Proposition \ref{fh}]
Set $v_\a=\tfrac{1}{2}(u_\a+\phi(u_\a))+ 	\tfrac{\I}{2}(u_\alpha-\phi(u_\alpha))$, where $\a$ runs over the positive odd roots. It is clear that $v_\a\in\mathfrak r$. We want to prove that the vectors $v_\a$ form an orthogonal basis of $\mathfrak r$. We need two auxiliary computations: 
\begin{align}\label{a1}
[e,u_\a]&=\I X_\a+N_{-\theta,\a}X_{\a-\th},\\
\label{a2} \langle u_\a,u_\be\rangle&=-(N_{-\theta,\a}+N_{-\theta,\be})\d_{\th-\a,\be}.
\end{align}
To prove \eqref{a1} use \eqref{rss}:
\begin{align*}
[e,u_\a]&=\tfrac{1}{2}[X_\th+X_{-\th}+\sqrt{-1}h_\theta,X_\a+\I N_{-\theta,\a}X_{\a-\th}]\\
&=\tfrac{1}{2}(\I N_{-\theta,\a}N_{\th,\a-\th}X_\a+N_{-\theta,\a}X_{\a-\th}+\I X_\a+N_{-\theta,\a}X_{\a-\th})\\
&=\I X_\a+N_{-\theta,\a}X_{\a-\th}.
\end{align*}
To prove \eqref{a2} use  \eqref{a1}:
\begin{align*}\langle u_\a,u_\be\rangle&=(e|[u_\a,u_\be])=([e,u_\a]|u_\be)\\
&=(\I X_\a+N_{-\theta,\a}X_{\a-\th}|X_\be+\sqrt{-1}N_{-\theta,\be}X_{\be-\theta})=\\
&=\s_{\a-\theta}N_{-\theta,\a}\d_{\th-\a,\be}-\s_\a N_{-\theta,\be}\d_{\th-\a,\be}\\
&=-(N_{-\theta,\a}+N_{-\theta,\be})\d_{\th-\a,\be}.
\end{align*}
Set $$M_{\a,\be}=-(N_{-\theta,\a}+N_{-\theta,\be}).$$
Then, using \eqref{a2}
\begin{align*}
&\langle v_\a,v_\be\rangle\\&=\langle \tfrac{1+\I}{2} u_\a-\tfrac{1-\I}{2}N_{-\theta,\a}u_{\theta-\a}, \tfrac{1+\I}{2} u_\be-\tfrac{1-\I}{2}N_{-\theta,\be}u_{\theta-\be}\rangle\\
&=\tfrac{\I}{2}\langle u_\a, u_\be\rangle
-\tfrac{1}{2}N_{-\theta,\a}\langle u_{\theta-\a}, u_\be\rangle-\tfrac{1}{2}N_{-\theta,\be}\langle u_\a, u_{\th-\be}\rangle-\tfrac{\I}{2}N_{-\theta,\a}N_{-\theta,\be}
\langle u_{\theta-\a}, u_{\theta-\be}\rangle\\
&=
\tfrac{\I}{2}M_{\a,\be}\d_{\th-\a,\be}
-\tfrac{1}{2}N_{-\theta,\a}M_{\th-\a,\be}\d_{\a,\be}-\tfrac{1}{2}N_{-\theta,\be}M_{\a,\th-\be}\d_{\th-\a,\th-\be}\\&-\tfrac{\I}{2}N_{-\theta,\a}N_{-\theta,\be}
M_{\th-\a,\th-\be}\d_{\a,\th-\be}.
\end{align*}
Therefore by \eqref{rss} and \eqref{ns}
$$\langle v_\a,v_\be\rangle=2\d_{\a,\beta}.$$
In particular, the restriction of $\langle\cdot,\cdot\rangle$ to $\g^{ac}\cap \g_{-1/2}$ is positive definite.
The final claim follows immediately from \eqref{negative}, using that $[e,\g_{-1/2}]=\g_{1/2}$.
\end{proof}
\section{A general theory of invariant Hermitian forms on modules over the vertex algebra of free boson and the Fairlie construction}\label{freeb}

Consider the infinite dimensional Heisenberg Lie algebra $\mathcal H=(\C[\tau,\tau^{-1}]\otimes \C a)\oplus \C K$ with $K$ central and bracket
$$
[\tau^n\otimes a, \tau^m\otimes a]=\d_{n,-m}nK.
$$

Let $\mathcal H_0=\C a+\C K$, and, given 
 $\mu\in\C$, define  $\mu^*\in \mathcal H_0^*$ by $\mu^*(a)=\mu, \mu^*(K)=1$. Let $M(\mu)$ be the Verma module for the Lie algebra $\mathcal H$ with highest weight $\mu^*$. Let $v_\mu$ be a highest weight vector,  i.e. $(\tau^n\otimes a)v_\mu=0$ for $n>0$, $h v_\mu=\mu^*(h)v_\mu$ for $h\in \mathcal H_0$. 
 It is well known that $M(0)$ carries a simple vertex algebra structure, called the vertex algebra of free boson, which we denote by $V^1(\C a)$, and that $M(\mu)$  is a simple module over the vertex algebra  $V^{1}(\C a)$. Moreover,  $V^{1}(\C a)$ is the universal enveloping vertex algebra of the nonlinear Lie conformal algebra $R=\C[T]\otimes\C a$  with $\l$--bracket 
$$
[a_\l a]=\l.
$$
We introduce conformal weight $\D$ on $V^1(\C a)$ by letting $\D_a=1$, and for $v\in V^1(\C a)$ we write the corresponding quantum field as 
$Y(v,z)=\sum_{j\in\mathbb Z} v_jz^{-j-\D_v}$.

Fix $t\in\C$ and set 
\begin{equation}\label{L(t)}L(t)=\frac{1}{2}:aa: + tTa\in V^1(\C a).\end{equation} It is an energy-momentum element for all $t$.  Set $H(t)=L(t)_0=\frac{1}{2}:aa:_0-ta_0$. Since $a_0=0$ as operator on $ V^1(\C a)$,  $H(t)=\frac{1}{2}:aa:_0$.  (Note that the conformal weights on $V^1(\C a)$ are the eigenvalues of $H(t)=H(0)$).

If $b\in V^{1}(\C a)$, write $b^\mu_n$ for $b^{M(\mu)}_n$. By the $-1$--st product identity 
$$
:aa:^{\mu}_0=2\sum_{j\in \nat}a_{-j}^\mu a_j^\mu+(a_0^\mu)^2.
$$
In particular 
\begin{equation}\label{actionxx}
:aa:^{\mu}_0 v_\mu=\mu^2 v_\mu.
\end{equation}
On the other hand, by  the commutator formula, 
\begin{equation}\label{xxbracketx}
\frac{1}{2}[:aa:^\mu_0,a^\mu_{j}]=\frac{1}{2}\sum_r\binom{1}{r}(:aa:_{(r)}a)_{j}=(Ta)^\mu_j+a^\mu_j=-j a^\mu_j.
\end{equation}
Recall that a basis of $M(\mu)$ is 
\begin{equation}\label{basismu}
\{(a^\mu_{-j_1})^{i_1} \cdots (a_{-j_r}^\mu)^{i_r}. v_\mu\mid j_1>\cdots >j_r>0\}.
\end{equation}

Let $M(\mu)_n$ be the eigenspace for the energy operator $H(t)$  corresponding to the eigenvalue $n+\half\mu^2-t\mu$. Since
$$
\frac{1}{2}:aa:^\mu_0 + t(Ta)^\mu_0=\frac{1}{2}:aa:^\mu_0 - ta^\mu_0
$$
and $[a^\mu_0,a_{-j}^\mu]=0$ for all $j$, it follows from \eqref{actionxx} and \eqref{xxbracketx} that
$$
M(\mu)_n=span \{(a^\mu_{-j_1})^{i_1} \cdots (a_{-j_r}^\mu)^{i_r}. v_\mu\mid \sum_s i_sj_s=n\}.
$$
Thus
$$
M(\mu)=\oplus_{n\in \ZZ_+} M(\mu)_n.
$$
 This shows that $M(\mu)$ is a positive energy $V^{1}(\C a)$--module, i.e. real parts of the eigenvalues of  $H(t)$ are bounded below. Moreover its minimal energy subspace is 
 $$
 M(\mu)_0=\C v_\mu.
 $$

 \begin{lemma}\label{Zassbosons}
\begin{equation}\label{Zasseq}
e^{zL(t)_1}=e^{zL(0)_1}\prod_{n=1}^{\infty}e^{-\frac{2t}{n}z^na_n}.
\end{equation}
\end{lemma}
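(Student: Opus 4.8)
The plan is to prove the operator identity \eqref{Zasseq} by analyzing the action of $L(t)_1$ on the module $M(\mu)$, or equivalently on $V^1(\C a)$ itself, and then verifying that both sides act the same way. First I would unwind what $L(t)_1$ is: since $L(t)=\tfrac12:aa:+tTa$, the $1$-st product gives $L(t)_1 = \tfrac12(:aa:)_1 + t(Ta)_1$. The term $(Ta)_1$ equals, by the standard identity $(Tb)_n = -n b_n$, simply $-1\cdot a_1 = -a_1$, so $L(t)_1 = L(0)_1 - t a_1$. Thus the statement to prove reduces to the Zassenhaus-type factorization $e^{z(L(0)_1 - t a_1)} = e^{zL(0)_1}\prod_{n\geq 1} e^{-\tfrac{2t}{n}z^n a_n}$.

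The key structural observation is how $L(0)_1$ interacts with the modes $a_n$. A direct commutator computation (entirely analogous to \eqref{xxbracketx}, but using the $1$-st rather than the $0$-th product) should give $[L(0)_1, a_n] = c\, a_{n+1}$ for an explicit small constant $c$; more precisely $[L(0)_1,a_n]$ picks up a multiple of $a_{n+1}$, and $a_1$ commutes with all $a_n$ for $n\geq 1$ since $[a_m,a_n]=m\delta_{m,-n}K$ and $m+n>0$ here. So the Lie algebra generated by $L(0)_1$ and $a_1$ inside the mode algebra is solvable, with $\{a_n : n\geq 1\}$ spanning an abelian ideal on which $L(0)_1$ acts as a shift operator $a_n\mapsto c\,a_{n+1}$. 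In such a situation the exponential of a sum $X+Y$ with $[X,Y]$ lying in the abelian ideal and all iterated brackets staying there has a closed-form Zassenhaus expansion, and one computes $e^{z(X+Y)} = e^{zX}\cdot\big(\text{product of }e^{(\cdots)a_n}\big)$. The cleanest route: set $F(z) = e^{-zL(0)_1} e^{z(L(0)_1 - ta_1)}$, differentiate in $z$, and show $F'(z) = F(z)\cdot\big(-t\, e^{-zL(0)_1}a_1 e^{zL(0)_1}\big)$, then evaluate the conjugation $e^{-zL(0)_1}a_1 e^{zL(0)_1} = \sum_{n\geq 1} \tfrac{(-z)^{n-1}}{(n-1)!}(\operatorname{ad} L(0)_1)^{n-1}(a_1) \cdot(\text{sign})$, which is a linear combination $\sum_{n\geq 1}\gamma_n z^{n-1} a_n$ of the (mutually commuting) $a_n$. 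Integrating then yields $F(z) = \exp\big(\sum_{n\geq 1}\tfrac{\gamma_n}{n}z^n a_n\big)$, and matching constants gives $\gamma_n/n = -2t/n$, i.e. $\gamma_n = -2t$, which should drop out of the $\operatorname{ad}$ computation.

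The step I expect to be the main obstacle is getting the iterated brackets $(\operatorname{ad} L(0)_1)^{k}(a_1)$ right — in particular nailing down the precise numerical coefficient so that it telescopes to the uniform constant $-2t$ (rather than something $n$-dependent that would spoil the clean product formula). This requires a careful computation of $[L(0)_1, a_n]$ using the commutator formula $[(u)_m,(v)_n] = \sum_{j\geq 0}\binom{m}{j}(u_{(j)}v)_{m+n-j}$ with $u=v=\tfrac12:aa:$ (or rather its relevant component), keeping track of normal-ordering corrections exactly as in the derivation of \eqref{xxbracketx}. One should find $[L(0)_1,a_n] = a_{n+1}$ (the analog of $[L(0)_0,a_n]=-na_n$), so that $(\operatorname{ad} L(0)_1)^{k}(a_1)=a_{k+1}$, and then the conjugation series sums cleanly. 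Once that coefficient is pinned down, the rest is the routine ODE/integration argument sketched above, valid as an identity of operators on $M(\mu)$ for every $\mu$ (hence on $V^1(\C a)$), since both sides are well-defined: on any fixed vector only finitely many $a_n$ with $n>0$ act nontrivially, so the infinite product is a finite product and all exponentials are polynomial.
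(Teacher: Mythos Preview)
Your approach is correct and genuinely different from the paper's, but two of your tentative numerical guesses are off and would need correction. First, in the paper's conformal-weight indexing one has $(Ta)_n=-(n+1)a_n$, so $(Ta)_1=-2a_1$ and hence $L(t)_1=L(0)_1-2t\,a_1$ (not $L(0)_1-ta_1$). Second, the commutator is $[L(0)_1,a_n]=-n\,a_{n+1}$ (this is formula \eqref{Lta} at $t=0$), not $a_{n+1}$. With these fixes your ODE argument goes through cleanly: $(-\operatorname{ad}L(0)_1)^k(a_1)=k!\,a_{k+1}$, so $e^{-zL(0)_1}a_1e^{zL(0)_1}=\sum_{n\ge1}z^{n-1}a_n$, and then $F(z)=e^{-zL(0)_1}e^{zL(t)_1}$ solves $F'(z)=\bigl(-2t\sum_{n\ge1}z^{n-1}a_n\bigr)F(z)$ with $F(0)=I$; since the $a_n$ for $n\ge1$ all commute, $F(z)=\exp\bigl(-2t\sum_{n\ge1}\tfrac{z^n}{n}a_n\bigr)$, which is exactly the desired product. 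The factor of $2$ you were worried about losing comes from $(Ta)_1=-2a_1$, and the factorial in the iterated bracket cancels the $1/k!$ in the exponential series to produce the clean $1/n$.

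The paper instead argues pointwise on generators: identifying $V^1(\C a)$ with $\C[a_{-1},a_{-2},\ldots]$, both sides of \eqref{Zasseq} are algebra automorphisms (each exponent acts as a derivation), so it suffices to check equality on each $a_{-m}$; this is done by a short induction showing $L(t)_1^n(a_{-m})=L(0)_1^n(a_{-m})-2n!\,\delta_{n,m}tI$. Your route is a structural BCH/Zassenhaus argument exploiting that $\{a_n:n\ge1\}$ is an abelian ideal on which $L(0)_1$ acts by shift; it is arguably more conceptual and explains \emph{why} the product has the shape it does, while the paper's check-on-generators proof is shorter and avoids any convergence or operator-ODE formalities.
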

\begin{proof}
Identify $V^1(\C a)$ with the polynomial algebra in infinitely many variables using \eqref{basismu} with $\mu=0$:$$\mathcal P=\C[a_{-1},a_{-2},\ldots,a_{-n},\ldots].$$ Since $L(t)_1\vac=0$ and $a_n\vac=0$ if $n>0$, both $L(t)_1$ and $a_n$ for $n>0$ act as derivations of the algebra $\mathcal P$ under our identification. It follows that both sides of \eqref{Zasseq} are automorphisms of $\mathcal P$. 
It is therefore enough to check the equality only on the generators $a_{-n}$.

We need the following formulae:
\begin{align}
[a_\l L(t)]&=\l a +t\l^2\vac,\\
[a_n,L(t)_1]&=na_{n+1}+\d_{n,-1}2tI,\label{Lta}\\
[a_n,a_{-m}]&=\d_{n,m}nI.
\end{align}
Applying these formulae we find 
\begin{equation}\label{exptrans}
e^{-\frac{2t}{n}z^na_n}(a_{-m})=e^{-2tz^n\tfrac{\partial}{\partial a_{-n}}}(a_{-m})=a_{-m}-\d_{n,m}2tz^m I.
\end{equation}
It follows that
\begin{equation}\label{prodtr}
e^{zL(0)_1}\prod_{n=1}^{\infty}e^{-\frac{2t}{n}z^na_n}(a_{-m})=e^{zL(0)_1}(a_{-m}-2tz^m\vac)=e^{zL(0)_1}a_{-m}-2tz^m I.
\end{equation}
To conclude we only need to check that, if $n\ge1$, then
$$
L(t)_1^n(a_{-m})=L(0)^n_1a_{-m}-2n!\d_{n,m}t I.
$$
We prove this by induction on $n$. If $n=1$ the formula reads
$$
L(t)_1(a_{-m})=L(0)_1a_{-m}-2\d_{1,m}t I.
$$
Using \eqref{Lta} with $t=0$ we see that the latter formula is equivalent to 
$$
L(t)_1(a_{-m})=ma_{-m+1}-2\d_{1,m}t I,
$$
which is just \eqref{Lta}.

If $n>1$ and $m=1$, then 
\begin{align*}
L(t)_1^n(a_{-1})=&L(t)_1^{n-1}L(t)_1(a_{-1})=L(t)_1^{n-1}(-2t)=0=L(0)_1^n(a_{-1}).
\end{align*}
If $n>1$ and $m>1$, then 
\begin{align*}
L(t)_1^n(a_{-m})=&L(t)_1^{n-1}L(t)_1(a_{-m})=L(t)_1^{n-1}(ma_{-m+1})\\
=&L(0)_1^{n-1}(ma_{-m+1})-2(n-1)!m\d_{n-1,m-1}t I)\\
=&L(0)_1^{n}a_{-m}-2n!\d_{n,m}t I.
\end{align*}
\end{proof}
 
 Let $\phi$ be the conjugate linear involution of the vector space $\C a$ defined by $\phi(a)=-a$. Assume from now on that $t\in\sqrt{-1}\R$. This assumption is necessary since, in order to apply the results of \cite{KMP}, we need to assume $\phi(L(t))=L(t)$. Set (cf. \eqref{12})
\begin{equation}\label{112}
A(z,t)=e^{zL(t)_1}z^{-2H(0)}g,
\end{equation}
where $g$ is defined in \eqref{113}.
 Let $\pi_Z:V^{1}(\C a)\to Zhu_{H(0)}(V^{1}(\C a))$ be the canonical projection to the Zhu algebra (see e.g. \cite[Section 2]{KMP}). Let $\omega$ be the conjugate linear anti-homomorphism of $Zhu_{H(0)}(V^{k+h^\vee}(\C x))$ defined by
  $$\omega(\pi_Z(v))=\pi_Z(A(1,t)v)$$
 It is proven in \cite[Proposition 6.1]{KMP} that $\omega$ is indeed  well-defined. 
 \begin{lemma} \label{6.2}
\begin{equation}
 \omega(\pi_Z(a))v_\mu =(\mu-2t)v_\mu
\end{equation}
\end{lemma}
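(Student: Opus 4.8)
The plan is to compute $\omega(\pi_Z(a))v_\mu$ directly from the definition $\omega(\pi_Z(a)) = \pi_Z(A(1,t)a)$, so the first step is to evaluate $A(1,t)a = e^{L(t)_1}\,1^{-2H(0)}\,g(a)$. Since $a$ has conformal weight $\D_a = 1$ and is even, the factor $g(a) = e^{-\pi\sqrt{-1}(\tfrac12 p(a) + \D_a)}\phi(a) = e^{-\pi\sqrt{-1}}(-a) = (-1)(-a) = a$. The operator $1^{-2H(0)}$ acts as the identity, so $A(1,t)a = e^{L(t)_1}a$. Now I would use Lemma~\ref{Zassbosons} with $z = 1$: $e^{L(t)_1}a = e^{L(0)_1}\prod_{n\ge1} e^{-\frac{2t}{n}a_n}(a)$. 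Applying \eqref{exptrans} with $z=1$, $m=1$, only the $n=1$ factor acts nontrivially on $a = a_{-1}1$, giving $a_{-1}1 - 2t\cdot 1 = a - 2t\,\vac$; the higher factors fix this (they involve $a_n$ with $n\ge2$, which annihilate both $a_{-1}1$ and $\vac$). Then $e^{L(0)_1}(a - 2t\vac) = e^{L(0)_1}a - 2t\vac$, and $L(0)_1 a = L(0)_1 a_{-1}1 = 0$ by \eqref{Lta} with $t=0$ and $m=1$ (i.e. $a_{-m+1} = a_0 = 0$). Hence $A(1,t)a = a - 2t\,\vac$.

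The second step is to pass to the Zhu algebra and then act on $v_\mu$. We get $\omega(\pi_Z(a)) = \pi_Z(a - 2t\vac) = \pi_Z(a) - 2t\,\pi_Z(\vac)$. The image $\pi_Z(\vac)$ is the unit of $Zhu_{H(0)}(V^1(\C a))$, which acts on $M(\mu)$ as the identity, so $\pi_Z(\vac)v_\mu = v_\mu$. For $\pi_Z(a)$, recall that the Zhu-algebra action on the lowest-energy space $M(\mu)_0 = \C v_\mu$ of a positive-energy module is induced by the zero-mode: $\pi_Z(a)$ acts on $v_\mu$ as $a_0^{M(\mu)}v_\mu$ up to the standard shift. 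Here $a$ has $\D_a = 1$, and the relevant zero mode is $a_{\D_a - 1} = a_0$, so $\pi_Z(a)$ acts as $a_0^\mu$. Since $v_\mu$ is a highest weight vector with $a_0^\mu v_\mu = \mu^* (a) v_\mu = \mu v_\mu$, we obtain $\omega(\pi_Z(a))v_\mu = (\mu - 2t)v_\mu$, as claimed.

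The one point that requires care — and which I would treat as the main obstacle — is pinning down precisely the convention by which the Zhu algebra $Zhu_{H(0)}(V^1(\C a))$ acts on the minimal-energy subspace of $M(\mu)$, i.e. the exact normalization in the identification $\pi_Z(v)\mapsto$ (appropriate mode of $v$). With the conformal grading $\D_a = 1$ fixed above, the correct mode is $a_{(0)} = a_0$ in the $Y(v,z) = \sum v_j z^{-j-\D_v}$ indexing, and this is consistent with \eqref{actionxx}–\eqref{xxbracketx}, where $a_0^\mu$ is used as the genuine zero-mode. Once this is checked against the reference (\cite[Section 2]{KMP} and \cite[Proposition 6.1]{KMP}), the computation is immediate. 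Everything else — the evaluation of $g(a)$, the application of Lemma~\ref{Zassbosons}, and the fact that $a_0 v_\mu = \mu v_\mu$ — is routine and uses only formulas already established in this section.
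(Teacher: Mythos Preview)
Your proof is correct and follows essentially the same route as the paper: compute $g(a)=a$, apply Lemma~\ref{Zassbosons} (together with $L(0)_1a=0$) to get $A(1,t)a=a-2t\vac$, and then evaluate the zero mode on $v_\mu$. The only difference is that the paper writes the Zhu-algebra action directly as $\omega(\pi_Z(a))v_\mu=(A(1,t)a)^\mu_0v_\mu$ without the detour through $\pi_Z(a)-2t\pi_Z(\vac)$, so your caution about the convention is not needed once you accept this identification.
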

\begin{proof}
By Lemma \ref{Zassbosons}, since $g(a)=a$ and $L(0)_1a=0$,
\begin{align*}
 \omega(\pi_Z(a))v_\mu=&(A(1,t)a)^\mu_0v_\mu=(e^{L(t)_1}a)^\mu_0v_\mu
 =(e^{L(0)_1}(a)-2t\vac)^\mu_0v_\mu=a^\mu_0v_\mu-2tv_\mu
\\
 =&(\mu-2t)v_\mu.
\end{align*}
\end{proof}

 Recall from \cite[Definition 6.4]{KMP} that if $V$ is a conformal vertex algebra and  $\phi$ is a conjugate linear involution of $V$,
 a Hermitian form $H(\, . \,\, , \, .\, )$ on a  $V$--module $M$ is called  $\phi$--invariant if, for all $v\in V$, $m_1,m_2\in M$
%\begin{equation}\label{i}
%( Y(e^{zL_1}gz^{-2L_0}a, z^{-1})u, v)=( u,
%Y(\phi(a), z)v).
%\end{equation}
\begin{equation*}\label{iM}
( m_1, Y_M(a, z)m_2)=( 
Y_M(A(z)a, z^{-1})m_1,m_2).
\end{equation*}
By abuse of terminology, we shall call $H(\cdot, \cdot)$ an $L$--invariant Hermitian form, where $L$ is the conformal vector of $V$.
If $\mu\in\C$ we denote by $\Re(\mu)$ and $\Im(\mu)$ the real and imaginary part of $\mu$, respectively.

\begin{proposition}\label{Eform} There is a non-zero $L(t)$--invariant Hermitian  form on $M(\mu)$ if and only if $t=\sqrt{-1}\Im(\mu)$.
\end{proposition}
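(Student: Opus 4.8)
The plan is to reduce the question to a single computation on the minimal energy subspace. By Lemma \ref{6.2} we know that the Zhu-algebra image of $a$ acts on $v_\mu$ by the scalar $\mu - 2t$. A general principle (proven in \cite{KMP} for positive-energy modules) says that an $L(t)$-invariant Hermitian form, if it exists and is non-zero, is unique up to scalar and is determined by a Hermitian pairing on the minimal energy space $M(\mu)_0 = \C v_\mu$ which is $\omega$-invariant with respect to the Zhu algebra action; conversely any such pairing on $M(\mu)_0$ extends. Since $Zhu_{H(0)}(V^1(\C a))$ is (the polynomial algebra) generated by the class of $a$, the compatibility condition on the one-dimensional space $\C v_\mu$ is simply that the scalar by which $\pi_Z(a)$ acts equals the complex conjugate of the scalar by which $\omega(\pi_Z(a))$ acts; equivalently $\mu = \overline{\mu - 2t}$.

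First I would make precise the assertion that existence of a non-zero $L(t)$-invariant Hermitian form on $M(\mu)$ is equivalent to the existence of a non-zero $\omega$-invariant Hermitian form on $M(\mu)_0$, invoking the relevant proposition of \cite{KMP} on invariant forms on positive-energy modules together with the fact, established above, that $M(\mu)$ is a positive-energy module with one-dimensional minimal energy subspace $\C v_\mu$ and that $M(\mu)$ is generated by $v_\mu$ over the vertex algebra. Then I would unwind what $\omega$-invariance means on $\C v_\mu$: a Hermitian form $H$ with $H(v_\mu,v_\mu)=1$ is $\omega$-invariant iff $H(\pi_Z(a)v_\mu, v_\mu) = H(v_\mu, \omega(\pi_Z(a))v_\mu)$, that is, $\overline{\mu} = \mu - 2t$. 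Since $t \in \sqrt{-1}\,\R$ by the standing assumption, $\overline{t} = -t$, so $\mu - \overline{\mu} = 2t$ reads $2\sqrt{-1}\,\Im(\mu) = 2t$, i.e. $t = \sqrt{-1}\,\Im(\mu)$. This gives both directions: if $t = \sqrt{-1}\,\Im(\mu)$ the scalar condition holds and the form extends; if not, no $\omega$-invariant form on $\C v_\mu$ exists, hence no $L(t)$-invariant form on $M(\mu)$.

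The main obstacle, and the step that requires care rather than cleverness, is justifying the reduction to the minimal energy subspace in exactly the form needed here — namely that $\omega$-invariance of the induced pairing on $M(\mu)_0$ is not only necessary but sufficient for the existence of a (then unique) $L(t)$-invariant form on all of $M(\mu)$. This is where the structure of $M(\mu)$ as a Verma/free module over the Heisenberg algebra, its decomposition $M(\mu) = \bigoplus_n M(\mu)_n$ with $M(\mu)_0 = \C v_\mu$, and the PBW-type basis \eqref{basismu} enter: one extends $H$ degree by degree using the invariance identity to define $H$ on $M(\mu)_n$ in terms of lower degrees, and checks consistency. If the cited result from \cite{KMP} already packages this equivalence for positive-energy modules over a conformal vertex algebra, the proof is essentially the short computation above; if only one direction is available off the shelf, I would supply the extension argument by induction on conformal weight, using \eqref{xxbracketx} and the commutation relations to verify well-definedness. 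I do not expect any genuine difficulty beyond bookkeeping, since everything is governed by the single generator $a$ of the Zhu algebra and the explicit formula of Lemma \ref{6.2}.
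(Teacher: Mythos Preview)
Your proposal is correct and follows essentially the same route as the paper: both invoke Proposition 6.7 of \cite{KMP} to reduce the existence of an $L(t)$-invariant Hermitian form on $M(\mu)$ to the $\omega$-invariance of the standard form on the one-dimensional space $\C v_\mu$, then use Lemma \ref{6.2} to turn this into the scalar equation $\mu=\overline{\mu-2t}$ (equivalently your $\overline{\mu}=\mu-2t$), which with $t\in\sqrt{-1}\R$ yields $t=\sqrt{-1}\,\Im(\mu)$. Your extended discussion of the ``main obstacle'' is unnecessary here, since the cited result from \cite{KMP} supplies both directions of the equivalence; the paper simply invokes it and performs the one-line computation.
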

\begin{proof} Let $(\cdot,\cdot)$ be the unique Hermitian form on $\C v_\mu$ such that $(v_\mu,v_\mu)=1$.
By Proposition 6.7 of \cite{KMP}, there is a non-zero $L(t)$--invariant Hermitian form on $M(\mu)$ if and only if $(\cdot,\cdot)$  is an $\omega$--invariant Hermitian form on $\C v_\mu$. By Lemma \ref{6.2}, that is equivalent to 
$$
\mu=(v_\mu,a_0 v_\mu)=(v_\mu,\pi_Z(a) v_\mu)=(\omega(\pi_Z(a))v_\mu,v_\mu)=\overline{\mu-2t}.
$$
Thus
$$
-2\overline{t}=2\sqrt{-1}\Im(\mu),
$$
hence the statement.
\end{proof}
We denote by $H_\mu$ the unique  $L(\sqrt{-1}\Im(\mu))$--invariant Hermitian form on $M(\mu)$ such that $H_\mu(v_\mu,v_\mu)=1.$
%Next we want to describe explicitly the $L(\sqrt{-1}\Im(\mu))$--invariant form on $M(\mu)$.
\begin{lemma}\label{64} If $m,m'\in M(\mu),$ then
$$
H_\mu(m,a^\mu_{n}m')=H_\mu(a^\mu_{-n}m,m')+\d_{n,0}2\sqrt{-1}\Im(\mu)H_\mu(m,m').
$$
\end{lemma}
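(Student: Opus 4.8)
The plan is to derive the commutation relation from the defining $L(\sqrt{-1}\Im(\mu))$--invariance of $H_\mu$ by unwinding what invariance says about the single generating field $a=Y(a,z)$. Write $t=\sqrt{-1}\Im(\mu)$ and recall $\D_a=1$, $H(0)a=a$, $L(t)_1 a=0$ (since $L(t)_1=L(0)_1+2t a_1$ kills $a$) and $g(a)=e^{-\pi\sqrt{-1}(\tfrac12 p(a)+\D_a)}\phi(a)=e^{-\pi\sqrt{-1}}(-a)=a$ because $a$ is even with conformal weight $1$ and $\phi(a)=-a$. Hence $A(z,t)a=e^{zL(t)_1}z^{-2H(0)}g(a)=z^{-2}a$. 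So the invariance identity $H_\mu(m_1,Y_M(a,z)m_2)=H_\mu(Y_M(A(z,t)a,z^{-1})m_1,m_2)$ becomes
\begin{equation*}
H_\mu\Bigl(m_1,\sum_{n\in\ZZ}a^\mu_n m_2\,z^{-n-1}\Bigr)=H_\mu\Bigl(\sum_{n\in\ZZ}a^\mu_n m_1\,(z^{-1})^{-n-1}z^{-2},\,m_2\Bigr).
\end{equation*}

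Next I would extract the coefficient of a fixed power of $z$. On the left the coefficient of $z^{-n-1}$ is $H_\mu(m_1,a^\mu_n m_2)$. On the right the general term is $a^\mu_k m_1$ with coefficient $z^{k+1-2}=z^{k-1}$, so the coefficient of $z^{-n-1}$ picks out $k=-n$, giving $H_\mu(a^\mu_{-n}m_1,m_2)$. This already yields $H_\mu(m_1,a^\mu_n m_2)=H_\mu(a^\mu_{-n}m_1,m_2)$ for $n\neq 0$, and for $n=0$ it would naively give the same, which is not quite the claimed formula. The subtlety is that the "invariant form on a module" identity from \cite{KMP} is really phrased with $A(z)$ applied using the module structure and the anti-involution $\omega$ on the Zhu algebra; the zero mode $a^\mu_0$ corresponds to $\pi_Z(a)$, and by Lemma \ref{6.2} we have $\omega(\pi_Z(a))v_\mu=(\mu-2t)v_\mu=\overline{\mu}\,v_\mu$. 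So the correct book-keeping of the zero mode introduces the extra additive constant: $H_\mu(m,a^\mu_0 m')=H_\mu(a^\mu_0 m,m')+2t\,H_\mu(m,m')=H_\mu(a^\mu_0 m,m')+2\sqrt{-1}\Im(\mu)H_\mu(m,m')$.

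Therefore the cleanest route is: (1) establish the $n\neq 0$ case directly from $A(z,t)a=z^{-2}a$ and matching coefficients as above, noting $a^\mu_{-n}$ on the left corresponds to $a^{\mu}_n$ adjoint; (2) for $n=0$, reduce to the minimal energy space and invoke Lemma \ref{6.2} together with Proposition \ref{Eform}'s proof mechanism — i.e. use that $\pi_Z(a)$ acts as $a^\mu_0$ on the $H(0)$--graded pieces and $\omega(\pi_Z(a))=\pi_Z(A(1,t)a)=\pi_Z(a)-2t$, so that invariance of $H_\mu$ against the zero mode is exactly $H_\mu(m,a^\mu_0 m')=H_\mu((a^\mu_0-2t)^{\dagger}m,m')$, unraveling to the stated identity. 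The main obstacle is precisely this careful treatment of the $n=0$ mode: one must be sure that the conjugate-linear versus linear conventions and the shift coming from $A(1,t)$ combine with the correct sign so that the constant is $+2\sqrt{-1}\Im(\mu)$ and not its negative; checking this on $m=m'=v_\mu$ against $H_\mu(v_\mu,a^\mu_0 v_\mu)=\mu$, $H_\mu(a^\mu_0 v_\mu,v_\mu)=\overline{\mu}$, and $\mu-\overline{\mu}=2\sqrt{-1}\Im(\mu)$ pins down the sign and serves as the sanity check. Everything else — extending from $v_\mu$ to all of $M(\mu)$ via the $n\neq 0$ relations and the PBW basis \eqref{basismu} — is routine.
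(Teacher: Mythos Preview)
Your overall strategy---use the invariance identity for the single field $a$ and read off the modes---is exactly the paper's approach, but you make a computational error that derails the argument. You claim $L(t)_1 a=0$ because ``$L(t)_1=L(0)_1+2t a_1$ kills $a$''. In fact $L(t)_1=L(0)_1-2t a_1$ (sign), and more importantly $a_1a=a_{(1)}a_{(-1)}\vac=[a_{(1)},a_{(-1)}]\vac=\vac\neq 0$. Hence
\[
L(t)_1 a=L(0)_1 a-2t\,a_1 a=0-2t\vac=-2t\vac,
\]
so
\[
A(z,t)a=e^{zL(t)_1}z^{-2}g(a)=z^{-2}\bigl(a-2tz\vac\bigr)=z^{-2}a-2tz^{-1}\vac.
\]
The extra summand $-2tz^{-1}\vac$ is exactly where the $\delta_{n,0}$ correction comes from: plugging into the invariance identity and taking $\text{Res}_z z^{n}$ of $Y^\mu(-2tz^{-1}\vac,z^{-1})=-2tz^{-1}I$ gives $-2t\delta_{n,0}$ in the first (conjugate-linear) slot of $H_\mu$, hence $+2\sqrt{-1}\Im(\mu)\,\delta_{n,0}H_\mu(m,m')$ after conjugation. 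This is precisely the paper's computation (it uses Lemma~\ref{Zassbosons}/\eqref{prodtr} to obtain $e^{zL(t)_1}a=a-2tz\vac$).

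Your attempted patch via the Zhu algebra is not a valid argument. There is no hidden ``zero-mode subtlety'' in the invariance identity: once $A(z,t)a$ is computed correctly, the identity yields the full statement for \emph{all} $n$ in one stroke. The Zhu-algebra Lemma~\ref{6.2} was used to establish existence of $H_\mu$ (Proposition~\ref{Eform}), not to supply a separate correction to mode adjoints. Your sanity check $\mu-\overline{\mu}=2\sqrt{-1}\Im(\mu)$ confirms the answer but does not repair the derivation; fix the computation of $L(t)_1a$ and the rest of your proof goes through without any detour.
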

\begin{proof}
By invariance of the Hermitian form,
\begin{align*}
H_\mu(m,a_nm')=&Res_{z}z^{n}H_\mu(m,Y^\mu(a,z)m')\\
=&Res_{z}z^{n}H_\mu(Y^\mu(A(z)a,z^{-1})m,m')\\
=&Res_{z}z^{n}H_\mu(Y^\mu(e^{zL(t)_1}z^{-2L(t)_0}g(a),z^{-1})m,m')\\
=&Res_{z}z^{n-2}H_\mu(Y^\mu(e^{zL(0)_1}a-2\sqrt{-1}\Im(\mu)z\vac,z^{-1})m,m')\\
=&Res_{z}z^{n-2}H_\mu(Y^\mu(a-2\sqrt{-1}\Im(\mu)z\vac,z^{-1})m,m').
\end{align*}
The last two steps follow by \eqref{prodtr} and the fact that $L(0)_1a=0$.
As
$$
Y^\mu(a,z^{-1})=\sum_ra^\mu_rz^{r+1},\ Y^\mu(\vac,z^{-1})=\sum_r\d_{r,0}Iz^{r},
$$
we get the result.
\end{proof}

It is now easy to compute the invariant form in the basis  \eqref{basismu}: 
\begin{align}
H_\mu\left((a^\mu_{-j_1})^{i_1} \cdots (a_{-j_r}^\mu)^{i_r}. v_\mu,(a^\mu_{-j'_1})^{i'_1} \cdots (a_{-j'_{r'}}^\mu)^{i'_{r'}}. v_\mu\right)\notag\\
=H_\mu\left((a^\mu_{j'_{r'}})^{i'_{r'}} \cdots (a_{j'_{1}}^\mu)^{i'_{1}}(a^\mu_{-j_1})^{i_1} \cdots (a_{-j_r}^\mu)^{i_r}. v_\mu,v_\mu\right)\label{formmu}.
\end{align}
It follows that the basis is orthogonal and 
$$
\left\| (a^\mu_{-j_1})^{i_1} \cdots (a_{-j_r}^\mu)^{i_r}. v_\mu\right\|_\mu=\prod_s i_s!j_s^{i_s}
$$
In particular the form is positive definite and its values on the chosen basis do not depend on $\mu$.

Let $\mu\in\C$ and $t\in \sqrt{-1}\R$.
Let $M(\mu,t)^\vee$ be the conjugate dual of $M(\mu)$  with action given by, for $b\in V^1(\C a)$, $m\in M(\mu)$, $f\in M(\mu,t)^\vee$, 
$$
(Y^{M(\mu,t)^\vee}(b,z)f)(m)=f(Y^{\mu}(A(t,z)b,z^{-1})m),
$$
 where $A(z,t)$ is defined by \eqref{12}, \eqref{113}.\par
Using the $L(\sqrt{-1}\Im(\mu))$--invariant form on $M(\mu)$ (see \eqref{formmu}), we can identify $M(\mu)$ and $M(\mu,t)^\vee$ (as vector spaces) by indentifying $m$ with $f_m:m'\mapsto H_\mu(m',m)$.

 We now want to describe explicitly the action of $V^{1}(\C a)$ under this identification. We need the following result:
 \begin{lemma}\label{zg}If $t\in\sqrt{-1}\R$, then 
\begin{align}
&z^{2H(0)}e^{z^na_n} z^{-2H(0)}=e^{z^{-n}a_n}\\
& e^{tz^na_n} g  = g e^{t(-z)^na_n}
\end{align}
\end{lemma}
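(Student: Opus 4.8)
\textbf{Proof plan for Lemma~\ref{zg}.}
The two identities are both instances of conjugating an exponential of a single operator $a_n$ (with $n\ge 1$) by a ``structural'' operator, so the strategy in each case is to compute the relevant conjugation on the generator $a_n$ itself and then exponentiate. For the first identity, I would start from the commutation relation $[H(0),a_n]=-n\,a_n$, which follows from \eqref{xxbracketx} (recall $H(0)=\tfrac12:aa:_0$ and $L(0)_0=H(0)$). This gives $z^{2H(0)}a_n z^{-2H(0)}=z^{-2n}a_n$ by the standard $\mathrm{Ad}(z^{2H(0)})=z^{2\,\mathrm{ad}\,H(0)}$ identity applied to the eigenvector $a_n$. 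Since $z^{2H(0)}$ acts as an algebra automorphism of the enveloping algebra and $e^{z^n a_n}$ is a (formal) exponential of $z^n a_n$, conjugation passes through the exponential:
$$
z^{2H(0)}e^{z^n a_n}z^{-2H(0)}=e^{z^{2H(0)}(z^n a_n)z^{-2H(0)}}=e^{z^n\cdot z^{-2n}a_n}=e^{z^{-n}a_n},
$$
which is the first claim.

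For the second identity I would use the definition \eqref{113} of $g$: on $V^1(\C a)$ we have $\phi(a)=-a$, the parity $p(a)=0$, and the conformal weight $\Delta_a=1$, so $g(a)=e^{-\pi\sqrt{-1}(0+1)}\phi(a)=(-1)(-a)=a$; more generally $g$ is the conjugate linear algebra automorphism sending $a_{-m}\mapsto (-1)^? a_{-m}$ — in fact since $\Delta_{a_{-m}\vac}=m$ one gets $g(a_{-m}\vac)=(-1)^m a_{-m}\vac$, and correspondingly, as an operator identity, $g\,a_n\,g^{-1}=(-1)^n a_n$ (the sign being $(-1)^{\Delta}$ with $\Delta$ the weight shift $n$). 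Then for $t\in\sqrt{-1}\R$ one has $g(t\,z^n a_n)g^{-1}=\bar t\,\overline{z^n}\,(-1)^n a_n$; being careful that $g$ is \emph{conjugate} linear so it conjugates the scalar $t$ and the formal variable prefactor. Using $\bar t=-t$ (as $t\in\sqrt{-1}\R$) and $(-1)^n\overline{z^n}=(-z)^n$ in the appropriate formal sense, this yields $g\,e^{tz^n a_n}\,g^{-1}=e^{t(-z)^n a_n}$, i.e. $e^{tz^n a_n}g=g\,e^{t(-z)^n a_n}$. Equivalently, and perhaps cleaner to write, I would verify the identity directly on the basis \eqref{basismu} generators $a_{-m}\vac$, using $[a_n,a_{-m}]=\delta_{n,m}nI$ to expand both sides and matching the resulting signs.

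The only genuinely delicate point is bookkeeping of signs and complex conjugation through the conjugate linear map $g$: one must be consistent about whether $z$ is treated as a formal variable (left untouched by $g$) or as a complex parameter (conjugated). The paper's convention — visible in Lemma~\ref{zg}'s statement where $(-z)^n$ appears rather than $(\bar z)^n$ — is that $g$ acts only on the coefficients and on the vertex-algebra elements, combining the conjugation $\bar t=-t$ with the eigenvalue sign $(-1)^n$ of $g$ on $a_n$ to produce the single factor $(-z)^n$. So the write-up should: (i) establish $g a_n g^{-1}=(-1)^n a_n$ from \eqref{113}; (ii) establish $[H(0),a_n]=-na_n$ from \eqref{xxbracketx}; (iii) exponentiate. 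I expect step (i) — pinning down the sign of $g$ on the modes $a_n$ for general $n$, rather than just on $a=a_{-1}$ — to be the main thing to get exactly right, but it is a short computation once one recalls that $g$ scales a weight-$m$ vector by $(-1)^m$ (the $e^{-\pi\sqrt{-1}\Delta_a}$ factor composed with $\phi(a_{-m}\vac)=(-1)^m a_{-m}\vac$, the latter from $\phi(a)=-a$ applied $m$-fold to the single mode... more precisely from $\phi$ acting as $-1$ on $a$ and the weight being $m$, giving total sign $(-1)^{m}\cdot(-1)^{m}=1$ on $a_{-m}\vac$ after the $e^{-\pi i m}$ — here the reader should follow the paper's own normalization in \eqref{113} carefully).
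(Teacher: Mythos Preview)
Your approach to the first identity is correct and is essentially the paper's argument in operator form: the paper applies both sides to an arbitrary $b\in V^1(\C a)$ and uses that $z^{2H(0)}$ scales a weight-$\Delta$ vector by $z^{2\Delta}$, which is equivalent to your eigenvalue computation $z^{2H(0)}a_nz^{-2H(0)}=z^{-2n}a_n$ followed by exponentiation.

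For the second identity your strategy is sound but the execution has two concrete errors. First, the sign is wrong: one has $g\,a_n\,g^{-1}=(-1)^{n+1}a_n$, not $(-1)^n a_n$. The mistake is in your computation of $\phi(a_{-m}\vac)$: since $a_{-m}\vac=\tfrac{1}{(m-1)!}T^{m-1}a$ involves only a \emph{single} factor of $a$, and $\phi(a)=-a$, one gets $\phi(a_{-m}\vac)=-a_{-m}\vac$, hence $g(a_{-m}\vac)=(-1)^{m}\cdot(-1)\,a_{-m}\vac=(-1)^{m+1}a_{-m}\vac$. More generally, decomposing $g=\Gamma\circ\phi$ with $\Gamma(c)=(-1)^{\Delta_c}c$, one has $\phi\,a_n\,\phi^{-1}=-a_n$ (from $\phi(a)=-a$) and $\Gamma\,a_n\,\Gamma^{-1}=(-1)^{n}a_n$ (since $a_n$ lowers weight by $n$), giving the product $(-1)^{n+1}$. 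Second, $g$ does \emph{not} conjugate the formal variable $z$; the $(-z)^n$ in the statement arises purely from combining $\bar t=-t$ with this $(-1)^{n+1}$: one gets $g(tz^na_n)g^{-1}=\bar t\,z^n\,(-1)^{n+1}a_n=(-t)(-1)^{n+1}z^na_n=t(-1)^nz^na_n=t(-z)^na_n$, and then exponentiation yields the claim. With the corrected sign your argument goes through; with your stated sign it does not.

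The paper sidesteps both hazards by computing directly on elements: it shows $g(a_n^r b)=(-1)^{-nr+r}a_n^r g(b)$ from the definition \eqref{113} and then verifies $e^{tz^na_n}g(b)=g(e^{t(-z)^na_n}b)$ term by term. This avoids having to reason about conjugate-linear operator conjugation through a formal power series, which is where your write-up lost track of the signs.
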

 \begin{proof}If $b\in V^1(\C a)$ then,
\begin{align*}e^{2z H(0)}e^{z^na_n} e^{-2zH(0)}b&=z^{-2\D_b}\sum_r\frac{1}{r!}z^{2\D_b-2rn}z^{nr}a^r_nb\\
&=\sum_r\frac{1}{r!}z^{-rn}a^r_nb=e^{z^{-n}a_n}b.
\end{align*}
For the second formula note that
$$
g(a_n^rb)=(-1)^{\D_b-nr}\phi(a^r_nb)=(-1)^{\D_b-nr}(-1)^ra^r_n\phi(b)=(-1)^{-nr}(-1)^ra^r_ng(b)
$$
so, since $t$ is purely imaginary,
\begin{align*}e^{tz^na_n} g(b)&=\sum_r\frac{1}{r!}t^rz^{nr}a^r_ng(b)=\sum_r\frac{1}{r!}(-1)^{-nr}z^{nr}g(t^ra^r_nb)=e^{t(-z)^{n}a_n}b.
\end{align*}
 \end{proof}
 
\begin{proposition}\label{16}If $m\in M(\mu)$ and $f_m\in M(\mu,t)^\vee$ is defined by $f_m(m')=H_\mu(m',m)$, then
$$
Y^{M(\mu,t)^\vee}(b,z)f_m=f_{Y^\mu(\prod_{n=1}^{\infty}e^{\frac{2(-t+\sqrt{-1}\Im(\mu))}{n}(-z)^{-n}a_n}b,z)m}
$$
In particular the fields
\begin{equation}\label{Ymut}
Y^{\mu,t}(b,z):=Y^\mu(\prod_{n=1}^{\infty}e^{\frac{2(-t+\sqrt{-1}\Im(\mu))}{n}(-z)^{-n}a_n}b,z)
\end{equation}
define a $V^1(\C a)$--module structure on $M(\mu)$. 
\end{proposition}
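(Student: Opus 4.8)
The plan is to prove Proposition \ref{16} by a direct computation, tracking how the various operators in $A(z,t)$ interact with the identification $m \leftrightarrow f_m$. First I would unwind the definition: for $b \in V^1(\C a)$ and $m, m' \in M(\mu)$, we have
$(Y^{M(\mu,t)^\vee}(b,z)f_m)(m') = f_m(Y^\mu(A(z,t)b, z^{-1})m') = H_\mu(Y^\mu(A(z,t)b,z^{-1})m', m)$.
The goal is to move the operator $Y^\mu(A(z,t)b,z^{-1})$ across the form $H_\mu$ onto the second slot, turning it into $H_\mu(m', (\text{something})m)$, and then to recognize the ``something'' as $Y^{\mu,t}(b,z)m$. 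The bridge between these two is precisely the $L(\sqrt{-1}\Im(\mu))$--invariance of $H_\mu$ (Proposition \ref{Eform} and the definition of $H_\mu$): invariance says that passing $Y^\mu(\cdot,z)$ across $H_\mu$ introduces exactly a factor of $A(z, \sqrt{-1}\Im(\mu))$ and inverts $z$. So applying invariance to $H_\mu(Y^\mu(A(z,t)b, z^{-1})m', m)$ converts it to $H_\mu(m', Y^\mu(A(z^{-1}, \sqrt{-1}\Im(\mu))A(z,t)b, z)m)$.

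The heart of the matter is then the operator identity $A(z^{-1}, \sqrt{-1}\Im(\mu)) \, A(z,t) = \prod_{n=1}^\infty e^{\frac{2(-t+\sqrt{-1}\Im(\mu))}{n}(-z)^{-n} a_n}$ as operators on $V^1(\C a)$ (not on the modules). Here I expand both $A$-factors using \eqref{112}: $A(z,t) = e^{zL(t)_1} z^{-2H(0)} g$ and $A(z^{-1}, s) = e^{z^{-1} L(s)_1} z^{2H(0)} g$ where $s = \sqrt{-1}\Im(\mu)$. Using Lemma \ref{Zassbosons} I rewrite $e^{zL(t)_1} = e^{zL(0)_1}\prod_n e^{-\frac{2t}{n}z^n a_n}$ and similarly for the $s$-factor. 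Then $g^2 = \mathrm{id}$ (as $\phi$ is an involution and the phase squares appropriately on a fixed conformal-weight space) collapses the two $g$'s; the two $e^{zL(0)_1}$-type factors, after conjugating by $z^{\pm 2 H(0)}$, should cancel because $L(0)_1 a = 0$ forces $e^{z L(0)_1}$ and $e^{z^{-1}L(0)_1}$ to act as mutually inverse unipotent operators once the scalings are matched; and the surviving Heisenberg exponentials $\prod_n e^{-\frac{2t}{n}z^n a_n}$ and $\prod_n e^{-\frac{2s}{n}z^{-n}a_n}$ get conjugated through $z^{\pm 2H(0)}$ and $g$ using exactly the two formulas of Lemma \ref{zg}, namely $z^{2H(0)} e^{z^n a_n} z^{-2H(0)} = e^{z^{-n}a_n}$ and $e^{tz^n a_n} g = g e^{t(-z)^n a_n}$. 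Carefully combining signs and powers of $z$ should leave precisely $\prod_n e^{\frac{2(s-t)}{n}(-z)^{-n}a_n}$.

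Once that operator identity on $V^1(\C a)$ is in hand, I substitute it back: $H_\mu(m', Y^\mu(A(z^{-1},s)A(z,t)b, z)m) = H_\mu\big(m', Y^\mu(\prod_n e^{\frac{2(s-t)}{n}(-z)^{-n}a_n}b, z)m\big) = f_{Y^{\mu,t}(b,z)m}(m')$, which is exactly the claimed formula. The final sentence of the Proposition — that the $Y^{\mu,t}(b,z)$ define a $V^1(\C a)$-module structure on $M(\mu)$ — is then automatic: the map $m \mapsto f_m$ is a linear isomorphism $M(\mu) \to M(\mu,t)^\vee$ (by \eqref{formmu}, $H_\mu$ is nondegenerate, indeed positive definite), $M(\mu,t)^\vee$ is a $V^1(\C a)$-module by construction of the conjugate dual, and we have just shown $m \mapsto f_m$ intertwines the $Y^{\mu,t}$-action with the $M(\mu,t)^\vee$-action, so $Y^{\mu,t}$ is the transport of a module structure through an isomorphism, hence a module structure.

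The main obstacle I anticipate is purely bookkeeping in the middle step: getting all the signs right in the interplay of $g$ (which contributes the $(-z)$ versus $z$ discrepancy and the $(-1)^r$ on $a_n^r$), of $z^{\pm 2H(0)}$ (which inverts the power of $z$ in each exponential), and of the two Zhu-algebra normalizations $t$ and $\sqrt{-1}\Im(\mu)$ entering with opposite effective signs. One has to be scrupulous that $t \in \sqrt{-1}\R$ so that the identities in Lemma \ref{zg} apply verbatim, and that $e^{zL(0)_1}$ genuinely drops out rather than leaving residual terms — this uses $L(0)_1 a = 0$ together with the fact that $e^{zL(0)_1}$ acts as an automorphism of the polynomial algebra $\mathcal P$ (Lemma \ref{Zassbosons}'s proof), so its behavior is determined on generators and the cancellation can be checked there if a clean operator argument is not immediately transparent. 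No genuinely new idea is needed beyond Lemmas \ref{Zassbosons} and \ref{zg} and the invariance of $H_\mu$; it is a matter of assembling them in the right order.
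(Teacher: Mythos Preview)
Your strategy and the paper's use the same ingredients (Lemmas~\ref{Zassbosons} and~\ref{zg} together with the $L(\sqrt{-1}\Im(\mu))$-invariance of $H_\mu$), but in the opposite order, and your ordering creates two avoidable complications.

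First, the step ``apply invariance to $H_\mu(Y^\mu(A(z,t)b,z^{-1})m',m)$ to get $H_\mu(m',Y^\mu(A(z^{-1},s)A(z,t)b,z)m)$'' is not a direct reading of the invariance identity. Invariance lets you rewrite $H_\mu(Y^\mu(A(z,s)a,z^{-1})m',m)=H_\mu(m',Y^\mu(a,z)m)$; to get your version you are implicitly using $A(z^{-1},s)=A(z,s)^{-1}$ (the involutivity of the contragredient). That is true and standard, but it is an extra fact you have not justified. Second, your plan to verify $A(z^{-1},s)A(z,t)=\prod_n e^{\frac{2(s-t)}{n}(-z)^{-n}a_n}$ by expanding both $A$'s and letting the two $e^{L(0)_1}$-type factors ``cancel as mutually inverse unipotent operators'' does not work as stated: after pushing $e^{zL(0)_1}$ through $z^{2H(0)}g$ one gets $e^{-z^{-1}L(0)_1}$, so one is left with $e^{z^{-1}L(0)_1}\bigl(\prod_n e^{c_n z^{-n}a_n}\bigr)e^{-z^{-1}L(0)_1}$, a genuine conjugation by a non-nilpotent operator that does \emph{not} commute with the Heisenberg factors (since $[L(0)_1,a_n]=-na_{n+1}$). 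Resolving this directly is possible but not covered by Lemmas~\ref{Zassbosons} and~\ref{zg} alone.

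The paper sidesteps both issues by reversing your order: it first uses Lemma~\ref{Zassbosons} to write $e^{zL(t)_1}=e^{zL(s)_1}\prod_n e^{-\frac{2(t-s)}{n}z^na_n}$, then Lemma~\ref{zg} to commute the Heisenberg product past $z^{-2H(0)}g$, obtaining the clean factorization $A(z,t)=A(z,s)\circ\prod_n e^{\frac{2(-t+s)}{n}(-z)^{-n}a_n}$. Now invariance applies in its literal form, with no need to invert $A(z,s)$ or to conjugate by $e^{L(0)_1}$. Your approach can be completed by either citing $A(z^{-1},s)A(z,s)=\mathrm{id}$ as a known fact or, more simply, by adopting the paper's order.
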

\begin{proof}By definition,
\begin{align*}
(Y^{M(\mu,t)^\vee}(b,z)f_m)(m')=&H_\mu(Y^\mu(A(t,z)b,z^{-1})m',m)\\
=&(Y^\mu(e^{zL(t)_1}z^{-2L(t)_0}g(b),z^{-1})m',m)
\end{align*}
Using \eqref{Zasseq}
we can write
$$
e^{z(L(t))_1}=e^{zL(0)_1}\prod_{n=1}^{\infty}e^{-\frac{2t}{n}z^na_n}=e^{zL(\sqrt{-1}\Im(\mu))_1}\prod_{n=1}^{\infty}e^{-\frac{2(t-\sqrt{-1}\Im(\mu))}{n}z^na_n}
$$
so, by Lemma \ref{zg},
\begin{align*}
(Y&^{M(\mu,t)^\vee}(b,z)f_m)(m')\\
=&H_\mu(Y^\mu(e^{zL(\sqrt{-1}\Im(\mu))_1}\prod_{n=1}^{\infty}e^{-\frac{2(t-\sqrt{-1}\Im(\mu))}{n}z^na_n}z^{-2L(t)_0}g(b),z^{-1})m',m)\\
=&H_\mu(Y^\mu(e^{zL(\sqrt{-1}\Im(\mu))_1}z^{-2L(t)_0}g\prod_{n=1}^{\infty}e^{\frac{2(-t+\sqrt{-1}\Im(\mu))}{n}(-z)^{-n}a_n}b,z^{-1})m',m)
\end{align*}
Since the form $H_\mu$ is $L(\sqrt{-1}\Im(\mu))$--invariant, we find that
\begin{align*}
(Y&^{M(\mu,t)^\vee}(b,z)f_m)(m')
=H_\mu(m',Y^\mu(\prod_{n=1}^{\infty}e^{\frac{2(-t+\sqrt{-1}\Im(\mu))}{n}(-z)^{-n}a_n}b,z)m)\\
=&f_{Y^\mu(\prod_{n=1}^{\infty}e^{\frac{2(-t+\sqrt{-1}\Im(\mu))}{n}(-z)^{-n}a_n}b,z)m}(m').
\end{align*}
\end{proof}

To simplify notation write $\mathbf a=(a_{-1},a_{-2},\ldots)$. If $I$ is an infinite sequence $(i_1,i_2,\ldots)$, with $i_j\in\ZZ_+$ almost all zero, then set $\mathbf a^I=\prod_{r=1}^\infty a_{-r}^{j_r}$.
We can regard $b\in V^1(\C a)$ as a polynomial $b(\mathbf a)$. More precisely, we write
$$
b(\mathbf a)=\sum_{I}  c_I\mathbf a^I\vac,\ c_I\in\C.
$$
We also set
$$
\rho(z)=(z\vac_0,z^2\vac_0,z^3\vac_0,\ldots)=(zI,z^2I,z^3I,\ldots).
$$

\begin{lemma}\label{atmun}Write $Y^{\mu,t}(b,z)=\sum_{r\in \ZZ}b^{\mu,t}_rz^{-r-\D_b}$. Then
$$b^{\mu,t}_r=  \left(b(\mathbf a^{shift})\right)^\mu_{r},   
\text{ 
where $\mathbf{a}^{shift}=\mathbf a+2(-t+\sqrt{-1}\Im(\mu))\rho(-1)$. }$$            
\end{lemma}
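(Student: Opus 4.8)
The plan is to unwind the definition of $Y^{\mu,t}(b,z)$ from \eqref{Ymut} and read off the coefficient of $z^{-r-\D_b}$. Recall that
\[
Y^{\mu,t}(b,z)=Y^\mu\Big(\prod_{n=1}^{\infty}e^{\frac{2(-t+\sqrt{-1}\Im(\mu))}{n}(-z)^{-n}a_n}b,z\Big),
\]
so the first step is to understand the operator $E(z):=\prod_{n\ge 1}e^{\frac{2(-t+\sqrt{-1}\Im(\mu))}{n}(-z)^{-n}a_n}$ acting on $V^1(\C a)$ before applying $Y^\mu(\cdot,z)$. Writing $c=2(-t+\sqrt{-1}\Im(\mu))$, on the polynomial algebra $\mathcal P=\C[a_{-1},a_{-2},\ldots]$ each $a_n$ ($n\ge 1$) acts as the derivation $n\,\partial/\partial a_{-n}$, so $e^{\frac{c}{n}(-z)^{-n}a_n}$ acts as the substitution $a_{-n}\mapsto a_{-n}+c(-z)^{-n}=a_{-n}+c(-1)^n z^{-n}$ (compare \eqref{exptrans}). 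Since these derivations commute, the product $E(z)$ is exactly the substitution $\mathbf a\mapsto \mathbf a + c\,\rho(-z^{-1})$, where I use $\rho$ as in the statement but with argument $-z^{-1}$; that is, $E(z)b$ is the polynomial $b\big(\mathbf a+c\,\rho(-1/z)\big)$.

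The second step is to combine this with the definition of the modes. By definition $Y^\mu(E(z)b,z)=\sum_{r}\big(E(z)b\big)^\mu_r z^{-r-\D_b}$, but now the subtlety is that $E(z)b$ itself depends on $z$; however, it depends on $z$ only through scalar factors $z^{-n}$ multiplying the honest vectors $\mathbf a^I\vac\in V^1(\C a)$, so one may expand $E(z)b=\sum_I d_I(z)\,\mathbf a^I\vac$ with $d_I(z)$ a Laurent polynomial in $z$, and then $Y^\mu(E(z)b,z)=\sum_I d_I(z)\,Y^\mu(\mathbf a^I\vac,z)$. Extracting the coefficient of $z^{-r-\D_b}$ and using that each monomial $\mathbf a^I\vac$ is homogeneous of conformal weight $\D_{\mathbf a^I\vac}$, one sees that the $z$-powers from $d_I(z)$ and from the field $Y^\mu(\mathbf a^I\vac,z)$ recombine precisely so that $b^{\mu,t}_r$ equals the mode $\big(\tilde b(\mathbf a)\big)^\mu_r$, where $\tilde b$ is the polynomial obtained from $b$ by the shift $\mathbf a\mapsto \mathbf a + c\,\rho(-1)$ evaluated with $z=1$. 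The bookkeeping is that, because the state $a_{-n}$ has conformal weight $n$ and the scalar $(-z^{-1})^n$ contributes $z^{-n}$, every occurrence of $z^{-n}$ is absorbed into the weight grading, which is exactly why only the constant-$z$ specialization $\rho(-1)$ survives in the final formula.

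The last step is to identify $\tilde b(\mathbf a)=b(\mathbf a+c\rho(-1))=b(\mathbf a^{shift})$ with $\mathbf a^{shift}=\mathbf a+2(-t+\sqrt{-1}\Im(\mu))\rho(-1)$, which is the asserted formula, and to note $\D_{\tilde b}=\D_b$ since the shift changes each $a_{-n}$ by a multiple of $\vac$, lowering but never raising conformal weight, and the degree-$\D_b$ part is unchanged (the highest-weight part of $\tilde b$ agrees with that of $b$), so the indexing $b^{\mu,t}_r=\big(b(\mathbf a^{shift})\big)^\mu_r$ is consistent. I expect the main obstacle to be the careful tracking of signs and of the interplay between the $z$-dependence coming from $\rho(-z^{-1})$ inside $E(z)b$ and the $z$-dependence of the vertex operator $Y^\mu(\cdot,z)$: one must be sure that, after the dust settles, the argument of $\rho$ collapses to $-1$ rather than $-z^{-1}$, which is precisely the content of matching the conformal-weight grading of the state with the power of $z$ in the field. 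Once that matching is made explicit (weight $n$ of $a_{-n}$ cancels the $z^{-n}$), the computation is routine.
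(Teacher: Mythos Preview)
Your approach is correct and is essentially the same as the paper's: both identify the exponential operator as the substitution $\mathbf a\mapsto\mathbf a+c\,\rho(-z^{-1})$ (via \eqref{exptrans}), then use that the factor $z^{-n}$ attached to each shifted $a_{-n}$ is exactly absorbed by the conformal-weight grading in $Y^\mu(\cdot,z)$, so that only $\rho(-1)$ survives in the mode formula. The paper carries out the binomial expansion explicitly for $b=\mathbf a^I\vac$ to verify this cancellation, whereas you state the mechanism more conceptually; your step~3 remark about $\Delta_{\tilde b}=\Delta_b$ is unnecessary and slightly off (the shifted element $b(\mathbf a^{\mathrm{shift}})$ is not homogeneous), since the mode index $r$ on the right-hand side is to be read componentwise, each homogeneous summand $\mathbf a^J\vac$ using its own weight $\Delta_J$.
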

\begin{proof}Since $b^{\mu,t}_r=Res_z z^{r+\D_b-1}(Y^{\mu,t}(b, z))$, we need to check that
$$
 Res_z z^{r+\D_b-1}(Y^{\mu,t}(b, z))=b(\mathbf a+(-t+\sqrt{-1}\Im(\mu))\rho(-1))^\mu_{r}.
 $$
 It is enough to check this for $b=\mathbf a^I\vac$. Using \eqref{exptrans}, we can write
 $$
\prod_{n=1}^{\infty}e^{\frac{2(-t+\sqrt{-1}\Im(\mu))}{n}(-z)^{-n}a_n} \mathbf a^I\vac=(\mathbf a+2(-t+\sqrt{-1}\Im(\mu))\rho(-z^{-1}))^I\vac.$$
It follows that
$$
Y^{\mu,t}(b, z)=Y^\mu(b(\mathbf a+2(-t+\sqrt{-1}\Im(\mu))\rho(-z^{-1}),z)
$$
hence we need to check that
\begin{align*}
 Res_z z^{r+\D_{\mathbf a^I}-1}&(Y((\mathbf a+2(-t+\sqrt{-1}\Im(\mu))\rho(-z^{-1}))^I\vac\otimes v, z))\\&=\left((\mathbf{a}^{shift})^I\vac\otimes v\right)_r.
\end{align*}
Indeed, setting $t_0=2(-t+\sqrt{-1}\Im(\mu))$ and letting $q$ be the number of $j$ such that $i_j\ne0$,
\begin{align*}
&Y^\mu((\mathbf a+t_0\rho(-z^{-1}))^I\vac, z))\\
 &=\sum_s\sum_{j_1\le i_1,\ldots,j_q\le i_q}\left(\prod_{p=1}^q\binom{i_p}{j_p}\left(\frac{t_0}{(-z)^p}\right)^{i_p-j_p}\right)\mathbf a^J\vac_sz^{-s-\sum_{p=1}^qpj_p}\\
 %%%
   &=\sum_s\sum_{j_1\le i_1,\ldots,j_q\le i_q}\left(\prod_{p=1}^q(-1)^{p(i_p-j_p)}t_0^{i_p-j_p}\binom{i_p}{j_p}\right)\mathbf a^J\vac_sz^{-s-\D_{\mathbf a^I\vac}}
\end{align*}
so
\begin{align*}
 Res_z z^{r+\D_{\mathbf a^I\vac}-1}&(Y^\mu(\mathbf a+t_0\rho(-z^{-1}))^I\vac, z))\\
 %%%%
   &=\sum_{j_1\le i_1,\ldots,j_q\le i_q}\left(\prod_{p=1}^q((-1)^{p}t_0)^{i_p-j_p}\binom{i_p}{j_p}\right)\mathbf a^J\vac_r\\
 %%%%
 &=\left((\mathbf a+t_0\rho(-1))^I\vac\right)_r,
\end{align*}
as wished.
\end{proof}

In particular, 
\begin{equation}\label{amut}
a^{\mu,t}_r=(a_{-1}\vac)^{\mu,t}_r=a^\mu_r-2(-t+\sqrt{-1}\Im(\mu))\d_{r,0}I
\end{equation}
so 
$$a_0^{\mu,t}=(\mu-2(-t+\sqrt{-1}\Im(\mu)))I=(\overline{\mu}+2t)I.$$
Hence we have an isomorphism of $V^1(\C a)$--modules
\begin{equation}
M(\mu,t)^\vee\cong M(\overline{\mu}+2t).
\end{equation}

Let $M[\mu,t]$ denote the vector space $M(\mu)$ equipped with the $V^{1}(\C a)$--module structure given by $b\mapsto Y^{\mu,t}(b,z)$ so that 
$$
M[\mu,t]\simeq M(\mu,t)^\vee\simeq M(\overline \mu+2t).
$$

Let $\Upsilon_{\mu,t}:M[\mu,t]\to M(\overline{\mu}+2t)$ denote such  an isomorphism.  By \eqref{amut}, $\Upsilon_{\mu,t}(v_\mu)\in\C v_{\overline{\mu}+2t}$. We can therefore normalize $\Upsilon_{\mu,t}$  so that $v_\mu\mapsto v_{\overline{\mu}+2t}$. It follows from \eqref{amut} that, if $j_1\ge j_2\ge \cdots \ge j_r$,
$$
\Upsilon_{\mu,t}(a^\mu_{-j_1}\cdots a^\mu_{-j_r}v_\mu)=\Upsilon_{\mu,t}(a^{\mu,t}_{-j_1}\cdots a^{\mu,t}_{-j_r}v_\mu)=a^{\overline{\mu}+2t}_{-j_1}\cdots a^{\overline{\mu}+2t}_{-j_r} v_{\overline{\mu}+2t}.
$$
Note that, by \eqref{formmu},
\begin{equation}\label{Upsilonisom}
H_{\overline{\mu}+2t}(\Upsilon_{\mu,t}(m),\Upsilon_{\mu,t}(m'))=H_\mu(m,m').
\end{equation}
Moreover
\begin{align}
\label{prima}Y^{\mu,t+s}(b,z)&=Y^\mu(\prod_{n=1}^{\infty}e^{\frac{2(-t-s+\sqrt{-1}\Im(\mu)}{n}(-z)^{-n}a_n}b,z)\\&=Y^{\mu,t}(\prod_{n=1}^{\infty}e^{\frac{-2s}{n}(-z)^{-n}a_n}b,z)\notag,
\end{align}
and, if $m\in M(\mu)$ and $m'\in M(\overline \mu+2s)$,
\begin{align*}
&H_{\overline\mu+2s}(\Upsilon_{\mu,s}Y^{\mu,t}(b,z)m,m')=H_{\overline\mu+2s}(\Upsilon_{\mu,s}Y^{\mu,s+(t-s)}(b,z)m,m')\\
&=H_{\overline\mu+2s}(\Upsilon_{\mu,s}Y^{\mu,s}(\prod_{n=1}^{\infty}e^{\frac{-2(t-s)}{n}(-z)^{-n}a_n}b,z)m,m')\\
&=H_{\overline\mu+2s}(Y^{\overline\mu+2s}(\prod_{n=1}^{\infty}e^{\frac{-2(t-s)}{n}(-z)^{-n}a_n}b,z)\Upsilon_{\mu,s}(m),m')\\
&=H_{\overline\mu+2s}(Y^{\overline\mu+2s,t+s-\sqrt{-1}\Im(\mu)}(b,z)\Upsilon_{\mu,s}(m),m').
\end{align*}
It follows that
\begin{align}\label{Traslation}
&\Upsilon_{\mu,s}Y^{\mu,t}(b,z)=Y^{\overline\mu+2s,t+s-\sqrt{-1}\Im(\mu)}(b,z)\Upsilon_{\mu,s}
\end{align}
In particular, if $\mu$ is real,
\begin{align}\label{translmureal}
\Upsilon_{\mu,s}Y^{\mu,t}(b,z)=
Y^{\mu+2s,t+s}(b,z)\Upsilon_{\mu,s}
\end{align}

\begin{lemma}\label{invgeneral}
If $m,m'\in M(\mu)$ and $b\in V^1(\C a)$, then
\begin{equation}\label{invmuts}
H_\mu(m,Y^{\mu,t}(b,z)m')=H_\mu(Y^{\mu,s}(A(-\sqrt{-1}\Im(\mu)+t+s,z)b,z^{-1})m,m').
\end{equation}

In particular, if $b$ is quasiprimary for $L(-\sqrt{-1}\Im(\mu)+t+s)$, then
\begin{equation}\label{mutprimary}
H_\mu(m,b^{\mu,t}_nm')=H_\mu(g(b)^{\mu,s}_{-n}m,m').
\end{equation}
\end{lemma}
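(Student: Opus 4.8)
The plan is to transport the identity \eqref{invmuts} to the module $M(\overline\mu+2s)$ via the isomorphism $\Upsilon_{\mu,s}$ of Proposition \ref{16}; there it collapses to the ordinary module--invariance of the canonical Hermitian form $H_{\overline\mu+2s}$, which is $L(\sqrt{-1}\Im(\overline\mu+2s))$--invariant by Proposition \ref{Eform}. Throughout write $p=\sqrt{-1}\Im(\mu)$, so that $Y^{\mu,p}=Y^\mu$, and put $\nu=\overline\mu+2s$; since $s\in\sqrt{-1}\R$ one has $\sqrt{-1}\Im(\nu)=-p+2s$. These elementary identifications of imaginary parts, kept consistently, are what drive the argument.

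Concretely, I would chain together the following equalities. By \eqref{Upsilonisom},
$$H_\mu(m,Y^{\mu,t}(b,z)m')=H_\nu\big(\Upsilon_{\mu,s}m,\,\Upsilon_{\mu,s}Y^{\mu,t}(b,z)m'\big);$$
then \eqref{Traslation} rewrites $\Upsilon_{\mu,s}Y^{\mu,t}(b,z)=Y^{\nu,\,t+s-p}(b,z)\Upsilon_{\mu,s}$, and \eqref{prima}, applied in $M(\nu)$ between the parameters $-p+2s$ and $t+s-p$, gives $Y^{\nu,t+s-p}(b,z)=Y^\nu(E(z)b,z)$ with $E(z)=\prod_{n\ge1}e^{\frac{-2(t-s)}{n}(-z)^{-n}a_n}$. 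Applying the $L(-p+2s)$--invariance of $H_\nu$ moves the field to the left argument at the cost of replacing $E(z)b$ by $A(-p+2s,z)E(z)b$, which by the identity of the next paragraph equals $A(-p+t+s,z)b$. Finally, since $Y^\nu=Y^{\nu,-p+2s}$, the case $t=s$ of \eqref{Traslation} reads $Y^\nu(c,w)\Upsilon_{\mu,s}=\Upsilon_{\mu,s}Y^{\mu,s}(c,w)$, which (with $w=z^{-1}$) lets us pull $\Upsilon_{\mu,s}$ back out, and a last use of \eqref{Upsilonisom} returns us to $H_\mu$ and produces precisely the right--hand side of \eqref{invmuts}.

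The one genuine computation is the Zassenhaus--type operator identity
$$A(\tau_2,z)\prod_{n\ge1}e^{\frac{2(\tau_2-\tau_1)}{n}(-z)^{-n}a_n}=A(\tau_1,z),\qquad \tau_1,\tau_2\in\sqrt{-1}\R,$$
which I would establish first: write $A(\tau_i,z)=e^{zL(\tau_i)_1}z^{-2H(0)}g$, use Lemma \ref{Zassbosons} to see that $e^{zL(\tau_1)_1}$ and $e^{zL(\tau_2)_1}$ differ by $\prod_{n}e^{\frac{2(\tau_2-\tau_1)}{n}z^na_n}$, and slide this product successively past $z^{-2H(0)}$ and past $g$ using the two formulas of Lemma \ref{zg} (which turn $z^n$ into $z^{-n}$ and then into $(-z)^{-n}$; here one needs $\tau_2-\tau_1$ purely imaginary). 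Taking $\tau_2=-p+2s$ and matching exponents against $E(z)$ forces $\tau_1=-p+t+s$, as used above. I expect the real obstacle to be not any single manipulation but the parameter bookkeeping: one must carry along the three independent imaginary parameters $t,s,p$, the auxiliary weight $\nu=\overline\mu+2s$ together with its own imaginary part $-p+2s$, and check at every invocation of \eqref{Traslation}, \eqref{prima} and \eqref{Upsilonisom} that the parameters genuinely line up.

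Finally, \eqref{mutprimary} follows from \eqref{invmuts} by coefficient extraction. If $b$ is homogeneous of weight $\D_b$ and quasiprimary for $L(-p+t+s)$, then $g(b)$ is again annihilated by $L(-p+t+s)_1$ (as $\phi(L(-p+t+s))=L(-p+t+s)$), so $A(-p+t+s,z)b=z^{-2\D_b}g(b)$ and hence $Y^{\mu,s}(A(-p+t+s,z)b,z^{-1})=\sum_r g(b)^{\mu,s}_rz^{r-\D_b}$. Comparing this with $Y^{\mu,t}(b,z)=\sum_n b^{\mu,t}_nz^{-n-\D_b}$ in \eqref{invmuts} and extracting the coefficient of $z^{-n-\D_b}$ (i.e.\ applying $\Res_z z^{n+\D_b-1}$) yields $H_\mu(m,b^{\mu,t}_nm')=H_\mu(g(b)^{\mu,s}_{-n}m,m')$.
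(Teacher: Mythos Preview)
Your proof is correct and uses essentially the same ingredients as the paper's: transport via the isomorphism $\Upsilon$ and \eqref{Upsilonisom}, the ordinary $L(\sqrt{-1}\Im(\nu))$--invariance of $H_\nu$, and the Zassenhaus--type commutation from Lemmas \ref{Zassbosons} and \ref{zg}. The only organizational difference is that the paper first establishes the special case $s=t$ (formula \eqref{invmut}) via $\Upsilon_{\mu,t}$ and then reduces the general case to it using \eqref{prima}, whereas you go directly through $\Upsilon_{\mu,s}$ and \eqref{Traslation} in a single pass.
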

\begin{proof}We first  prove that
\begin{equation}\label{invmut}
H_\mu(m,Y^{\mu,t}(b,z)m')=H_\mu(Y^{\mu,t}(A(-\sqrt{-1}\Im(\mu)+2t,z)b,z^{-1})m,m').
\end{equation} 
Indeed,
\begin{align*}
H_\mu(m,Y^{\mu,t}(b,z)m')&=H_{\overline{\mu}+2t}(\Upsilon_{\mu,t}(m),\Upsilon_{\mu,t}(Y^{\mu,t}(b,z)m'))\\
&=H_{\overline{\mu}+2t}(\Upsilon_{\mu,t}(m),Y^{\overline{\mu}+2t}(b,z)\Upsilon_{\mu,t}(m'))\\
&=H_{\overline{\mu}+2t}(Y^{\overline{\mu}+2t}(A(-\sqrt{-1}\Im(\mu)+2t,z)b,z^{-1})\Upsilon_{\mu,t}(m),\Upsilon_{\mu,t}(m'))\\
&=H_{\overline{\mu}+2t}(\Upsilon_{\mu,t}(Y^{\mu,t}(A(-\sqrt{-1}\Im(\mu)+2t,z)b,z^{-1})m),\Upsilon_{\mu,t}(m')),
\end{align*}
so \eqref{invmut} follows.

To prove \eqref{invmuts} write
$$
H_\mu(m,Y^{\mu,t}(b,z)m')=H_\mu(m,Y^{\mu,s}(\prod_{n=1}^{\infty}e^{\frac{-2(t-s)}{n}(-z)^{-n}a_n}b,z)m').
$$
By \eqref{invmut}, setting $s_0=-\sqrt{-1}\Im(\mu)+2s$,
\begin{align*}
H_\mu(m,Y^{\mu,t}(b,z)m')&=H_\mu(Y^{\mu,s}(A(s_0,z)\prod_{n=1}^{\infty}e^{\frac{-2(t-s)}{n}(-z)^{-n}a_n}b,z^{-1})m,m')\\
&=H_\mu(Y^{\mu,s}(e^{zL(s_0)_1}z^{-2L(s_0)_0}g\prod_{n=1}^{\infty}e^{\frac{-2(t-s)}{n}(-z)^{-n}a_n}b,z^{-1})m,m')\\
&=H_\mu(Y^{\mu,s}(e^{zL(s_0)_1}\prod_{n=1}^{\infty}e^{\frac{-2(t-s)}{n}z^{n}a_n}z^{-2L(s_0)_0}g(b),z^{-1})m,m').
\end{align*}
Since, if $p\in\sqrt{-1}\R$, 
$$
e^{z(L(p))_1}=e^{zL(0)_1}\prod_{n=1}^{\infty}e^{-\frac{2p}{n}z^na_n},
$$
we find that
\begin{align*}
H_\mu(m,Y^{\mu,s}(b,z)m')
&=H_\mu(Y^{\mu,s}(e^{zL(0)_1}\prod_{n=1}^{\infty}e^{-\frac{2(-\sqrt{-1}\Im(\mu)+s+t)}{n}z^na_n}z^{-2L(0)_0}g(b),z^{-1})m,m')\\
&=H_\mu(Y^{\mu,s}(e^{zL(-\sqrt{-1}\Im(\mu)+s+t)_1}z^{-2L(0)_0}g(b),z^{-1})m,m')\\
&=H_\mu(Y^{\mu,s}(e^{zL(0)_1}\prod_{n=1}^{\infty}e^{-\frac{2(-\sqrt{-1}\Im(\mu)+s+t)}{n}z^na_n}z^{-2L(0)_0}g(b),z^{-1})m,m')\\
&=H_\mu(Y^{\mu,s}(A(-\sqrt{-1}\Im(\mu)+s+t,z)b,z^{-1})m,m').
\end{align*}

\end{proof}
\begin{example}[The Fairlie construction]\label{FC}
Since $L(s)=\half a_{-1}^2\vac+sa_{-2}\vac$, by \eqref{amut} we have
\begin{align*}
L(s)^{\mu,t}_n&=\half(a_{-1}+2t-2\sqrt{-1}\Im(\mu))^2\vac_n+s(a_{-2}-2t+2\sqrt{-1}\Im(\mu))\vac_n\\
&=\half a_{-1}^2\vac_n+2(t-\sqrt{-1}\Im(\mu))a_{-1}\vac_n+2(t-\sqrt{-1}\Im(\mu))^2\vac_n\\
&+sa_{-2}\vac_n-2s(t-\sqrt{-1}\Im(\mu))\vac_n\\
&=\half :aa:_n+2(t-\sqrt{-1}\Im(\mu))a_n+s(Ta)_n
+2(t-\sqrt{-1}\Im(\mu))(t-\sqrt{-1}\Im(\mu)-s)\vac_n.
\end{align*}
In other words
\begin{equation}\label{Lsmut}
L(s)^{\mu,t}_n=\half :aa:^\mu_n+s(Ta)^\mu_n+2(t-\sqrt{-1}\Im(\mu))a^\mu_n+2(t-\sqrt{-1}\Im(\mu))(t-\sqrt{-1}\Im(\mu)-s)\vac_n^\mu
\end{equation}
In particular, if $\mu\in \R$, we have
\begin{equation}\label{L(s)real}
L(s)^{\mu,t}_n=\half :aa:^\mu_n+s(Ta)^\mu_n+2ta^\mu_n+2(t^2-st)\vac^\mu_n,
\end{equation}
and,  setting $s=2t$, \eqref{L(s)real} becomes
\begin{equation}\label{628}
L(s)^{\mu,s/2}_n=\half :aa:^\mu_n+s(Ta)^\mu_n+sa^\mu_n-\tfrac{1}{2}s^2\vac^\mu_n=L(s)^\mu_n+sa^\mu_n-\tfrac{1}{2}s^2\vac^\mu_n.
\end{equation}
By the $-1$--st product identity,

$$:aa:^{\mu}_n=\begin{cases}\sum_{j\in \mathbb Z}a_{-j}^\mu a_{j+n}^\mu \quad&\text{if $n\ne 0$,}\\2\sum_{j\in \nat}a_{-j}^\mu a_j^\mu+(a_0^\mu)^2 \quad&\text{if $n=0$.}\end{cases}
$$
Moreover, $(Ta)^{\mu}_n=-(n+1)a^\mu_n$, hence, substituting in \eqref{628}, we obtain
$$
L(s)^{\mu,s/2}_n=\frac{1}{2}\sum_{j\in\ZZ}a^\mu_{-j}a^\mu_{j+n}-s na^\mu_n\ \text{    if $n\ne0$,}
$$
 while
$$
L(s)^{\mu,s/2}_0=\sum_{j\in\nat}a^\mu_{-j}a^\mu_{j}+\frac{\mu^2-s^2}{2}I.
$$
Since $b\mapsto Y^{\mu,s/2}(b,z)$ gives a $V^1(\C a)$--module structure to $M(\mu)$
and
$$
[L(s)_\l L(s)]=(T+2\l)L(s)+\frac{\l^3}{12}(1-12s^2),
$$
by the  Borcherds commutator formula, 
$$
[L(s)^{\mu,s/2}_n,L(s)^{\mu,s/2}_m]=(n-m)L(s)^{\mu,s/2}_{n+m}+\frac{n^3-n}{12}(1-12s^2)\d_{n,-m}.
$$
Finally, since $L(s)$ is quasiprimary for $L(s)$ and $g(L(s))=L(s)$, by \eqref{mutprimary} we have
$$
H_\mu(m,L(s)^{\mu,s/2}_nm')=H_\mu(L(s)^{\mu,s/2}_{-n}m,m').
$$
%\subsection{Tensor product}\label{TP} 
\vskip10pt
We now extend the previous analysis of invariant Hermitian forms on bosons to the case of  the vertex algebra $V^1(\C a)\otimes V$
where   $V$ is a conformal vertex algebra.\par
Let $\widehat L$ be the  conformal vector of $V$. Set 
\begin{equation}\label{nuovovir}
\widehat L(s)=L(s)+\widehat L.
\end{equation}

If $M$ is a $V$--module, then $M(\mu)\otimes M$ is a $V^1(\C a)\otimes V$--module and, if $M$ is equipped with a $\widehat L$--invariant form $(\,.\,,.\,)$, then $H_\mu(\,.\,,.\,)\otimes (\,.\,,.\,)$ is a $\widehat L(\sqrt{-1}\Im(\mu))$--invariant form on $M(\mu)\otimes M$ that we keep denoting by $H_\mu(\,.\,,.\,)$.
 
 The arguments developed  in  this section for  $V^1(\C a)$ can be carried out in the same way in the more general setting of the vertex algebra 
 \begin{equation}\label{Vref}V^1(\C a)\otimes V,\end{equation} where $V$ is any conformal vertex algebra. In particular, we have 
 \begin{proposition}\label{FCwithV}If $b\in V^1(\C a)\otimes V$ and $M$ is a $V$--module, then
the fields
$$
Y^{\mu,t}(b,z)=Y^\mu(\prod_{n=1}^{\infty}e^{\frac{2(-t+\sqrt{-1}\Im(\mu))}{n}(-z)^{-n}a_n}b,z)
$$
define a $V^1(\C a)\otimes V$--module structure on $M(\mu)\otimes M$. 
\end{proposition}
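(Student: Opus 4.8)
The plan is to reduce the assertion to the already established Proposition \ref{16}, by observing that the twist producing $Y^{\mu,t}$ lives entirely on the boson tensor factor $V^1(\C a)$, so that the factor $V$ and the module $M$ come along untouched. Write $c=2\bigl(-t+\sqrt{-1}\,\Im(\mu)\bigr)$, so that by definition $Y^{\mu,t}(b,z)=Y^{M(\mu)\otimes M}\bigl(U(z)b,z\bigr)$ with $U(z)=\prod_{n\ge 1}e^{\frac{c}{n}(-z)^{-n}a_n}$, where $a_n$ ($n\ge 1$) denotes the $n$-th mode of $a\otimes\vac$ on the vertex algebra $V^1(\C a)\otimes V$. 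Since the field of $a\otimes\vac$ acts only on the first tensor factor, $U(z)$ equals $U_0(z)\otimes\mathrm{id}_V$, where $U_0(z)=\prod_{n\ge 1}e^{\frac{c}{n}(-z)^{-n}a_n}$ is precisely the operator on $V^1(\C a)$ appearing in Proposition \ref{16}; exactly as in Section \ref{freeb}, $U_0(z)b_1$ is a Laurent polynomial in $z^{-1}$ for each $b_1\in V^1(\C a)$, because every $a_n$ with $n\ge 1$ strictly lowers the conformal weight and hence only finitely many factors act nontrivially on a given state.

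Next I would invoke the standard factorization of the state--field correspondence for a tensor product of vertex algebras: for $b_1\in V^1(\C a)$ and $b_2\in V$ the $V^1(\C a)\otimes V$--module $M(\mu)\otimes M$ satisfies $Y^{M(\mu)\otimes M}(b_1\otimes b_2,z)=Y^{M(\mu)}(b_1,z)\otimes Y^{M}(b_2,z)$, with the usual Koszul sign. Combining this with the previous paragraph and extending to all of $V^1(\C a)\otimes V$ by bilinearity, one gets
\[
Y^{\mu,t}(b_1\otimes b_2,z)=Y^{M(\mu)}\bigl(U_0(z)b_1,z\bigr)\otimes Y^{M}(b_2,z)=Y^{\mu,t}_{V^1(\C a)}(b_1,z)\otimes Y^{M}(b_2,z),
\]
where $Y^{\mu,t}_{V^1(\C a)}$ is the field on $M(\mu)$ of Proposition \ref{16}. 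By that proposition $b_1\mapsto Y^{\mu,t}_{V^1(\C a)}(b_1,z)$ is a genuine $V^1(\C a)$--module structure on $M(\mu)$, while $b_2\mapsto Y^{M}(b_2,z)$ is a $V$--module structure on $M$ by hypothesis; since the external tensor product of a $U_1$--module structure with a $U_2$--module structure is a $U_1\otimes U_2$--module structure, it follows that $b\mapsto Y^{\mu,t}(b,z)$ defines a $V^1(\C a)\otimes V$--module structure on $M(\mu)\otimes M$, which is the assertion.

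I do not expect a genuine obstacle here: the three ingredients above --- well-definedness of $U_0(z)$ (verbatim as in Section \ref{freeb}), the tensor-product factorization of fields, and the fact that an external tensor product of module structures is a module structure --- are all routine. The only thing to be sure of is the conceptual point that no new analysis is needed, i.e.\ that the twisting is concentrated on the boson factor; once that is seen the statement is immediate. Alternatively, should one prefer a proof running strictly parallel to Section \ref{freeb} rather than quoting Proposition \ref{16}, one re-runs that section word for word with $L(t)$ replaced by $\widehat L(t)=L(t)+\widehat L$ and $M(\mu)$ by $M(\mu)\otimes M$ (endowed with $H_\mu\otimes(\cdot,\cdot)$ when $M$ carries an $\widehat L$--invariant form), realizing $M(\mu)\otimes M$ with the fields $Y^{\mu,t}$ as a conjugate dual module; Lemmas \ref{Zassbosons}, \ref{zg}, \ref{atmun} and Proposition \ref{16} go through unchanged, since their proofs manipulate only the Heisenberg operators $a_n$, which commute with the $V$--action throughout.
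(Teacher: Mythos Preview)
Your proposal is correct. The paper itself gives no detailed proof: it simply remarks that ``the arguments developed in this section for $V^1(\C a)$ can be carried out in the same way in the more general setting of the vertex algebra $V^1(\C a)\otimes V$'' and then states the proposition. Your main argument---observing that the twisting operator $U(z)$ lives entirely on the boson factor, invoking the tensor-product factorization of fields, and then quoting Proposition~\ref{16} together with the fact that an external tensor product of module structures is a module structure---is a clean and conceptually transparent way to justify this, arguably more explicit than what the paper does. Your alternative suggestion at the end (re-running Section~\ref{freeb} with $L(t)$ replaced by $\widehat L(t)$) is exactly what the paper's one-line remark is pointing to.
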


As before, we can regard $b\in V^1(\C a)\otimes V$ as a polynomial $b(\mathbf a)$ with values in $V$. More precisely, we write
$$
b(\mathbf a)=\sum_{I}  \mathbf a^I\otimes c_I,\ c_I\in V.
$$
The following is the generalization of Lemma \ref{atmun}. The proof is the same.
\begin{lemma}\label{atmuntV}Write $Y^{\mu,t}(b,z)=\sum_{r\in -\D_b+\ZZ}b^{\mu,t}_rz^{-r-\D_b}$. Then
$$b^{\mu,t}_r=  b(\mathbf a+2(-t+\sqrt{-1}\Im(\mu))\rho(-1))^\mu_{r}.     
$$                
\end{lemma}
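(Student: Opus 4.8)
The plan is to repeat, essentially word for word, the argument that proves Lemma \ref{atmun}, exploiting the fact that the Heisenberg modes $a_n$ act only on the first tensor factor of $M(\mu)\otimes M$ and commute with all modes of elements of $V$. First I would note that both sides of the asserted identity are linear in $b$, so it suffices to treat $b=\mathbf a^I\otimes c$ with $c\in V$ a monomial in a chosen basis; the general element of $V^1(\C a)\otimes V$ is a finite sum of such. For such a $b$ the operator $\prod_{n=1}^{\infty}e^{\frac{2(-t+\sqrt{-1}\Im(\mu))}{n}(-z)^{-n}a_n}$ fixes the $V$--factor, so applying \eqref{exptrans} in the first factor gives
$$
\prod_{n=1}^{\infty}e^{\frac{2(-t+\sqrt{-1}\Im(\mu))}{n}(-z)^{-n}a_n}\,(\mathbf a^I\otimes c)\vac=\big(\mathbf a+2(-t+\sqrt{-1}\Im(\mu))\rho(-z^{-1})\big)^I\otimes c\,\vac,
$$
and hence, by linearity, $Y^{\mu,t}(b,z)=Y^\mu\big(b(\mathbf a+2(-t+\sqrt{-1}\Im(\mu))\rho(-z^{-1})),z\big)$ as operators on $M(\mu)\otimes M$, exactly as in the scalar case.

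Next I would extract the mode $b^{\mu,t}_r=\Res_z z^{r+\D_b-1}\,Y^{\mu,t}(b,z)$ and expand $\big(\mathbf a+2(-t+\sqrt{-1}\Im(\mu))\rho(-z^{-1})\big)^I$ by the multi-binomial theorem, precisely as in the displayed computation inside the proof of Lemma \ref{atmun}: each factor coming from $\rho(-z^{-1})_p=(-z^{-1})^pI$ contributes a power of $z$ that is absorbed into the total conformal weight $\D_b$, which now reads $\D_b=\D_{\mathbf a^I}+\D_c$ (the extra summand $\D_c$ being a fixed additive constant and thus harmless in the residue bookkeeping). What survives is exactly the $r$--th mode of $b(\mathbf a+2(-t+\sqrt{-1}\Im(\mu))\rho(-1))=b(\mathbf a^{shift})$ acting on $M(\mu)\otimes M$, i.e. $b^{\mu,t}_r=b(\mathbf a^{shift})^\mu_r$.

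I do not expect a genuine obstacle here; the only points that deserve a sentence of care are (i) that on $M(\mu)\otimes M$ every $a_n$ ($n\in\ZZ$) commutes with all modes of elements of $V$, so that $Y^{\mu,t}$ really does factor through the first tensor factor in the reduction above; (ii) that the Koszul signs appearing when $V$ has odd elements do not enter, since the $a_n$ are even and the twisting prefactor therefore passes through without sign; and (iii) the harmless extra contribution $\D_c$ to $\D_b$ just mentioned. None of these alters the combinatorics of the residue calculation, so the proof is literally that of Lemma \ref{atmun}.
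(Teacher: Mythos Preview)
The proposal is correct and is exactly the approach the paper takes: the paper's entire proof of Lemma \ref{atmuntV} is the sentence ``The proof is the same'' (referring to Lemma \ref{atmun}), and you have simply spelled out what ``the same'' means, with appropriate remarks about tensor factors, signs, and the harmless shift $\D_c$ in the conformal weight.
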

\end{example}

\section{Minimal $W$--algebras}\label{7}
\subsection{$\l$--brackets and conjugate linear involutions} Let, as before, $\g$ be a basic classical Lie superalgebra, and $x\in\g$ be an element, for 
which $ad\, x$ is diagonalizable with eigenvalues in $\tfrac{1}{2}\mathbb Z$, the $ad\,x$--gradation of $\g$ satisfies  \eqref{seconda} with some $f\in\g_{-1}$ and is compatible with the parity of $\g$. Then for some $e\in\g_1$, $\{e,x,f\}$ is an $sl_2$--triple as in Proposition 
 \ref{constructphi}, i.e. \eqref{goss} holds with $\g^\natural$ the centralizer of $f$ in $\g$.
 Recall that the invariant bilinear form $(.|.)$ on $\g$ is normalized by the condition $(x|x)=\tfrac{1}{2}$, and we have the orthogonal direct sum of ideals
 \begin{equation}\label{ods}
 \g_0=\C x\oplus \g^\natural.\end{equation}
 Choose a Cartan subalgebra $\h^\natural$ of $\g^\natural$, so that, by \eqref{ods}, 
$\h=\C x\oplus \h^\natural$
is a Cartan subalgebra of $\g_0$ (and of $\g$).\par 

%Let $\D$ be the set  of roots of $\g$ with respect to $\h$. Let $\{\pm\theta\}$ be the set of roots of $\mathfrak s$. Fix a set $\Dp$ of positive roots on $\D$ in such a way that  $\theta\in\Dp$ and $e\in \g_\theta$ (hence $f\in\g_{-\theta}$). Note that $x=\tfrac{1}{2} h_\theta$ and  
%Since $-\theta$ is a minimal even root,  $ad\, x$ defines on $\g$ a {\it minimal} grading
%\begin{equation}\label{mg}
%\g=\C f\oplus\g_{-1/2}\oplus \g_0\oplus \g_{1/2}\oplus\C e. 
%end{equation}
%Recall that the dual Coxeter number of a simple Lie algebra is half of the eigenvalue of its Casimir element for the invariant bilinear form normalized by the condition that the squared length of a long root is $2$. For the basic Lie superalgebras it is defined in a similar way, except that one needs, in addition, to have a choice of positive roots with the highest root $\theta$ even, and the   bilinear form  is normalized by the condition $(\theta | \theta)=2$. Its values on simple roots are given in Table 1.
% that $(.|.)_{|\mathfrak s}$ is non-degenerate, hence $(x|x)\ne 0$ and therefore we have the orthogonal direct sum of %reductive subalgebras
% \begin{equation}\label{ods}
% \g_0=\C x\oplus \g^\natural.
% \end{equation}
%Denoting by $\n_{0+}$ and  $\n_{0-}$ the sum of all root spaces corresponding to $\Delta_{0+}$,
%$-\Delta_{0+}$, we obtain the
%triangular decomposition
%$
 % \g = \n_- \oplus \h \oplus \n_+ \, \quad
 % \g_0 = \n_{0-} \oplus \h \oplus \n_{0+} \, .
%$

Let 
\begin{equation}\label{gnatural}\g^\natural=\bigoplus_{i=0}^s\g^\natural_i\end{equation} be  the decomposition of $\g^\natural$ into the direct sum of  ideals, where $\g_0^\natural$  is the center and the $\g^\natural_i$ are simple  for $i>0$. Let $h^\vee$ be the dual Coxeter number of $\g$, and denote by  $\bar h^\vee_i$  half of the eigenvalue of the Casimir element of $\g^\natural _i$ with respect to $(. |.)_{|\g^\natural_i\times\g^\natural_i}$, when acting on $\g^\natural _i$. Note that $\bar h_0^\vee=0$. \par
In  \cite{KRW} the authors introduced (as a special case of a more general construction) the universal {\sl minimal} $W$--algebra $W^k_{\min}(\g)$, whose simple quotient is $W_k^{\min}(\g)$, attached to the grading \eqref{mg}. This is a vertex algebra  strongly and freely generated by elements $L$, $J^{\{v\}}$ where $v$ runs over a basis of $\g^\natural$, $G^{\{u\}}$ where $u$ runs over a basis of $\g_{-1/2}$, with the following  $\lambda$--brackets  (\cite[Theorem 5.1]{KW1}): $L$ is a Virasoro element  (conformal vector) with central charge $c(k)$ given by \eqref{cc},
 $J^{\{u\}}$ are primary of conformal weight $1$, $G^{\{v\}}$ are primary of conformal weight $\frac{3}{2}$, and 
\begin{align}
\label{primarel}[{J^{\{u\}}}_\l G^{\{v\}}]&=G^{\{[u,v]\}} &&\text{ for  $u\in \g^\natural , \ v\in \g_{-1/2}$,}\\
\label{secondarel}[{J^{\{u\}}}_\l J^{\{v\}}]&=J^{\{[u,v]\}}+\l\beta_k(u|v) &&\text{ for $u,v \in \g^\natural$,}
\end{align}
where
\begin{equation}
\label{eq:5.17}\beta_k (u,v) =\d_{i,j}(k+ \tfrac{h^\vee-\bar h_i^\vee}{2})(u|v),\quad u\in\g^\natural_i,\ v\in\g^\natural_j,\,i,j\ge 0.
\end{equation}
Furthermore, the most  explicit formula for the $\l$--bracket between the $G^{\{u\}}$ is given in \cite[(1.1)]{AKMPP} and in \cite[Theorem 5.1 (e)]{KW1}. We will need both formulas:
 \begin{align}\label{GGsimplifiedfurther2}
 [{G^{\{u\}}}_{\lambda}G^{\{v\}}]=&-2(k+h^\vee)\langle u,v\rangle L+\langle u,v\rangle\sum_{\alpha=1}^{\dim \g^\natural} 
:J^{\{u^\alpha\}}J^{\{u_\alpha\}}:+\\\notag
&\sum_{\gamma=1}^{\dim\g_{1/2}}:J^{\{[u,w^{\gamma}]^\natural\}}J^{\{[w_\gamma,v]^\natural\}}:
+2(k+1)\partial J^{\{[[e,u],v]^\natural\}}\\\notag
&+ 4 \l  \sum_{i} \frac{p(k)}{k_i} J^{\{[[e, u],v]_i^\natural\}}+2\l^2\langle u,v\rangle p(k)\vac,
%\end{align}
%%%%%%%%%%%%%%%
%&[{G^u}_\l G^v]=\\&C\left(\tfrac{1}{4p(k)}\langle u,v\rangle\sum_i :J^{x_i}J^{x_i}:-\tfrac{1}{4p(k)}\sum_{i, j}(\langle[u,x_i],[v,x_j]\rangle+\langle[u,x_j],[v,x_i]\rangle):J^{x_i}J^{x_j}:\right)+\notag\\
%&C\left(-\tfrac{k+h^\vee}{2p(k)}\langle u,v\rangle L+\sum_r\tfrac{1}{k+\tfrac{h^\vee-\bar h_r^\vee}{2}}(\tfrac{1}{2}TJ^{[[x_\theta,u],v]_r^\natural}+\l J^{[[x_\theta,v],v]_r^\natural})+\tfrac{\l^2}{2}\langle u,v\rangle\right).\notag
%&\begin{align}\label{GGsimplifiedfurther}
%&[{G^{\{u\}}}_{\lambda}G^{\{v\}}]
\end{align}
\begin{align}[{G^{\{u\}}}_\l G^{\{v\}}]=&-2(k+h^\vee)\langle u,v\rangle L+\langle u,v\rangle\sum_{\alpha=1}^{\dim \g^\natural} \label{GGsimplifiedfurther}
:J^{\{u^\alpha\}}J^{\{u_\alpha\}}:+\\\notag
&2\sum_{\a,\be}\langle[u_\alpha,u],[v,u^\be]\rangle:J^{\{u^\a\}}J^{\{u_\be\}}:
 +2(k+1) (\partial+ 2\lambda) J^{\{[[e,u],v]^{\natural}\}}\\\notag
&+ 2 \lambda \sum_{\a,\be}\langle[u_\alpha,u],[v,u^\be]\rangle
 J^{\{ [u^\a,u_\be]\}}
       + 2\l^2\langle u,v\rangle p(k)\vac,
\end{align}
where $\{u_\a\}$ and $\{u^\a\}$ (resp. $\{w_\gamma\}, \{w^\gamma\}$) are dual bases of $\g^\natural$ (resp. $\g_{1/2}$) with respect to $(.|.)$ (resp. with respect to $\langle\cdot,\cdot\rangle_{\rm ne}$), $a\mapsto a^\natural_i$ (resp. $a\mapsto a^\natural$) for  $a \in\g_0$  is the orthogonal projection to $\g^\natural_i$ (resp $\g^\natural$), 
 $p(k)$ is  the  monic quadratic polynomial proportional to  \eqref{pol},  introduced in \cite[Table 4]{AKMPP}, and thoroughly investigated in \cite{Y}, and $k_i= k + \tfrac{1}{2}(h^\vee-\bar h^\vee_i)$, $i=1,\ldots,s$ (see Table 2 below for the values of $\bar h^\vee_i$).
\vskip5pt

The following proposition is a special case of \cite[Lemma 7.3]{KMP}, in view of Lemma \ref{31}.
\begin{prop}\label{descend}
Let $\phi$ be a conjugate linear involution of $\g$ such that
$
\phi(f)=f,\ \phi(x)=x,\ \phi(e)=e.
$
Then the map
\begin{equation}\label{phiespilcita} \phi(J^{\{u\}})= J^{\{\phi(u)\}},\quad \phi(G^{\{v\}})=G^{\{\phi(v)\}},\quad \phi(L)=L,\ \ u\in \g^\natural,\ v\in \g_{-1/2}\end{equation}
 extends to
 a  conjugate linear involution of the vertex algebra $W^k_{\min}(\g)$.
\end{prop}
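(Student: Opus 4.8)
The statement to be proved is Proposition~\ref{descend}: that a conjugate linear involution $\phi$ of $\g$ fixing $e,x,f$ induces, via the formulas \eqref{phiespilcita}, a conjugate linear involution of the vertex algebra $W^k_{\min}(\g)$. The plan is to invoke \cite[Lemma 7.3]{KMP}, which says exactly this under the hypotheses \eqref{primas}, and to check that the present hypotheses match. By Lemma~\ref{31} the third condition of \eqref{primas}, namely $\overline{(\phi(a)|\phi(b))}=(a|b)$, is automatic as soon as $\phi(x)=x$ and $(x|x)$ is a nonzero real number; in our normalization $(x|x)=\tfrac12$, so this hypothesis is verified. The first two conditions of \eqref{primas} are $\phi(x)=x$ and $\phi(f)=f$, which are among the standing assumptions (indeed we also assume $\phi(e)=e$, which is consistent since $e=-2[f,[x,f]]/(x|x)$-type relations and the $sl_2$-triple structure force $\phi(e)$ to be determined once $\phi$ fixes $x$ and $f$). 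Hence \cite[Lemma 7.3]{KMP} applies and yields a conjugate linear involution of the universal $W$-algebra $W^k_{\min}(\g)$; restricting that general statement to the minimal datum $(\g,x,f)$ gives the desired conclusion.

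The only thing requiring genuine argument beyond citing \cite[Lemma 7.3]{KMP} is to make explicit that the induced involution acts on the strong generators $L$, $J^{\{u\}}$, $G^{\{v\}}$ by the formulas \eqref{phiespilcita}. For this I would recall the construction of $W^k_{\min}(\g)$ from \cite{KRW}, \cite{KW1} as a subquotient (the $\mathcal H_0$-BRST cohomology, or equivalently via the free field / quantum Hamiltonian reduction picture) of a complex built from the affine vertex algebra $V^k(\g)$ tensored with a charged fermion system and a neutral fermion system. The conjugate linear involution $\phi$ of $\g$ extends first to $V^k(\g)$ by acting on the generators $a_{(n)}$ as $\overline{(\cdot)}$-linear and $a\mapsto\phi(a)$ (this is where one needs $\overline{(\phi(a)|\phi(b))}=(a|b)$, so that the level-$k$ cocycle is preserved); it extends to the ghost systems because $\phi$ fixes $x$ (hence preserves the $ad\,x$-grading used to set up the ghosts) and fixes $e$ (used in the neutral fermion pairing $\langle\cdot,\cdot\rangle$ via \eqref{sesq}). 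One checks $\phi$ commutes with the BRST differential $d$, using $\phi(f)=f$, and therefore descends to cohomology, i.e. to $W^k_{\min}(\g)$. Finally, tracing the cohomology classes representing $L$, $J^{\{u\}}$, $G^{\{v\}}$ (these have the standard representatives recorded in \cite[Theorem 5.1]{KW1}, built out of $x$, $u\in\g^\natural$, $v\in\g_{-1/2}$ and the ghosts) through this $\phi$ gives precisely \eqref{phiespilcita}: $J^{\{u\}}$ is linear in the affine current attached to $u$ plus ghost corrections that are $\phi$-fixed since they involve only $x$ and the pairing data, and similarly for $G^{\{v\}}$ and $L$.

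The descent to the simple quotient $W_k^{\min}(\g)$ (not asked here, but implicit in the surrounding text) is then automatic: a conjugate linear involution of $W^k_{\min}(\g)$ preserves the unique maximal proper graded ideal, hence induces one on $W_k^{\min}(\g)$.

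\textbf{Main obstacle.} There is essentially no obstacle of substance: the proposition is labelled as ``a special case of \cite[Lemma 7.3]{KMP}, in view of Lemma \ref{31}'', so the proof is a two-line verification that the hypotheses of that lemma hold. The only mild care needed is bookkeeping: checking that the explicit generator formulas \eqref{phiespilcita} are the correct transcription of the general statement of \cite[Lemma 7.3]{KMP} into the generators of the minimal $W$-algebra, i.e. that the ghost-correction terms in the cocycle representatives of $L$, $J^{\{u\}}$, $G^{\{v\}}$ are genuinely fixed by $\phi$ — which they are because they are assembled from $x$, $e$, the invariant form, and the pairing $\langle\cdot,\cdot\rangle$, all of which $\phi$ preserves under the hypotheses $\phi(e)=\phi(x)=\phi(f)=f$ and \eqref{forma}. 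Accordingly the write-up can be as short as: ``This follows from \cite[Lemma 7.3]{KMP} together with Lemma \ref{31}.''
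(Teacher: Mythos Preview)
Your proposal is correct and matches the paper's approach exactly: the paper simply states that this proposition is a special case of \cite[Lemma 7.3]{KMP}, in view of Lemma~\ref{31}, with no further proof given. Your additional sketch of how the induced involution acts on the BRST complex and the explicit generators is reasonable background but goes beyond what the paper records.
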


The following result is a sort of converse to Proposition \ref{descend}.
\begin{proposition} \label{converse}Assume that $k\in\R$ is non-collapsing.
 Let $\psi$ be a conjugate linear  involution of $W^k_{\min}(\g)$. Then there exists a conjugate linear  involution $\phi$ of $\g$ satisfying \eqref{primas} such that  $\psi$ is the conjugate linear  involution induced by $\phi$.
\end{proposition}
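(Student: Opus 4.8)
The plan is to reconstruct $\phi$ from the action of $\psi$ on the lowest conformal weight subspaces of $W^k_{\min}(\g)$, exploiting the strong and free generation of $W^k_{\min}(\g)$ by the conformal vector $L$, the currents $J^{\{v\}}$ ($v\in\g^\natural$, conformal weight $1$) and the fields $G^{\{u\}}$ ($u\in\g_{-1/2}$, conformal weight $\tfrac32$), together with the explicit $\l$--brackets \eqref{primarel}, \eqref{secondarel}, \eqref{GGsimplifiedfurther2}, \eqref{GGsimplifiedfurther}. Since a conjugate linear involution is determined by its values on a generating set, and since \eqref{phiespilcita} records what the involution induced by a $\phi$ fixing $e,x,f$ does to these generators, it suffices to prove: (i) $\psi(L)=L$, so that $\psi$ preserves the conformal weight grading; (ii) $\psi$ preserves $\mathfrak{J}:=\mathrm{span}\{J^{\{v\}}\}$ and $\mathfrak{G}:=\mathrm{span}\{G^{\{u\}}\}$, inducing conjugate linear involutions $\phi^\natural$ of $\g^\natural$ and $\phi^\sharp$ of $\g_{-1/2}$; and (iii) the pair $(\phi^\natural,\phi^\sharp)$ is the restriction of a conjugate linear involution $\phi$ of $\g$ with $\phi(e)=e$, $\phi(x)=x$, $\phi(f)=f$. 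Once this is done, Lemma \ref{31} shows $\phi$ satisfies \eqref{primas}, and Proposition \ref{descend} shows $\phi$ induces a conjugate linear involution of $W^k_{\min}(\g)$ which, agreeing with $\psi$ on $L,J^{\{v\}},G^{\{u\}}$, equals $\psi$.

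For (i): $\psi(L)$ is again a conformal (energy--momentum) vector, of central charge $\overline{c(k)}=c(k)$ (real for $k\in\R$), and since $\psi$ is conjugate linear it carries the $L_0$--grading of $W^k_{\min}(\g)$, which is by non-negative half--integers, to the grading defined by $\psi(L)_{(1)}$, which therefore has the same half--integral, non-negative, bounded-below spectrum and one--dimensional weight $0$ space. One then invokes the fact that $L$ is the \emph{only} conformal vector of $W^k_{\min}(\g)$ with these properties: a competitor $L+v$ (with $v$ killed by $a\mapsto a_{(0)}$, hence, after the usual reduction, $v\in\mathfrak{J}$) would either fail to be $\ad x$--type semisimple or would shift the central charge by a multiple of $\beta_k(v,v)$ and shift the conformal weights of the $J^{\{w\}}$ and $G^{\{w\}}$ out of $\tfrac12\Z_{\ge0}$; the non-collapsing hypothesis rules out the degenerate $k$ at which this rigidity could fail. (This is precisely the point where one might prefer to note that $\psi(L)=L$ is already built into the notion of a conjugate linear involution of a \emph{conformal} vertex algebra, cf. \cite{KMP}.) Consequently $\psi L_0\psi=L_0$, so $\psi$ preserves every weight space; as the weight $1$ space is exactly $\mathfrak{J}$ and the weight $\tfrac32$ space is exactly $\mathfrak{G}$, assertion (ii) follows.

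Identify $\mathfrak{J}\cong\g^\natural$, $\mathfrak{G}\cong\g_{-1/2}$ via $J^{\{v\}}\leftrightarrow v$, $G^{\{u\}}\leftrightarrow u$, and write $\psi(J^{\{v\}})=J^{\{\phi^\natural v\}}$, $\psi(G^{\{u\}})=G^{\{\phi^\sharp u\}}$. Applying $\psi$ to \eqref{secondarel} and separating the $\l^0$-- and $\l^1$--parts shows that $\phi^\natural$ is a conjugate linear automorphism of the Lie algebra $\g^\natural$ with $\overline{\beta_k(u|v)}=\beta_k(\phi^\natural u|\phi^\natural v)$; since the levels $k_i=k+\tfrac12(h^\vee-\bar h^\vee_i)$ in \eqref{eq:5.17} are real and nonzero, this means $\phi^\natural$ preserves $(\cdot|\cdot)_{|\g^\natural}$ up to conjugation. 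Applying $\psi$ to \eqref{primarel} gives $\phi^\sharp([u,v])=[\phi^\natural u,\phi^\sharp v]$ for $u\in\g^\natural$, $v\in\g_{-1/2}$, i.e.\ $\phi^\sharp$ is semilinear--equivariant over $\phi^\natural$; and $(\phi^\natural)^2=\mathrm{id}$, $(\phi^\sharp)^2=\mathrm{id}$ since $\psi^2=\mathrm{id}$. Finally, applying $\psi$ to \eqref{GGsimplifiedfurther} (or \eqref{GGsimplifiedfurther2}) and comparing coefficients: the coefficient of $L$ yields $\overline{\langle u,v\rangle}=\langle\phi^\sharp u,\phi^\sharp v\rangle$ (using $k+h^\vee\in\R\setminus\{0\}$), and the coefficient of $\partial J^{\{[[e,u],v]^\natural\}}$, namely $2(k+1)$ — or, if $k=-1$, the linear term $4\l\sum_i\tfrac{p(k)}{k_i}J^{\{[[e,u],v]_i^\natural\}}$ of \eqref{GGsimplifiedfurther2}, whose coefficient is nonzero precisely because $k$ is non-collapsing — yields $\phi^\natural([[e,u],v]^\natural)=[[e,\phi^\sharp u],\phi^\sharp v]^\natural$.

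It remains to assemble $\phi$. Using the grading $\g=\C f\oplus\g_{-1/2}\oplus(\C x\oplus\g^\natural)\oplus\g_{1/2}\oplus\C e$ and the fact that $\ad e\colon\g_{-1/2}\to\g_{1/2}$ is bijective, define $\phi|_{\g^\natural}=\phi^\natural$, $\phi|_{\g_{-1/2}}=\phi^\sharp$, $\phi(e)=e$, $\phi(x)=x$, $\phi(f)=f$, and $\phi([e,v]):=[e,\phi^\sharp v]$ for $v\in\g_{-1/2}$. Then $\phi^2=\mathrm{id}$ since $\phi^\natural,\phi^\sharp$ are involutions and $\phi$ fixes $e,x,f$, and one checks $\phi$ is a Lie superalgebra homomorphism bracket by bracket: the $\mathfrak{s}$--relations and the brackets $[\C x,\g_{\pm1/2}]$, $[\C e,\g_{-1/2}]$, $[\C f,\g_{1/2}]$ hold by construction; the brackets within $\g^\natural$ and $[\g^\natural,\g_{\pm1/2}]$ are the identities just derived; $[\g_{-1/2},\g_{-1/2}]\subseteq\C f$ and $[\g_{1/2},\g_{1/2}]\subseteq\C e$ are governed by $\langle\cdot,\cdot\rangle$ and $\langle\cdot,\cdot\rangle_{\rm ne}$, on which $\phi$ acts compatibly (the second via \eqref{negative}); and $[\g_{-1/2},\g_{1/2}]\subseteq\C x\oplus\g^\natural$ splits into a $\C x$--part, which is a multiple of $\langle v,v'\rangle\,x$ read off from the invariant form, and a $\g^\natural$--part $[[e,v'],v]^\natural$, both controlled by the identities from the previous paragraph. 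Thus $\phi$ is the desired conjugate linear involution. The main obstacle is twofold: the rigidity of the conformal vector in step (i), and, in the last step, identifying which terms of \eqref{GGsimplifiedfurther}/\eqref{GGsimplifiedfurther2} encode the $\C x$-- and $\g^\natural$--components of $[\g_{-1/2},\g_{1/2}]$ and checking that non-collapsing guarantees the relevant coefficients do not vanish — this being the exact place where the hypothesis on $k$ enters.
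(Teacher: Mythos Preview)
Your proposal is correct and follows essentially the same route as the paper's proof: define $\phi$ on $\g^\natural$ and $\g_{-1/2}$ via $\psi(J^{\{\cdot\}})$ and $\psi(G^{\{\cdot\}})$, extend to $e,x,f,\g_{1/2}$ by fiat and $\ad e$, and verify all brackets by reading off coefficients in \eqref{secondarel}, \eqref{primarel}, and \eqref{GGsimplifiedfurther}/\eqref{GGsimplifiedfurther2}. The only cosmetic differences are that you are more explicit about $\psi(L)=L$ (which the paper tacitly takes as part of the definition of a conjugate linear involution of a \emph{conformal} vertex algebra), and that you extract $\overline{\langle u,v\rangle}=\langle\phi^\sharp u,\phi^\sharp v\rangle$ from the $L$--coefficient and $[[e,u],v]^\natural$ primarily from the $\partial J$--coefficient $2(k+1)$, whereas the paper reads these from the $\l^2\vac$--coefficient $2p(k)$ and the $\l J$--coefficient $4p(k)/k_i$ respectively, using $p(k)\ne0$ directly.
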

\begin{proof} If $a,b\in\g^\natural$, define $\phi(a)$ by 
$$\psi(J^{\{a\}})=J^{\{\phi(a)\}}.$$
Then 
\begin{align}
&\psi([{J^{\{a\}}}_\l J^{\{b\}}])=\psi(J^{\{[a,b]\}})+\l\overline{\beta_k(a,b)}=J^{\{\phi([a,b])\}}+\l\overline{\beta_k(a,b)}\label{pm}\\
&[{J^{\{\phi(a)\}}}_\l J^{\{\phi(b)\}}]=J^{\{[\phi(a),\phi(b)]\}}+\l\beta_k(\phi(a),\phi(b))\label{sm}
\end{align}
Since $\psi$ is a vertex algebra conjugate linear  automorphism, \eqref{pm} equals \eqref{sm}, so that $\phi$ is a conjugate  linear involution of $\g^\natural$,
and we have 
 \begin{align}\label{uguale}\overline{\beta_k(a,b)}=\beta_k(\phi(a),\phi(b)).
\end{align} 
%The explicit expression of $p(k)$ given in \cite[\S5]{Y} shows that  the linear polynomial appearing in \eqref{eq:5.17} is a factor of $p(k)$.  
Since $k$ is not collapsing, relations \eqref{abbbb}, \eqref{pol} and  \eqref{uguale} imply that 
\begin{align}\label{ugualee}\overline{(a|b)}=(\phi(a)|\phi(b))\text{ for $a,b\in\g^\natural$.}
\end{align}
\par
 We now prove that there is a unique extension 
of $\phi$ to a conjugate linear  automorphism of $\g$ fixing $e$, $x$, and $f$. Note that  $\phi(\g_{-1/2})\subset \g_{-1/2}$ and that $\g_{1/2}=[e,\g_{-1/2}]$. In particular, setting $\phi(x)=x,\, \phi(f)=f,\,  \phi(e)=e,$ $\phi(u)=[e,\phi(v)]$  for $u\in \g_{1/2},\,  u=[e,v],\,  v\in \g_{-1/2}$, 
we extend $\phi$ to  a conjugate  linear bijection $\g\to\g$. In particular, $\phi$ is unique. It remains to prove that it is a conjugate linear  automorphism. Note first that, by \eqref{ods},  equation 	\eqref{ugualee} holds for $a,b\in\g_0$.
% Next we prove that $\phi$ is an automorphism of $\g$. 
Consider elements
%$$g=\ggg\,e + u+ a + v+ \d\,f +\s\,x,\quad g'=\ggg'\,e + u'+ a '+ v'+ \d'\,f +\s'\,x$$
%$\ggg,\ggg',\d,\d',\s,\s'\in\C,\, u,u'\in\g_{1/2},\,v,v'\in \g_{-1/2},\, a,a'\in\g^\natural$. Then 
%\begin{align}\label{aaaaa}\phi([g,g'])&=\phi([e,v']+\ggg\d'x-\ggg\s'e+[u,u']+[u,a']+[u,v']+\d'[u,f]\\&-\tfrac{1}{2}\s' u+[a,u']+[a,a']+[a,v']+
%\ggg'[v,e]+[v,u']+[v,a']+[v,v']\notag\\&+\tfrac{1}{2}\s' v-\d\ggg' x+\d [f,u']  +\d \s'f+\s\ggg' e+\tfrac{1}{2}\s u'-\tfrac{1}{2}\s v'-\d'\s f)\notag
%\end{align}
%\begin{align}\label{bbbbb}&[\phi(g),\phi(g')]=\\&\bar\ggg [e,\phi(v')]+\bar\ggg\bar\d'x-\bar\ggg\bar\s'e+[\phi(u),\phi(u')]+[\phi(u),\phi(a')]+[\phi(u),\phi(v')]+\bar\d'[\phi(v),f]
%\notag\\&-\tfrac{1}{2}\bar\s'\phi(u)+[\phi(a),\phi(u')]+[\phi(a),\phi(a')]+[\phi(a),\phi(v')]+\notag
%\bar\ggg'[\phi(v),e]+[\phi(v),\phi(u')]\\&+[\phi(v),\phi(a')]+[\phi(v),\phi(v')]+\tfrac{1}{2}\bar\s' \phi(v)-\bar\d\bar\ggg' x+\d [f,\phi(u')]+\d \s'f+%\bar\s\bar\ggg e+\tfrac{1}{2}\bar\s \phi(u')\notag
%\&-\tfrac{1}{2}\bar\s \phi(v')-\bar\d'\bar\s f)\notag\end{align}
$$g=\a\,e + u+ a + v+ \beta\,f +\gamma\,x,\quad g'=\a'\,e + u'+ a '+ v'+ \beta'\,f +\gamma'\,x,$$ where
$\a,\a',\beta,\beta',\gamma,\gamma'\in\C,\, u,u'\in\g_{1/2},\,v,v'\in \g_{-1/2},\, a,a'\in\g^\natural$. Then 
\begin{align}\label{aaaaa}&\phi([g,g'])=\\&\notag\phi([e,v']+\a\beta'x-\a\gamma'e+[u,u']+[u,a']+[u,v']+\beta'[u,f]\\&-\tfrac{1}{2}\gamma' u+[a,u']+[a,a']+[a,v']+
\a'[v,e]+[v,u']+[v,a']+[v,v']\notag\\&+\tfrac{1}{2}\gamma' v-\beta\a' x+\beta [f,u']  +\beta \gamma'f+\gamma\a' e+\tfrac{1}{2}\gamma u'-\tfrac{1}{2}\gamma v'-\beta'\gamma f),\notag
\\\label{bbbbb}&[\phi(g),\phi(g')]=\\&\bar\a [e,\phi(v')]+\bar\a\bar\beta'x-\bar\a\bar\gamma'e+[\phi(u),\phi(u')]+[\phi(u),\phi(a')]+[\phi(u),\phi(v')]+\bar\beta'[\phi(v),f]
\notag\\&-\tfrac{1}{2}\bar\gamma'\phi(u)+[\phi(a),\phi(u')]+[\phi(a),\phi(a')]+[\phi(a),\phi(v')]+\notag
\bar\a'[\phi(v),e]+[\phi(v),\phi(u')]\\&+[\phi(v),\phi(a')]+[\phi(v),\phi(v')]+\tfrac{1}{2}\bar\gamma' \phi(v)-\bar\beta\bar\a' x+\beta [f,\phi(u')]+\beta \gamma'f+\bar\gamma\bar\a e\notag\\&+\tfrac{1}{2}\bar\gamma \phi(u')
-\tfrac{1}{2}\bar\gamma \phi(v')-\bar\beta'\bar\gamma f).\notag\end{align}
Hence \eqref{aaaaa} equals \eqref{bbbbb}, provided  the following equalities hold
\begin{align}
\label{w1}&\phi([u,u'])=[\phi(u),\phi(u')],\\
\label{w2}&\phi([u,a'])=[\phi(u),\phi(a')],\\
\label{w3}&\phi([u,v'])=[\phi(u),\phi(v')],\\
\label{w4}&\phi([v,v'])=[\phi(v),\phi(v')],\\
\label{w44}&\phi([v,a'])=[\phi(v),\phi(a')],\\
\label{w5}&\phi([u,f])=[\phi(u),f].
\end{align}

Relation  \eqref{primarel} implies at once \eqref{w44}. To prove \eqref{w4} note that $[v,v']=\langle v,v'\rangle f$, so it is enough to prove that 
$\langle \phi(v),\phi(v')\rangle=\overline{ \langle v,v'\rangle}$. By \eqref{GGsimplifiedfurther}, $${G^{\{\phi(v)\}}}_{3/2}G^{\{\phi(v')\}}=
4p(k)\langle\phi(v),\phi(v') \rangle\vac=\phi(4p(k)\langle v,v'\rangle \vac)=4p(k)\overline{\langle v,v'\rangle}\vac.$$ Since $p(k)\ne 0$ 
($k$ is not collapsing)  and $k$ is real, we have the claim. \par Now we prove \eqref{w5}. Here and in the following we write  $u=[e,v],\, v\in\g_{-1/2}$. Then 
$$\phi([u,f])=\phi([[e,v],f])=-\phi([x,v])=\tfrac{1}{2}\phi(v)=-[x,\phi(v)]=[[e,\phi(v)],f]=[\phi(u),f].$$\par
Next we prove \eqref{w3}.
%write as usual $u=[e,v],\,v\in\g_{-1/2}$, hence 
We have to prove that
$$\phi([[e,v],v'])=[[e,\phi(v)],\phi(v')].$$
 By \eqref{GGsimplifiedfurther2}
$${G^{\{\phi(v)\}}}_{1/2}{G^{\{\phi(v')\}}}=\sum_i\tfrac{p(k)}{k_i}J^{\{[[e,\phi(v)],\phi(v')]_i^\natural\}}.
$$
On the other hand 
$$\psi({G^{\{v\}}}_{1/2}G^{\{v'\}})=\sum_i\tfrac{p(k)}{k_i}J^{\{\phi([[e,v],v']_i^\natural)\}}.$$
Since $\phi$ is an automorphism of $\g^\natural$ there is a permutation $i\mapsto i'$ such that
$\phi(\g^\natural_i)=\g^\natural_{i'}$. It follows that
$\phi([[e,v],v']_i^\natural)=\phi([[e,v],v'])_{i'}^\natural$
hence $[[e,\phi(v)],\phi(v')]_{i'}^\natural=\phi([[e,v],v']_i^\natural)$ for all $i$,  and also
$$[[e,\phi(v)],\phi(v')]^\natural=\!\sum_{i'}[[e,\phi(v)],\phi(v')]_{i'}^\natural=\!\sum_{i}\phi([[e,v],v']_i^\natural)\!=
\sum_{i'}\phi([[e,v],v'])_{i'}^\natural=\phi([[e,v],v'])^\natural.
$$
To conclude we have to check the $x$--component:
$$(x|[[e,\phi(v)],\phi(v')])=([x,[e,\phi(v)]]|\phi(v'))=\tfrac{1}{2}([e,\phi(v)]|\phi(v'))=\tfrac{1}{2}\langle\phi(v),\phi(v')\rangle=
\tfrac{1}{2}\overline{\langle v , v'\rangle}.
$$
Since  \eqref{primas} holds on $\g_0$, we have
$$(x|\phi([[e,v],v'])=\overline{(\phi(x)|[[e,v],v'])}=\overline{(x|[[e,v],v'])}=\tfrac{1}{2}\overline{\langle v , v'\rangle}.
$$
Next, we  prove \eqref{w2}. We have 
$$\phi([[e,v],a'])=\phi([e,[v,a']])=[e,\phi([v,a'])]\!=\![e,[\phi(v),\phi(a')]]=[[e,\phi(v)],\phi(a')]=[\phi(u),\phi(a')].
$$
Next,  we prove \eqref{w1}. Consider  $u=[e,v],\, u'=[e,v'],\,v,v'\in\g_{-1/2}$.
$$\phi([u,u'])=\phi([[e,v],[e,v']])=\phi([e,[[e,v],v']])=[e,\phi([[e,v],v'])].
$$
By \eqref{w3}, we obtain 
$$\phi([u,u'])=[e,[\phi([e,v]),\phi(v')])]=[e,[\phi(u),\phi(v')]]=[\phi(u),\phi(u')].
$$

It remains to check that 
$$
(\phi(a)|\phi(b))=\overline{(a|b)}
$$
for $a,b\in\g$. We already observed that this relation holds for $a,b\in\g_0$ and it is obvious that $(\phi(e)|\phi(f))=\overline{(e|f)}$. We now compute for $u\in\g_{1/2}$, $v'\in\g_{-1/2}$,
$$
(\phi(u)|\phi(v'))=([e,\phi(v)]|\phi(v'))=\langle \phi(v),\phi(v')\rangle=\overline{\langle v, v'\rangle}=\overline{(u|v')}.
$$
\end{proof}

By Proposition \ref{constructphi}  there is a    conjugate linear involution $ \phi$ on $\g$ such that $\phi(x)=x,\,\phi(f)=f$ and $(\g^\natural)^{\phi}$ is a compact real form of $\g^\natural$, hence, by Proposition \ref{descend}, $\phi$ induces a  conjugate linear involution of the vertex algebra $W^k_{\min}(\g)$, and descends to  a  conjugate linear involution  of its unique simple quotient $W_k^{\min}(\g)$, which we again denote by $\phi$.

  By \cite[Proposition 7.4 (b)]{KMP}, 
$W^k_{\min}(\g)$ admits a unique $\phi$--invariant Hermitian form $H(\cdot,\cdot)$ such that $H(\vac,\vac)=1$. Recall that if
$k+h^\vee\ne 0$ then the kernel of $H(\cdot,\cdot)$  is the unique maximal ideal of  $W^k_{\min}(\g)$, hence $H(\cdot,\cdot)$  descends to a non-degenerate $\phi$--invariant Hermitian form on $W_k^{\min}(\g)$, which we again denote by $H(\cdot,\cdot)$. 
\vskip5pt
%\begin{definition} We say that $W_k^{\min}(\g)$ is {\sl $\psi$-unitary} if the $\psi$-invariant Hermitian form $H(\cdot,\cdot)$  is positive definite.  We say that $W_k^{\min}(\g)$ is {\it unitary} if it  is {\sl $\psi$-unitary} for some conjugate linear automorphism $\psi$. \end{definition}
%\begin{remark} We also say that 
%$W^k_{\min}(\g)$ is $\psi$-unitary if  $H(\cdot,\cdot)$ is positive semidefinite. It is clear, if $k\ne - h^\vee$, that  if $W^k_{\min}(\g)$ is $\psi$-unitary then $W_k^{\min}(\g)$ is $\psi$-unitary.
%\end{remark}
%In the following we investigate the unitarity of  $W_k^{\min}(\g)$ (cf. Definition \ref{unidef}). With a slight abuse of terminology, we also say that 
%$W^k_{\min}(\g)$ is  unitary if  $H(\cdot,\cdot)$ is positive semidefinite. It is clear that
%, for   $k\ne - h^\vee$, 
% $W^k_{\min}(\g)$ is  unitary if and only if $W_k^{\min}(\g)$ is unitary.

We need  to fix notation  for affine vertex algebras.
Let $\aa$ be a Lie superalgebra equipped with a nondegenerate  invariant supersymmetric bilinear form $B$. The universal affine vertex algebra $V^B(\aa)$ is  the universal enveloping vertex algebra of  the  Lie conformal superalgebra $R=(\C[T]\otimes\aa)\oplus\C$ with $\lambda$--bracket given by
$$
[a_\lambda b]=[a,b]+\lambda B(a,b),\ a,b\in\aa.
$$
In the following, we shall say that a vertex algebra $V$ is an affine vertex algebra if it is a quotient of some $V^B(\aa)$. If $\aa$ is simple Lie algebra, we denote by $(.|.)^\aa$ the normalized invariant bilinear form on $\aa$, defined by the condition $(\a|\a)^\aa=2$ for a long root $\a$. Then  $B=k(.|.)^\aa$, and  we simply write $V^k(\aa)$.  If $k\ne -h^\vee$, then  $V^k(\aa)$ has a unique simple quotient, which will be denoted by 
$V_k(\aa)$.

Let $\psi$ be a conjugate linear involution of $\aa$ such that $
(\psi(x) |\psi(y))=\overline{(x|y)}$. By \cite[\S5.3]{KMP} there exists a unique $\psi$--invariant Hermitian form $H_\aa$ on $V^k(\aa)$.  
%If $k\ne - h^\vee$, then 
The kernel of  $H_\aa$ is the maximal ideal of $V^k(\aa)$, hence $H_\aa$ descends to $V_k(\aa)$.\par

\subsection{Some numerical information}
Recall the decomposition \eqref{gnatural}  of the Lie algebra $\g^\natural$, and that  we assume that $\g^\natural$ is not abelian, i.e. $s\ge 1$ in \eqref{gnatural}. Let $\theta_i$ be the highest root of the simple component $\g^\natural_i$ for $i>0$. Set 
\begin{equation}\label{MIK}M_i(k)=
\frac{2}{u_i}\left(k+\frac{h^\vee-\bar h^\vee_i}{2}\right),\quad i\ge 0,\end{equation}
where
$$u_i=\begin{cases}2\quad&\text{if $i=0$,}\\
(\theta_i|\theta_i)\quad&\text{if $i>0$.}
\end{cases}
$$
Let  $(.|.)_i^\natural$ denote the  invariant bilinear form on $\g^\natural_i$, normalized by the condition $(\theta_i|\theta_i)_i^\natural=2$ for $i>0$,  and let 
$(.|.)_0^\natural=(.|.)_{|\g_0^\natural\times\g_0^\natural}$.  Note that, for $i>0$,  $(a|b)_i^\natural= \d_{i,j}\frac{(\theta_i|\theta_i)}{2}(a|b)$,  hence,
formula \eqref{eq:5.17} can be written as 
\begin{align}
\label{abbbb}
\beta_k(a, b)&=\d_{i,j} M_i(k)\frac{(\theta_i|\theta_i)}{2}(a|b)\\
\label{ab}&=\d_{i,j} M_i(k)(a|b)_i^\natural  \qquad\text{ for $a\in \g^\natural_i,\ b\in \g^\natural_j,\ i,j\ge 0$.}
\end{align}
In other words, the vertex subalgebra of $W^k_{\min}$ generated by $J^{\{a\}},\, a\in\g^\natural$, is $\bigotimes\limits_{i\ge 0} V^{M_i(k)}(\g_i^\natural)$.\par
Closely related to the vertex algebra $ W^k_{\min}(\g)$ is the universal affine vertex algebra $V^{\a_k}(\g_0)$ 
%with $\lambda$-bracket 
(see \cite[(5.16)]{KW1}), 
%$$[a_\lambda b]=[a,b]+\l\a_k(a,b),$$
where
\begin{equation}
\label{eq:5.16}
  \alpha_k (a, b) = ((k+h^\vee)(a|b)-\tfrac{1}{2}
\kappa_{\fg_0}(a,b))\,,
\end{equation}
and where $\kappa_{\fg_0}$ denotes the Killing form of $\fg_0$. Note that $$\a_k(a,b)= \d_{i,j}(k+h^\vee-\bar h_i^\vee)(a|b)\text{ if } a\in\g^\natural_i,\ b\in \g^\natural_j,\,i,j\ge 0.$$
\par
We have another formula for the cocycle $\a_k,$ closely related to \eqref{ab}:
\begin{equation}
\label{abb}
\a_k(a, b)=\d_{i,j} \frac{2}{(\theta_i|\theta_i)}\left(k+h^\vee-\bar h^\vee_i\right)(a|b)_i^\natural= \d_{i,j} (M_i(k)+\chi_i)(a|b)_i^\natural \text{ for $a\in \g^\natural_i, b\in \g^\natural_j,\ i,j\ge 0$,}
\end{equation}
where 
\begin{equation}\label{chii}\chi_i=\frac{h^\vee-\bar h^\vee_i}{u_i},\ i\geq 0.\end{equation}
The relevant data for computing the $M_i(k)$ and $\chi_i$ are collected in  Table 2, where their explicit values are   also displayed. Note that $M_0(k)=k+\tfrac{1}{2}h^\vee$.
\begin{table}{\small
\begin{tabular}{   c | c| c| c| c | c | c}
$\g$ & $\g^\natural$ & $u_i$ & $h^\vee$ & $\bar h_i^\vee$ & $M_i(k)$ &$\chi_i$\\\hline
$sl(2|m), m >2 $ & $ \C\oplus sl_m$ & $2,-2$ & $2-m$ & $0,-m$ & $k-\frac{m-2}{2}, -k-1$&$1-m/2,-1$\\\hline
$psl(2|2)$& $sl_2$  & $-2$ &  $0$ & $-2$ & $-k-1$&$-1$\\\hline
$osp(4|m), m>2$ & $sl_2\oplus sp_m$&$2, -4$ & $2-m$ & $ 2, -m-2$ & $k-\frac{m}{2}, -\frac{1}{2}k-1$ &$-m/2,-1$\\\hline
$spo(2|3)$& $sl_2$  &$-1/2$ & $1/2$ & $-1/2$ & $-4k-2$&$-2$\\\hline
$spo(2|m), m>4$ & $so_m$ &  $-1$& $2-m/2$ & $1-m/2$ &  $-2k-1$&$-1$\\\hline
%$D(2,1;a)$ &  $sl_2\oplus sl_2$ & $-2-2a, 2a$&$0$ &  $-2-2a,2a $ & $-\frac{1}{1+a}k-1, \frac{1}{a}k-1$ &$-1,-1$\\\hline
$D(2,1;a)$ &  $sl_2\oplus sl_2$ & $-\tfrac{2}{1+a}, -\tfrac{2a}{1+a}$&$0$ &  $-\tfrac{2}{1+a}, -\tfrac{2a}{1+a} $ & $-(1+a)k-1, -\frac{1+a}{a}k-1$ &$-1,-1$\\\hline
$F(4)$& $so_7$  & $-4/3$ &$-2$ & $-10/3$ & $-\frac{3}{2} k-1$ &$-1$\\\hline
$G(3)$& $G_2$ &$-2/3$  &$-3/2$  &  $-3$ &$ -\frac{4}{3} k-1$&$-1$
\end{tabular}
}
\vskip5pt

\caption{Numerical information}
\end{table}\par
As in the Introduction, denote by $\xi\in(\h^\natural)^*$ a highest weight of the $\g^\natural$--module $\g_{-1/2}$.
\begin{lemma} For $i\geq 1$ we have 
\begin{equation}\label{chiiii}\chi_i=-\xi(\theta_i^\vee),\end{equation} with the exception of $\chi_1$ for $\g=osp(4|m)$.
\end{lemma}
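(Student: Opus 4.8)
The statement to prove is the identity $\chi_i=-\xi(\theta_i^\vee)$ for $i\ge1$, with the single exception of $\chi_1$ when $\g=osp(4|m)$. The plan is to verify this case by case against the data already compiled in Table~2 and the weight data for the $\g^\natural$--module $\g_{-1/2}$. First I would recall that $\xi$ is a highest weight of $\g_{-1/2}$ viewed as a $\g^\natural$--module, that this module is irreducible for every $\g$ in the list except $psl(2|2)$, and that in every case the simple roots and the form $(\cdot|\cdot)$ are as displayed in Table~1. From these one reads off $\xi$ explicitly in the $\{\e_i,\d_j\}$ coordinates, and $\theta_i^\vee=2\theta_i/(\theta_i|\theta_i)$ is likewise explicit; then $\xi(\theta_i^\vee)=2(\xi|\theta_i)/(\theta_i|\theta_i)$ is a direct pairing, to be matched against the value $\chi_i=(h^\vee-\bar h^\vee_i)/u_i$ already tabulated in the last column of Table~2.

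The key steps, carried out in order, are: (1) for each $\g$ among $sl(2|m)$, $psl(2|2)$, $osp(4|m)$, $spo(2|m)$, $D(2,1;a)$, $F(4)$, $G(3)$, identify $\g^\natural$ and its simple ideals $\g^\natural_i$ together with their highest roots $\theta_i$ from Table~1 (noting that for $psl(2|2)$ the module $\g_{-1/2}\cong\C^2\oplus\C^2$ still has a well-defined highest weight $\xi$); (2) compute the highest weight $\xi$ of $\g_{-1/2}$ — for instance for $spo(2|m)$ the odd roots of $\g_{-1/2}$ are $-\d_1\pm\e_j$ (and $-\d_1$ in the odd case), giving $\xi=-\d_1+\e_1$ type expressions, and similarly a spinor-type highest weight for $F(4)$ and the $7$-dimensional highest weight for $G(3)$; (3) compute $2(\xi|\theta_i)/(\theta_i|\theta_i)$ using the bilinear form from Table~1; (4) compare with the entry $\chi_i$ in Table~2. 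Since $\chi_i=-1$ in almost all rows, steps (3)–(4) reduce to checking that $(\xi|\theta_i)=\tfrac12(\theta_i|\theta_i)$, i.e. that $\theta_i$ appears with the expected multiplicity in the decomposition of the weight $\xi$ along the coroot $\theta_i^\vee$; the only genuinely different computations are the two $sl_2$ factors of $D(2,1;a)$, where the normalizations $(\e_2|\e_2)$ and $(\e_3|\e_3)$ depend on $a$, and the non-$(-1)$ values $\chi_1$ for $sl(2|m)$ (which is $1-m/2$) and for $osp(4|m)$ (which is $-m/2$, the excluded case).

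\textbf{Where the exception comes from.} The main point to isolate is precisely why $osp(4|m)$ with $i=1$ fails: there $\g^\natural_1=sl_2$ with $\theta_1=\e_1-\e_2$ while $\g_{-1/2}\cong\C^2\otimes\C^m$ as $sl_2\oplus sp_m$-module, so its highest weight restricted to the $sl_2$ factor is the fundamental weight, giving $\xi(\theta_1^\vee)=1$ and hence $-\xi(\theta_1^\vee)=-1$, whereas Table~2 records $\chi_1=-m/2$; these agree only for $m=2$, which is excluded from the present list. By contrast, for the $sp_m$-factor of $osp(4|m)$, for the $sl_2$-factor of $sl(2|m)$, and for all other $\g$, the highest weight $\xi$ pairs with $\theta_i^\vee$ to give exactly $-\chi_i$, and I would present these as a short table or list of the computed pairings rather than grinding through each Lie-algebraic identity in full. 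The main obstacle is purely bookkeeping: keeping the normalization of $(\cdot|\cdot)$ consistent with the $u_i$ appearing in \eqref{MIK} and \eqref{chii}, since the coroot pairing uses the ambient form on $\g$ rescaled by $u_i/2$, and a sign or factor-of-two slip there is the only real danger. No deep input is needed beyond the explicit data in Tables~1 and~2 and the identification $\mathfrak{q}_0=\g^\natural$ already established in the proof of Proposition~\ref{constructphi}.
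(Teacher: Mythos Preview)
Your proposal is correct and takes essentially the same approach as the paper: a direct case-by-case verification of the identity $\chi_i=-\xi(\theta_i^\vee)$ using the explicit data for $\theta_i$, the bilinear form $(\cdot|\cdot)$, and the highest weight $\xi$ of $\g_{-1/2}$, with the paper packaging the needed $\xi$ and $\theta_i$ into a short Table~3 (listing the maximal odd roots and the highest roots of $\g^\natural$) rather than rederiving them from the module structure. Your identification of the $osp(4|m)$ exception---that $\xi(\theta_1^\vee)=1$ while $\chi_1=-m/2$---is exactly the point, and your caution about the normalization factor $u_i$ in passing between the ambient form and the coroot pairing is well placed, though in practice the check is routine once the data are tabulated.
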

\begin{proof} The weights $\xi$ are restrictions to $\h^\natural$ of the maximal  odd roots of $\g$; they are listed in Table 3, together with the maximal roots $\theta_i$. 
\begin{table}{\small
\begin{tabular}{   c | c| c}
$\g$ & $\text{highest odd roots}$ & $\theta_i$\\\hline
$sl(2|m), m >2 $ & $ \e_1-\d_m, \d_1-\e_2$ & $\d_1-\d_m$\\\hline
$psl(2|2)$& $ \e_1-\d_2, \d_1-\e_2$ & $\d_1-\d_2$\\\hline
$osp(4|m), m>2$ & $\e_1+\d_1$&$\e_1-\e_2,2\d_1$ \\\hline
$spo(2|3)$& $\d_1+\e_1$  &$\e_1$\\\hline
$spo(2|m), m>4$ & $\d_1+\e_1$  &$\e_1+\e_2$\\\hline
$D(2,1;a)$ &$\e_1+\e_2+\e_3$ & $2 \e_2, 2 \e_3$\\\hline
$F(4)$& $\tfrac{1}{2}(\d_1+\e_1+\e_2+\e_3)$ &$\e_1+\e_2$\\\hline
$G(3)$& $\d_1+\e_1+\e_2$  &$\e_1+2\e_2$
\end{tabular}
}
\vskip5pt

\caption{Highest  odd roots and highest  roots of $\g^\natural$}
\end{table}
 Relation \eqref{chiiii} is then checked directly using the data in Tables 1, 2, 3.
\end{proof}

%Recall from \cite{AKMPP} that a level $k$ is {\it collapsing} for $W_k^{\min}(\g)$ if $W_k^{\min}(\g)$ is the unique simple 
%graded quotient of $V^{\beta_k}(\g^\natural)$.  
Recall from \cite{AKMPP} that a level $k$ is {\it collapsing} for $W_k^{\min}(\g)$ if $W_k^{\min}(\g)$ is a subalgebra of the 
simple affine vertex algebra $V_{\beta_k}(\g^\natural)$.

%The main result of \cite{AKMPP} states that $k$ is collapsing if and only if $p(k)=0$.

We summarize in the following result the content  of Theorem 3.3 and Proposition 3.4 of \cite{AKMPP} relevant to our setting.
We say that an ideal in $\g^\natural$ is a {\it component} of $\g^\natural$ if it is simple or $1$--dimensional.
\begin{theorem}\label{oldresults} Let $\g$ be a basic  Lie superalgebra from  Table 2. Assume $k\ne -h^\vee$.  Let 
$p(k)$ be the monic quadratic polynomial in $k$, proportional to 
\begin{equation}\label{pol}\begin{cases} M_1(k)M_2(k)\quad & \text{if $\g^\natural$ has two components,}\\
M_1(k)(k+\frac{\bar h_1^\vee}{2}+1)\quad & \text{otherwise.}
\end{cases}\end{equation}
Then 
\begin{enumerate}
\item $k$ is collapsing if and only if $p(k)=0$.
\item If $\g^\natural$ is simple then 
\begin{enumerate}\item 
$W_k^{\min}(\g)=\C$ if and only if $M_1(k)=0$;
\item if  $k=-\tfrac{\bar h_{1}^\vee}{2}-1,$ then $W^{\min}_k(\g)\cong V_{M_1(k)}(\g^\natural)$. 
\end{enumerate}
\item If $\g=D(2,1;a) $ and  $k$ is collapsing, then 
 $W_k^{\min}(\g)=V_{M_j(k)}(\g^\natural_j)$, with $j\ne i$ if $M_i(k)=0$.
 % In particular, if $\g=spo(2|4)\cong D(2,1;1)$ and $M_1(k)=M_2(k)=0$ then  $W_k^{\min}(\g)=\C$.
\end{enumerate} 
\end{theorem}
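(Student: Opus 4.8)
The plan is to reduce the whole statement to two Gram--matrix computations, following the strategy of \cite[Theorem 3.3 and Proposition 3.4]{AKMPP}. The basic tool is that $W^k_{\min}(\g)$, being a CFT type vertex superalgebra freely and strongly generated in positive conformal weights with one--dimensional weight--zero space, carries a contravariant bilinear form $B(\cdot,\cdot)$ whose radical is its unique maximal ideal, so that $W_k^{\min}(\g)=W^k_{\min}(\g)/\mathrm{rad}\,B$ (for $k\in\RR$ one may use instead the Hermitian form of \cite[Proposition 7.4]{KMP}). Since $B$ is graded and the only strong generators of conformal weight $1$ and $3/2$ are the $J^{\{v\}}$ and the $G^{\{u\}}$ respectively, the first step is to compute these two blocks. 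From \eqref{secondarel} and \eqref{ab} one reads off $B(J^{\{v\}},J^{\{v'\}})=\delta_{i,j}M_i(k)(v|v')_i^\natural$ up to a non--zero universal constant, for $v\in\g^\natural_i$, $v'\in\g^\natural_j$; and from the most singular term $2\lambda^2\langle u,v\rangle p(k)\vac$ of \eqref{GGsimplifiedfurther2} one reads off $B(G^{\{u\}},G^{\{u'\}})=p(k)\langle u,u'\rangle$, again up to a non--zero constant, for $u,u'\in\g_{-1/2}$. Since $(\cdot|\cdot)_i^\natural$ and $\langle\cdot,\cdot\rangle$ are non--degenerate, the weight--$1$ block is non--degenerate iff $M_i(k)\neq0$ for all $i$, and the weight--$3/2$ block iff $p(k)\neq0$.

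Next I would prove (1). On one hand, if $k$ is collapsing then $W_k^{\min}(\g)$ embeds into the purely even vertex algebra $V_{\beta_k}(\g^\natural)$, and since the $G^{\{u\}}$ are odd this forces $G^{\{u\}}\equiv0$ in $W_k^{\min}(\g)$ for all $u$; by the weight--$3/2$ computation this means $p(k)=0$. On the other hand, if $p(k)=0$ then the whole weight--$3/2$ subspace lies in $\mathrm{rad}\,B$, hence $G^{\{u\}}\equiv0$ in $W_k^{\min}(\g)$; substituting into \eqref{GGsimplifiedfurther2}, choosing $u,u'$ with $\langle u,u'\rangle\neq0$ and using $k+h^\vee\neq0$, the vanishing of the left--hand side exhibits $L$ as a combination of the $:J^{\{a\}}J^{\{b\}}:$ and the $\partial J^{\{a\}}$, so $W_k^{\min}(\g)$ is strongly generated by the currents $J^{\{v\}}$; being simple, it is then the simple affine vertex algebra $V_{\beta_k}(\g^\natural)$, i.e. $k$ is collapsing.

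For (2), with $\g^\natural$ simple one has $\beta_k=M_1(k)(\cdot|\cdot)_1^\natural$ and $p(k)$ monic proportional to $M_1(k)\bigl(k+\tfrac{\bar h^\vee_1}{2}+1\bigr)$. If $W_k^{\min}(\g)=\C$ then in particular $J^{\{v\}}\equiv0$, hence $M_1(k)=0$ by the weight--$1$ block; conversely, $M_1(k)=0$ gives $p(k)=0$, so by (1) the algebra collapses to $V_{\beta_k}(\g^\natural)=V_0(\g^\natural)=\C$, which proves (a). If $k=-\tfrac{\bar h^\vee_1}{2}-1$ the second factor of $p(k)$ vanishes, so again the algebra collapses, and by the argument of the previous paragraph it is the simple affine vertex algebra strongly generated by the $J^{\{v\}}$ at the level $M_1(k)$ determined by $\beta_k$; hence $W_k^{\min}(\g)\cong V_{M_1(k)}(\g^\natural)$, which is (b).

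For (3), $\g=D(2,1;a)$ has $\g^\natural=\g^\natural_1\oplus\g^\natural_2$ with both components $sl_2$, $V_{\beta_k}(\g^\natural)=V_{M_1(k)}(\g^\natural_1)\otimes V_{M_2(k)}(\g^\natural_2)$, and $p(k)$ monic proportional to $M_1(k)M_2(k)$. If $k$ is collapsing then $p(k)=0$, so $M_i(k)=0$ for some $i\in\{1,2\}$; then $B$ vanishes on the span of the $J^{\{v\}}$ with $v\in\g^\natural_i$, hence those currents are $\equiv0$ in $W_k^{\min}(\g)$, which is therefore strongly generated by $L$ and the $J^{\{v\}}$ with $v\in\g^\natural_j$, $j\neq i$, with $L$ forced to be the corresponding Sugawara vector; being simple, $W_k^{\min}(\g)=V_{M_j(k)}(\g^\natural_j)$. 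The step I expect to be the main obstacle is the precise evaluation of the weight--$3/2$ block $B(G^{\{u\}},G^{\{u'\}})$, i.e. confirming that the polynomial coefficient of $2\lambda^2\langle u,v\rangle\vac$ in \eqref{GGsimplifiedfurther2} is, up to a non--zero scalar, exactly the monic polynomial $p(k)$ defined by \eqref{pol}: this is where the detailed computation of \cite{AKMPP} is genuinely needed, whereas the remaining ingredients (that $V_0(\g^\natural)\cong\C$, and that a simple vertex algebra strongly generated by level--$\ell$ affine currents for the normalized form is $V_\ell$) are standard.
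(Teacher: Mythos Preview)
Your sketch is correct and is precisely the argument of \cite{AKMPP}; note that the paper itself does not prove this theorem but simply quotes it from that reference, so there is no independent ``paper's proof'' to compare against. The one point you flag as the main obstacle---that the coefficient of $2\lambda^2\langle u,v\rangle\vac$ in \eqref{GGsimplifiedfurther2} coincides, up to a non--zero scalar, with the polynomial \eqref{pol}---is indeed the substantive computation, and the paper simply \emph{defines} $p(k)$ to be this coefficient while asserting (with a reference to \cite[Table~4]{AKMPP} and \cite{Y}) that it is proportional to \eqref{pol}; so your identification of where the real work lies is accurate.
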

%\begin{remark}\label{63} We say that an ideal in $\g^\natural$ is a {\it component} of $\g^\natural$ if it is simple or $1$-dimensional.
%It follows from \cite[Lemma 3.1]{AKMPP},  \cite[Theorem 5.9]{KMP} that, up to a constant factor 
%$$p(k)=\begin{cases} M_1(k)M_2(k)\quad & \text{if $\g^\natural$ has two components,}\\
%M_1(k)(k+\frac{\bar h_1^\vee}{2}+1)\quad & \text{otherwise.}
%\end{cases}$$
%The roots of $p(k)$ are the collapsing levels defined in the Introduction.
%\end{remark}
\begin{remark}\label{7.4} If $M_i(k)\in \mathbb Z_+$ for all $i\geq 1,\,\g\ne osp(4|m)$ and $M_i(k)<-\chi_i$ for some 
$i\geq 1$, then $k$ is a collapsing level (or critical). This is clear by looking at Table 2.
\end{remark}

\section{Necessary conditions for unitarity of modules over $W^k_{\min}(\g)$}\label{Necessary}
We assume that $\g$ is from the list \eqref{ssuper}; in particular, $\g^\natural$ is a reductive Lie algebra.
We parametrize the highest weight modules for $W^k_{\min}(\g)$ following Section 7 of \cite{KW1}.
Let $\h^\natural$ be a Cartan subalgebra of $\g^\natural$, and choose a triangular decomposition
$\g^\natural=\n^\natural_-\oplus\h^\natural\oplus\n^\natural_+$. For  $\nu\in (\h^\natural)^*$ and $l_0\in\C$, let $L^W(\nu,\ell_0)$ (resp.
$M^W(\nu,\ell_0)$ ) denote the irreducible highest weight (resp. Verma) $W^k_{\min}(\g)$--module with highest weight $(\nu,\ell_0)$ and highest weight vector $v_{\nu,\ell_0}$. This means that one has
\begin{align*}
&J^{\{h\}}_0v_{\nu,\ell_0}=\nu(h)v_{\nu, \ell_0} \text{ for } h\in\h^\natural,\quad L_0v_{\nu,\ell_0}=l_0v_{\nu,\ell_0},\\
&J^{\{u\}}_nv_{\nu,\ell_0}=G^{\{v\}}_nv_{\nu,\ell_0}=L_nv_{\nu,\ell_0}=0\text{ for $n>0$, $u\in \g^\natural$},\ v\in\g_{-1/2}, \\
&J^{\{u\}}_0v_{\nu,\ell_0}=0\text{ for } u\in\n^\natural_{+}.
\end{align*}

Let $\phi$ is an almost compact conjugate linear involution of $\g$ (see Definition \ref{ac}); in particular,  the fixed points set $\g^\natural_\R$ of $\phi_{|\g^\natural}$  is a compact Lie algebra (the adjoint group is compact). 
Set $\h^\natural_\R=\g^\natural_\R\cap \h^\natural$. Recall that $\nu\in (\h^\natural_\R)^*$ is said to be purely imaginary if $\nu(\h^\natural_\R)\subset\sqrt{-1}\R$. It is well-known that if 
$\a$ is a root of $\g^\natural$ and $\nu$ is purely imaginary then $\nu(\a)\in \R$.
\begin{lemma}\label{ef} Assume that $l_0\in\R$ and that $\nu$ is purely imaginary. Then 
$L^W(\nu,\ell_0)$ admits a unique $\phi$--invariant nondegenerate Hermitian form $H(\,.\,,.\,)$ such that $H(v_{\nu,\ell_0},v_{\nu,\ell_0})=1$. 
\end{lemma}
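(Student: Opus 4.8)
The plan is to reduce the existence and uniqueness of the form on $L^W(\nu,\ell_0)$ to a statement about the Zhu algebra, exactly as was done for the vertex algebra itself in \cite[Proposition 7.4]{KMP}. First I would recall that, by Proposition \ref{descend}, the almost compact involution $\phi$ of $\g$ induces a conjugate linear involution of $W^k_{\min}(\g)$, which we also denote $\phi$; the hypotheses that $l_0\in\R$ and $\nu$ is purely imaginary are precisely what is needed to make the highest weight $(\nu,\ell_0)$ compatible with $\phi$ in the sense that the associated one--dimensional module over the Zhu algebra $Zhu_{L_0}(W^k_{\min}(\g))$ carries an $\omega$--invariant Hermitian form, where $\omega$ is the conjugate linear anti--automorphism attached to $\phi$ as in \cite{KMP} (compare the discussion preceding Proposition \ref{Eform} and Lemma \ref{6.2} in the bosonic case). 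Concretely, one checks that $\omega(\pi_Z(J^{\{h\}})) = \pi_Z(J^{\{\phi(h)\}})$ acts on the highest weight line by $\overline{\nu(\phi(h))}$; since $\phi$ is compact on $\h^\natural$ we have $\phi(h) = -h$ on $\h^\natural_\R$, and $\nu$ purely imaginary forces $\overline{\nu(-h)} = \nu(h)$, so the scalar is preserved. Similarly $L_0$ is $\omega$--self--adjoint and $l_0$ is real, so the one--dimensional form on $\C v_{\nu,\ell_0}$ is $\omega$--invariant.

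Next I would invoke the general existence result from \cite{KMP} (the analogue of Proposition 6.7 there, used already in the proof of Proposition \ref{Eform}): a highest weight module over a conformal vertex algebra with conjugate linear involution admits a $\phi$--invariant Hermitian form if and only if the corresponding highest weight module over the Zhu algebra admits an $\omega$--invariant form, and in that case the form on the Verma module $M^W(\nu,\ell_0)$ exists and is unique up to scalar. Normalizing by $H(v_{\nu,\ell_0},v_{\nu,\ell_0})=1$ pins it down. Then the radical of this form on $M^W(\nu,\ell_0)$ is a submodule, hence contained in the maximal proper submodule; conversely the maximal proper submodule lies in the radical because any invariant form descends to the quotient and a nonzero vector in an irreducible quotient cannot be orthogonal to the whole module (using that $L_0$ acts semisimply with the highest weight space one--dimensional, so the form is block--diagonal with respect to the $L_0$--eigenspace decomposition and nondegenerate on the top). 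Therefore the form descends to a nondegenerate $\phi$--invariant Hermitian form on $L^W(\nu,\ell_0)$, and uniqueness is inherited from uniqueness on the Verma module.

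The main obstacle, I expect, is not the descent argument but the verification that the one--dimensional Zhu--module is genuinely $\omega$--invariant, i.e. that one has complete control of $\omega$ on the images $\pi_Z(J^{\{u\}})$, $\pi_Z(G^{\{v\}})$, $\pi_Z(L)$ in the Zhu algebra. For the Cartan generators and $L$ this is immediate as above, but one must also confirm that the odd generators $G^{\{v\}}$, which have half--integral conformal weight $\tfrac32$ and act by zero on the highest weight line, do not impose an additional constraint; this should follow from the structure of $A(z)$ in \eqref{12}, since the factor $z^{-2L_0}$ together with $g$ from \eqref{113} produces the correct phase so that $\omega$ sends $\pi_Z(G^{\{v\}})$ into a combination of operators annihilating $v_{\nu,\ell_0}$ (the relevant computation being parallel to Lemma \ref{6.2}). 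Once this compatibility is in hand, everything else is the standard radical/maximal--submodule dichotomy, and the nondegeneracy on $L^W(\nu,\ell_0)$ is automatic.
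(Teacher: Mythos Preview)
Your strategy—reduce to an $\omega$-invariance check on the highest weight data and then invoke \cite[Proposition~6.7]{KMP}—is correct in spirit and is exactly what underlies the paper's argument, but the paper carries it out concretely rather than by citation. Specifically, the paper defines the conjugate-linear functional $F$ on $M^W(\nu,\ell_0)$ that is $1$ on $v_{\nu,\ell_0}$ and $0$ on every other PBW basis vector, then verifies directly that $F$ is a highest weight vector in the contragredient module $M^W(\nu,\ell_0)^\vee$; the resulting module map $\beta:M^W\to (M^W)^\vee$ gives $H(m,m')=\beta(m')(m)$, and $\phi$-invariance is immediate from the definition of the contragredient action. This is the abstract result you cite, unfolded in the case at hand.

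Two points in your write-up need repair. First, a sign: by definition $\h^\natural_\R$ is the $\phi$-fixed subspace of $\h^\natural$, so $\phi(h)=h$ (not $-h$) for $h\in\h^\natural_\R$; the minus sign you need comes instead from the phase in $g$, since $J^{\{h\}}$ is even of conformal weight $1$, whence $g(J^{\{h\}})=-J^{\{\phi(h)\}}=-J^{\{h\}}$, and then $-\overline{\nu(h)}=\nu(h)$ is exactly the condition $\nu(h)\in\sqrt{-1}\R$. Second, the lowest $L_0$-eigenspace of $M^W(\nu,\ell_0)$ is not one-dimensional: it is the $\g^\natural$-Verma module $U(\n^\natural_-)v_{\nu,\ell_0}$, so the Zhu-module on which you must check $\omega$-invariance is this, not $\C v_{\nu,\ell_0}$. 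The required form on it is the Shapovalov form, whose existence is again equivalent to $\nu$ being purely imaginary, together with the observation (which the paper checks) that $\omega$ exchanges $\pi_Z(J^{\{u\}})$ for $u\in\n^\natural_+$ with $-\pi_Z(J^{\{\phi(u)\}})$ where $\phi(u)\in\n^\natural_-$. With these corrections your argument is equivalent to the paper's; the paper's direct construction of $F$ simply sidesteps having to name the Zhu module explicitly.
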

\begin{proof}It is enough to show that the Verma module $M^W(\nu,\ell_0)$ admits a $\phi$--invariant Hermitian  form $H$ such that 
$H(v_{\nu,\ell_0},v_{\nu,\ell_0})=1$.
Fix a basis 
 $\{v_i\mid i\in I\}$  of $\g_{-1/2}$ and a basis $\{u_i\mid i\in J\}$  of $\mathfrak n^\natural_{-}$. Set $A^{\{i\}}=J^{\{u_i\}}$ if $i\in J$,  $A^{\{i\}}=G^{\{v_i\}}$ if $i\in I$, and $A^{\{0\}}=L$. Then
\begin{equation*}
  \label{eq:6.13}
\mathcal B=\left\{  \left( A^{\{ 1 \}}_{-m_1}\right)^{b_1} \cdots
   \left( A^{\{ s \}}_{-m_s}\right)^{b_s} v_{\nu,\ell_0}\right\}
\end{equation*}
 where  $b_i \in \ZZ_+ \, , \,
   b_i \leq 1$   if  $i\in I$,  $m_i >0$
  or   $m_i=0$ when  $i\in J$,
is a basis of $M^W(\nu,\ell_0)$. 

Define the conjugate-linear map $F:M\to \C$ by setting $F(v_{\nu,\ell_0})=1$ and $F(v)=0$ if $v\in\mathcal B$, $v\ne v_{\nu,\ell_0}$. 
%We will identify $\h^\natural$ with its dual using the bilinear form $(.|.)$.

If $v\in M^W(\nu,\ell_0)$, $m>0$, and $u\in\g^\natural$, then
$$
(J^{\{u\}}_mF)(v)=-F(J^{\{\phi(u)\}}_{-m}v)=0.
$$
Similarly we see that, if $u\in\g_{-1/2}$, then  
$$
(G^{\{u\}}_mF)(v)=(L_mF)(v)=0.
$$
On the other hand, if $u\in \mathfrak n_{0+}$, then, since $\phi(u)\in\mathfrak n_{0-}$, 
$$
(J^{\{u\}}_0F)(v)=-F(J^{\{\phi(u)\}}_{0}v)=0.
$$
%If $\a$ is a root of $\g^\natural$ then, since $(\nu,\a)\in\R$, 
If $h\in \h_\R^\natural$, then, since $\nu(h)$ is purely imaginary,
$$
(J^{\{h\}}_0F)(v_{\nu,\ell_0})=-F(J^{\{\phi(h)\}}_{0}v_{\nu,\ell_0})=-F(J^{\{h\}}_{0}v_{\nu,\ell_0})=\nu(h)F(v_{\nu,\ell_0}),
$$
and, if $v\in\mathcal B$, $v\ne v_{\nu,\ell_0}$, then 
$$
(J^{\{h\}}_0F)(v)=-F(J^{\{\phi(h)\}}_{0}v)=-F(J^{\{h\}}_{0}v)=0.
$$
It follows that $J^{\{h\}}_0F=\nu(h)F$ for all $h\in\h^\natural$.
Finally, since $l_0\in\R$,
$$
(L_0F)(v_{\nu,\ell_0})=F(L_{0}v_{\nu,\ell_0})=l_0F(v_{\nu,\ell_0}),
$$
and, if $v\in\mathcal B$, $v\ne v_{\nu,\ell_0}$, then 
$$
(L_0F)(v)=F(L_{0}v)=0.
$$
so $L_0F=l_0F$.
It follows that there is a $W^k_{\min}(\g)$--module map  $\beta:M^W(\nu,\ell_0)\to M^W(\nu,\ell_0)^\vee$ mapping $v_{\nu,\ell_0}$ to $F$.
Define a Hermitian form on $M^W(\nu,\ell_0)$ by setting
$$
H(m,m')=\beta(m')(m).
$$
Let us check that this form is $\phi$--invariant: write $Y^{\nu,\ell_0}$ for the field $Y^{M^W(\nu,\ell_0)}$ and $\check Y^{\nu,\ell_0}$ for the field $Y^{M^W(\nu,\ell_0)^\vee}$. Then
\begin{align*}
H(m,Y^{\nu,\ell_0}(u,z)m')&=\beta(Y^{\nu,\ell_0}(u,z)m')(m)=\check Y^{\nu,\ell_0}(u,z)\beta(m')(m)\\&=\beta(m')(Y^{\nu,\ell_0}(A(z)u,z^{-1})m),
\end{align*}
so
$$
H(m,Y^{\nu,\ell_0}(u,z)m')=H(Y^{\nu,\ell_0}(A(z)u,z^{-1})m,m').
$$
\end{proof}
%{\color{red} The sixth column of  Table 2  allows to express $k$ as a function of $M_i(k)$. }
%\begin{proposition}\label{ncw} Assume $k+h^\vee\ne0$. If  there exists a unitary irreducible highest weight module  over $W^k_{\min}(\g)$, then   $M_i(k)\in\ZZ_+$ for all $i$. In particular, if $W_k^{\min}(\g)$ is unitary, then   $M_i(k)\in\ZZ_+$ for all $i$.
%\end{proposition}
%\begin{proof}
 %Recall that, if $(.|.)_i^\natural$ is the normalized invariant form on $\g_i^\natural$ and $(\beta_k)_{|\g^\natural_i\times \g^\natural_i}=M_i(k)(.|.)_i^\natural$, then there is an embedding $V^{\beta_k}(\g^\natural)\to W^k_{\min}(\g)$. Notice that  the restriction of the $\phi$-invariant Hermitian  form $H$ on $L^W(\nu,\ell_0)$  to $V^{\beta(k)}(\g^\natural). v_{\nu,\ell_0}$ is invariant with respect to the action of $V^{\beta_k}(\g^\natural)$: indeed, the generators $J^{\{a\}}$ of $V^{\beta_k}$ are primary with respect to $L$.
 %In particular, if there is $(\nu,\ell_0)\in (\h^\natural)^*\times\R$ such that $L^W(\nu,\ell_0)$ is unitary, then  $V^{\beta(k)}(\g^\natural). v_{\nu,\ell_0}$ is unitary.

%Since
%$$ V^{\beta(k)}(\g^\natural)=\bigotimes_{i\ge 1} V^{M_i(k)}(\g_i^\natural),$$
%and $\phi$ restricted to $\g^\natural$ defines a compact real form, it follows from \cite{VB} and  \S 5.3 of \cite{KMP} that $M_i(k)\in\ZZ_+$ for all $i$.
%\end{proof}
%In Proposition \ref{necessary} we shall prove the converse of the above statement. 
\begin{definition} The $W_k^{\min}(\g)$-module $L^W(\nu,\ell_0)$  is called unitary if the Hermitian form $H(\cdot,\cdot)$ is positive definite. The vertex algebra $W_k^{\min}(\g)$  is called unitary if its adjoint module is unitary.
\end{definition}
As usual, we denote $\Vert u\Vert =H(u,u),\,u\in L^W(\nu,\ell_0)$. 
In order to obtain necessary  conditions for unitarity of $L^W(\nu,\ell_0)$ we compute 
$||G^{\{v\}}_{-1/2}v_{\nu,\ell_0}||$.
\begin{lemma}\label{GGhalf}Let, as before,  $\xi$ be a highest weight of the $\g^\natural$--module $\g_{-1/2}$, and fix a highest weight vector $v\in\g_{-1/2}$ .  Then
\begin{align}
\Vert G^{\{v\}}_{-1/2}v_{\nu,\ell_0}\Vert^2=&(-2(k+h^\vee)l_0+(\nu|\nu+2\rho^\natural)-2(k+1)(\xi|\nu)+2(\xi|\nu)^2) \langle\phi(v),v\rangle.\label{GGv}
\end{align}
\end{lemma}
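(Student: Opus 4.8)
The plan is to compute $\|G^{\{v\}}_{-1/2}v_{\nu,\ell_0}\|^2 = H(G^{\{v\}}_{-1/2}v_{\nu,\ell_0},\, G^{\{v\}}_{-1/2}v_{\nu,\ell_0})$ directly from the definition of the $\phi$--invariant form, pushing one copy of $G^{\{v\}}_{-1/2}$ across the form. First I would record that $G^{\{v\}}$ is primary of conformal weight $\tfrac32$ (so $\widehat L_1 G^{\{v\}}=0$, in fact $G^{\{v\}}$ is quasi-primary), and that $\Delta_{G^{\{v\}}}=\tfrac32$, $p(G^{\{v\}})=1$; hence under the map $A(z)=e^{zL_1}z^{-2L_0}g$ one gets $A(z)G^{\{v\}}= e^{-\pi\sqrt{-1}(\tfrac12 + \tfrac32)}z^{-3}G^{\{\phi(v)\}} = \sqrt{-1}\,z^{-3}G^{\{\phi(v)\}}$ (the sign constant $e^{-2\pi\sqrt{-1}}=1$ times $e^{-\pi\sqrt{-1}\cdot 2}$ — I would pin down the exact phase carefully, this is a routine but sign-sensitive computation), so that by $\phi$--invariance $H(m, G^{\{v\}}_n m') = \pm H(G^{\{\phi(v)\}}_{-n} m, m')$ for all $n$, with a definite sign I will fix. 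Therefore
\begin{equation*}
\|G^{\{v\}}_{-1/2}v_{\nu,\ell_0}\|^2 = \pm\, H\big(v_{\nu,\ell_0},\, G^{\{\phi(v)\}}_{1/2}G^{\{v\}}_{-1/2}v_{\nu,\ell_0}\big),
\end{equation*}
so everything reduces to computing the operator $G^{\{\phi(v)\}}_{1/2}G^{\{v\}}_{-1/2}$ acting on the highest weight vector.

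Next I would use the $\lambda$--bracket \eqref{GGsimplifiedfurther} (the ``AKMPP-type'' form is the convenient one here, since its $J J$ terms are organized via $\langle[u_\alpha,u],[v,u^\beta]\rangle$) to expand the commutator $[G^{\{\phi(v)\}}_{1/2},G^{\{v\}}_{-1/2}]$; since $G^{\{v\}}_{-1/2}v_{\nu,\ell_0}$ has minimal conformal weight among vectors obtained this way is not automatic — rather, $G^{\{\phi(v)\}}_{1/2}v_{\nu,\ell_0}=0$ because $1/2>0$, so $G^{\{\phi(v)\}}_{1/2}G^{\{v\}}_{-1/2}v_{\nu,\ell_0} = [G^{\{\phi(v)\}}_{1/2},G^{\{v\}}_{-1/2}]v_{\nu,\ell_0}$. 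Reading off the $(n,m)=(1,-1)$-indexed piece of the Borcherds commutator formula applied to \eqref{GGsimplifiedfurther}, I expect to pick up: from the $-2(k+h^\vee)\langle u,v\rangle L$ term a contribution $-2(k+h^\vee)\langle\phi(v),v\rangle L_0 \to -2(k+h^\vee)\ell_0\langle\phi(v),v\rangle$; from the $\langle u,v\rangle\sum :J^{\{u^\alpha\}}J^{\{u_\alpha\}}:$ term the Casimir contribution $\langle\phi(v),v\rangle\,(\nu|\nu+2\rho^\natural)$ (here one uses that $\sum_\alpha :J^{\{u^\alpha\}}J^{\{u_\alpha\}}:_0$ acts on a $\g^\natural$-highest weight vector of weight $\nu$ as the Casimir eigenvalue $(\nu|\nu+2\rho^\natural)$, with the normalization of $(.|.)$ on $\g^\natural$ inherited from $\g$); from the second $JJ$ term $2\sum_{\alpha,\beta}\langle[u_\alpha,\phi(v)],[v,u^\beta]\rangle :J^{\{u^\alpha\}}J^{\{u_\beta\}}:_0$ a term producing the $(\xi|\nu)^2$ piece after evaluating on $v_{\nu,\ell_0}$; from $2(k+1)(\partial+2\lambda)J^{\{[[e,\phi(v)],v]^\natural\}}$ and the $2\lambda\sum\langle\cdots\rangle J^{\{[u^\alpha,u_\beta]\}}$ term the linear-in-$\nu$ contributions combining to $-2(k+1)(\xi|\nu)\langle\phi(v),v\rangle$; and the central $2\lambda^2\langle u,v\rangle p(k)\vac$ term contributes nothing on $v_{\nu,\ell_0}$ at this order (or is absorbed). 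I would then collect these into the stated bracket $(-2(k+h^\vee)l_0 + (\nu|\nu+2\rho^\natural) - 2(k+1)(\xi|\nu) + 2(\xi|\nu)^2)\langle\phi(v),v\rangle$.

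The technical heart — and the step I expect to be the main obstacle — is the precise evaluation of the two $\sum :JJ:$ contributions on $v_{\nu,\ell_0}$, because this requires: (i) normal-ordering $:J^{\{a\}}J^{\{b\}}:_0$ on a highest weight vector, picking up both the ``zero-mode product'' $J^{\{a\}}_0 J^{\{b\}}_0$ and a correction from the $\lambda$-bracket of $J^{\{a\}}$ with $J^{\{b\}}$ (giving the $2\rho^\natural$ shift and possibly $\beta_k$-type central corrections); (ii) identifying $[[e,\phi(v)],v]^\natural$ and the sums $\sum_{\alpha,\beta}\langle[u_\alpha,\phi(v)],[v,u^\beta]\rangle (\cdots)$ with quantities expressible through $\xi$ and $\nu$ — here I would use that $v$ is a $\g^\natural$-highest weight vector of weight $\xi$ in $\g_{-1/2}$, so that $[h,v] = \xi(h)v$ for $h\in\h^\natural$ and $J^{\{[h,v]\}}$-type terms contract against $\nu$ to give $(\xi|\nu)$, together with the compatibility $\langle\phi(\cdot),\cdot\rangle$ being (positive) definite from Proposition \ref{fh} so that the overall scalar $\langle\phi(v),v\rangle$ factors out cleanly and is real positive. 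I would also double-check the global sign coming from $A(z)$ by testing on a known case (e.g.\ the Neveu--Schwarz algebra, $\g=spo(2|1)$, where the formula must reduce to the standard $\|G_{-1/2}v\|^2$ computation), to make sure the right-hand side of \eqref{GGv} comes out with the correct sign — a sign error here is the most likely pitfall. The remaining manipulations (the Borcherds/Zhu-level bookkeeping, $\partial J \to$ derivative term contributing $2(k+1)\cdot 1\cdot(\cdots)$ versus the $2\lambda$ term contributing a further factor, etc.) are routine once the dictionary between the abstract $\lambda$-bracket data and the numbers $(\xi|\nu)$, $(\nu|\nu+2\rho^\natural)$, $\ell_0$ is fixed.
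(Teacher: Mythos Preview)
Your approach is essentially the paper's: push $G^{\{v\}}_{-1/2}$ across the form to obtain $H([G^{\{\phi(v)\}}_{1/2},G^{\{v\}}_{-1/2}]v_{\nu,\ell_0},v_{\nu,\ell_0})$, expand the commutator via Borcherds' formula applied to \eqref{GGsimplifiedfurther}, and evaluate each piece on the highest weight vector using that $v$ has $\g^\natural$-weight $\xi$ and $\phi(v)$ has weight $-\xi$.

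Two small organizational corrections to your term accounting. First, there are no $\beta_k$-type corrections from normal ordering: the $-1$-st product identity gives exactly $:J^{\{u^\alpha\}}J^{\{u_\beta\}}:_0 v_{\nu,\ell_0}=J^{\{u_\beta\}}_0 J^{\{u^\alpha\}}_0 v_{\nu,\ell_0}$, and the $2\rho^\natural$ shift arises purely from evaluating the Casimir $\sum_\alpha u_\alpha u^\alpha$ on a highest weight vector. Second, the $2\lambda\sum\langle\cdots\rangle J^{\{[u^\alpha,u_\beta]\}}$ term does \emph{not} contribute to the linear $(\xi|\nu)$ piece; in the paper it is combined with the second $:JJ:$ term (using $J^{\{u_\beta\}}_0 J^{\{u^\alpha\}}_0+J^{\{[u^\alpha,u_\beta]\}}_0=J^{\{u^\alpha\}}_0 J^{\{u_\beta\}}_0$) and the resulting sum $\sum\langle[u_\alpha,\phi(v)],[v,u^\beta]\rangle J^{\{u^\alpha\}}_0 J^{\{u_\beta\}}_0$ collapses to the Cartan contribution $(\xi|\nu)^2\langle\phi(v),v\rangle$, since the root-vector terms die either by $J^{\{u_\gamma\}}_0 v_{\nu,\ell_0}=0$ (positive roots) or by $[v,u^\gamma]=0$ (negative roots, using that $v$ is highest weight). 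The linear $-2(k+1)(\xi|\nu)$ piece comes entirely from the $2(k+1)(\partial+2\lambda)J^{\{[[e,\phi(v)],v]^\natural\}}$ term, via the identity $(e|[\phi(v),[v,h]])=-\xi(h)\langle\phi(v),v\rangle$.
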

\begin{proof}To prove \eqref{GGv} we observe that,
since $g(G^{\{v\}})=G^{\{\phi(v)\}}$ and $G^{\{v\}}$ is primary,
\begin{align*}
H(G^{\{v\}}_{-1/2}v_{\nu,\ell_0},G^{\{v\}}_{-1/2}v_{\nu,\ell_0})&=H(G^{\{\phi(v)\}}_{1/2}G^{\{v\}}_{-1/2}v_{\nu,\ell_0},v_{\nu,\ell_0})\\
&=H([G^{\{\phi(v)\}}_{1/2},G^{\{v\}}_{-1/2}]v_{\nu,\ell_0},v_{\nu,\ell_0}).
\end{align*}
Using Borcherds' commutator formula
$$
[G^{\{\phi(v)\}}_{1/2},G^{\{v\}}_{-1/2}]=\sum_j\binom{1}{j}({G^{\{\phi(v)\}}}_{(j)}G^{\{v\}})_{0},
$$
and formula \eqref{GGsimplifiedfurther} with $u=\phi(v)$ we obtain
\begin{align}\label{nuova}
&[G^{\{\phi(v)\}}_{1/2},G^{\{v\}}_{-1/2}]=-2(k+h^\vee)\langle \phi(v),v\rangle L_0+\langle \phi(v),v\rangle\sum_{\alpha=1}^{\dim \g^\natural} 
:J^{\{u^\alpha\}}J^{\{u_\alpha\}}:_0+\\\notag
&2\sum_{\a,\be}\langle[u_\alpha,\phi(v)],[v,u^\be]\rangle:J^{\{u^\a\}}J^{\{u_\be\}}:_0
 +2(k+1)J^{\{[[e_{\theta},\phi(v)],v]^{\natural}\}}_0 \\\notag
&+ 2  \sum_{\a,\be}\langle[u_\alpha,\phi(v)],[v,u^\be]\rangle
  J^{\{ [u^\a,u_\be]\}}_0.
\end{align}
By the $-1$--st product  identity,
$$
:J^{\{u^\a\}}J^{\{u_\be\}}:_0=\sum_{j\in\ZZ_+}(J^{\{u^\a\}}_{-j-1}J^{\{u_\be\}}_{j+1}+J^{\{u_\be\}}_{-j}J^{\{u^\alpha\}}_{j}),
$$
hence
\begin{equation}\label{nuova2}
:J^{\{u^\a\}}J^{\{u_\be\}}:_0v_{\nu,\ell_0}=J^{\{u_\be\}}_{0}J^{\{u^\alpha\}}_{0}v_{\nu,\ell_0}.
\end{equation}
We choose the basis $\{u_\a\}$ so that $\{u_\a\}=\{u_\gamma\mid u_\gamma\in\g^\natural_\gamma\}\cup\{u_i\mid 1\le i\le \rank\,\g^\natural\}$ with $\{u_i\}$ a basis of $\h^\natural$. Then $u^\gamma \in\g^\natural_{-\gamma}$. It follows that
\begin{equation}\label{nuova3}
H(J^{\{u_\gamma\}}_{0}J^{\{u^{\gamma'}\}}_{0}v_{\nu,\ell_0},v_{\nu,\ell_0})\ne0\Rightarrow \gamma=\gamma'.
\end{equation}
Since
$$
[[e_{\theta},\phi(v)],v]^{\natural}=\sum_{\gamma\in\D^\natural}([[e_{\theta},\phi(v)],v]|u_\gamma)u^\gamma+\sum_i([[e_{\theta},\phi(v)],v]|u_i)u^i,
$$
we see that
\begin{equation}\label{nuova4}
H(J^{\{[[e_{\theta},\phi(v)],v]^{\natural}\}}_{0}v_{\nu,\ell_0},v_{\nu,\ell_0})=\sum_i([[e_{\theta},\phi(v)],v]|u_i)\nu(u_i)=\sum_i(e_{\theta}|[\phi(v),[v,u_i]])\nu(u_i).
\end{equation}
We assume that $v\in \g_\xi$. Then \eqref{nuova4} yields
\begin{equation}\label{nuova5}
H(J^{\{[[e_{\theta},\phi(v)],v]^{\natural}\}}_{0}v_{\nu,\ell_0},v_{\nu,\ell_0})=-\sum_i(e_{\theta}|[\phi(v),v])\xi(u_i)\nu(u^i)=-\langle \phi(v),v\rangle(\xi|\nu).
\end{equation}
From \eqref{nuova3} we see that 
$(J^{\{ [u^{\gamma'},u_\gamma]\}}_0v_{\nu,\ell_0},v_{\nu,\ell_0})=0$ unless $\gamma'=\gamma$. Clearly $J^{\{ [u^i,u_j]\}}_0=0$ for all $i,j$.
Combining \eqref{nuova}, \eqref{nuova3}, \eqref{nuova5} we find
\begin{align}\label{nnuova}
&H([G^{\{\phi(v)\}}_{1/2},G^{\{v\}}_{-1/2}]v_{\nu,\ell_0},v_{\nu,\ell_0})\\\notag
&=-2(k+h^\vee)\langle \phi(v),v\rangle l_0+\langle \phi(v),v\rangle(\nu|\nu+2\rho^\natural)-2(k+1)\langle \phi(v),v\rangle(\xi|\nu)\\\notag
&+2\sum_{\a,\be}\langle[u_\alpha,\phi(v)],[v,u^\be]\rangle H(J^{\{u^\a\}}_0J^{\{u_\be\}}_0v_{\nu,\ell_0},v_{\nu,\ell_0}).
\end{align}

Recall that $\phi$ is a compact involution of $\g^\natural$, thus \begin{equation}\label{phiha}\phi(h_\a)=-h_\a\text{ for all $\a\in\D^\natural$.}\end{equation} (As usual $h_\a$ stands for the element of $\h^\natural$ corresponding to $\a$ in the identification of $\h^\natural$ with  $(\h^\natural)^*$ via $(.|.)$). It follows that $[h_\a,\phi(v)]=-\xi(h_\a)\phi(v)$, so the weight of $\phi(v)$ is $-\xi$.
In particular, since $v$ is a highest weight vector for the $\g^\natural$--module $\g_{-1/2}$, we have
\begin{align}\label{tecnico}
&\sum_{\a,\be}\langle[u_\alpha,\phi(v)],[v,u^\be]\rangle H(J^{\{u^\a\}}_0J^{\{u_\be\}}_0v_{\nu,\ell_0},v_{\nu,\ell_0})=\\\notag
&\sum_{i,j}\langle[u_i,\phi(v)],[v,u^j]\rangle H(J^{\{u^i\}}_0J^{\{u_j\}}_0v_{\nu,\ell_0},v_{\nu,\ell_0})
+\sum_{\gamma<0}\langle[u_\gamma,\phi(v)],[v,u^\gamma]\rangle H(J^{\{u^\gamma\}}_0J^{\{u_\gamma\}}_0v_{\nu,\ell_0},v_{\nu,\ell_0})\\\notag
&=\sum_{i,j}\xi(u_i)\nu(u^i)\xi(u^j)\nu(u_j)\langle \phi(v),v\rangle.
\end{align}
Substituting \eqref{tecnico} into \eqref{nnuova} we obtain
\begin{align*}
H(G^{\{v\}}_{-1/2}v_{\nu,\ell_0},G^{\{v\}}_{-1/2}v_{\nu,\ell_0})=&-2(k+h^\vee)\langle \phi(v),v\rangle l_0+\langle \phi(v),v\rangle(\nu|\nu+2\rho^\natural)\\
&-2(k+1)\langle \phi(v),v\rangle(\xi|\nu)+2(\xi|\nu)^2\langle \phi(v),v\rangle,
\end{align*}
as claimed.
\end{proof}
\begin{remark} Let $v\in \g_{-1/2}$ be as in Lemma \ref{GGhalf} and $u$ a root vector   for the root $\theta_i$. Then
 \begin{align*}
 \Vert J^{\{u\}}_{-1}G^{\{v\}}_{-1/2}v_{\nu,\ell_0}\Vert^2=&((\theta_i|\xi+\nu)(\phi(u)|u)-\beta_k(\phi(u),u))\Vert G^{\{v\}}_{-1/2}v_{\nu,\ell_0}\Vert^2.
 \end{align*}
%To prove \eqref{JGv}, note that in this case
Indeed,
\begin{align*}
&H(J^{\{u\}}_{-1}G^{\{v\}}_{-1/2}v_{\nu,\ell_0},J^{\{u\}}_{-1}G^{\{v\}}_{-1/2}v_{\nu,\ell_0})\\&=-H(G^{\{\phi(v)\}}_{1/2}J^{\{\phi(u)\}}_{1}J^{\{u\}}_{-1}G^{\{v\}}_{-1/2}v_{\nu,\ell_0},v_{\nu,\ell_0})\\
&=-H(G^{\{\phi(v)\}}_{1/2}[J^{\{\phi(u)\}}_{1},J^{\{u\}}_{-1}]G^{\{v\}}_{-1/2}v_{\nu,\ell_0},v_{\nu,\ell_0})\\
&=(\theta_i|\xi+\nu)(\phi(u)|u)H(G^{\{v\}}_{-1/2}v_{\nu,\ell_0},G^{\{v\}}_{-1/2}v_{\nu,\ell_0})-\beta_k(\phi(u),u)H(G^{\{v\}}_{-1/2}v_{\nu,\ell_0},G^{\{v\}}_{-1/2}v_{\nu,\ell_0}).\end{align*}
\end{remark}
%Let $\D^\natural$ be the set of $\h^\natural$--roots of $\g^\natural$, choose $\D^\natural_+=\D^\natural\cap \Dp$ as a set of positive roots,  and  let $\theta_i$ denote the  highest root of $\g^\natural_i$. 
Let $P^+\subset (\h^\natural)^*$ be the set of dominant integral weights  for $\g^\natural$ and let
\begin{equation}\label{p+k}P^{+}_k=\left\{\nu\in P^+\mid  \nu(\theta^\vee_i)\le M_i(k)\text{ for all $i\ge 1$}\right\}.\end{equation}
Recall that $\xi\in (\h^\natural)^*$ is  a  highest weight
of the $\g^\natural$--module $\g_{-1/2}$. Introduce the following number
\begin{equation}\label{Aknu}
 A(k,\nu)=\frac{(\nu|\nu+2\rho^\natural)}{2(k+h^\vee)}+\frac{(\xi|\nu)}{k+h^\vee}((\xi|\nu)-k-1).
 \end{equation}

\begin{prop}\label{l0nec} Assume that $k+h^\vee\ne0$. 
If the $W^k_{\min}(\g)$--module $L^W(\nu,\ell_0)$ is unitary, then $M_i(k)\in\ZZ_+$ for all $i\ge 1$, $\nu\in P^{+}_k$, and
\begin{equation}\label{89a}
\ell_0\ge A(k,\nu).
\end{equation}
\end{prop}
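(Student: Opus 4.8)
The plan is to read off each of the three conclusions from positivity of the Hermitian form $H$ of Lemma \ref{ef}, evaluated on two kinds of states: states produced by the current modes $J^{\{a\}}_n$ acting on $v_{\nu,\ell_0}$, and the single state $G^{\{v\}}_{-1/2}v_{\nu,\ell_0}$.

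First I would look at the vertex subalgebra generated by the $J^{\{a\}}$, $a\in\g^\natural$. By \eqref{secondarel} and \eqref{ab} the modes $J^{\{a\}}_n$ close on the affine Lie algebra $\bigoplus_{i\ge0}\widehat\g^\natural_i$, the $i$--th factor being at level $M_i(k)$, and $U(\bigoplus_i\widehat\g^\natural_i)\,v_{\nu,\ell_0}$ is a highest weight module whose highest weight restricts to $\nu$ on $\h^\natural$ (here $\nu$ is a genuine real weight, being purely imaginary by the standing hypothesis of Lemma \ref{ef}). Since $\phi$ restricts to a compact involution of $\g^\natural$ (Definition \ref{ac}(ii)) and $H$ is $\phi$--invariant, the restriction of $H$ to this submodule is contravariant for the compact form of $\bigoplus_i\widehat\g^\natural_i$; as $H$ is positive definite, the submodule is a unitary highest weight module, hence irreducible and integrable. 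By the classification of unitary (equivalently, integrable) highest weight modules over affine Lie algebras, this forces $M_i(k)\in\ZZ_+$ for all $i\ge1$ and $\nu$ to be dominant integral with $\nu(\theta_i^\vee)\le M_i(k)$, i.e.\ $\nu\in P^+_k$.

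For the bound \eqref{89a} I would fix a highest weight vector $v\in\g_{-1/2}$ of $\g^\natural$--weight $\xi$. By Proposition \ref{fh} (replacing $\phi$, if necessary, by the equivalent involution provided there) the form $u\mapsto\langle\phi(u),u\rangle$ is positive definite on $\g^{ac}\cap\g_{-1/2}$; being Hermitian by Lemma \ref{31} and positive definite on a real form of $\g_{-1/2}$, it is positive definite on all of $\g_{-1/2}$, so $\langle\phi(v),v\rangle>0$. Rewriting the bracket on the right of \eqref{GGv} in terms of $A(k,\nu)$ (see \eqref{Aknu}) yields
\begin{equation*}
\Vert G^{\{v\}}_{-1/2}v_{\nu,\ell_0}\Vert^{2}=2(k+h^\vee)\bigl(A(k,\nu)-\ell_0\bigr)\,\langle\phi(v),v\rangle,
\end{equation*}
so unitarity gives $(k+h^\vee)\bigl(A(k,\nu)-\ell_0\bigr)\ge0$.

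It remains to pin down the sign of $k+h^\vee$. Inspection of Table 2 shows that for every $\g$ in the list there is a simple component $\g^\natural_i$ with $(\theta_i|\theta_i)<0$; since $M_i(k)=\tfrac{2}{(\theta_i|\theta_i)}\bigl(k+\tfrac12(h^\vee-\bar h^\vee_i)\bigr)\ge0$ and $\tfrac{2}{(\theta_i|\theta_i)}<0$, we get $k+h^\vee\le\tfrac12(h^\vee+\bar h^\vee_i)$, and again from Table 2 one checks $h^\vee+\bar h^\vee_i\le0$ in each case. Together with the hypothesis $k+h^\vee\ne0$ this forces $k+h^\vee<0$, whence $A(k,\nu)-\ell_0\le0$, which is \eqref{89a}. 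I expect this last step to be the main obstacle: \eqref{89a} points in the direction opposite to what $k+h^\vee>0$ would give, so the argument genuinely relies on establishing $k+h^\vee<0$, for which there seems to be no uniform conceptual reason — one falls back on the tabulated values of $h^\vee$ and $\bar h^\vee_i$, made available by the positivity of the $M_i(k)$ obtained in the first step.
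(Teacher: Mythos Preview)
Your proof is correct and follows essentially the same route as the paper's: both deduce $M_i(k)\in\ZZ_+$ and $\nu\in P^+_k$ from unitarity of the affine submodule $V^{\beta_k}(\g^\natural)v_{\nu,\ell_0}$, and both read off \eqref{89a} from Lemma \ref{GGhalf} together with Proposition \ref{fh} and the sign $k+h^\vee<0$. The only difference is that the paper simply asserts $k+h^\vee<0$ at this point (this is consistent with Corollary \ref{uur}, but that corollary comes later and depends logically on the present proposition), whereas you actually derive it from the $M_i(k)\ge0$ just obtained and a case check in Table 2 --- a genuine improvement in rigor, and your self-identified ``main obstacle'' is exactly the place where the paper's argument is tersest.
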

\begin{proof}
In order to prove that $M_i(k)\in\ZZ_+$ for all $i\ge 1$ and $\nu\in P^+_k$, it is enough to observe that, if $L^W(\nu,\ell_0)$ is a unitary  module  over  $W^k_{\min}(\g)$, then, in particular, $V^{\be_k}(\g^\natural)v_{\nu,\ell_0}$ is a unitary  module  over  $V^{\be_k}(\g^\natural)$, hence $\nu\in P^{+}_k$ \cite{VB}, which is non-empty if and only if $M_i(k)\in\ZZ_+$ for all $i\ge 1$. 

To prove the second claim recall that, by Proposition \ref{fh}, the Hermitian  form $\langle\phi(.),.\,\rangle$ is positive definite on $\g_{-1/2}$.  Since $k+h^\vee<0$, we obtain from \eqref{GGv} that
$$
\ell_0\ge\frac{(\nu|\nu+2\rho^\natural)}{2(k+h^\vee)}-\frac{(k+1)}{k+h^\vee}(\xi|\nu)+\frac{(\xi|\nu)^2}{k+h^\vee}=A(k,\nu),
$$
as claimed.
\end{proof}  
%Let $L^W(\nu,\ell_0)$ be  a unitary highest weight  module over $W^k_{\min}(\g)$. Then it is a unitary module when restricted to  $V^{\be_k}(\g^\natural)$. In particular, it is completely reducible and it is a module over the simple affine vertex algebra $V_{\be_k}(\g^\natural)$, hence  
%$$
%L^W(\nu,\ell_0)=\bigoplus_{\mu\inP^+_k}L^W(\nu,\ell_0)(\mu),
%$$
%where $L^W(\nu,\ell_0)(\mu)$ is the isotypic component  of the  $V^{\beta_k}(\g^\natural)$-module 
%$L^\natural(\mu)$.
Consider the short exact sequence
$$0\to I^k\to W^k_{\min}(\g)\to W_k^{\min}(\g)\to 0.$$
If a  $W^k_{\min}(\g)$--module $L^W(\nu,\ell_0)$ is   unitary, then, 
restricted to the subalgebra $V^{\beta_k}(\g^\natural)$ it is unitary, hence a direct sum of irreducible
integrable highest  weight  $\widehat \g^\natural$--modules of levels $M_i(k),\  i\ge 1$.
But it is well known that all these modules descend to  $V_{\beta_k}(\g^\natural)$. Also, all these modules are annihilated 
by the elements
\begin{equation}\label{ei}
(J^{\{e_{\theta_i}\}}_{(-1)})^{M_i(k)+1}\vac,\quad i \ge 1.\end{equation}
Let $\widetilde I^k\subset I^k$ be the ideal of  $W^k_{\min}(\g)$ generated by the elements \eqref{ei}, and let 
 $\widetilde W_k^{\min}=W^k_{\min}/\widetilde I^k$. We thus obtain  

\begin{proposition}\label{8.5} If the $W^k_{\min}(\g)$--module $L^W(\nu, \ell_0)$ is unitary, then it 
descends to $\widetilde W_k^{\min}(\g)$.
\end{proposition}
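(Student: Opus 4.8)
The plan is to prove directly that the ideal $\widetilde I^k$ acts by zero on $M:=L^W(\nu,\ell_0)$; once this is established, the $W^k_{\min}(\g)$--module structure on $M$ factors through $\widetilde W_k^{\min}(\g)=W^k_{\min}(\g)/\widetilde I^k$, which is precisely the assertion. The first thing I would isolate is a general fact: the set $N=\{w\in W^k_{\min}(\g)\mid Y_M(w,z)=0\}$ is an ideal of the vertex algebra $W^k_{\min}(\g)$. Indeed, $N$ is $T$--stable since $Y_M(Tw,z)=\partial_z Y_M(w,z)$, and a standard computation with the $n$--th product identity (the Borcherds identity on the module $M$) expresses, for $a\in W^k_{\min}(\g)$, $w\in N$ and $n\in\ZZ$, the field $Y_M(a_{(n)}w,z)$ as a finite combination of products of Fourier modes of $Y_M(a,z)$ and $Y_M(w,z)$, hence $Y_M(a_{(n)}w,z)=0$. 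Since $\widetilde I^k$ is by definition the smallest ideal of $W^k_{\min}(\g)$ containing the elements $(J^{\{e_{\theta_i}\}}_{(-1)})^{M_i(k)+1}\vac$, $i\ge1$, it suffices to check that each of these generators lies in $N$.

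Next I would restrict the Hermitian form. If $M$ is unitary, with positive definite $\phi$--invariant form for an almost compact involution $\phi$ of $\g$, then restricting this form to the vertex subalgebra of $W^k_{\min}(\g)$ generated by the $J^{\{u\}}$, $u\in\g^\natural$ --- which by \eqref{ab} is $V^{\beta_k}(\g^\natural)=\bigotimes_{i\ge0}V^{M_i(k)}(\g^\natural_i)$, and on which $\phi$ acts as the compact involution of $\g^\natural$ --- exhibits $M$ as a unitary module over $V^{\beta_k}(\g^\natural)$. As recalled in the proof of Proposition \ref{l0nec} (and \cite{VB}), a unitary positive--energy module over $V^{M_i(k)}(\g^\natural_i)$ is a direct sum of integrable irreducible highest weight $\widehat\g^\natural_i$--modules of level $M_i(k)\in\ZZ_+$, and every such module is a module over the simple quotient $V_{M_i(k)}(\g^\natural_i)$; hence, for each $i\ge1$, the action of $V^{M_i(k)}(\g^\natural_i)$ on $M$ factors through $V_{M_i(k)}(\g^\natural_i)$.

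To conclude, I would use that $(J^{\{e_{\theta_i}\}}_{(-1)})^{M_i(k)+1}\vac$ lies in the copy of $V^{M_i(k)}(\g^\natural_i)$ inside $W^k_{\min}(\g)$ and is a singular vector generating its maximal ideal, so that it maps to $0$ in $V_{M_i(k)}(\g^\natural_i)$ (equivalently, it annihilates every integrable level--$M_i(k)$ module). Since the $V^{M_i(k)}(\g^\natural_i)$--action on $M$ factors through $V_{M_i(k)}(\g^\natural_i)$, this forces $Y_M((J^{\{e_{\theta_i}\}}_{(-1)})^{M_i(k)+1}\vac,z)=0$ for all $i\ge1$, i.e.\ all generators of $\widetilde I^k$ lie in $N$. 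By the first step $\widetilde I^k\subseteq N$, so $\widetilde I^k$ acts by zero on $M$, and $M$ descends to $\widetilde W_k^{\min}(\g)$.

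I do not expect a genuine obstacle: the argument simply assembles standard facts. The two points that need a little care are the general statement that the annihilator of a module is a vertex algebra ideal --- so that killing the generators of $\widetilde I^k$ kills all of $\widetilde I^k$ --- and the input from affine Kac--Moody representation theory that a unitary module over $V^{M_i(k)}(\g^\natural_i)$ is integrable of non-negative integral level, which is exactly the fact already invoked in the proof of Proposition \ref{l0nec}.
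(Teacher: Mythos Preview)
Your proof is correct and follows essentially the same approach as the paper: the paper's argument (given in the paragraph immediately preceding the proposition) is precisely that unitarity forces the restriction to $V^{\beta_k}(\g^\natural)$ to decompose as a direct sum of integrable highest weight $\widehat\g^\natural$--modules of levels $M_i(k)$, all of which are annihilated by the elements \eqref{ei}. You spell out one point the paper leaves implicit, namely that the annihilator $N$ of a module is an ideal of the vertex algebra, so that killing the generators of $\widetilde I^k$ suffices; this is a standard fact but a useful clarification.
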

%\begin{proposition} The Zhu algebra $\widetilde Z^k=Zhu_{\mathbb Z}(\widetilde W_k^{\min}(\g))$ is isomorphic to $\C[L]\otimes \left(U(\g^\natural)/I_k\right)$, where
%$I_k$ is the two-sided ideal of $U(\g^\natural)$ generated by $e_{\theta_i}^{M_i(k)+1}, \,i\ge 1$.
%\end{proposition}
%\begin{proof} The algebra $Zhu_{L_0}(W^k_{\min}(\g))$ is described in \cite{KMP}. From this description it follows that for any irreducible module over
%$\C[L]\otimes U(\g^\natural)$ there exists the corresponding irreducible $W^k_{\min}(\g)$--module, hence  $Zhu_{\mathbb Z}(W^k_{\min}(\g))$ is a quotient 
%of $\C[L]\otimes U(\g^\natural)$ by some ideal $I$. By Corollary 6.1 from \cite{KW1}, $I=0$. The claim follows.
%\end{proof}
Note that a unitary $W^k_{\min}(\g)$--module descends to $W_k^{\min}(\g)$ if and only if 
\begin{equation}\label{ik}
\widetilde I^k=I^k.
\end{equation}
\vskip5pt
\noindent{\sl  {\bf Conjecture 4.}\footnote{The proof of this conjecture will appear in a forthcoming paper joint with D. Adamovi\'c.}  Equality \eqref{ik} holds for all unitary vertex algebras $W^k_{\min}(\g)$. Consequently, any unitary $W^k_{\min}(\g)$--module descends to 
$W_k^{\min}(\g)$.}
%\vskip5pt
%\begin{itemize}
%\item Introduce $V$
%\item $V$ is exact ?  Properties
%\begin{enumerate}
%\item If $M_0$ is a module for the Zhu algebra $\mathcal Z$, then $Zhu( V(M_0))=M_0$.
%\item If $M$ is a module for $\mathcal V$ generated by $M_{top}$, and $M_{top}=M_0$, then $M$ is a quotient of 
%$V(M_0)$.
%\end{enumerate}
%\item Consider $\nu\in \widehat P^k$, consider $L_{\g^\natural}(\nu)$ and extend to a module $L_\nu=\C[L]\otimes L_{\g^\natural}(\nu)$  for $\widetilde Z^k$ by letting $\C[L]$ act by left multiplication. 
%Consider $V(L_\nu)$.
%\item $V(L_\nu)$ is a free $\C[L]$--module ?
%\item  $V(L_\nu)$ is a right $\C[L]$--module
%\item Set $\overline M^k(\nu,h) = V(L_\nu)/(V(L_\nu) Ker\,ev_h)$
%\end{itemize}

\begin{definition}\label{extr} An element $\nu\in P^+_k$ is called an {\it extremal weight} if 
$\nu+\xi$ doesn't lie in $P^+_k$.
\begin{prop}\label{boundary} If $L^W(\nu,\ell_0)$ is unitary and $\nu$ is an extremal weight,  then 
$$\ell_0=A(k,\nu).
$$
\end{prop}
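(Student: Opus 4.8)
The plan is to upgrade the inequality $\ell_0\ge A(k,\nu)$ of Proposition \ref{l0nec} to an equality by showing that, for extremal $\nu$, the strict inequality is impossible. Fix a highest weight vector $v\in\g_{-1/2}$ of weight $\xi$ for the $\g^\natural$--action and set $w=G^{\{v\}}_{-1/2}v_{\nu,\ell_0}$. Combining formula \eqref{GGv} of Lemma \ref{GGhalf} with the definition \eqref{Aknu} of $A(k,\nu)$ --- which amounts to the identity $2(k+h^\vee)A(k,\nu)=(\nu|\nu+2\rho^\natural)+2(\xi|\nu)^2-2(k+1)(\xi|\nu)$ --- one rewrites \eqref{GGv} as
$$
\Vert G^{\{v\}}_{-1/2}v_{\nu,\ell_0}\Vert^2=-2(k+h^\vee)\,(\ell_0-A(k,\nu))\,\langle\phi(v),v\rangle.
$$
By Proposition \ref{fh} we have $\langle\phi(v),v\rangle>0$, and $k+h^\vee<0$ in the case at hand (as in the proof of Proposition \ref{l0nec}); hence the hypothesis $\ell_0>A(k,\nu)$ would give $\Vert w\Vert^2>0$, and in particular $w\neq 0$.

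The next step is to observe that $w$ is an affine highest weight vector for the subalgebra $\bigotimes_i V^{M_i(k)}(\g^\natural_i)$ of $W^k_{\min}(\g)$ generated by the currents $J^{\{u\}}$, $u\in\g^\natural$. From the $\lambda$--bracket \eqref{primarel} one gets $[J^{\{u\}}_m,G^{\{v\}}_{-1/2}]=G^{\{[u,v]\}}_{m-1/2}$; since $J^{\{u\}}_m v_{\nu,\ell_0}=0$ and $G^{\{[u,v]\}}_{m-1/2}v_{\nu,\ell_0}=0$ for $m>0$, this gives $J^{\{u\}}_m w=0$ for all $m>0$ and all $u\in\g^\natural$. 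For $u\in\n^\natural_+$ one has $[u,v]=0$ because $v$ is a $\g^\natural$--highest weight vector, so $J^{\{u\}}_0 w=0$; and $J^{\{h\}}_0 w=(\nu+\xi)(h)\,w$ for $h\in\h^\natural$. Thus $w$ generates a highest weight module for the affinization of $\g^\natural$, with finite highest weight $\nu+\xi$ and level $M_i(k)$ on the $i$--th component.

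Finally, since $L^W(\nu,\ell_0)$ is $\phi$--unitary and $\phi$ restricts to a compact involution of $\g^\natural$, the restriction of the invariant form stays positive definite on the submodule generated by $w$, so that submodule is a nonzero unitary highest weight module over $\bigotimes_i V^{M_i(k)}(\g^\natural_i)$ with finite highest weight $\nu+\xi$; by \cite{VB} (exactly as used in the proof of Proposition \ref{l0nec}) this forces $\nu+\xi\in P^+_k$, contradicting the extremality of $\nu$. Therefore $\ell_0=A(k,\nu)$. The two points requiring a little care are the verification that $w$ is a genuine affine highest weight vector --- and not merely an $\h^\natural$--weight vector --- which is done above, and the invocation that a unitary highest weight module over an affine vertex algebra at non-negative integral level has dominant integral highest weight; everything else is routine bookkeeping with Borcherds' commutator formula and the $\lambda$--brackets recalled in Section \ref{7}.
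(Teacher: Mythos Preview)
Your proof is correct and follows essentially the same route as the paper's: both arguments hinge on the observation that $G^{\{v\}}_{-1/2}v_{\nu,\ell_0}$ is a singular vector for $V^{\beta_k}(\g^\natural)$ of weight $\nu+\xi$, and that unitarity forces such a vector to vanish when $\nu+\xi\notin P^+_k$, whence $\ell_0=A(k,\nu)$ by \eqref{GGv}. The only difference is organizational---the paper argues directly that $w=0$ and then reads off the equality from the vanishing norm, while you run the contrapositive---but the content is identical.
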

\begin{proof}Let $u$ be a root vector for $\xi$.
Then $G^{\{u\}}_{-1/2}v_{\nu,\ell_0}$ is a singular vector for $V^{\be_k}(\g^\natural)$. Since  $L^W(\nu,\ell_0)$ is unitary, all  vectors that are singular for 
$V^{\be_k}(\g^\natural)$ should have weight in $P^+_k$. By the assumption, we have 
$G^{\{u\}}_{-1/2}v_{\nu,\ell_0}=0$, hence the norm of this vector is $0$, and we can apply 	\eqref{GGv}.
\end{proof}
In the setting of the above proposition, note that $\nu$ is extremal iff $\nu(\theta_i^\vee)>M_i(k)+\chi_i$ for some $i$.
Moreover,  $k$ is collapsing iff $M_i(k)+\chi_i <0$ (cf. Remark \ref{7.4}).
\end{definition}
%\begin{remark} One can obtain necessary conditions for the unitarity of $L^W(\nu,\ell_0)$ by restricting the action to $\g^\natural$ and $L$, hence regarding it as a module for the vertex algebra $V^{\beta_k}(\g^\natural)\otimes Vir$, which is the universal affine vertex algebra of the  semidirect product of the affinization of $\g^\natural$ with cocycle $\beta_k$ and the Virasoro algebra. Unitary representations of these semidirect products, called {\it conformal current algebras}, were studied in 
%\cite{Kacconformal}: Proposition 1 of {\it loc. cit.} gives necessary and sufficient conditions for unitarity. It turns out that the conditions obtained in this way are already implied by Proposition \ref{l0nec}.
%and Proposition  \ref{boundary}.
%\end{remark} 
\begin{prop}\label{casi}\ 
\begin{itemize} 
\item[(a)] For
$k\ne -1$, $W_{\min}^k(sl(2|m)),\, m\ge 3, $ has no unitary highest weight modules.
In particular, $W^{\min}_k(sl(2|m)),\, m\ge 3, $ is unitary if and only if $k=-1$ and this $W$--algebra collapses  to the free boson. 
\item[(b)]
The $W$--algebra $W_{\min}^k(osp(4|m)),\, m>2,$ has  no unitary highest weight modules for all $k$.
\end{itemize}
\end{prop}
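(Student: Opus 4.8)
For part (b) the plan is simply to invoke Proposition \ref{l0nec}. If some $L^W(\nu,\ell_0)$ over $W^k_{\min}(osp(4|m))$, $m>2$ even, were unitary, then both simple levels would have to be non-negative integers; from Table 2, $M_1(k)=k-\tfrac m2\ge0$ forces $k\ge\tfrac m2\ge2$, while $M_2(k)=-\tfrac12k-1\ge0$ forces $k\le-2$, which is impossible (the excluded value $k=-h^\vee=m-2$ never enters the discussion). So $W^k_{\min}(osp(4|m))$ has no unitary highest weight module for any $k$, and (b) is done.

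For part (a) set $\g=sl(2|m)$ with $m\ge3$. The plan has three parts: (i) use Proposition \ref{l0nec} to cut the levels down to negative integers; (ii) exclude $k\le-2$ by a sign computation involving the centre $\g^\natural_0$ of $\g^\natural$; (iii) identify $k=-1$ with the free boson. For (i): Table 2 gives the single simple level $M_1(k)=-k-1$, so unitarity of $L^W(\nu,\ell_0)$ forces $-k-1\in\ZZ_+$, hence $k\in\{-1,-2,-3,\dots\}$ (and $k+h^\vee=k+2-m\ne0$ there). For (iii): at $k=-1$ we have $M_1(-1)=0$, so $p(-1)=0$ and by Theorem \ref{oldresults}(1) the level is collapsing; since $\beta_{-1}$ vanishes on $[\g^\natural,\g^\natural]=sl_m$, the $sl_m$-part trivializes and $W^{\min}_{-1}(sl(2|m))$ collapses (cf. \cite{AKMPP}) onto the free boson generated by $J^{\{c\}}$, $c$ a basis vector of $\g^\natural_0$; this vertex algebra and its Fock modules are unitary by the analysis of Section \ref{freeb} (cf. \eqref{formmu}), so in particular $W^{\min}_{-1}(sl(2|m))$ is unitary.

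The heart of (a) is step (ii). I would fix $k\le-2$; then $M_0(k)=k+\tfrac{2-m}{2}<0$ and $M_1(k)=-k-1>0$, so $p(k)\ne0$ and $k$ is non-collapsing, whence by Proposition \ref{converse} the conjugate linear involution of $W^k_{\min}(\g)$ carrying the unitary form on $L^W(\nu,\ell_0)$ is induced by a conjugate linear involution $\phi$ of $\g$ with $\phi(e)=e$, $\phi(x)=x$, $\phi(f)=f$. Restricting the invariant form to the affine subalgebra $\bigotimes_{i\ge1}V^{M_i(k)}(\g^\natural_i)=V^{M_1(k)}(sl_m)$ shows, as in the proof of Proposition \ref{l0nec}, that $\phi|_{sl_m}$ is conjugate to the compact involution; after conjugating $\phi$ by an inner automorphism of $sl_m$ (which fixes $e,x,f$) we may assume $\phi|_{sl_m}$ is compact. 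Now I would use that $sl(2|m)$ is of type I: as a $\g^\natural$-module $\g_{-1/2}=V_1\oplus V_2$ with $V_1\cong\C^m$ the standard $sl_m$-module, $V_2\cong(\C^m)^*$ its dual, and $c$ acting on $V_1,V_2$ by the opposite nonzero scalars $\pm(m+2)$. Since $m\ge3$, $\C^m\not\cong(\C^m)^*$, whereas $\overline{\C^m}\cong(\C^m)^*$ for the compact form; hence the conjugate linear, $su_m$-equivariant involution $\phi$ of $\g_{-1/2}$ must interchange $V_1$ and $V_2$, and comparing the $c$-action on the two summands (using that $V_2$ is an irreducible nontrivial $sl_m$-module) forces $\phi(c)=-c$. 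Then $J^{\{c\}}$ is bosonic, primary of conformal weight $1$, and $g(J^{\{c\}})=(-1)^{1}J^{\{\phi(c)\}}=(-1)J^{\{-c\}}=J^{\{c\}}$, so invariance of the form makes $J^{\{c\}}_{-n}$ the adjoint of $J^{\{c\}}_n$; by \eqref{secondarel} and $[c,c]=0$,
\[
H\big(J^{\{c\}}_{-1}v_{\nu,\ell_0},J^{\{c\}}_{-1}v_{\nu,\ell_0}\big)=H\big(v_{\nu,\ell_0},[J^{\{c\}}_1,J^{\{c\}}_{-1}]v_{\nu,\ell_0}\big)=\beta_k(c,c)\,H(v_{\nu,\ell_0},v_{\nu,\ell_0})=M_0(k)\,(c|c)\,H(v_{\nu,\ell_0},v_{\nu,\ell_0}).
\]
A direct computation ($c=\mathrm{diag}(\tfrac m2I_2,I_m)\in sl(2|m)_{\bar0}$ gives $(c|c)=\operatorname{str}(c^2)=\tfrac{m(m-2)}{2}$) shows $(c|c)>0$, while $M_0(k)<0$ and $H(v_{\nu,\ell_0},v_{\nu,\ell_0})>0$; so the left side is strictly negative, contradicting unitarity. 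Hence $k=-1$, and combined with (iii): $W^k_{\min}(sl(2|m))$ (equivalently $W^{\min}_k(sl(2|m))$) has no unitary highest weight module unless $k=-1$, and the ``in particular'' follows because for $k\ne-1$ even the adjoint module is not unitary.

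The step I expect to cost the most is forcing $\phi(c)=-c$: one must argue carefully that a conjugate linear involution of $\g$ which is compact on $[\g^\natural,\g^\natural]=sl_m$ cannot stabilize each of the two $\g^\natural$-summands of $\g_{-1/2}$ once $m\ge3$ (the input being $\overline{\C^m}\cong(\C^m)^*\not\cong\C^m$), and then that the interchange of the summands pins the sign of $\phi$ on the one-dimensional centre. The remaining points---that $k\le-2$ is non-collapsing with $k+h^\vee\ne0$, the value of $(c|c)$, and the collapse statement at $k=-1$---are routine checks against Table 2 and \cite{AKMPP}.
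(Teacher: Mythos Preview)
Your argument for (b) is exactly the paper's argument.

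For (a), your approach is correct but genuinely different from the paper's, and in fact shorter. Both proofs agree through the point where one knows $\phi|_{sl_m}$ is compact at a non-collapsing level. From there the paper runs the norm computation of $J^{\{\varpi\}}_{-1}v_{\nu,\ell_0}$ \emph{first}, deduces from positivity that $\psi(\varpi)=+\varpi$, and then---since $\psi$ now preserves each $\varpi$-eigenspace $\g_{-1/2}^{\pm}$ and these are isotropic for $\langle\cdot,\cdot\rangle$---obtains $\langle\psi(u),u\rangle=0$ for all $u\in\g_{-1/2}$; a further page-long calculation with \eqref{GGsimplifiedfurther} then shows every $G^{\{u\}}$ acts by zero on $L^W(\nu,\ell_0)$, contradicting non-collapse. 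You instead pin down the sign of $\phi$ on the centre by pure representation theory: since $\phi|_{sl_m}$ is compact and the standard $sl_m$-module is not self-conjugate for $m\ge3$, $\phi$ must swap the two summands of $\g_{-1/2}$, which forces $\phi(c)=-c$; then the very same norm computation gives $\|J^{\{c\}}_{-1}v\|^2=\beta_k(c,c)=M_0(k)(c|c)<0$ directly. In effect, your rep-theoretic step and the paper's norm computation force opposite signs on the centre, so either route yields the contradiction; yours avoids the $G^{\{u\}}$ analysis entirely. The step you flagged as delicate---that a conjugate linear involutive $su_m$-intertwiner cannot preserve $V_1\cong\C^m$---is just the statement that the standard representation of $su_m$ has no real structure for $m\ge3$ (its highest weight is not $-w_0$-fixed), so $\phi(V_1)=V_2$. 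One minor slip: the eigenvalues of $c=\varpi$ on $\g_{-1/2}^{\pm}$ are $\pm(m/2-1)$ (the paper's \eqref{id} gives $\pm m/2$, also off), not $\pm(m+2)$; but only ``opposite nonzero reals'' is used, so the argument is unaffected.
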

\begin{proof} (a) Let 
$\g=sl(2|m)$. Then $\g_0^\natural= \C \varpi$, where $\varpi=\left(
\begin{array}{cc|c}
m/2 & 0 &  0\\0 & m/2   & 0\\   \hline 0& 0 & I_m
\end{array}\right)$, and $(a|b)=str(ab)$. By Theorem \ref{oldresults}, the  collapsing levels are $k=-1$ and $k=m/2-1$.\par
If $k=-1$ then $M_0(-1)=-m/2,  M_1(-1)=0$ and  $W_k^{\min}(\g)$ is the Heisenberg vertex algebra $M(\C\varpi)=V^{-m/2}(\C\varpi)=V_{-m/2}(\C\varpi)$ and this vertex algebra is unitary. \par If $k=m/2-1$ then  $M_0(m/2-1)=0,\  M_1(m/2-1)=-m/2$ and $W_k^{\min}(sl(2|m))=V_{-m/2}(sl(m))$ which has no unitary highest weight modules.\par
Assume that $k$ is not collapsing.  Let  $\psi$ be a conjugate linear involution of $W^k_{\min}(sl(2|m))$ such that $L^W(\nu,\ell_0)$  has a  positive definite $\psi$--invariant Hermitian form 
$H$, normalized by the condition $H(v_{\nu,\ell_0},v_{\nu,\ell_0})=1$. By Proposition \ref{converse}, the involution $\psi$ is induced by an involution $\psi$ on $\g$ satisfying \eqref{primas}. This implies that $\psi(\varpi)=\zeta\varpi$ with $|\zeta|=1$.

 The vertex algebra 
$V^{k-m/2-1}(\C\varpi)\otimes V^{-k-1}(sl(m))$ embeds in  $W^k_{\min}(sl(2|m))$. In particular, $(V^{k-m/2-1}(\C\varpi)\otimes V^{-k-1}(sl(m))). v_{\nu,\ell_0}$ is a unitary module.
This implies that $\psi_{|sl(m)}$ corresponds to a compact real form of $sl(m)$ and $-k-1\in\Z_+$.
%Let $\langle b \rangle_{ev}$ denote the expectation value of $b$, i.e.
%the coefficient of the projection of $b$ on the vacuum vector $\vac$. 
Using the formulas given in
\cite[\S 5.3]{KMP} we have 
\begin{align*}0\le H(J^{\{\varpi\}}v_{\nu,\ell_0},J^{\{\varpi\}}v_{\nu,\ell_0})&=H(-J^{\{\psi(\varpi)\}}_1J^{\{\varpi\}}_{-1}v_{\nu,\ell_0},v_{\nu,\ell_0})\\&=-(k-m/2-1)\zeta^{-1} (\varpi|\varpi)=-\zeta^{-1}(k-m/2-1)(m^2/2-m).
\end{align*}
Therefore $\zeta=1$, so that 
\begin{equation}\label{=pi}\psi(\varpi)=\varpi.
\end{equation}
%Note that, from the relation $[J^{\{a\}},G^{\{u\}}]=J^{\{[a,u]\}}$, $a\in\g^\natural, u\in \g_{-1/2}$,  it follows that $\psi$ can be extended to $\g_{-1/2}$, in such a way that $\psi([a,u])=[\psi(a),\psi(u)]$.
%Consider  the following symmetric bilinear  form on $\g_{-1/2}$ 
%\begin{equation}\label{sesq}\langle u,v\rangle=(e|[u,v]).\end{equation}
Note that 
\begin{equation}\label{id} [\varpi, u]=\pm\tfrac{m}{2}u,\quad u\in\g_{-1/2}.
\end{equation}
Write $\g_{-1/2}=\g_{-1/2}^+\oplus\g_{-1/2}^-$ for the corresponding eigenspace decomposition. Since\break  $\psi(\varpi)=\varpi$, we have $\psi(\g^\pm_{-1/2})=\g^\pm_{-1/2}$. Since the form $\langle.,\,.\rangle$ is $\g^\natural$--invariant, we have
$$
\langle \g^+,\g^+\rangle=\langle \g^-,\g^-\rangle=0.
$$
It follows that, if $u\in\g_{-1/2}$,
\begin{equation}\label{uu=0}
\langle\psi(u),u\rangle=0.
\end{equation}
Observe now that by \cite{AKMPP}, since $k$ is not collapsing, the image of $G^{\{u\}}$ in $W_k^{\min}(\g)$ is non-zero if $u\ne 0$. 
We observe that,
since $g(G^{\{u\}})=G^{\{\psi(u)\}}$ and $G^{\{u\}}$ is primary, for $n\in \tfrac{1}{2}+\mathbb Z_+$
\begin{align}\label{HGG}
H(G^{\{u\}}_{-n}v,G^{\{u\}}_{-n}v)&=H(G^{\{\psi(u)\}}_{n}G^{\{u\}}_{-n},v)\\\notag
&=H([G^{\{\psi(u)\}}_{n},G^{\{u\}}_{-n}]v,v)
\end{align}
for any $v\in L^W(\nu,\ell_0)$. Using Borcherds' commutator formula
$$
[G^{\{\psi(u)\}}_{n},G^{\{u\}}_{-n}]=\sum_j\binom{n+\tfrac{1}{2}}{j}({G^{\{\psi(u)\}}}_{(j)}G^{\{u\}})_{0},
$$
and combining  formulas \eqref{GGsimplifiedfurther} and \eqref{uu=0}, we obtain
\begin{align}\label{nuova33}
&[G^{\{\psi(u)\}}_{n},G^{\{u\}}_{-n}]=-2(k+h^\vee)\langle \psi(u),u\rangle L_0+\langle \psi(u),u\rangle\sum_{\alpha=1}^{\dim \g^\natural} 
:J^{\{u^\alpha\}}J^{\{u_\alpha\}}:_0+\\\notag
&2\sum_{\a,\be}\langle[u_\alpha,\psi(u)],[u,u^\be]\rangle:J^{\{u^\a\}}J^{\{u_\be\}}:_0
 +4n(k+1)J^{\{[[e_{\theta},\psi(u)],u]^{\natural}\}}_0 \\\notag
&+ (2n+1)  \sum_{\a,\be}\langle[u_\alpha,\psi(u)],[u,u^\be]\rangle
  J^{\{ [u^\a,u_\be]\}}_0+(2n^2-\tfrac{1}{2})p(k)\langle \psi(u),u\rangle\\\notag
&=2\sum_{\a,\be}\langle[u_\alpha,\psi(u)],[u,u^\be]\rangle:J^{\{u^\a\}}J^{\{u_\be\}}:_0
 +4n(k+1)J^{\{[[e_{\theta},\psi(u)],u]^{\natural}\}}_0 \\\notag
&+ (2n+1)  \sum_{\a,\be}\langle[u_\alpha,\psi(u)],[u,u^\be]\rangle
  J^{\{ [u^\a,u_\be]\}}_0.
\end{align}
Now we compute \eqref{HGG} for $v\in L^W(\nu,\ell_0)_{\ell_0}$.  As in the proof of Lemma \ref{GGhalf}, using \eqref{nuova3}, \eqref{nuova5} with $\psi$ instead of $\phi$, we find that \eqref{nuova33} becomes, with the notation of the proof of Lemma \ref{GGhalf},
\begin{align}\notag
H([G^{\{\psi(u)\}}_{n},G^{\{u\}}_{-n}]v,v)&=(2n+1)\sum_{\a,\be}\langle[u_\alpha,\psi(u)],[u,u^\be]\rangle H(J^{\{u^\a\}}_0J^{\{u_\be\}}_0v,v)\\\label{a}
&=(2n+1)\sum_{i,j}\langle[u_i,\psi(u)],[u,u^j]\rangle H(J^{\{u^i\}}_0J^{\{u_j\}}_0v,v)\\\label{b}
&+(2n+1)\sum_{\gamma\in\D^\natural}\langle[u_\gamma,\psi(u)],[u,u^\gamma]\rangle H(J^{\{u^\gamma\}}_0J^{\{u_\gamma\}}_0v,v).
\end{align} 
Recall that $\psi$ ia a compact involution of $[\g^\natural,\g^\natural]$, hence, by \eqref{phiha}, $\psi(u_\gamma)\in\g^\natural_{-\gamma}$, so that, for some constant $b$ we have 
$$\langle[u_\gamma,\psi(u)],[u,u^\gamma]\rangle=b \langle \psi( [u,u^\gamma]),[u,u^\gamma]\rangle, $$
so,  by \eqref{uu=0}, the summand  \eqref{b} is zero. \par The summand \eqref{a}  vanishes  since 
$\langle[u_i,\psi(u)],[u,u^j]\rangle $ is a multiple of $\langle \psi(u),u\rangle=0$. This shows that 
$Y^{L^W(\nu,\ell_0)}(G^{\{u\}},z)v=0$. 
By relation \eqref{primarel}, $G^{\{u\}}_n A_m v=0$ with $A\in V^{\beta_k}(\g^\natural)$ for all $n, m$, hence, since $G^{\{u\}}$ is primary,  $$Y^{L^W(\nu,\ell_0)}(G^{\{u\}},z)L^W(\nu,\ell_0)=0.$$
Hence $G^{\{u\}}$ lies in a proper ideal of $W^{k}_{\min}(\g)$, contradicting the fact that, since the level is not collapsing,  $G^{\{u\}}$ is non zero in $W_{k}^{\min}(\g)$.

(b)
 For $\g=osp(4|m)$, the  conditions of Proposition \ref{l0nec} imply
$k-m/2\in\mathbb Z_+,\ -\frac{1}{2} k-1\in \mathbb Z_+.
$
These relations  are never satisfied at the same time.
\end{proof}
\begin{prop}\label{810} Non-trivial unitary irreducible highest weight $W^k_{\min}(\g)$--modules
with $k\ne -h^\vee$ may exist only in the following cases
\begin{enumerate}
\item $\g=sl(2|m),\ m\ge 3$, $k=-1$ (then $W^{\min}_k=W_{\min}^k$ is a free boson);
\item  $\g=psl(2|2)$, $-k\in\mathbb N+1$;
\item  $\g=spo(2|3)$, $-k\in\frac{1}{4}(\mathbb N+2)$;
\item $\g=spo(2|m),\ m>4$, $-k\in\frac{1}{2}(\mathbb N+1)$;
%\item   $\g=D(2,1;-\frac{m}{m+n})$, $-k\in\frac{mn}{m+n}\nat,\ m,n\in\mathbb N$ are coprime, $k\ne -\frac{1}{2}$;
\item   $\g=D(2,1;\frac{m}{n})$, $-k\in\frac{mn}{m+n}\nat,\ m,n\in\mathbb N$ are coprime, $k\ne -\frac{1}{2}$;
\item   $\g=F(4)$, $-k\in\frac{2}{3}(\mathbb N+1)$;
\item  $\g=G(3)$, $-k\in\frac{3}{4}(\mathbb N+1)$.
\end{enumerate}
\end{prop}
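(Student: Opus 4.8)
The plan is to extract every constraint appearing in the statement from results already established: the necessary conditions of Propositions~\ref{l0nec} and~\ref{casi}, the list of collapsing levels in Theorem~\ref{oldresults}, and the descent property of Proposition~\ref{8.5}. First I would dispose of the superalgebras governed by Proposition~\ref{casi}: part~(b) says that $\g=osp(4|m)$ with $m>2$ even admits no non-trivial unitary highest weight module at any level, so $osp(4|m)$ does not occur; part~(a) says that $\g=sl(2|m)$ with $m\ge3$ admits such a module only at $k=-1$, where $W^{\min}_{-1}(\g)$ is a free boson (which does have non-trivial unitary modules). This is case~(1), and there remain the six superalgebras with semisimple $\g^\natural$: $psl(2|2)$, $spo(2|3)$, $spo(2|m)$ with $m>4$, $D(2,1;a)$, $F(4)$ and $G(3)$.

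For each of these, assuming $L^W(\nu,\ell_0)$ is a non-trivial unitary $W^k_{\min}(\g)$--module, Proposition~\ref{l0nec} applies (as $k\ne-h^\vee$) and gives $M_i(k)\in\ZZ_+$ for every simple component $\g^\natural_i$. Substituting the values from Table~2, this integrality already furnishes nearly the stated condition: $-k-1\in\ZZ_+$ for $psl(2|2)$, $-4k-2\in\ZZ_+$ for $spo(2|3)$, $-2k-1\in\ZZ_+$ for $spo(2|m)$ with $m>4$, $-\tfrac{3}{2}k-1\in\ZZ_+$ for $F(4)$, $-\tfrac{4}{3}k-1\in\ZZ_+$ for $G(3)$, and the pair $-(1+a)k-1\in\ZZ_+$, $-\tfrac{1+a}{a}k-1\in\ZZ_+$ for $D(2,1;a)$. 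In the last case, since $(M_1(k)+1)/(M_2(k)+1)=a$, the parameter $a$ is forced to be a positive rational, $a=m/n$ with $\gcd(m,n)=1$; writing $N_i=M_i(k)+1\in\NN$, the identity $N_2=N_1n/m$ yields $m\mid N_1$, hence $-k=\tfrac{mn}{m+n}r$ with $r=N_1/m\in\NN$, i.e. $-k\in\tfrac{mn}{m+n}\NN$.

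To sharpen $\ZZ_+$ to $\NN+1$ (and its analogues), and to produce the exclusion $k\ne-\tfrac{1}{2}$, I would eliminate every level at which some $M_i(k)$ vanishes. Assume first that $\g^\natural$ is simple and $M_1(k)=0$. Then $\beta_k\equiv0$ on $\g^\natural$, and the generator \eqref{ei} of $\widetilde I^k$ is just $J^{\{e_{\theta_1}\}}$; from the $\lambda$--brackets \eqref{secondarel}, \eqref{primarel} and \eqref{GGsimplifiedfurther} one sees, in turn, that the ideal generated by $J^{\{e_{\theta_1}\}}$ contains all $J^{\{u\}}$ with $u\in\g^\natural$ (here $\g^\natural$ simple is used), then all $G^{\{v\}}$ with $v\in\g_{-1/2}$ (using $[\g^\natural,\g_{-1/2}]=\g_{-1/2}$), and then $L$ (from the $GG$--bracket, using $k+h^\vee\ne0$ and nondegeneracy of $\langle\cdot,\cdot\rangle$ on $\g_{-1/2}$); hence $\widetilde W_k^{\min}(\g)=\C$. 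Since a unitary module descends to $\widetilde W_k^{\min}(\g)$ by Proposition~\ref{8.5}, it would be trivial, a contradiction. Thus $M_1(k)\ge1$, and the conditions above become $-k\in\NN+1$ for $psl(2|2)$, $-k\in\tfrac{1}{4}(\NN+2)$ for $spo(2|3)$, $-k\in\tfrac{1}{2}(\NN+1)$ for $spo(2|m)$ with $m>4$, $-k\in\tfrac{2}{3}(\NN+1)$ for $F(4)$ and $-k\in\tfrac{3}{4}(\NN+1)$ for $G(3)$.

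For $D(2,1;a)$ I would apply the same argument to whichever of $M_1(k),M_2(k)$ equals $0$. If both vanish, which occurs exactly for $a=1$, $k=-\tfrac{1}{2}$, then $\widetilde I^k$ kills both simple ideals of $\g^\natural$ and, as before, $\widetilde W_k^{\min}(\g)=\C$, so $k=-\tfrac{1}{2}$ admits no non-trivial unitary module and must be excluded. If only $M_1(k)=0$ (the case $M_2(k)=0$ is symmetric), the same computation shows $\widetilde W_k^{\min}(\g)$ is a quotient of $V^{M_2(k)}(\g^\natural_2)$, so non-triviality forces $M_2(k)\ge1$; combined with $M_1(k)=0$ this gives $a=1/n$ with $n\ge2$ and $-k=\tfrac{n}{n+1}$, which already lies in $\tfrac{mn}{m+n}\NN$ (take $(m,n)=(1,n)$, $r=1$). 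This yields case~(5). I expect the only fiddly point to be exactly this accounting at the collapsing levels---verifying that precisely the claimed boundary values of $k$ survive in the $D(2,1;a)$ family and that the shifts by $1$ appear correctly in the other cases; the substantive input (Propositions~\ref{l0nec}, \ref{casi}, \ref{8.5} and Theorem~\ref{oldresults}) is already available.
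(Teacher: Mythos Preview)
Your argument is correct and follows the paper's approach: Proposition~\ref{casi} disposes of $sl(2|m)$ and $osp(4|m)$; for the rest, Proposition~\ref{l0nec} gives $M_i(k)\in\ZZ_+$, and Table~2 converts this into the stated arithmetic progressions, with the $D(2,1;a)$ case handled separately.

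Where you differ is in being more explicit about the boundary $M_i(k)=0$. The paper simply declares the condition $M_i(k)\in\ZZ_+$ ``obviously equivalent'' to the stated range on $k$, but for $psl(2|2)$, $spo(2|m)$ with $m>4$, $F(4)$ and $G(3)$ the value $M_1(k)=0$ is \emph{not} excluded by $k\ne-h^\vee$ (only for $spo(2|3)$ do the two coincide), so some argument is needed to pass from $M_1(k)\in\ZZ_+$ to $M_1(k)\in\nat$. You supply it via Proposition~\ref{8.5}, showing from the $\lambda$-brackets that when $M_1(k)=0$ the ideal $\widetilde I^k$ contains all of $J^{\{u\}}$, $G^{\{v\}}$ and $L$, so $\widetilde W_k^{\min}(\g)=\C$ and every unitary module is trivial. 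The paper presumably has the collapsing-level statement $W_k^{\min}(\g)=\C$ of Theorem~\ref{oldresults}(2)(a) in mind, though that concerns the simple quotient and would strictly require Conjecture~4 to conclude anything about $W^k_{\min}(\g)$-modules; your route through $\widetilde W_k^{\min}(\g)$ avoids this issue. Your separate discussion of the $D(2,1;a)$ case with exactly one $M_i(k)=0$ is more than necessary---such levels already lie in $\tfrac{mn}{m+n}\NN$ and need no exclusion---but it does no harm.
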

\begin{proof} By Proposition \ref{casi}, we may assume that $\g$ is not one of the Lie superalgebras $sl(2|m)$ woth $m\ge 3$ or 
$osp(4|m)$ with $m>2$. The remaining cases are treated, using only the easy necessary conditions $M_i=M_i(k)\in\mathbb Z_+$ for all $i$. In all cases, except for $\g=D(2,1;a)$, the condition $M_i\in \mathbb Z_+$ is obviously equivalent to the condition on $k$, given in the statement of the proposition. \par
Consider the remaining case $\g=D(2,1;a)$. By this we mean the contragredient Lie superalgebra with Cartan matrix $\begin{pmatrix}
0 & 1 & a\\ -1 & 2 & 0\\ -1 & 0 & 2 \end{pmatrix}$. 
%By Proposition \ref{casi} (a) we may exclude case (1). 
By Proposition \ref{l0nec},  we need to find the values of $a$ such that $M_i=M_i(k),\  i=1,2,$ from Table 2 are non-negative integers. These conditions imply that
\begin{equation}\label{NV}
k=-\tfrac{M_1+1}{a+1}\text{ and } k=-\tfrac{(M_2+1)a}{a+1}. \text{ where $M_1, M_2\in\mathbb Z_+$}.
\end{equation}
Equating these two expressions for $k$, we obtain $a=\tfrac{M_1+1}{M_2+1}$ is a positive rational number. Inserting this in either of the expressions 	\eqref{NV} for $k$, we obtain
$$k=-\tfrac{(M_1+1)(M_2+1)}{(M_1+1)+(M_2+1)},
$$
proving the claim. ($k=-1/2$ corresponds to the trivial $D(2,1;1)$--module.)
\end{proof}
\begin{definition}
Given $\g$ in the above list, we call the corresponding set of values of $k\neq -h^\vee$  the {\it unitarity range} of $W^k_{\min}(\g)$. 
\end{definition}
\begin{remark} For  $\g=D(2,1;a)$, there are actually three possible choices of the minimal root. We now describe how  the unitarity range depends on this choice.  We choose
$\{2\e_1,2\e_2,2\e_3\}$ as the set of positive roots in $\g_{\bar 0}$: hence, if $-\theta$ is a minimal root, then $\theta=2\e_i$ for some $1\leq i\leq 3$.
The bilinear form $(.|	.),$ displayed in   Table 1, corresponds to the choice $\theta=2\e_1$, so that $(2\e_1|2\e_1)=2$. If we choose $\theta=2\e_2$, then the bilinear form $(.|.)$ is  given by 
$$
(\e_1|\e_1)=-\tfrac{1+a}{2},\ (\e_2|\e_2)=\tfrac{1}{2}, \ (\e_3|\e_3)=\tfrac{a}{2}, \ (\e_1|\e_2)=(\e_1|\e_3)=(\e_2|\e_3)=0.$$
We have $M_1(k)=-\tfrac{1}{1+a}k-1, M_2(k)=\tfrac{1}{a}k-1$. Then $a=-\tfrac{m}{m+n}, \ m,n\in \mathbb N,$ $m$ and $n$ are coprime (i. e.  $a\in\mathbb Q$, $-1<a<0$) and in turn  $k\in - \tfrac{mn}{m+n}\mathbb N$.
If we choose $\theta=2\e_3$, then the bilinear form $(.|.)$ is  given by 
$$(\e_1|\e_1)=-\tfrac{a+1}{2a},\ (\e_2|\e_2)=\tfrac{1}{2a}, \ (\e_3|\e_3)=\tfrac{1}{2}, \ (\e_1|\e_2)=(\e_1|\e_3)=(\e_2|\e_3)=0.
$$
We have $M_1(k)=-\tfrac{a}{1+a}k-1,\ M_2(k)=ak-1$. Then $a=-\tfrac{m+n}{m}, \ m,n\in \mathbb N,$ $m$ and $n$ are coprime (i. e.  $a\in\mathbb Q$, $a<-1$) and in turn  $k\in - \tfrac{mn}{m+n}\mathbb N$.\par

Recall that one obtains isomorphic superalgebras of the family $D(2,1;a), a\ne 0,-1,$ under the action of the group $S_3$, generated by the transformations $a\mapsto 1/a, a\mapsto  -1-a$. These transformations permute transitively the domains $\mathbb Q_{>0}, 
\mathbb Q_{>-1}\cap \mathbb Q_{<0}$ and $\mathbb Q_{<-1}$, which correspond to the above three cases. 
\end{remark}
\begin{cor}\label{uur} If $k$ is from the unitarity range for $W^k_{\min}(\g)$, then $k+h^\vee$ is a negative rational number.
\end{cor}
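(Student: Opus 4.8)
The plan is a finite verification over the seven families of $\g$ listed in Proposition \ref{810}, using the data of Table 2. Rationality of $k+h^\vee$ is immediate: in each family the constraint of Proposition \ref{810} exhibits $k$ as a rational number (it is $-1$ for $\g=sl(2|m)$, a negative rational times a positive integer for $\g=D(2,1;\tfrac mn)$, and a fixed positive rational translate of $-\mathbb N$ scaled by a fixed positive rational in the remaining five families), while $h^\vee\in\QQ$ by Table 2; hence $k+h^\vee\in\QQ$. The family $\g=sl(2|m)$, $m\ge3$, is then settled at once: there $k=-1$ and $h^\vee=2-m$, so $k+h^\vee=1-m\le-2<0$. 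Thus the substance is to prove $k+h^\vee<0$ in the other six cases, where $\g^\natural$ is semisimple.

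For those six cases I would argue uniformly. Pick a simple component $\g^\natural_i$ of $\g^\natural$ (there is at least one). By the very definition of the unitarity range --- equivalently, by the necessary conditions of Proposition \ref{l0nec} used to derive Proposition \ref{810} --- we have $M_i(k)\in\ZZ_+$, in particular $M_i(k)\ge0$. From $M_i(k)=\tfrac{2}{u_i}\bigl(k+\tfrac{h^\vee-\bar h^\vee_i}{2}\bigr)$ and $u_i<0$ (read off Table 2: $u_i<0$ for every simple component of every $\g$ in these six families), this forces $k+\tfrac{h^\vee-\bar h^\vee_i}{2}\le0$, i.e.
$$k+h^\vee\ \le\ \tfrac12\bigl(h^\vee+\bar h^\vee_i\bigr).$$
Inspecting Table 2 one checks $h^\vee+\bar h^\vee_i\le0$ in all six cases, the only equality occurring for $\g=spo(2|3)$ ($h^\vee=\tfrac12$, $\bar h^\vee_1=-\tfrac12$); hence $k+h^\vee\le0$, and equality would give $k=-h^\vee$, contradicting the condition $k\ne-h^\vee$ built into the definition of the unitarity range. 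Therefore $k+h^\vee<0$.

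There is no serious obstacle here; the proof is bookkeeping from Proposition \ref{810} and Table 2, and the only points needing care are reading the signs of $u_i$ and of $h^\vee+\bar h^\vee_i$ correctly and recalling that $\mathbb N$ denotes the \emph{positive} integers (so $m\ge5$ in the family $spo(2|m)$, $m>4$, whence $h^\vee=2-\tfrac m2\le-\tfrac12$). As an independent cross-check one may instead substitute the bounds of Proposition \ref{810} into $k+h^\vee$ directly: $psl(2|2)$ gives $k\le-2$, $h^\vee=0$; $spo(2|3)$ gives $k\le-\tfrac34$, $h^\vee=\tfrac12$; $spo(2|m)$, $m\ge5$, gives $k\le-1$, $h^\vee\le-\tfrac12$; $D(2,1;\tfrac mn)$ gives $k<0$, $h^\vee=0$; $F(4)$ gives $k\le-\tfrac43$, $h^\vee=-2$; $G(3)$ gives $k\le-\tfrac32$, $h^\vee=-\tfrac32$ --- in every case $k+h^\vee<0$.
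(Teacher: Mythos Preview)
Your proof is correct and takes essentially the same approach as the paper, which simply states the result as an immediate corollary of Proposition~\ref{810} without further argument; your direct case-by-case check at the end is exactly what the paper tacitly invokes. Your additional uniform argument via $M_i(k)\ge0$, the sign of $u_i$, and the inequality $h^\vee+\bar h^\vee_i\le0$ is a neat repackaging of the same data from Table~2 and is also correct, including the handling of the borderline $spo(2|3)$ case through the exclusion $k\ne-h^\vee$.
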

 \section{Free field realization of minimal $W$--algebras}\label{8}

Let $\Psi: W^k_{\min}(\g)\to \mathcal V^k=V^{k+h^\vee}(\C x)\otimes V^{\alpha_k}(\g^\natural)\otimes F(\g_{1/2})$ be the free field realization introduced  in \cite[Theorem 5.2]{KW1}; it is explicitly given on the generators of $W^k_{\min}(\g)$ by 
\begin{align}
\label{FFR1}&   J^{\{ b \}} \mapsto b + \frac{1}{2}
      \sum_{\alpha \in S_{1/2}}: \Phi^{\alpha}
      \Phi_{[u_{\alpha},b]}: (b \in \fg^{\natural}), \\
\label{FFR2}&    G^{\{ v \}} \mapsto \sum_{\alpha \in S_{1/2}}
         : [v,u_{\alpha}]\Phi^{\alpha}:
         -(k+1)\sum_{\alpha \in S_{1/2}}
         (v|u_{\alpha})  T \Phi^{\alpha}\\
\notag&     +\frac{1}{3} \sum_{\alpha ,\beta \in S_{1/2}}
         : \Phi^{\alpha}
         \Phi^{\beta}\Phi_{[u_{\beta},[u_{\alpha},v]]}
         : (v \in \fg_{-1/2})\, , \\
\label{FFR4}&       L \mapsto \frac{1}{2 (k+h^\vee)}
         \sum_{\alpha \in S_0} :u_{\alpha}
         u^{\alpha}:+ \frac{k+1}{k+h^\vee}T x +\frac{1}{2}
         \sum_{\alpha \in S_{1/2}} : (T \Phi^{\alpha})
         \Phi_{\alpha}: .
\end{align}\par
Recall that $F(\g_{1/2})$ is the universal enveloping vertex algebra of the (non-linear) Lie conformal superalgebra $\C[T]\otimes \g_{1/2}$ with
$[a_\l b]=\langle a,b\rangle_{ne}\vac,\,a,b\in\g_{1/2}$, and 
%$\langle a,b\rangle_{ne}$ is defined in \eqref{sesq}, and 
$\{\Phi_\a\}_{\a\in S_{1/2}},$ $\{\Phi^\a\}_{\a\in S_{1/2}}$ are dual bases of $\g_{1/2}$ with respect to $\langle., .\rangle_{ne}$.\par
We now apply the results of Section \ref{freeb} to $V^{k+h^\vee}(\C x)$.  By Corollary \ref{uur}  unitarity of $W^k_{\min}(\g)$ implies  $k+ h^\vee<0$. Hence, using the normalization 
\begin{equation}\label{norm}a=\sqrt{-1}\frac{\sqrt{2}}{\sqrt{|k+h^\vee|}}x,
\end{equation}
we have $V^{k+h^\vee}(\C x)=V^{1}(\C a)$, since, by \eqref{eq:5.16}, $\a_k(x,x)=\tfrac{1}{2}(k+h^\vee)$, hence $\a_k(a,a)=1$

%Recall that for our goals we assume $k+h^\vee<0$.\par
Recall that  in Proposition \ref{fh} we proved that one can choose an almost compact involution $\phi$ of $\g$ that  fixes pointwise the $sl_2$--triple $\{e,x,f\}$ in such a way that  the Hermitian form 
$\langle \phi(u),v\rangle_{ne}$ on $\g_{1/2}$ is negative definite.
This conjugate linear involution induces a conjugate linear  involution of    $W^k_{\min}(\g)$ and  of $V^{\alpha_k}(\g_0)\otimes F(\g_{1/2})$ as well, both denoted again by $\phi$. It is readily checked, using \eqref{FFR1}, \eqref{FFR2}, and \eqref{FFR4}, that
\begin{equation}\label{Psiphi}
\Psi(\phi(v))=\phi(\Psi(v))\quad\text{for all $v\in W^k_{\min}(\g)$.}
\end{equation}
%Also recall that 
%$$(\cdot,\cdot)_2=(\cdot,\cdot)_{\C x}\otimes(\cdot,\cdot)_{\g^\natural}\otimes(\cdot,\cdot)_F.
%$$
Since $\phi(x)=x$, we see that $\phi(a)=-a$.
%Let $(\cdot,\cdot)_W$ be the invariant form on $W^k_{\min}(\g)$.
The conformal  vector of the vertex algebra $\mathcal V^k$ is
\begin{align}\label{Lfreefield}
L_{free}&=\frac{1}{2}: a a :+L_{\g^\natural}+L_F, 
\end{align}
where
$$ L_{\g^\natural}=\tfrac{1}{2(k+h^\vee)}\sum_{\a\in S^\natural} :u_\a u^\a,\quad L_F=\tfrac{1}{2}\sum_{\a\in S_{1/2}}:(T\Phi^\a)\Phi_\a:.$$
Here $\{u_\a\}_{\a\in S^\natural}$  and $\{u^\a\}_{\a\in S^\natural}$ are dual bases of $\g^\natural$ with respect to the bilinear form $(.|.)$ restricted to $\g^\natural$. 
Recall that $L_{\g^\natural}$ is the conformal vector of $V^{\alpha_k}(\g^\natural)$ and $L_F$ is the conformal vector of $F(\g_{1/2})$. Let 
\begin{equation}\label{sfix}s_k=\sqrt{-1}\frac{(k+1)}{\sqrt{2|k+h^\vee|}}.\end{equation}
It follows  from \eqref{FFR4} and \eqref{Lfreefield} that 
%$$\tfrac{1}{k+h^\vee}:xx:=\tfrac{1}{2}:aa:,$$
%hence 
\begin{equation}\label{Psi(L)}
\Psi(L) =L(s_k)+\widehat L=L_{free}+s_kT (a),
\end{equation}
where $\widehat L=L_{\g^\natural}+L_F$, and $L(s)=\tfrac{1}{2}:aa:+sTa,\  \widehat L(s)=L(s)+\widehat L$, cf. \eqref{L(t)} and \eqref{nuovovir}, respectively.
\par
Note that  $\mathcal V^k=V^1(\C a)\otimes V,$  where $V=V^{\alpha_k}(\g^\natural)\otimes F(\g_{1/2})$, and  $\Psi(L)=\widehat L(s)$ (cf. \eqref{628}, \eqref{nuovovir}).

Given $\mu\in\C$, let  $M(\mu)$ be the irreducible $V^1(\C a)$--module with highest weight $\mu$, and consider the $\mathcal V^k$--module $$N(\mu)=M(\mu)\otimes V.$$
Recall that $V$ carries a $\phi$--invariant Hermitian form $H_{\g^\natural}\otimes H_F, $ which is positive definite. Recall also that, by Proposition \ref{Eform}, the $V^1(\C a)$--module $M(\mu)$ carries a unique $L(t)$--invariant Hermitian form, provided that $t=\sqrt{-1}\Im(\mu)$, which is positive definite. This Hermitian form, normalized by the condition that the norm of the highest weight vector equals $1$, was denoted by $H_\mu$. Hence we have a $\phi$--invariant positive definite Hermitian form 
$H_\mu(\,.\,,.\,)\otimes H_{\g^\natural}(.\,,.)\otimes H_F(.\,,.)$ on $N(\mu)$, which we denote by $(\cdot,\cdot)_\mu$.\par
It follows from Proposition \ref{FCwithV} that, restricting the fields $Y^{\mu,t}(-,z)$ from $\mathcal V^k$ to $\Psi\!(W^k_{\min}(\g)\!)$, one equips $N(\mu)$ with a structure of  a $W^k_{\min}(\g)$--module.
We now  explicitly  describe this action of the generators of $W^k_{\min}(\g)$ on $N(\mu)$.
\begin{proposition}\label{FairlieW} For $b\in W^k_{\min}(\g)$, write
$$
Y^{\mu,t}(\Psi(b),z)=\sum_{n\in-\D_b+\ZZ}b^{\mu,t}_nz^{-n-\D_b},
$$
and let $\mu\in\R$. Then
\begin{align}
L^{\mu,t}_n&=\Psi( L)^\mu_n+2ta^\mu_n+2(t^2-st)\vac^\mu_n,\\
(J^{\{u\}})^{\mu,t}_n&=\Psi(J^{\{u\}})^{\mu}_n,\ u\in\g^\natural,\\
 (G^{\{ v \}})^{\mu,t}_n
        &=\Psi(G^{\{ v \}})^\mu_n+2t\sqrt{-1}\sqrt{2|k+h^\vee|} (\Phi_{[e,v]})^\mu_n,\ v\in \g_{-1/2}.
\end{align}
Furthermore, if $m,m'\in N(\mu)$, then
\begin{align}
(m,L^{\mu,t}_nm')_\mu&=(L^{\mu,s-t}_{-n}m,m')_\mu,\label{INV1}\\
(m,(J^{\{u\}})^{\mu,t}_nm')_\mu&=-((J^{\{\phi(u)\}})^{\mu,s-t}_{-n}m,m')_\mu,\label{INV2}\\
(m, (G^{\{ v \}})^{\mu,t}_nm')_\mu
        &=((G^{\{ \phi(v) \}})^{\mu,s-t}_{-n}m,m')_\mu.\label{INV3}
\end{align}
\end{proposition}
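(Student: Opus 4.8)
The plan is to prove the first three formulas by a direct computation using Lemma~\ref{atmuntV}, and then deduce the invariance formulas \eqref{INV1}--\eqref{INV3} from the general invariance result \eqref{invmuts} of Lemma~\ref{invgeneral} (applied in the $V^1(\C a)\otimes V$ setting, cf. the final paragraph of Section~\ref{freeb}), combined with the fact that $\Psi(L)=\widehat L(s_k)$ from \eqref{Psi(L)}.

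\textbf{The three mode formulas.} Write each generator of $W^k_{\min}(\g)$ as a polynomial $\Psi(b)(\mathbf a)$ with coefficients in $V=V^{\alpha_k}(\g^\natural)\otimes F(\g_{1/2})$, using the free field realization \eqref{FFR1}, \eqref{FFR2}, \eqref{FFR4} and the normalization \eqref{norm}. By Lemma~\ref{atmuntV}, for $\mu\in\R$ (so $\Im(\mu)=0$) the modes $b^{\mu,t}_n$ are obtained from $\Psi(b)^\mu_n$ by the substitution $\mathbf a\mapsto\mathbf a-2t\rho(-1)$, i.e. $a_{-r}\mapsto a_{-r}-2t(-1)^rI$; equivalently, only the terms of $\Psi(b)$ that involve $a$ get modified. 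Since $J^{\{u\}}$, $u\in\g^\natural$, has a free field image \eqref{FFR1} not involving $a$, the substitution does nothing and $(J^{\{u\}})^{\mu,t}_n=\Psi(J^{\{u\}})^\mu_n$. For $G^{\{v\}}$ the only $a$-dependent term comes from rewriting; more precisely, by \eqref{FFR2} the image $\Psi(G^{\{v\}})$ is $a$-independent, but $\Psi(L)$ contains $s_k Ta$ via \eqref{Psi(L)}; here the claimed correction $2t\sqrt{-1}\sqrt{2|k+h^\vee|}\,\Phi_{[e,v]}$ must be traced back to the term $-(k+1)\sum_\alpha(v|u_\alpha)T\Phi^\alpha$ and the identification of $\Phi_{[e,v]}$ with a combination of the $\Phi^\alpha$ under $\langle\cdot,\cdot\rangle_{ne}$, together with the relation between $[e,v]$ and the structure constants; I would verify this by expanding the $\mathbf a\mapsto\mathbf a-2t\rho(-1)$ substitution carefully. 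Actually the cleanest route is: since $\Psi(G^{\{v\}})$ itself is $a$-free but our module structure on $N(\mu)$ is via the twisted fields $Y^{\mu,t}$, and $G^{\{v\}}$ has nonzero $(1)$-product with $\Psi(L)$, one should instead compute $(G^{\{v\}})^{\mu,t}_n$ directly from the formula $Y^{\mu,t}(\Psi(G^{\{v\}}),z)=Y^\mu(\prod_n e^{\frac{-2t}{n}(-z)^{-n}a_n}\Psi(G^{\{v\}}),z)$, noting that $a_n$ acts on $\Psi(G^{\{v\}})$ nontrivially only through the $(Ta)$-free part; this is where the term $\Phi_{[e,v]}$ emerges, and the constant is pinned down by \eqref{norm}. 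The $L$ formula is just \eqref{L(s)real} with $s=s_k$ tensored with $\widehat L$, i.e. essentially a restatement of \eqref{Psi(L)} at the level of modes.

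\textbf{The invariance formulas.} For these I would apply \eqref{invmuts} with $\mu$ real (so $\sqrt{-1}\Im(\mu)=0$), taking the second parameter to be $s-t$ in place of $s$; then \eqref{invmuts} reads $H_\mu(m,Y^{\mu,t}(b,z)m')=H_\mu(Y^{\mu,s-t}(A(t+(s-t),z)b,z^{-1})m,m')=H_\mu(Y^{\mu,s-t}(A(s,z)b,z^{-1})m,m')$, where $A(s,z)=e^{zL(s)_1}z^{-2L(0)_0}g$ is the operator attached to the conformal vector $\widehat L(s)=\Psi(L)$. For $b=\Psi(L)$, which is quasiprimary for itself with $g(\Psi(L))=\Psi(L)$ (by \eqref{Psiphi} and $\phi(L)=L$), formula \eqref{mutprimary} gives $(m,L^{\mu,t}_nm')_\mu=(L^{\mu,s-t}_{-n}m,m')_\mu$, which is \eqref{INV1}. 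For $b=\Psi(J^{\{u\}})$: this is primary of conformal weight $1$ for $\widehat L$, hence quasiprimary for $\widehat L(s)=\Psi(L)$ as well; here $g(\Psi(J^{\{u\}}))=-\Psi(J^{\{\phi(u)\}})$ (the extra sign is the $e^{-\pi\sqrt{-1}(\frac12 p+\Delta)}$ factor in \eqref{113} with $\Delta=1$, $p=0$), so \eqref{mutprimary} yields \eqref{INV2} with its minus sign. For $b=\Psi(G^{\{v\}})$: it is primary of conformal weight $\frac32$, hence quasiprimary, with $g(\Psi(G^{\{v\}}))=e^{-\pi\sqrt{-1}(\frac12+\frac32)}\Psi(G^{\{\phi(v)\}})=e^{-2\pi\sqrt{-1}}\Psi(G^{\{\phi(v)\}})=\Psi(G^{\{\phi(v)\}})$, giving \eqref{INV3} with no sign.

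\textbf{Main obstacle.} The routine-but-delicate point is the $G^{\{v\}}$ mode formula: one has to correctly pass the operator $\prod_n e^{\frac{-2t}{n}(-z)^{-n}a_n}$ through $\Psi(G^{\{v\}})$, and although $\Psi(G^{\{v\}})$ as written in \eqref{FFR2} contains no factor $a$, the twisted normal ordering with the boson nevertheless produces the fermionic correction $\Phi_{[e,v]}$; identifying this term and getting the precise constant $2t\sqrt{-1}\sqrt{2|k+h^\vee|}$ right (using $\Phi_{[e,v]}=\sum_\alpha(v|u_\alpha)\Phi^\alpha$-type relations and \eqref{norm}, \eqref{sfix}) is the step most prone to sign and normalization errors, so I would carry it out explicitly rather than by analogy. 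Everything else follows mechanically from Lemmas~\ref{atmuntV} and \ref{invgeneral}.
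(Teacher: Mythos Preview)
Your overall plan matches the paper's proof: compute the three mode formulas via Lemma~\ref{atmuntV} and then deduce the invariance formulas from (the $V^1(\C a)\otimes V$ version of) \eqref{mutprimary}, using that $\Psi(L)$, $\Psi(J^{\{u\}})$, $\Psi(G^{\{v\}})$ are quasiprimary for $\widehat L(s_k)=\Psi(L)$ together with $\Psi\circ g=g\circ\Psi$ (which follows from \eqref{Psiphi} since $\Psi$ preserves parity and conformal weight). Your computation of the signs via $g$ is correct.

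The one genuine gap is in your treatment of $(G^{\{v\}})^{\mu,t}_n$. You repeatedly assert that $\Psi(G^{\{v\}})$ is $a$-free and then grope for an indirect mechanism (the $(1)$-product with $\Psi(L)$, or the $T\Phi^\alpha$ term) to produce the correction. This is a misconception: $\Psi(G^{\{v\}})$ \emph{does} contain $a$. In \eqref{FFR2} the first summand is $\sum_\alpha:[v,u_\alpha]\Phi^\alpha:$, and $[v,u_\alpha]\in\g_0=\C x\oplus\g^\natural$; projecting onto $\C x$ and using \eqref{norm} gives
\[
[v,u_\alpha]=\sqrt{-1}\,\tfrac{\sqrt{|k+h^\vee|}}{\sqrt 2}\,(v|u_\alpha)\,a+[v,u_\alpha]^\natural,
\]
so $\Psi(G^{\{v\}})$ contains the term $\sqrt{-1}\sqrt{2|k+h^\vee|}\,{:}a\,\Phi_{[e,v]}{:}$, where one checks $[e,v]=\tfrac12\sum_\alpha(v|u_\alpha)u^\alpha$ by invariance of $(\cdot|\cdot)$. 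The shift $a_{-1}\mapsto a_{-1}+2t$ from Lemma~\ref{atmuntV} (with $\mu\in\R$) applied to this single $a$-linear term then produces exactly the correction $2t\sqrt{-1}\sqrt{2|k+h^\vee|}\,(\Phi_{[e,v]})^\mu_n$; the terms $-(k+1)\sum_\alpha(v|u_\alpha)T\Phi^\alpha$, $\sum_\alpha:[v,u_\alpha]^\natural\Phi^\alpha:$, and the cubic fermionic term are all $a$-free and are untouched by the shift. There is no ``twisted normal ordering'' subtlety here: if $\Psi(G^{\{v\}})$ were truly $a$-free, every $a_n$ with $n>0$ would annihilate it and $(G^{\{v\}})^{\mu,t}_n$ would equal $\Psi(G^{\{v\}})^\mu_n$, contradicting the claimed formula. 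Once you correct this point, your argument is exactly the paper's.
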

\begin{proof}
We already noted that $\Psi(L)=\widehat L(s)$. By \eqref{L(s)real},
\begin{align*}
\Psi(L)^{\mu,t}_n&=(L(s)+\widehat L)^{\mu,t}_n=\half :aa:^\mu_n+sTa^\mu_n+2ta^\mu_n+2(t^2-st)\vac_n+\widehat L^\mu_n\\
&=\widehat L(s)^\mu_n+2ta^\mu_n+2(t^2-st)\vac^\mu_n.
\end{align*}
If $u\in\g^\natural$, then $\Psi(J^{\{u\}})\in V^{\a_k}(\g^\natural)\otimes F(\g_{1/2})$, hence, by Lemma \ref{atmuntV}, $(J^{\{u\}})^{\mu,t}_n=\Psi(J^{\{u\}})^{\mu}_n$. 
Finally, if $v \in \fg_{-1/2}$,
$$[v,u_\a]=2([v,u_\a]|x)x+[v,u_\a]^\natural=\sqrt{-1}\frac{\sqrt{|k+h^\vee|}}{\sqrt{2}}(v|u_\a)a+[v,u_\a]^\natural,
$$
where $u^\natural $ is the orthogonal projection of $u$ onto $\g^\natural$ with respect to $(\,.\,|\,.\,)$.
Since
\begin{align*}
[e,v]=&\sum_\a \langle [e,v],u_\a\rangle_{ne} u^\a=\sum_\a ( f|[[e,v],u_\a]) u^\a=\sum_\a ( f|[e,[v,u_\a]]) u^\a\\
=&\sum_\a ( [[f,e],v]|u_\a) u^\a=-\sum_\a ( [x,v]|u_\a) u^\a=\tfrac{1}{2}\sum_\a ( v|u_\a) u^\a,
\end{align*}
we can write
\begin{align*}   
   \Psi(G^{\{ v \}})=&\sqrt{-1}\sqrt{2|k+h^\vee|}:a \Phi_{[e,v]}:+\sum_{\alpha \in S_{1/2}}
         : [v,u_{\alpha}]^\natural\Phi^{\alpha}:
         -2(k+1)  T \Phi_{[e,v]}\\
\notag&     +\frac{1}{3} \sum_{\alpha ,\beta \in S_{1/2}}
         : \Phi^{\alpha}
         \Phi^{\beta}\Phi_{[u_{\beta},[u_{\alpha},v]]}
         : .
         \end{align*}
         Set 
$$G^{\{v\}}=\sum_{\alpha \in S_{1/2}}
         : [v,u_{\alpha}]^\natural\Phi^{\alpha}:+\frac{1}{3} \sum_{\alpha ,\beta \in S_{1/2}}
         : \Phi^{\alpha}
         \Phi^{\beta}\Phi_{[u_{\beta},[u_{\alpha},v]]}:.
         $$
so that 
\begin{align*}   
   (G^{\{ v \}})^{\mu,t}_n=&\sqrt{-1}\sqrt{2|k+h^\vee|}:a \Phi_{[e,v]}:^\mu_n+2t\sqrt{-1}\sqrt{2|k+h^\vee|} (\Phi_{[e,v]})^\mu_n\\
   &-2(k+1) ( T \Phi_{[e,v]})^\mu_n    +{G^{\{v\}}}^\mu_n.
\end{align*}
Thus, 
\begin{align}   \label{GFairlie}
   (G^{\{ v \}})^{\mu,t}_n
        =\Psi(G^{\{ v \}})^\mu_n+2t\sqrt{-1}\sqrt{2|k+h^\vee|} (\Phi_{[e,v]})^\mu_n.
\end{align}

For proving \eqref{INV1}, \eqref{INV2}, and \eqref{INV3}, it is enough to observe that $L$, $G^{\{ v \}}$, and $J^{\{u\}}$ are quasiprimary for $\widehat L(s)$ and apply \eqref{mutprimary}. We use the fact that $\Psi(g(b))=g(\Psi(b))$ for all $b\in W^k_{\min}(\g)$ , where $g$ is defined by \eqref{113}. This follows from \eqref{Psiphi} and the fact that $\Psi$ preserves both parity and conformal weight.
\end{proof}
As an application of Proposition \ref{FairlieW}, we obtain a generalization of  the Fairlie construction to minimal $W$--algebras.
\begin{proposition}\label{FCW} Set $s=s_k$ (cf. \eqref{sfix}) and 
$$
L^{\mu,s/2}_n=\Psi(L)^\mu_n+sa^\mu_n+\tfrac{|s|^2}{2}\vac^\mu_n=\Psi(L)^\mu_n+\frac{k+1}{k+h^\vee}x^\mu_n-\tfrac{(k+1)^2}{4(k+h^\vee)}\vac^\mu_n,
$$
$$
(G^{\{ v \}})^{\mu,s/2}_n
        =\Psi(G^{\{ v \}})^\mu_n-(k+1)(\Phi_{[e,v]})^\mu_n,
$$
$$
(J^{\{ u \}})^{\mu,s/2}_n
        =\Psi(J^{\{ u \}})^\mu_n.
$$
The fields 
\begin{align*}Y^{\mu,s}(L,z)&=\sum_{n\in \Z} L^{\mu,s}_nz^{-n-2},\\ Y^{\mu,s}(G^{\{v\}},z)&=\sum_{n\in 1/2+\Z} (G^{\{v\}})^{\mu,s}_nz^{-n-3/2}, \\
Y^{\mu,s}(J^{\{u\}},z)&=\sum_{n\in \Z} (J^{\{u\}})^{\mu,s}_nz^{-n-1}
\end{align*} endow  $N(\mu)$ with a $W^k_{\min}(\g)$--module structure. Moreover,  the Hermitian form $(\,.\,,.\,)_\mu$ on $N(\mu)$ is invariant. 
\end{proposition}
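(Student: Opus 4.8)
The strategy is to obtain both the asserted module structure and the invariance of the form as the specialization $t=s_k/2$ of the general construction of Proposition \ref{FairlieW}. The point of this particular choice is that then the two twist parameters occurring in the invariance identities \eqref{INV1}--\eqref{INV3}, namely $t$ and $s_k-t$, coincide, so that those identities express a genuine $\phi$--invariance of a single Hermitian form on $N(\mu)$ rather than a pairing between two a priori distinct modules.

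First I would note that, since we are in the regime $k$ real and $k+h^\vee<0$, we have $t:=s_k/2\in\sqrt{-1}\R$, so all results of Section \ref{freeb} apply. By Proposition \ref{FCwithV}, applied with $V=V^{\alpha_k}(\g^\natural)\otimes F(\g_{1/2})$, the fields $Y^{\mu,t}(b,z)$ make $N(\mu)=M(\mu)\otimes V$ into a $\mathcal V^k$--module; composing with the vertex algebra homomorphism $\Psi\colon W^k_{\min}(\g)\to\mathcal V^k$ yields a $W^k_{\min}(\g)$--module structure on $N(\mu)$, namely $w\mapsto Y^{\mu,s_k/2}(\Psi(w),z)$. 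Substituting $t=s_k/2$ into the three formulas of Proposition \ref{FairlieW} and using the normalizations \eqref{norm} and \eqref{sfix} together with $k+h^\vee<0$ reproduces exactly the mode operators $L^{\mu,s/2}_n$, $(J^{\{u\}})^{\mu,s/2}_n$, $(G^{\{v\}})^{\mu,s/2}_n$ of the statement; the only arithmetic to verify is $2\bigl((s/2)^2-s\cdot s/2\bigr)=-s^2/2=|s|^2/2$, $s\,a=\tfrac{k+1}{k+h^\vee}x$, and $2\cdot\tfrac{s}{2}\sqrt{-1}\sqrt{2|k+h^\vee|}=-(k+1)$. Hence the displayed fields are precisely those defining the $W^k_{\min}(\g)$--module structure just described.

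For invariance of $(\cdot,\cdot)_\mu$ I would again invoke Proposition \ref{FairlieW}: with $t=s_k/2$ one has $s_k-t=s_k/2=t$, so \eqref{INV1}, \eqref{INV2}, \eqref{INV3} become the balanced relations $(m,L^{\mu,s/2}_nm')_\mu=(L^{\mu,s/2}_{-n}m,m')_\mu$, $(m,(J^{\{u\}})^{\mu,s/2}_nm')_\mu=-((J^{\{\phi(u)\}})^{\mu,s/2}_{-n}m,m')_\mu$, and $(m,(G^{\{v\}})^{\mu,s/2}_nm')_\mu=((G^{\{\phi(v)\}})^{\mu,s/2}_{-n}m,m')_\mu$. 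Since $L$ is the conformal vector of $W^k_{\min}(\g)$ and $J^{\{u\}},G^{\{v\}}$ are primary, all three are quasiprimary, so $A(z)w=z^{-2\Delta_w}g(w)$ with $g$ as in \eqref{113}; a short check of the phase factor gives $g(L)=L$, $g(J^{\{u\}})=-J^{\{\phi(u)\}}$, $g(G^{\{v\}})=G^{\{\phi(v)\}}$, and $\Psi g=g\Psi$ holds by \eqref{Psiphi} since $\Psi$ preserves parity and conformal weight. Taking residues, the three relations above are exactly the $\phi$--invariance identity $(m,Y^{\mu,s/2}(\Psi(w),z)m')_\mu=(Y^{\mu,s/2}(\Psi(A(z)w),z^{-1})m,m')_\mu$ for $w=L$, $J^{\{u\}}$ ($u\in\g^\natural$), $G^{\{v\}}$ ($v\in\g_{-1/2}$). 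Finally, the set of $w\in W^k_{\min}(\g)$ satisfying this identity is closed under the $n$--th products and under $T$ — the standard compatibility of the operator $A(z)$ with the vertex operations from \cite{KMP}, combined with the fact that $w\mapsto Y^{\mu,s/2}(\Psi(w),z)$ is a module structure — hence is a vertex subalgebra; since it contains the strong generators $L$, $J^{\{u\}}$, $G^{\{v\}}$ it is all of $W^k_{\min}(\g)$, and $(\cdot,\cdot)_\mu$ is invariant.

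The only step that is not purely formal is this last one: the passage from the strong generators to the whole algebra, for which one must have at hand, in the required form, the compatibility of $A(z)$ with normally ordered products (equivalently, that the subspace of invariance--satisfying vectors is a subalgebra). Everything else is the substitution $t=s_k/2$ into Propositions \ref{FCwithV} and \ref{FairlieW} plus bookkeeping of the normalization constants \eqref{norm} and \eqref{sfix}.
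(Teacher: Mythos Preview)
Your proposal is correct and follows essentially the same route as the paper: the paper's proof is literally ``Plug $t=s/2$ in Proposition~\ref{FairlieW}'' and then reads off the balanced identities \eqref{INV1}--\eqref{INV3} at $t=s-t=s/2$. Your extra paragraph extending invariance from the strong generators to all of $W^k_{\min}(\g)$ makes explicit a step the paper leaves tacit (it is the standard fact, from \cite{KMP}, that the invariance identity is preserved under normally ordered products and $T$, equivalently that the comparison map to the contragredient is a module map once it intertwines the generators); your arithmetic checks of the constants are also correct and not spelled out in the paper.
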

\begin{proof} Plug $t=s/2$ in Proposition \ref{FairlieW}.
By \eqref{INV1}, \eqref{INV2}, and \eqref{INV3}, we have
\begin{align*}
(m,L^{\mu,s/2}_nm')_\mu&=(L^{\mu,s/2}_{-n}m,m')_\mu,\\
(m,(J^{\{u\}})^{\mu,s/2}_nm')_\mu&=-((J^{\{\phi(u)\}})^{\mu,s/2}_{-n}m,m')_\mu,\\
(m, (G^{\{ v \}})^{\mu,s/2}_nm')_\mu
        &=((G^{\{ \phi(v) \}})^{\mu,s/2}_{-n}m,m')_\mu.
\end{align*}
thus the representations $N(\mu)$ acquire a $W^k_{\min}(\g)$--module structure and the Hermitian form $(\cdot, \cdot)_\mu$ is $\phi$--invariant. 
\end{proof}

\section{Sufficient conditions for  unitarity 
of modules over $W^k_{\min}(\g)$}\label{ur}
Due to the Proposition \ref{casi} (a), we may assume in this section that $\g\ne sl(2|m)$ and $osp(4|m),\, m>2$. Then, in particular,   $\g^\natural=\oplus_{i\ge 1} \g^\natural_i$ is  the decomposition of $\g^\natural$ into simple ideals, and the $\chi_i$ are given by \eqref{chiiii}.
\begin{proposition}\label{necessary} Assume that $k+h^\vee\ne0$. Then there exists a unitary  module  $L^W(\nu,\ell_0)$ over $W^k_{\min}(\g)$ if and only if  $M_i(k)\in\ZZ_+$ for all $i$ and $\nu \in P^+_k$.  
%Moreover these modules are unitary for $l_0\gg 0$ if $k$ is not a collapsing level.
\end{proposition}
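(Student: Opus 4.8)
The plan is to prove the two implications separately. The forward implication (``only if'') is immediate from Proposition~\ref{l0nec}: if $L^W(\nu,\ell_0)$ is a unitary $W^k_{\min}(\g)$-module, then its restriction to the affine subalgebra $V^{\beta_k}(\g^\natural)=\bigotimes_{i\ge1}V^{M_i(k)}(\g^\natural_i)$ is again unitary, so by the classification of unitary highest weight modules over affine Kac--Moody algebras (\cite{VB}) one must have $M_i(k)\in\mathbb Z_+$ for all $i\ge1$ and $\nu\in P^+_k$; the bound \eqref{89a} plays no role here.

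For the reverse implication I would assume $M_i(k)\in\mathbb Z_+$ for all $i\ge1$ (so $P^+_k\ne\emptyset$) and distinguish whether $k$ is collapsing. If $k$ is collapsing, then by Theorem~\ref{oldresults} the simple quotient $W^{\min}_k(\g)$ is isomorphic either to $\mathbb C$ or to an affine vertex algebra $V_{M_j(k)}(\g^\natural_j)$ at the non-negative integer level $M_j(k)$; the latter is a unitary affine vertex algebra, every $\nu\in P^+_k$ (with trivial $\g^\natural_i$-component for $i\ne j$) is the highest weight of an integrable, hence unitary, module, and pulling back along $W^k_{\min}(\g)\twoheadrightarrow W^{\min}_k(\g)$ yields the desired unitary $W^k_{\min}(\g)$-module $L^W(\nu,\ell_0)$. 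If $k$ is not collapsing, then since $k\ne-h^\vee$ Remark~\ref{7.4} forces $M_i(k)+\chi_i\in\mathbb Z_+$ for all $i\ge1$. When $\nu$ is not extremal this is precisely the setting of Proposition~\ref{sufficient}, which via the generalized Fairlie construction of Section~\ref{8} produces a unitary $L^W(\nu,\ell_0)$ for $\ell_0$ large; in particular $\nu=0$ always works, since $0$ is not extremal when $k$ is not collapsing (if $0>M_i(k)+\chi_i$ for some $i$ then $k$ would be collapsing, by Remark~\ref{7.4}). For an extremal $\nu\in P^+_k$ one cannot invoke Proposition~\ref{sufficient}, and instead I would use the unitary $W^k_{\min}(\g)$-module $N(\mu)=M(\mu)\otimes V^{\alpha_k}(\g^\natural)\otimes F(\g_{1/2})$ of Proposition~\ref{FCW}, carrying its positive definite invariant form $(\cdot,\cdot)_\mu$: since $V^{\alpha_k}(\g^\natural)=\bigotimes_{i\ge1}V^{M_i(k)+\chi_i}(\g^\natural_i)$ surjects onto its integrable quotient and $F(\g_{1/2})$ contributes all $\g^\natural$-weights of $\g_{1/2}$ (with extreme weight $\xi$ on the top fermion), one can find $\mu\in\R$ and a highest weight vector $w\in N(\mu)$ of $\g^\natural$-weight $\nu$; the $W^k_{\min}(\g)$-submodule generated by $w$ has irreducible quotient $L^W(\nu,\ell_0)$ for the resulting $\ell_0$, and it inherits a positive definite form because $(\cdot,\cdot)_\mu$ restricts to a positive semidefinite form on any submodule, descends to a positive definite form on the quotient by its radical, and by uniqueness of the invariant form on $L^W(\nu,\ell_0)$ this descended form is a positive multiple of $H$.

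The main obstacle is this last step, for extremal $\nu$ at non-collapsing $k$. Because the embedding $\Psi$ is not conformal ($\Psi(L)=\widehat L(s_k)\ne\widehat L(0)$), the minimal $L_0^{\mu,s/2}$-eigenvalue of the subquotient of $N(\mu)$ extracted above is not read off from $\mu$ but must be computed from the Fairlie formulas of Proposition~\ref{FCW}; one has to check that it equals $A(k,\nu)$ (which is forced by Proposition~\ref{boundary}) and, more delicately, that the chosen weight-$\nu$ vector $w$ --- assembled from the top fermion of $F(\g_{1/2})$ together with the bosonic highest weight vector $v_\mu$, which interact through the fields $G^{\{v\}}$ --- is a genuine singular vector for all of $W^k_{\min}(\g)$, so that the submodule it generates really has irreducible quotient $L^W(\nu,A(k,\nu))$ rather than a module with a strictly larger minimal $L_0$-eigenvalue. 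For non-extremal weights this bookkeeping is exactly what the proof of Proposition~\ref{sufficient} accomplishes; for extremal weights it is the most sensitive point, and is tied to Conjecture~2.
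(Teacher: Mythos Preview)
You have read more into the statement than the paper intends. From the paper's own proof it is clear that the converse only asserts: if $M_i(k)\in\ZZ_+$ for all $i\ge1$, then there exists \emph{some} unitary $L^W(\nu,\ell_0)$. The paper simply takes $\nu=0$, embeds the highest weight vector as $v_s\otimes\vac\otimes\vac$ in the manifestly unitary space $M(s)\otimes V_{\alpha_k}(\g^\natural)\otimes F(\g_{1/2})$ (using that the \emph{simple} quotient $V_{\alpha_k}(\g^\natural)$ is integrable, hence unitary, once all $M_i(k)+\chi_i\in\ZZ_+$), and then disposes of the finitely many levels with $0\le M_i(k)<-\chi_i$ via Theorem~\ref{oldresults}, exactly as you do for the collapsing case. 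The clause ``$\nu\in P^+_k$'' in the statement belongs to the necessary side (any unitary $L^W(\nu,\ell_0)$ has $\nu\in P^+_k$), not to the existence side. Your appeal to Proposition~\ref{sufficient} for non-extremal $\nu$ is correct but already proves more than is asked here; its $\nu=0$ case is all the paper records.

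The genuine gap is your treatment of extremal $\nu$, and it cannot be closed with the tools at hand. Once you pass to the integrable quotient $V_{\alpha_k}(\g^\natural)$ (which is what is needed for positive definiteness; the form on the universal $V^{\alpha_k}(\g^\natural)$ is not positive definite), the affine level is $M_i(k)+\chi_i$, so an extremal $\nu$---by definition satisfying $\nu(\theta_i^\vee)>M_i(k)+\chi_i$ for some $i$---cannot occur as the $\g^\natural$-highest weight of a vector in the minimal energy subspace. The extra $\xi$ you try to borrow from $F(\g_{1/2})$ sits at $L_0$-degree $\tfrac12$, not $0$, and there is no mechanism forcing a vector built from it to be singular for the odd generators $G^{\{v\}}$, which mix all three tensor factors. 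What you are attempting for extremal $\nu$ is precisely Conjecture~2, which the paper leaves open in general and settles only for $\g=psl(2|2)$ and $\g=spo(2|3)$ by separate, case-specific constructions.
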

\begin{proof}
One implication has been already proven in Proposition \ref{l0nec}.
To show that the converse implication also holds, assume $M_i(k)\in \ZZ_+$ for all $i$. 
Recall (see \eqref{abb}) that the cocycle $\a_k$ is  given by
$${\a_k}_{|\g^\natural_i\times \g^\natural_i}=(M_i(k)+\chi_i)(.|.)^\natural_i.
$$
Assume first that $M_i(k)+\chi_i\in\ZZ_+$ for all $i$. Then the simple quotient $V_{\a_k}(\g^\natural)$ of $V^{\a_k}(\g^\natural)$ is unitary, since it is an integrable $\widehat \g^\natural$--module \cite{Kacsuper}. Next, the vertex algebra $F(\g_{1/2})$ is unitary due to Proposition \ref{fh} and \cite[\S 5.1]{KMP}. Finally, the $V^1(\C a)$--module $M(s)$ , where $s$ is given by \eqref{sfix}, is unitary by the observation following Lemma \ref{64}. 
\par
Consider the unitary $W^k_{\min}(\g)$--module
$ M(s)\otimes V_{\a_k}(\g^\natural)\otimes F(\g_{1/2}),
$ and its submodule
$$U=\Psi(W^k_{\min}(\g)). (v_{s}\otimes \vac\otimes\vac).$$
Since the Hermitian form $H_s(\,.\,,\,.)$ is $\widehat L(s)$--invariant and $\Psi(L)=\widehat L(s)$, we see that $U$ admits a $\phi$--invariant Hermitian positive definite form, thus 
$U$ is a unitary highest weight module for $W^k_{\min}(\g)$.

Now we look at the missing cases, where there is $i$ such that $0\le M_i(k)< -\chi_i$, described in Remark \ref{7.4}. Assume first that $\g^\natural$ is simple. If $\chi_1=-1$  then the only possible value is $M_1(k)=0$,
 so,  $W_k^{\min}(\g)=\C$, by Theorem \ref{oldresults} (1) (a).  In the case of $\g=spo(2|3)$ one should consider the cases $M_1(k)=1$ and $M_1(k)=0$: in the former case 
 $k=-\frac{h_1^\vee}{2}-1$, hence Theorem \ref{oldresults} (1) (b) applies and $W_k^{\min}(spo(2|3))=V_1(sl(2))$, whereas in the latter case $k+h^\vee=0$. If $\g^\natural$ is semisimple but not simple, then $\g=D(2,1;a)$.
In this case we have to consider only the case in which either $M_1(k)$ or $M_2(k)$ is zero. If $M_1(k)=0$ (resp. $M_2(k)=0$) then, by Theorem \ref{oldresults} (2), $W_k^{\min}(D(2,1;a))=V_{M_2(k)}(sl(2))$ (resp. $=V_{M_1(k)}(sl(2))$).
\end{proof}

We now generalize the construction given in the proof of Proposition \ref{necessary} to provide families of unitary representations. For
$\nu\in P^+_k$ introduce the following number
\begin{equation}\label{bknu}
 B(k,\nu)=\frac{(\nu|\nu+2\rho^\natural)}{2(k+h^\vee)}-\frac{(k+1)^2}{4(k+h^\vee)}.
 \end{equation}
 
\begin{proposition}\label{sufficient} Assume that $k+h^\vee\ne 0$ and $M_i(k)+\chi_i\in\ZZ_+$ for all $i>0$. 
%Let $\rho^\natural$ be the $\rho$--vector for $\g^\natural$.
If $\nu\in  P^+$ is such that $\nu(\theta_i^\vee)\le M_i(k)+\chi_i $ for all $i>0$ (then $\nu \in P^+_k$) and
% then
%$N(\mu+s,\nu)$ is unitary. 
\begin{equation}\label{m2}
\ell_0\ge B(k,\nu),
\end{equation}
then $L^W(\nu,\ell_0)$ is a unitary $W^k_{\min}(\g)$--module.
\end{proposition}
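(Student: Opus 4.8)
The plan is to generalize the construction used in the proof of Proposition \ref{necessary}, replacing the vacuum module $V^{\a_k}(\g^\natural)$ by an irreducible integrable highest weight module of weight $\nu$. Since the numbers $\chi_i$, $i\geq 1$, are negative integers (Table 2), the hypothesis $M_i(k)+\chi_i\in\ZZ_+$ forces $M_i(k)\in\ZZ_+$, so $k$ lies in the unitarity range and $k+h^\vee<0$ by Corollary \ref{uur}; thus the normalization \eqref{norm} applies and $\mathcal V^k=V^1(\C a)\otimes V$ with $V=V^{\a_k}(\g^\natural)\otimes F(\g_{1/2})$. Because $\a_k$ restricts to $(M_i(k)+\chi_i)(\cdot|\cdot)_i^\natural$ on each $\g^\natural_i$ and $\nu(\theta_i^\vee)\le M_i(k)+\chi_i$, the weight $\nu$ is dominant integral for $\widehat\g^\natural$ at these levels; let $L_{\a_k}(\nu)$ be the corresponding irreducible $V_{\a_k}(\g^\natural)$-module, with highest weight vector $v_\nu$ and lowest graded subspace the finite dimensional $\g^\natural$-module $V_\nu$ of highest weight $\nu$. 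Being integrable, $L_{\a_k}(\nu)$ is unitary, hence carries a positive definite $\phi$-invariant Hermitian form $H_{\g^\natural}$. Given $\ell_0\ge B(k,\nu)$, I would set $\mu:=\sqrt{2(\ell_0-B(k,\nu))}\in\R_{\ge 0}$.

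I then consider the $V$-module $M:=L_{\a_k}(\nu)\otimes F(\g_{1/2})$, equipped with the positive definite $\widehat L$-invariant form $H_{\g^\natural}\otimes H_F$ (for $F(\g_{1/2})$ this uses Proposition \ref{fh} and \cite[\S5.1]{KMP}), together with the free boson module $M(\mu)$ and its $L(\sqrt{-1}\Im(\mu))=L(0)$-invariant form $H_\mu$. By Proposition \ref{FCwithV} the fields $Y^{\mu,s}(-,z)$ with $s=s_k$ make $N:=M(\mu)\otimes M=M(\mu)\otimes L_{\a_k}(\nu)\otimes F(\g_{1/2})$ into a $\mathcal V^k$-module, hence, via $\Psi$, into a $W^k_{\min}(\g)$-module; and, exactly as in the proof of Proposition \ref{FCW} (whose argument only involves modes of $\mathcal V^k$ and therefore makes sense on any $\mathcal V^k$-module), the positive definite form $(\cdot,\cdot)_\mu:=H_\mu\otimes H_{\g^\natural}\otimes H_F$ is $\phi$-invariant for this $W^k_{\min}(\g)$-action.

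Next I would identify the highest weight. Put $v:=v_\mu\otimes v_\nu\otimes\vac$ and $U:=W^k_{\min}(\g).v\subseteq N$. By the formulas of Proposition \ref{FCW}, the fermionic part of $\Psi(J^{\{h\}})$ has zero mode annihilating $\vac$, so $(J^{\{h\}})^{\mu,s/2}_0v=\nu(h)v$ for $h\in\h^\natural$, while $(J^{\{u\}})^{\mu,s/2}_0v=0$ for $u\in\n^\natural_+$ since $v_\nu$ is the $\g^\natural$-highest weight vector of $V_\nu$. Combining $L(s_k)_0v_\mu=(\tfrac12\mu^2-s_k\mu)v_\mu$, $L_{\g^\natural,0}v_\nu=\tfrac{(\nu|\nu+2\rho^\natural)}{2(k+h^\vee)}v_\nu$, $a^\mu_0v_\mu=\mu v_\mu$ and $\tfrac{|s_k|^2}{2}=-\tfrac{(k+1)^2}{4(k+h^\vee)}$, the imaginary contributions $\mp s_k\mu$ cancel and $L^{\mu,s/2}_0v=(\tfrac12\mu^2+B(k,\nu))v=\ell_0v$. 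Moreover $N$ is a positive energy $\mathcal V^k$-module and $L^{\mu,s/2}_0$ is, up to the additive constant $s_k\mu+\tfrac{|s_k|^2}{2}$, the zero mode of $\widehat L(s_k)$; its eigenvalues on $N$ are bounded below by $\ell_0$ and its $\ell_0$-eigenspace is $\C v_\mu\otimes V_\nu\otimes\C\vac$. Since $L$, the $J^{\{u\}}$ and the $G^{\{v\}}$ are quasiprimary, their positive modes lower $L^{\mu,s/2}_0$ and hence annihilate $v$. Thus $v$ is a highest weight vector of highest weight $(\nu,\ell_0)$ and $U$ is a highest weight $W^k_{\min}(\g)$-module.

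Finally, $U$ carries the restriction of $(\cdot,\cdot)_\mu$, which is positive definite and $\phi$-invariant. A highest weight module with a positive definite invariant Hermitian form is irreducible: any proper submodule would, in each (finite dimensional) graded piece, split off an orthogonal complement, yielding a direct sum decomposition incompatible with the indecomposability of a highest weight module. Hence $U\cong L^W(\nu,\ell_0)$, so $L^W(\nu,\ell_0)$ is unitary; and as $\ell_0$ ranges over $[B(k,\nu),\infty)$ the value $\mu=\sqrt{2(\ell_0-B(k,\nu))}$ ranges over all of $\R_{\ge 0}$, which gives the statement. I expect the main work to be in checking that the Fairlie construction of Proposition \ref{FCW} carries over verbatim with $L_{\a_k}(\nu)$ in place of the vacuum module — routine given the generality of Section \ref{freeb}, in particular Proposition \ref{FCwithV} — and in tracking the highest weight through the twist, especially the cancellation of the imaginary terms in $\ell_0$ and the identification of the $\ell_0$-eigenspace of $L^{\mu,s/2}_0$; the irreducibility observation is the only other point requiring attention.
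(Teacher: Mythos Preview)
Your proof is correct and follows essentially the same construction as the paper's: build the unitary module as a cyclic submodule inside a tensor product of a free boson Verma module, an integrable $V^{\a_k}(\g^\natural)$-module $L^\natural(\nu)$, and $F(\g_{1/2})$, then read off the highest weight. The only difference is cosmetic: the paper works on $M(\mu+s)\otimes L^\natural(\nu)\otimes F(\g_{1/2})$ with $\mu\in\R$ and the \emph{untwisted} action via $\Psi$ (the form $H_{\mu+s}$ is then directly $L(s)$-invariant since $\sqrt{-1}\Im(\mu+s)=s$, hence $\widehat L(s)=\Psi(L)$-invariant), whereas you work on $M(\mu)\otimes L_{\a_k}(\nu)\otimes F(\g_{1/2})$ with the Fairlie-twisted action $Y^{\mu,s/2}$. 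By \eqref{Traslation} and \eqref{Upsilonisom} the isomorphism $\Upsilon_{\mu,s/2}:M(\mu)\to M(\mu+s)$ intertwines the two constructions isometrically, so they are equivalent; the paper's version is a bit shorter since it bypasses the Fairlie twist, while yours makes the cancellation of imaginary terms in $\ell_0$ explicit.
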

\begin{proof}Let $L^\natural(\nu)$ be the irreducible highest weight $V^{\a_k}(\g^\natural)$--module of highest weight $\nu$ and let $v_\nu$ be a highest weight vector.
Fix $\mu\in\R$ and set
$$
N(\mu,\nu)=\Psi(W^k_{\min}(\g)). (v_{\mu+s}\otimes v_\nu\otimes\vac)\subset M(\mu+s)\otimes L^\natural(\nu)\otimes F(\g_{1/2}),
$$
where $s=s_k$ is given by formula \eqref{sfix}. Note that the Hermitian form $(\cdot,\cdot)_{\mu+s}$ is $\widehat L(s)$--invariant.   Since $M_i(k)+\chi_i\in\ZZ_+$ and $\nu(\theta_i^\vee)\le M_i(k)+\chi_i $ for all $i$, then $L^\natural(\nu)$ is integrable for $V^{\a_k}(\g^\natural)$, hence  unitary \cite{VB}. Thus $N(\mu,\nu)$ is a unitary representation of $W^k_{\min}(\g)$.

We now compute the highest weight of $N(\mu,\nu)$.
Recall that
$$
\Psi(J^{\{h\}} )=h + \frac{1}{2}
      \sum_{\alpha \in S_{1/2}}: \Phi^{\alpha}
      \Phi_{[u_{\alpha},h]}:.
$$
By the $-1$--st product  identity,
$$
: \Phi^{\alpha}
      \Phi_{[u_{\alpha},h]}:_0=\sum_{j\in \tfrac{1}{2}+\ZZ_+}\left(\Phi^{\a}_{-j}(\Phi_{[u_{\alpha},h]})_j-(\Phi_{[u_{\alpha},h]})_{-j}\Phi^{\a}_{j}\right)
$$
so 
$$\Psi(J^{\{h\}})_0. (v_{\mu+s}\otimes v_\nu\otimes\vac)=\nu(h)(v_{\mu+s}\otimes v_\nu\otimes\vac).
$$
It follows that $N(\mu,\nu)=L^W(\nu,\ell_0)$ for some $\ell_0$. We now compute $\ell_0$:
$$
L_0(v_{\mu+s}\otimes v_\nu\otimes\vac)=\left(\frac{\mu^2-s^2}{2}+\frac{(\nu|\nu+2\rho^\natural)}{2(k+h^\vee)}\right)(v_{\mu+s}\otimes v_\nu\otimes\vac)
$$
so that, using \eqref{sfix},
$$
\ell_0=\frac{\mu^2-s^2}{2}+\frac{(\nu|\nu+2\rho^\natural)}{2(k+h^\vee)}=\frac{\mu^2}{2}-\frac{(k+1)^2}{4(k+h^\vee)}+\frac{(\nu|\nu+2\rho^\natural)}{2(k+h^\vee)}.
$$
Hence
$
\ell_0\ge B(k,\nu)$. 
Letting 
$\mu=2\sqrt{\ell_0-B(k,\nu)},$
we see that the module $L^W(\nu,\ell_0)=N(\mu,\nu)$ is unitary.
\end{proof}

\section{Unitarity of minimal $W$--algebras and modules over them}
The main result of this paper is the following.
\begin{theorem}\label{u??} Let $k\ne -h^\vee$, and recall the number $A(k,\nu)$ given by \eqref{Aknu}.
If k lies in the unitary range (hence $M_i(k)\in\mathbb Z_+$ for $i\ge 1$), then
the $W^k_{\min}(\g)$--module $L^W(\nu,\ell_0)$ is unitary for all non extremal $\nu\in \widehat P_k^+$ and $\ell_0\geq A(k,\nu)$. 
%Consequently, $W_k^{\min}(\g)$ is a unitary vertex algebra if and only if $k$ lies in the unitary range.
\end{theorem}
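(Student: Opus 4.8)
The plan is to bridge the gap between the two partial results already in hand: Proposition~\ref{sufficient}, which gives unitarity of $L^W(\nu,\ell_0)$ for non-extremal $\nu$ but only when $\ell_0\ge B(k,\nu)$, and Proposition~\ref{l0nec}/Proposition~\ref{boundary}, which give the necessary bound $\ell_0\ge A(k,\nu)$. Note $B(k,\nu)\ge A(k,\nu)$ in general (and strictly in general), so the task is precisely to extend unitarity down to the sharp threshold $A(k,\nu)$. The idea, as sketched in the Introduction, is to realize $L^W(\nu,\ell_0)$ for $\ell_0$ in the range $[A(k,\nu),B(k,\nu)]$ via quantum Hamiltonian reduction of an explicitly chosen $\ga$-module $\overline M(\widehat\nu_h)$, and to deduce positivity of the invariant form by a continuity/deformation argument from the already-known unitary region $\ell_0\gg 0$.

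First I would set up the family of $\ga$-modules: define the affine highest weight $\widehat\nu_h$ depending on a complex parameter $h$, whose $W$-reduction (via the functor $H_0$ of quantum Hamiltonian reduction) has highest weight $(\nu,\ell(h))$ with $\ell(h)$ given by the quadratic-in-$h$ formula \eqref{p}, so that as $h$ varies over an appropriate real interval $\ell(h)$ sweeps out exactly $[A(k,\nu),\infty)$. Next I would invoke Arakawa's irreducibility theorem \cite{Araduke} to conclude that $H_0(\overline M(\widehat\nu_h))=L^W(\nu,\ell(h))$ for all $h$ outside the explicit exceptional set $J$, after first checking (property (1) in the Introduction) that $\overline M(\widehat\nu_h)$ is irreducible for $h\notin J$. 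Then I would construct the invariant Hermitian form on $H_0(\overline M(\widehat\nu_h))$ and verify that, in a fixed basis of each graded piece, its Gram determinant is a polynomial in $h$ (property (2)). Since for $h$ in the region corresponding to $\ell(h)\gg 0$ the module is unitary by Proposition~\ref{sufficient} (or rather by the $\mathcal V^k$ free-field construction of Section~\ref{8}, which covers $\ell_0\ge B(k,\nu)$), each such Gram determinant is positive there; being a polynomial in $h$ that does not vanish on the relevant interval, it stays positive as long as $\overline M(\widehat\nu_h)$ remains irreducible, i.e. for $h\notin J$.

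The delicate point — and the one the Introduction flags as ``miraculous'' — is the behavior at $h\in J$. One must check that for every $h\in J$ the value $\ell(h)$ fails to satisfy the inequality \eqref{eh} (equivalently $\ell(h)<A(k,\nu)$), so that the exceptional locus $J$ never obstructs unitarity in the range we care about; this is a case-by-case verification using Table~2 and the explicit description of $J$ coming from the determinant of the Shapovalov form on $\overline M(\widehat\nu_h)$ (Kac--Kazhdan type determinant). Combined with the independent fact that $L^W(\nu,\ell_0)$ is unitary for $\ell_0\gg 0$ (already established), a density/continuity argument then yields: the Gram determinant on each fixed graded component of $L^W(\nu,\ell_0)$, viewed as a function of $\ell_0$ through $\ell(h)$, is a nonzero polynomial that is positive for large $\ell_0$ and has no zeros for $\ell_0>A(k,\nu)$, hence is positive throughout $\ell_0\ge A(k,\nu)$. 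For $\ell_0=A(k,\nu)$ one concludes by a final continuity (the form is a limit of positive semidefinite forms and, $\nu$ being non-extremal, no new null vectors of the type excluded by Proposition~\ref{boundary} appear).

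I expect the main obstacle to be twofold: first, establishing property (2) — that the invariant form on $H_0(\overline M(\widehat\nu_h))$ genuinely depends polynomially (not merely rationally or meromorphically) on $h$, which requires a careful choice of basis compatible with the reduction functor and control of how $H_0$ interacts with contravariant forms; and second, the arithmetic of verifying $\ell(h)<A(k,\nu)$ for all $h\in J$ in each of the cases $\g=psl(2|2),spo(2|m),D(2,1;a),F(4),G(3)$. The former is where the real vertex-algebraic content lies; the latter is a finite but genuinely case-dependent computation that must be carried out using the numerical data of Tables~2 and~3 together with the explicit form of the Kac--Kazhdan determinant for $\ga$.
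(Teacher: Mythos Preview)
Your overall strategy is the paper's strategy: deform in a one-parameter family, use Arakawa's theorem and irreducibility of $\overline M(\widehat\nu_h)$ to identify the deformed module with $L^W(\nu,\ell(h))$ away from an exceptional set, verify (Lemma~\ref{sign2}) that the exceptional $\ell$-values all satisfy $\ell\le A(k,\nu)$, and conclude by continuity from the known unitarity for $\ell_0\gg 0$. Your description of the ``miraculous'' case-by-case computation is accurate; this is exactly Lemma~\ref{sign2}.

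Where your plan diverges from the paper is in the mechanism for obtaining polynomial dependence of the Hermitian form. You propose to construct the invariant form directly on $H_0(\overline M(\widehat\nu_h))$ and show it varies polynomially in $h$; you correctly flag this as the main obstacle, since controlling how contravariant forms pass through the reduction functor is genuinely delicate. The paper sidesteps this entirely: it uses a \emph{second} model for $L^W(\nu,\ell_0)$, namely the free-field submodule $N(\mu,\nu)\subset M(\mu+s)\otimes L^\natural(\nu)\otimes F(\g_{1/2})$ from Section~\ref{ur}, together with its ``universal'' version $N(y,\nu)$ over the polynomial ring $\C[y]$. Since $N(y,\nu)$ is free over $\C[y]$, a fixed $\C[y]$-basis specializes to bases of all $N(\mu,\nu)$ simultaneously, and the Gram determinants $\det_{\widehat\zeta}(\ell_0)$ are then manifestly polynomial in $\ell_0$. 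The Hamiltonian reduction $H_0(\overline M(\widehat\nu_h))$ is used only to compute the character (Theorem~\ref{chW}) and thereby prove that $N(\mu,\nu)$ is irreducible for $\ell_0>A(k,\nu)$ (Lemma~\ref{basisfree}); the Hermitian form never touches $H_0$. So the two realizations play complementary roles: $H_0(\overline M(\widehat\nu_h))$ supplies irreducibility, $N(\mu,\nu)$ supplies the form. Your approach could in principle be made to work, but the paper's route is what makes property~(2) essentially automatic rather than the hardest step.
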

\begin{cor}\label{u?} 
If k lies in the unitary range, then
the $W^k_{\min}(\g)$--module $L^W(0,\ell_0)$ is unitary for all $\ell_0\geq 0$. 
Consequently, $W_k^{\min}(\g)$ is a unitary vertex algebra if and only if $k$ lies in 
the unitary range.
\end{cor}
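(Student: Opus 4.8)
The plan is to read off Corollary \ref{u?} from Theorem \ref{u??} specialized to $\nu=0$, together with the necessary conditions already established. For the positive assertion, set $\nu=0$ in Theorem \ref{u??}. The hypotheses are automatic: $0\in\widehat P_k^+$ since $0$ is dominant integral and $0(\theta_i^\vee)=0\le M_i(k)$ (being in the unitary range forces $M_i(k)\in\mathbb Z_+$); $A(k,0)=0$ by direct inspection of \eqref{Aknu}, as each summand carries a factor $\nu$; and $\nu=0$ is non-extremal precisely when $k$ is non-collapsing (the remark following Proposition \ref{boundary}). Hence, for a non-collapsing $k$ in the unitary range, Theorem \ref{u??} gives that $L^W(0,\ell_0)$ is unitary for every $\ell_0\ge A(k,0)=0$. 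Taking $\ell_0=0$ and using that the simple vertex algebra $W_k^{\min}(\g)$, equipped with the conjugate linear involution $\phi$ induced by an almost compact involution of $\g$ and the form $H$ of \cite[Proposition 7.4]{KMP}, is its own adjoint module $L^W(0,0)$, we conclude that $W_k^{\min}(\g)$ is unitary.

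The remaining levels in the unitary range are the collapsing ones; there Theorem \ref{u??} does not apply (for such $k$, $\nu=0$ is extremal), and they are disposed of directly: by Theorem \ref{oldresults} the vertex algebra $W_k^{\min}(\g)$ is isomorphic to an integrable affine vertex algebra $V_N(\aa)$ with $\aa$ simple and $N=M_i(k)\in\mathbb Z_+$, which is unitary by \cite[\S5.3]{KMP}. For the converse, assume $W_k^{\min}(\g)$ is a non-trivial unitary vertex algebra. Its adjoint module is the non-trivial unitary highest weight module $L^W(0,0)$, so Proposition \ref{810} forces $k$ into the unitary range, and Proposition \ref{converse} guarantees that the involution is the one induced from an almost compact involution of $\g$. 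This closes the equivalence.

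The substance of the matter is of course Theorem \ref{u??}, and that is where the difficulty lies; to prove it I would proceed as follows. Fix a non-extremal $\nu\in\widehat P_k^+$, let $\ga$ be the affinization of $\g$, and construct a one-parameter family of highest weight $\ga$-modules $\overline M(\widehat\nu_h)$, $h\in\C$, with highest weight $\widehat\nu_h$ depending affinely on $h$ and arranged so that quantum Hamiltonian reduction $H_0$ carries $\overline M(\widehat\nu_h)$ to a highest weight $W^k_{\min}(\g)$-module. The structural inputs needed are: (i) by a Kac--Kazhdan-type analysis and Arakawa's irreducibility theorem \cite{Araduke}, $\overline M(\widehat\nu_h)$ is irreducible for all $h$ outside an explicit finite set $J$, and then $H_0(\overline M(\widehat\nu_h))=L^W(\nu,\ell(h))$ for an explicit affine function $\ell(h)$; (ii) $H_0(\overline M(\widehat\nu_h))$ carries a $\phi$-invariant Hermitian form whose Gram matrix on each $L_0$-eigenspace has entries polynomial in $h$. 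Granting these, Proposition \ref{sufficient} makes the form positive definite for $\ell_0\gg0$ (that is, $h$ large in the appropriate direction); since the Gram determinants are polynomial in $h$ and --- crucially --- the exceptional values $h\in J$ turn out to give $\ell(h)$ violating \eqref{eh}, no Gram determinant can vanish while \eqref{eh} holds, so positivity propagates by continuity from $\ell_0\gg0$ down to the boundary $\ell_0=A(k,\nu)$, descending to the irreducible quotient $L^W(\nu,\ell_0)$.

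The main obstacle is exactly the conjunction of (ii) and that ``miracle'': one must build the family $\overline M(\widehat\nu_h)$ and pin down the polynomial dependence of the reduced form explicitly enough to run the continuity argument, and then verify --- case by case over the list \eqref{ssuper} --- that every reducibility point $h\in J$ falls outside the unitarity region for $L^W(\nu,\cdot)$. Controlling simultaneously the reducibility of the affine module, the behaviour of $H_0$ (exactness and non-vanishing on the relevant modules), and the sign of the resulting Hermitian form is the technical heart; the case $\nu=0$ needed for the Corollary uses the same machinery and is no easier, the only elementary extra facts being $A(k,0)=0$ and that $\nu=0$ is non-extremal exactly at non-collapsing $k$.
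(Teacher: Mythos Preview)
Your deduction of the Corollary from Theorem \ref{u??} is correct and follows the paper's own argument: the paper's proof amounts to exactly the observation that $A(k,0)=0$, that $\nu=0$ is non-extremal iff $k$ is non-collapsing (Remark \ref{7.4}), and the converse via Proposition \ref{810}. Your treatment is in fact slightly more explicit than the paper's terse one-liner, in that you spell out the collapsing case via Theorem \ref{oldresults}; your sketch of the proof of Theorem \ref{u??} itself also matches the paper's strategy in Section 11 (the family $\overline M(\widehat\nu_h)$, Arakawa's theorem, polynomial dependence of the form, and the continuity argument from Proposition \ref{sufficient}).
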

In the rest of this section we give a proof of these results. 
First, by Proposition \ref{casi} (a), we may exclude $\g=sl(2|m),\ m>2$, from consideration, so that $\g^\natural$ is semisimple and by Proposition \ref{l0nec}, conditions $M_i(k)\in\mathbb Z_+$  are necessary for unitarity, hence we shall assume that these conditions hold.
Let $\ga=	(\C[t,t^{-1}]\otimes \g)\oplus \C K \oplus \C d$ be the affinization of $\g$ (with bracket 
$[t^m\otimes a,t^n\otimes b]=t^{n+m}\otimes[a,b]+\d_{m,-n}m K(a|b),\,a,b\in \g$). Let $\ha=\h\oplus \C K \oplus \C d$ be its Cartan subalgebra. Define  $\L_0$ and $\d\in \ha^*$ setting $\L_0(\h)=\L_0(d)=\d(\h)=\d(K)=0$ and $\L_0(K)=\d(d)=1$. Let $\Da\subset\ha^*$ be the set of roots of $\ga$. As a subset  of simple roots for $\ga$ we choose $\Pia=\{\a_0=\d-\theta\}\cup\Pi$, where $\Pi$ is the set of simple roots for $\g$ given in Table 1. We denote by $\Dap$ the corresponding set of positive roots and by  $\widehat \rho\in \ha^*$ the corresponding $\rho$--vector.\par

For
$\nu\in P^+_k$ and $h\in\C$, set 
\begin{equation}\label{nuh}
\widehat\nu_h=k\L_0+\nu+h\theta\in \ha^*.
\end{equation}
Let $\widehat{\mathfrak p}$ be the parabolic subalgebra of $\ga$ with Levi factor $\ha+\g^\natural$ and the nilradical $\widehat{\mathfrak u}_+=\sum_{\a\in\Dap\setminus\D^\natural}\ga_\a$.  Set $\widehat{\mathfrak u}_-=\sum_{\a\in\Dap\setminus\D^\natural}\ga_{-\a}$. Let $V^\natural(\nu)$ denote the irreducible $\g^\natural$--module with highest weight $\nu$ and extend the $\g^\natural$ action to 
 $\widehat\p$
by letting 
$\widehat{\mathfrak u}_+$ act trivially; $x$, $K$, and $d$ act by $h$, $k$, and $0$ respectively. Let $M^\natural(\widehat\nu_h)$ be the corresponding generalized Verma module for $\ga$, i.e.
$$M^\natural(\widehat\nu_h)=U(\ga)\otimes_{U(\widehat\p)} V^\natural(\nu).$$ 

We denote by $v_{\widehat \nu_h}$ a highest weight vector for $M^\natural(\widehat \nu_h)$ .
If  $\widehat\mu\in  \ha^*$ and $M$ is a $\ga$--module, we denote by 
 $M_{\widehat\mu}$  the corresponding weight space.
Let $\eta_i=\d-\theta_i, 1\leq i\leq s$ (recall that $s=1$ or $2$).
%Assume that $\lambda\in P^+_\natural$ and  set $\nu=\l_{|\h^\natural}$. 
%Fix an ordered basis of root vectors for $\widehat{\mathfrak u}_-$ and let  $\{b_j\}$ be the corresponding PBW-basis of $U(\widehat{\mathfrak u}_-)$. Fix also  a basis $\{z_j\}$ of $V^\natural(\nu)$  consisting of weight vectors. For $\widehat\nu\in  \ha^*$, set $\mathcal B_{\widehat\nu}=\{b_j\otimes z_r\}\cap M^\natural(\widehat \nu_h)_{\widehat\mu}$. Then  $\mathcal B_{\widehat\mu}$ is a basis of $M^\natural(\widehat \nu_h)_{\widehat\mu}$.

If $\a\in\Da$ is a non-isotropic root, denote by $s_\a\in End(\ha^*)$ the corresponding reflection and the group generated by them by $\Wa$. 
If $\be\in\Da\setminus \mathbb Z\d$ is an odd isotropic root, we let $r_{\be}$ denote the corresponding odd reflection. We denote by $x_\a$ a root vector attached to $\a\in \Da$. Denote by $w.$ the shifted action of $\Wa$: $w.\l= w(\l+\widehat \rho)-\widehat \rho$.
 
  \begin{lemma}\label{oddref} Let $\widehat \Pi'$ be a set of simple roots for $\Da$. Let $M$ be a $\ga$--module and assume that $m\in M$ is a singular  vector with respect to $\widehat \Pi'$. If $\a_j\in\widehat \Pi'$ is an isotropic root and  $x_{-\a_j}m\ne0$, then $x_{-\a_j}m$ is a singular vector with respect to $r_{\a_j}(\widehat \Pi')$.
 \end{lemma}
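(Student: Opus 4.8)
The plan is to verify directly that $x_{-\a_j}m$ is annihilated by the Chevalley generators attached to the simple roots of the new basis $r_{\a_j}(\widehat\Pi')$, using only the relations in the affine Lie superalgebra $\ga$ together with the fact that $\a_j$ is isotropic (so $(\a_j|\a_j)=0$, equivalently $[x_{\a_j},x_{-\a_j}]=h_{\a_j}$ with $\a_j(h_{\a_j})=0$).

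Recall that for an isotropic simple root $\a_j$, the odd reflection $r_{\a_j}$ sends $\a_j\mapsto-\a_j$, sends any $\be\in\widehat\Pi'$ with $(\be|\a_j)=0$, $\be\ne\a_j$, to $\be$ itself, and sends any $\be\in\widehat\Pi'$ with $(\be|\a_j)\ne0$ to $\be+\a_j$ (which is again a root, and is odd iff $\be$ is even). So the generators I must test are: $x_{-\a_j}$ itself (for the new simple root $-\a_j$); $x_{\be}$ for those $\be\in\widehat\Pi'\setminus\{\a_j\}$ orthogonal to $\a_j$; and $x_{\be+\a_j}$ for those $\be$ not orthogonal to $\a_j$. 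First I would handle the easy cases. Since $m$ is singular for $\widehat\Pi'$, we have $x_\be m=0$ and $x_{\a_j}m=0$; also $h\,m$ is a multiple of $m$ for $h\in\ha$. For $\be$ orthogonal to $\a_j$, the vectors $x_\be$ and $x_{-\a_j}$ either commute or super-commute to a root vector $x_{\be-\a_j}$; but $\be-\a_j$ is not a root when $(\be|\a_j)=0$ and $\be,\a_j$ are distinct simple roots of the same basis (a standard fact), so $[x_\be,x_{-\a_j}]=0$ in $\ga$, hence $x_\be x_{-\a_j}m=\pm x_{-\a_j}x_\be m=0$. For the new simple root $-\a_j$: since $\a_j$ is isotropic, $x_{-\a_j}$ is odd and $[x_{-\a_j},x_{-\a_j}]=2x_{-\a_j}^2$ is a root vector for $-2\a_j$, which is not a root; hence $x_{-\a_j}^2=0$, so $x_{-\a_j}(x_{-\a_j}m)=0$ automatically.

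The one genuinely substantive case is $\be\in\widehat\Pi'$ with $(\be|\a_j)\ne0$: I must show $x_{\be+\a_j}(x_{-\a_j}m)=0$. Here $[x_{\be+\a_j},x_{-\a_j}]$ is, up to a nonzero scalar, $x_\be$ (since $\be$ is a root and $\be+\a_j-\a_j=\be$), so
\[
x_{\be+\a_j}x_{-\a_j}m=\pm x_{-\a_j}x_{\be+\a_j}m+c\,x_\be m
\]
for a constant $c$, and $x_\be m=0$. Thus it remains to see $x_{\be+\a_j}m=0$. This is where I would use that $m$ is singular for $\widehat\Pi'$: $\be+\a_j$ is a positive root for $\widehat\Pi'$ (a sum of two simple roots), so $x_{\be+\a_j}$ lies in the nilradical spanned by positive root vectors, obtained as $[x_\be,x_{\a_j}]$ up to scalar, and hence $x_{\be+\a_j}m=\pm[x_\be,x_{\a_j}]m$ with both $x_\be m=0$ and $x_{\a_j}m=0$, giving $x_{\be+\a_j}m=0$. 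Finally I would note that $x_{-\a_j}m$ has weight $\mu-\a_j$ where $\mu$ is the weight of $m$, so it is a weight vector, completing the verification that it is singular for $r_{\a_j}(\widehat\Pi')$.

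The main obstacle, such as it is, is bookkeeping: one must be careful that the root vectors $x_{\be+\a_j}$ and $x_{\be-\a_j}$ are correctly identified (including the $\pm$ signs from superbrackets and the possibility that $\be\pm\a_j$ fails to be a root), and that the structure constant multiplying $x_\be$ in the bracket $[x_{\be+\a_j},x_{-\a_j}]$ does not vanish — but since $\be$ is a root and the $\a_j$-string through $\be$ has a definite shape dictated by $(\be|\a_j)\ne 0$ and $(\a_j|\a_j)=0$ (the string has length one on the relevant side), this structure constant is nonzero, and in any event for the argument we only need that $x_\be m=0$, which kills that term regardless. So there is no real analytic difficulty; the lemma is essentially a formal consequence of the defining relations and the description of how odd reflections act on a simple root system.
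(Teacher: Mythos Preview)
Your proof is correct and follows essentially the same approach as the paper: both arguments check the three types of new simple roots ($-\a_j$, the orthogonal $\be$, and the $\be+\a_j$) by the same commutator computations, using that $x_{-\a_j}^2=0$, that $\be-\a_j$ is not a root for distinct simple $\be,\a_j$, and that $x_{\be+\a_j}m=0$ since $\be+\a_j$ is positive for $\widehat\Pi'$. Your write-up is in fact more careful than the paper's about the super-signs and the nonvanishing of structure constants, correctly noting these are irrelevant since the terms they multiply vanish anyway.
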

 \begin{proof}
 Since $\a_j$ is odd isotropic, it follows that $x_{-\a_j}^2m=0$. If $r\ne j$ and $(\a_r|\a_j)=0$ then $x_{\a_r}x_{-\a_j}m=x_{-\a_j}x_{\a_r}m=0$. If $r\ne j$ and $(\a_r|\a_j)\ne0$ then $x_{\a_r+\a_j}x_{-\a_j}m=x_{-\a_j}x_{\a_r+\a_j}m+x_{\a_r}m=0$.
 \end{proof}

For  $\nu\in P^+_k$ set
\begin{equation}\label{Ni}
N_i(k,\nu)=(\widehat \nu_h+\widehat\rho|\eta_i^\vee).
\end{equation}
Note that $N_i(k,\nu)$ does not depend on $h$. We will simply write $N_i$ when the dependence on $k$ and  $\nu$ is clear from the context.

\begin{lemma}\label{Submodule}For $\nu\in P^+_k$ not extremal,  we have 
\begin{equation}\label{10}
N_i(k,\nu)=M_i(k)+\chi_i+1-(\nu|\theta_i^\vee)\in\nat.
\end{equation}
Moreover, for 
$$v_i(h):=x_{-\eta_i}^{N_i}x_{-\a_0-\a_1}x_{-\a_1}v_{\widehat\nu_h},
$$
 the subspace $\sum_iU(\ga)v_{i}(h)$ is a proper submodule of the $\ga$-module $M^\natural(\widehat\nu_h)$.
\end{lemma}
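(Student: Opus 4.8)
For the equality and integrality of $N_i$, I would compute $N_i=(\widehat\nu_h+\widehat\rho\,|\,\eta_i^\vee)$ directly. Since $\eta_i=\delta-\theta_i$ with $\delta$ isotropic and $(\delta|\theta_i)=0$, we have $(\eta_i|\eta_i)=(\theta_i|\theta_i)=u_i$, so $\langle\l,\eta_i^\vee\rangle=\tfrac2{u_i}(\l\,|\,\delta-\theta_i)$ for $\l\in\ha^*$. Writing $\widehat\nu_h=k\L_0+h\theta+\nu$ and $\widehat\rho=h^\vee\L_0+\rho$ (up to a multiple of $\delta$, which is irrelevant), and using $(\theta|\theta_i)=0$ (because $e$ is a root vector for $\theta$ and centralizes $\g^\natural$, so $\theta|_{\h^\natural}=0$) together with $(\L_0|\delta)=1$, one obtains
\begin{equation*}
N_i=\tfrac2{u_i}\bigl((k+h^\vee)-(\nu|\theta_i)-(\rho|\theta_i)\bigr)=\tfrac{2(k+h^\vee)}{u_i}-\nu(\theta_i^\vee)-\tfrac{2(\rho|\theta_i)}{u_i}.
\end{equation*}
To finish I need $(\rho|\theta_i)=\bar h^\vee_i-\tfrac{u_i}2$. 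First $\rho|_{\h^\natural}=\rho^\natural$: indeed $\rho=\tfrac12\theta+\rho^\natural-\rho_{\bar1}$ with $\theta|_{\h^\natural}=0$, and the restriction to $\h^\natural$ of the half-sum $\rho_{\bar1}$ of positive odd roots vanishes, since $\g_{1/2}$ is self-dual as a $\g^\natural$-module (via the nondegenerate symmetric invariant form $\langle\cdot,\cdot\rangle_{\rm ne}$), so its weights sum to zero. Then, writing $g_i$ for the dual Coxeter number of $\g^\natural_i$, the classical identity $(\rho_{\g^\natural_i}|\theta_i^\vee)=g_i-1$ gives $(\rho^\natural|\theta_i)=(g_i-1)\tfrac{u_i}2$, while $\bar h^\vee_i=g_i\tfrac{u_i}2$ by the definition of $\bar h^\vee_i$ and the value of the Casimir on the adjoint representation; subtracting gives the claim. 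Plugging this in and using $M_i(k)=\tfrac{2k}{u_i}+\chi_i$, $\chi_i=\tfrac{h^\vee-\bar h^\vee_i}{u_i}$, yields $N_i=M_i(k)+\chi_i+1-\nu(\theta_i^\vee)$. Finally, ``$\nu$ not extremal'' means $\nu(\theta_i^\vee)\le M_i(k)+\chi_i$ for all $i$ (Definition \ref{extr} and the remark after Proposition \ref{boundary}, via $\chi_i=-\xi(\theta_i^\vee)$), so $N_i\ge1$; and $N_i\in\ZZ$ since $M_i(k),\chi_i\in\ZZ$ and $\nu\in P^+$. Hence $N_i\in\nat$.

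For the submodule claim I would exhibit $v_i(h)$ as a singular vector for a base of $\Da$ obtained from $\Pia$ by two odd reflections. Note first that $v_{\widehat\nu_h}$ is a genuine $\ga$-highest weight vector for $\Pia$: the positive nilradical of $\ga$ is $\n^\natural_+\oplus\ua_+$, and $v_{\widehat\nu_h}$ is annihilated by both summands. From Table 1, $\alpha_1$ is an odd isotropic simple root lying in the support of $\theta$ with $(\theta|\alpha_1)=1$; hence $(\alpha_0|\alpha_1)=-(\theta|\alpha_1)=-1\ne0$ (so $\alpha_0+\alpha_1\in\Da$), and $(\alpha_0+\alpha_1|\alpha_0+\alpha_1)=(\theta|\theta)-2=0$, so in $\Pia':=r_{\alpha_1}(\Pia)$ the root $\alpha_0+\alpha_1$ is again odd isotropic simple. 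Since $M^\natural(\widehat\nu_h)\cong U(\ua_-)\otimes V^\natural(\nu)$ as $U(\ua_-)$-modules (PBW), both $x_{-\alpha_1}v_{\widehat\nu_h}$ and $m:=x_{-\alpha_0-\alpha_1}x_{-\alpha_1}v_{\widehat\nu_h}$ are nonzero, so Lemma \ref{oddref} applies twice and $m$ is singular for $\Pia'':=r_{\alpha_0+\alpha_1}r_{\alpha_1}(\Pia)$, of weight $\widehat\nu_h-\alpha_0-2\alpha_1$.

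The key point, which I would verify case by case from Tables 1 and 3 (for $D(2,1;a)$ and $psl(2|2)$ checking that both nodes of $\g^\natural$ become attached to the image of the affine node), is that each $\eta_i=\delta-\theta_i$ is a real simple root of $\Pia''$. Granting this: under each of the two odd reflections, $\rho$ increases by the reflected isotropic root while the highest weight of the module drops by the same root (because $x_{-\alpha_1}v_{\widehat\nu_h}\ne0$ and $m\ne0$), so ``highest weight $+\,\rho$'' is preserved, whence $(\mathrm{wt}(m)+\widehat\rho''\,|\,\eta_i^\vee)=(\widehat\nu_h+\widehat\rho\,|\,\eta_i^\vee)=N_i\in\ZZ_{>0}$, where $\widehat\rho''$ is the $\rho$-vector of $\Pia''$. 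The standard $sl_2$ argument in the direction of the real simple root $\eta_i$ then shows that $v_i(h)=x_{-\eta_i}^{N_i}m$ is singular for $\Pia''$; it is nonzero because $x_{-\eta_i}$ is an even element of $\ua_-$ and $M^\natural(\widehat\nu_h)$ is $U(\ua_-)$-free. Hence $U(\ga)v_i(h)$, being generated by the $\Pia''$-singular vector $v_i(h)$, has all weights $\le_{\Pia''}\mathrm{wt}(v_i(h))=\mathrm{wt}(m)-N_i\eta_i<_{\Pia''}\mathrm{wt}(m)$; in particular $m\notin U(\ga)v_i(h)$, so each $U(\ga)v_i(h)$ is a proper submodule, and therefore $\sum_iU(\ga)v_i(h)$ is contained in the (unique) maximal proper submodule of $M^\natural(\widehat\nu_h)$.

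The main obstacle is exactly the case-by-case check that $\delta-\theta_i\in\Pia''$: one has to push the two odd reflections $r_{\alpha_1}$ and $r_{\alpha_0+\alpha_1}$ through each Dynkin diagram of Table 1 and read off the resulting base. Everything else is the classical highest-weight-module machinery together with the numerology already recorded in Section \ref{7}.
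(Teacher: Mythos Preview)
Your argument is correct and follows essentially the same route as the paper's proof: compute $N_i$ from the inner product with $\eta_i^\vee$, perform the two odd reflections at $\a_1$ and $\a_0+\a_1$, verify case by case that $\eta_i$ becomes simple in the reflected base, and then use the $sl_2$ mechanism at $\eta_i$ to produce the singular vector $v_i(h)$. Your treatment is in fact slightly more detailed than the paper's in two respects: you justify $(\rho\,|\,\theta_i)=(\rho^\natural\,|\,\theta_i)$ (the paper uses this silently, though it notes elsewhere that the simple roots of $\g^\natural$ lie in $\Pi$, which gives a one-line proof), and you argue explicitly that $v_i(h)\ne0$ by freeness over $U(\widehat{\mathfrak u}_-)$. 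Your properness argument via the unique maximal submodule is a minor variant of the paper's, which instead observes that $\sum_iU(\ga)v_i(h)\subset U(\ga)m\subsetneq M^\natural(\widehat\nu_h)$.
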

\begin{proof}

Note that
\begin{align*}
(\widehat \nu_h+\widehat\rho|\eta_i^\vee)&=\frac{2}{(\theta_i|\theta_i)}(k+h^\vee)-(\nu+\rho|\theta_i^\vee)=\frac{2}{(\theta_i|\theta_i)}(k+\frac{h^\vee-\bar h_i^\vee}{2}+\frac{h^\vee+\bar h_i^\vee}{2}-(\nu+\rho^\natural|\theta_i))\notag\\
&=M_i(k)+\frac{2}{(\theta_i|\theta_i)}(\frac{h^\vee-\bar h_i^\vee}{2}+\frac{(\theta_i|\theta_i)}{2})-(\nu|\theta_i^\vee)\\
&=M_i(k)+\chi_i+1-(\nu|\theta_i^\vee).\notag
\end{align*}
Since  $\nu$ is not extremal, $(\widehat \nu_h+\widehat\rho|\eta_i^\vee)\in\mathbb N$.

Recall from Table 1 the set $\Pi$ of simple roots for $\g$. Let $\a_1$ be an odd root in $\Pi$. 
A direct (easy) verification shows that
$\a_0+\a_1$ is an odd root and that
the set of simple roots $r_{\a_0+\a_1}(r_{\a_1}(\widehat \Pi))$ contains both $\a_0$ and $\{\eta_i\mid 1\le i\le s\}$. 
Clearly $x_{-\a_0-\a_1}x_{-\a_1}v_{\widehat \nu_h}\ne0$ in $M^\natural(\nu_h)$ so, by Lemma \ref{oddref}, 
$x_{-\a_0-\a_1}x_{-\a_1}v_{\widehat \nu_h}$ is a singular vector for the set of simple roots
$r_{\a_0+\a_1}(r_{\a_1}(\widehat \Pi))$. The weight of this singular vector is, clearly, $\widehat \nu_h'=\widehat \nu_h-\a_0-2\a_1$.
%By \eqref{typical}, 
%$$(\widehat \nu_h|\a_1)=(\widehat \nu_h+\widehat\rho|\a_1)\ne0$$ and
%$$(\widehat \nu_h-\a_1|\a_0+\a_1)=(\widehat \nu_h-\a_1+\widehat\rho+\a_1|\a_0+\a_1)=(\widehat \nu_h+\widehat\rho|\a_0+\a_1)\ne0,
%$$
Since the $\rho$--vector $\widehat \rho'$ of $r_{\a_0+\a_1}(r_{\a_1}(\widehat \Pi))$ is $\widehat \rho+\a_0+2\a_1$, we see that $(\widehat \nu_h'+\widehat \rho'|\eta_i^\vee)=(\widehat \nu_h+\widehat \rho|\eta_i^\vee)=N_i$.
Since $\eta_i$ is a simple root in $r_{\a_0+\a_1}(r_{\a_1}(\widehat \Pi))$,  we obtain that $x_{-\eta_i}^{N_i}x_{-\a_0-\a_1}x_{-\a_1}v_{\widehat \nu_h}$ is a singular vector for  the set of simple roots $r_{\a_0+\a_1}(r_{\a_1}(\widehat \Pi))$. It follows that $\sum_iU(\ga)v_{i}(h)$ is a proper submodule of $U(\ga)x_{-\a_0-\a_1}x_{-\a_1}v_{\widehat \nu_h}\subset M^\natural(\widehat\nu_h)$.
\end{proof}

Set
\begin{equation}\label{modulo}
\overline{M}(\widehat \nu_h)=M^\natural(\widehat \nu_h)/(\sum_iU(\widehat \g)v_i(h)).
\end{equation}
Recall (cf. \cite{KK} in the non-super case) that for $\widehat \mu, \widehat\l\in \ha^*$, $\widehat \mu$ is said to be  linked to $\widehat \l$  if there exists a sequence of roots   $\{\gamma_1,\ldots,\gamma_t\}\subset\Dap$  and weights $\widehat\l=\mu_0,\mu_1\ldots, \mu_t=\widehat \mu$ such that, for $1\leq r\leq t$ one has

\begin{itemize}
\item $ (\mu_{r-1}+\widehat\rho|\gamma_r)=\frac{m_r}{2}(\gamma_r|\gamma_r),\ m_r\in\nat$, where $m_r=1$ if $\gamma_r$ is an odd isotropic root and 
$m_r$ is odd  if $\gamma_r$ is an odd non-isotropic root,
\item $\mu_r=\mu_{r-1}-m_r\gamma_r$.
\end{itemize}
The proof of the following proposition is inspired by \cite[Section 11]{GK2}. It also provides a simple proof of Lemma 2 from \cite{J}.
\begin{proposition}\label{irr} 
Assume that  $\nu\in P^+_k$ is  not extremal and that 
\begin{equation}\label{gi}
(\widehat \nu_h+\widehat \rho|\a)\ne \tfrac{n}{2}(\a|\a)\text{ for all $n\in \mathbb N$ and
$\a\in \Dap\setminus \Dap(\g^\natural)$.}
\end{equation}
 Then 
\begin{enumerate}
\item[(i)] the module $\overline M(\widehat \nu_h)$ is irreducible;
\item[(ii)]  its character is 
\begin{equation}\label{chf}ch \overline M(\widehat \nu_h)=\sum_{w\in \Wa^\natural} \det(w) ch M(w.\widehat \nu_h).\end{equation}
\end{enumerate}\end{proposition}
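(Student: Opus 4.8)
The plan is to deduce both statements from standard facts about highest weight modules over $\ga$ together with the Kac--Kazhdan linkage criterion. First I would observe that $\overline M(\widehat\nu_h)$ is a quotient of $M^\natural(\widehat\nu_h)$, which in turn is a quotient of the ordinary Verma module $M(\widehat\nu_h)$; thus every irreducible subquotient of $\overline M(\widehat\nu_h)$ has highest weight $\widehat\mu$ linked to $\widehat\nu_h$ in the sense recalled before the proposition (this is the super Kac--Kazhdan result, applied as in \cite{KK}). The hypothesis \eqref{gi} kills all links through roots $\a\in\Dap\setminus\Dap(\g^\natural)$ that are non-isotropic, and through isotropic odd roots not in $\mathbb Z\d$; one must still deal with links through roots of $\g^\natural$ and through imaginary roots $n\d$. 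Because $\widehat\nu_h$ restricted to $\h^\natural$ is $\nu\in P^+_k$, which is $\g^\natural$-dominant integral, the only links available inside $\g^\natural$ are the ones already modded out by the submodule $\sum_i U(\ga)v_i(h)$ of Lemma \ref{Submodule}; and the level $k+h^\vee\ne 0$ is non-critical (indeed negative rational by Corollary \ref{uur} in the unitary range), so there are no links through $n\d$. I would make this bookkeeping precise by listing which positive roots of $\ga$ are in $\Dap(\g^\natural)$, which are the ``affine'' ones generated by $\eta_i$, and which are the remaining ones covered by \eqref{gi}, exactly as in the root combinatorics used in the proof of Lemma \ref{Submodule}.

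For part (i), the argument is then: suppose $\overline M(\widehat\nu_h)$ is reducible; pick a singular vector of weight $\widehat\mu\neq\widehat\nu_h$. By the linkage analysis above, the only way to reach such a $\widehat\mu$ is a chain using roots of $\g^\natural$ and the affine roots $\eta_i$ (together with the odd isotropic roots $\a_1,\a_0+\a_1$ used to build the $v_i(h)$). But $\nu$ being $\g^\natural$-dominant integral means the $\g^\natural$-Verma quotient $M^\natural(\widehat\nu_h)$ has already absorbed the $\g^\natural$-singular vectors, and the affine $\eta_i$-links produce precisely the vectors $v_i(h)$, which have been quotiented out in \eqref{modulo}. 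Hence no nonzero singular vector of weight $\neq\widehat\nu_h$ survives in $\overline M(\widehat\nu_h)$, so it is irreducible. The main obstacle I anticipate is handling the interaction between the odd reflections $r_{\a_1}, r_{\a_0+\a_1}$ (which change the set of simple roots, hence the $\rho$-vector) and the linkage count: one has to be careful that passing to the base $r_{\a_0+\a_1}(r_{\a_1}(\widehat\Pi))$ does not open up new links, and that the condition \eqref{gi} is stated relative to the original $\Dap$. I would resolve this by noting, as in the proof of Lemma \ref{Submodule}, that odd reflections do not change the linkage class and that the quantities $(\widehat\nu_h+\widehat\rho|\a)$ are unchanged for the relevant $\a$; this is where I expect to cite or adapt the argument of \cite[Section 11]{GK2}.

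For part (ii), once irreducibility is known, the character formula is a formal consequence. The module $M^\natural(\widehat\nu_h)$ is, as a $\g^\natural$-module, induced from $V^\natural(\nu)$, so
\[
ch\, M^\natural(\widehat\nu_h)=ch\,V^\natural(\nu)\cdot\!\!\prod_{\a\in\Dap\setminus\Dap(\g^\natural)}\!\!(1-e^{-\a})^{-p(\a)\,\mathrm{mult}\,\a},
\]
and using the Weyl character formula for $V^\natural(\nu)$ this rewrites as an alternating sum over $\Wa^\natural$ (the integral Weyl group of $\g^\natural$, viewed inside $\Wa$) of characters of ordinary Verma modules $M(w.\widehat\nu_h)$. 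The submodule $\sum_i U(\ga)v_i(h)$ that we factor out is itself, by the same induction plus the Bernstein--Gelfand--Gelfand/BGG-type resolution of the finite-dimensional $\g^\natural$-module (equivalently: inclusion--exclusion of the $\g^\natural$-Verma modules through the affine walls $\eta_i$), accounted for by extending the alternating sum from the finite Weyl group of $\g^\natural$ to the affinized group generated by it together with the reflections $s_{\eta_i}$ — but since $\overline M(\widehat\nu_h)$ is irreducible, the only contribution that can survive in the Euler characteristic is the full alternating sum over $\Wa^\natural$, giving \eqref{chf}. Concretely I would argue: by (i) the Euler--Poincaré characteristic of the standard BGG-type complex built from the $s_{\eta_i}$ walls collapses onto $ch\,\overline M(\widehat\nu_h)$, and expanding each term by the induced-module formula yields the stated sum; the potential pitfall, namely the cancellation of ``subsingular'' contributions, does not arise precisely because \eqref{gi} forces genuine irreducibility, which is the whole point of imposing that genericity hypothesis here.
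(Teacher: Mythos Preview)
Your plan reverses the paper's order of argument, and this creates a genuine gap in part (i).

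You propose to prove (i) first by arguing directly that no singular vector survives in $\overline M(\widehat\nu_h)$: linkage via \cite{KK} together with \eqref{gi} restricts all subquotients to have highest weights of the form $w.\widehat\nu_h$ with $w\in\Wa^\natural$, and then you assert that the quotient by $\sum_i U(\ga)v_i(h)$ already kills every such weight. This last step is not justified. The vectors $v_i(h)$ correspond only to the \emph{simple} reflections $s_{\eta_i}$ (after the two odd reflections), and showing that the submodule they generate contains every singular vector of weight $w.\widehat\nu_h$ for arbitrary $w\in\Wa^\natural$ is precisely the hard part; it is essentially a BGG-type statement for parabolic Verma modules over $\ga$, which you do not supply. (Note also that \eqref{gi} already rules out links through the odd isotropic roots $\a_1,\a_0+\a_1$, so these should not appear in your list of admissible chains.)

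The paper avoids this difficulty entirely by proving (ii) \emph{before} (i), and without assuming irreducibility. The key observation is that $\overline M(\widehat\nu_h)$ is $\ga^\natural$-\emph{integrable}: since by \eqref{gi} the odd isotropic roots are non-singular, $x_{-\a_0-\a_1}x_{-\a_1}v_{\widehat\nu_h}$ generates $\overline M(\widehat\nu_h)$, and the vanishing of $v_i(h)$ in the quotient gives exactly the $x_{-\eta_i}^{N_i}$-nilpotency needed for integrability. Integrability forces $ch\,\overline M(\widehat\nu_h)$ to be $\Wa^\natural$-antisymmetric, hence in the expansion $ch\,\overline M(\widehat\nu_h)=\sum_{w}c(w)\,ch\,M(w.\widehat\nu_h)$ (which your linkage analysis does correctly justify) one must have $c(w)=\det(w)$. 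This proves (ii). Then (i) is immediate: the same argument applies verbatim to the irreducible quotient $L(\widehat\nu_h)$, so $\overline M(\widehat\nu_h)$ and $L(\widehat\nu_h)$ have the same character and must coincide. Your derivation of (ii) from (i) via a BGG resolution is thus both unnecessary and harder than what is actually needed.
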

\begin{proof} We have 
\begin{enumerate}
\item $(\widehat \nu_h+\widehat \rho|\a)\neq 0$ for all odd isotropic roots;
\item $(\widehat \nu_h+\widehat \rho|\a^\vee)\in \nat$ for all $\a\in\Dap(\g^\natural)$;
\item $(\widehat \nu_h+\widehat \rho|\a)\ne \tfrac{n}{2}(\a|\a)$ for all $n\in \mathbb N$ and for all
 positive roots $\a$ of the affinization of  $sl_2=\langle e,f,x\rangle$ and for all non-isotropic odd positive roots.
\end{enumerate}
Indeed, (1), (3) follow from \eqref{gi}. To prove (2), first remark that if $\a$ is a simple root for $\D^\natural_+$, then 
$\a\in\Pia$. It follows that $(\widehat\rho|\a^\vee)=(\rho^\natural|\a^\vee)=1$. This implies that 
 $(\widehat \nu_h+\widehat \rho|\a^\vee)=(\nu+\rho^\natural|\a^\vee)\in\nat$ for $\a\in \D_+^\natural$.
Since $\nu$ is not extremal, \eqref{10}  gives  $(\widehat \nu_h+\widehat \rho|\eta_i^\vee)\in\nat$.\par
We have 
\begin{equation}\label{iop} ch\,\overline M(\widehat \nu_h)=\sum_{w\in \Wa^\natural} c(w) ch M(w.\widehat \nu_h),\text{\it where $c(w)\in \mathbb Z$}.\end{equation}
Indeed, if   $ch\,M(\widehat\mu)$ appears in $ch \,\overline M(\widehat \nu_h)$ 
then, using the determinant formula proved in \cite{GK}, and the 
corresponding Jantzen filtration \cite{J}, one shows, as in \cite{KK}, that there  is a sequence of roots   $\{\gamma_1,\ldots,\gamma_t\}\subset\Dap$
 linking $\widehat \mu$ to  $\widehat \nu_h$. Properties  (1), (3) imply that $\gamma_i\in\Dap(\g^\natural)$  and this yields \eqref{iop}.
%If $\a$  is a simple root in $\D_+^\natural$, then   $\a\in\Pia$. 
It is clear that $\g^\natural$ acts locally finitely on $M^\natural(\widehat\nu_h)$, hence also on $\overline M(\widehat\nu_h)$. By (1), $x_{-\a_0-\a_1}x_{-\a_1}v_{\widehat\nu_h}$ generates $\overline M(\widehat\nu_h)$. Since $x_{-\eta_i}^{N_i}(x_{-\a_0-\a_1}x_{-\a_1}v_{\widehat\nu_h})=v_i(h)=0$ in $\overline M(\widehat\nu_h)$,
 $\overline M(\widehat \nu_h)$ is integrable for 
$\ga^\natural$, in particular
 $ch \,\overline M(\widehat \nu_h)$ is $\Wa^\natural$--invariant.  Hence, we obtain $c(w)=\det(w)$; therefore (ii) holds.
Since the proof of (ii) didn't use irreducibility,
the irreducible quotient of $\overline M(\widehat \nu_h)$ has the same character,
proving (i).
\end{proof}
 The following functions $h_{n, \epsilon m},\, h_{m,\gamma }$ relate singular weights of Verma modules over $\ga$ to those over  $W^k_{\min}(\g)$ \cite[Remark 7.2]{KW1}:
\begin{align}
  h_{n, \epsilon m} (k,\nu) &= \frac{1}{4(k+h^\vee)} (( \epsilon  m
  (k+h^\vee)-n)^2 - (k+1)^2 + 2(\nu |\nu
  +2\rho^{\natural})),\, \label{1244}\\
    h_{m,\gamma }(k,\nu) &= \frac{1}{4(k+h^\vee)}
  ((2(\nu + \rho^{\natural}|\gamma)+2m(k+h^\vee))^2
  -(k+1)^2 + 2(\nu |\nu +2 \rho^{\natural})  ) \, .  \label{123}
\end{align}
Here $\gamma\in\D'$, the set of $\g^\natural$--weights in $\g_{-1/2}$, $\epsilon=2$ (resp. $1$) if $0\in \D'$ (resp. $0\notin \D'$), $m,n\in \epsilon^{-1}\mathbb N$ and $m-n\in \mathbb Z$ in \eqref{1244}  and $m\in \tfrac{1}{2}+\mathbb Z_+$ in \eqref{123}. 
\begin{lemma}\label{sign2} Let $k$ be in the unitarity range and let $A(k,\nu)$ be as in \eqref{Aknu}. Assume that $\nu$ is not extremal. Then
\begin{align}
\label{1262}h_{n,\epsilon m} (k,\nu)&\le A(k,\nu),\\
\label{1261}h_{m,\gamma }(k,\nu) &\le A(k,\nu).
\end{align}
\end{lemma}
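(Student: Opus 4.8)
The plan is to establish both inequalities \eqref{1262} and \eqref{1261} by direct computation, exploiting the explicit formulas \eqref{1244}, \eqref{123}, and \eqref{Aknu}, together with the sign information coming from $k$ lying in the unitarity range (so $k+h^\vee<0$ by Corollary \ref{uur}) and the non-extremality of $\nu$. First I would rewrite $A(k,\nu)$ over the common denominator $4(k+h^\vee)$: a short manipulation of \eqref{Aknu} gives
$$
A(k,\nu)=\frac{1}{4(k+h^\vee)}\Big(2(\nu|\nu+2\rho^\natural)+4(\xi|\nu)^2-4(k+1)(\xi|\nu)\Big).
$$
Comparing this with \eqref{1244}, the difference $A(k,\nu)-h_{n,\epsilon m}(k,\nu)$ equals $\tfrac{1}{4(k+h^\vee)}$ times
$$
\big(4(\xi|\nu)^2-4(k+1)(\xi|\nu)+(k+1)^2\big)-(\epsilon m(k+h^\vee)-n)^2
=\big(2(\xi|\nu)-(k+1)\big)^2-(\epsilon m(k+h^\vee)-n)^2,
$$
a difference of two squares. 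Since $k+h^\vee<0$, the inequality \eqref{1262} is equivalent to
$$
\big(2(\xi|\nu)-(k+1)\big)^2\le \big(\epsilon m(k+h^\vee)-n\big)^2 .
$$
So the substance of the argument is to prove this comparison of squares. The analogous computation for \eqref{1261} using \eqref{123}: there the difference $A(k,\nu)-h_{m,\gamma}(k,\nu)$ reduces, again after pulling out $\tfrac{1}{4(k+h^\vee)}$ and completing the square, to comparing $\big(2(\xi|\nu)-(k+1)\big)^2$ with $\big(2(\nu+\rho^\natural|\gamma)+2m(k+h^\vee)\big)^2$, and since $k+h^\vee<0$ one again needs $\big(2(\xi|\nu)-(k+1)\big)^2$ to be the smaller one.

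The second and main step is therefore to bound $\big|2(\xi|\nu)-(k+1)\big|$ from above by the relevant quantity on the right. Here is where the hypotheses enter. Because $k$ is in the unitarity range, $M_i(k)\in\ZZ_+$ and (from Table 2) $k$ is a specific negative rational; because $\nu$ is not extremal, $\nu(\theta_i^\vee)\le M_i(k)+\chi_i$ for all $i$, i.e. $N_i(k,\nu)\ge 1$ by Lemma \ref{Submodule}. I expect the right strategy is: (a) for \eqref{1262}, observe $\epsilon m(k+h^\vee)-n$ runs over a discrete set, and the extremal (closest-to-zero) value of $|\epsilon m(k+h^\vee)-n|$ over the admissible $m,n$ is controlled from below by $|k+1|$ when $\nu=0$ and improves with $(\xi|\nu)$ as $\nu$ grows — so one is reduced to checking the worst case, which is essentially the constraint that $2(\xi|\nu)-(k+1)$ cannot exceed the first ``gap''; (b) for \eqref{1261}, note that $m\in\tfrac12+\ZZ_+$ so $2m(k+h^\vee)\le (k+h^\vee)<0$, hence $2(\nu+\rho^\natural|\gamma)+2m(k+h^\vee)$ is pushed away from $0$, and since $(\xi|\nu)=\max_{\gamma\in\D'}(\nu|\gamma)$ (as $\xi$ is the highest weight of $\g_{-1/2}$) while $(\rho^\natural|\gamma)$ contributes a bounded shift, the square on the right dominates. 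In both cases the key numerical input is that $k+h^\vee$ is a negative rational with denominator dividing the $u_i$, together with $N_i(k,\nu)\ge 1$, which forces $2(\xi|\nu)-(k+1)$ to land strictly inside the first spectral gap.

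Concretely, the cleanest route is probably to treat \eqref{1261} first (it is the cheaper one), then reduce \eqref{1262} to it or to a case check on the short list $psl(2|2),\ spo(2|m),\ D(2,1;a),\ F(4),\ G(3)$ using the data of Tables 1–3; for each $\g$ one writes $(\xi|\nu)$ and $k+h^\vee$ explicitly in terms of $M_i(k)$ and the coordinates of $\nu$, and the inequality becomes a transparent statement about non-negative integers. \textbf{The main obstacle} I anticipate is bookkeeping: making the ``worst case'' argument for \eqref{1262} uniform across all $\g$ in the list, since the parameter $\epsilon$ (whether $0\in\D'$) and the value of $h^\vee$ vary, and one must be careful that the discrete set $\{\epsilon m(k+h^\vee)-n : m,n\in\epsilon^{-1}\NN,\ m-n\in\ZZ\}$ really has its smallest-modulus element at distance $\ge |2(\xi|\nu)-(k+1)|$ from $0$ — this is exactly the point where non-extremality (via $N_i\ge1$) must be invoked, and getting that implication tight, rather than off by a factor, will require the explicit $\chi_i$ from Table 2. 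I would organize the final write-up as: (1) the algebraic reduction to differences of squares, valid for all $\g$ at once; (2) the case-free treatment of \eqref{1261}; (3) the reduction of \eqref{1262} to $\nu$-independent gap estimates plus the $N_i\ge1$ constraint, finishing with a one-line-per-algebra check against Table 2.
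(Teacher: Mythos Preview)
Your algebraic reduction is exactly right and matches the paper: both inequalities boil down to comparing $\big(2(\xi|\nu)-(k+1)\big)^2$ with the appropriate square, using $k+h^\vee<0$ to flip the inequality sign. The paper arrives at the equivalent form $n-\epsilon m(k+h^\vee)\ge |(k+1)-2(\xi|\nu)|$ for \eqref{1262} and at the same square comparison you wrote for \eqref{1261}.

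However, you have the relative difficulty of the two parts reversed, and your plan for \eqref{1261} has a genuine gap. First, a sign issue: since $(.|.)$ is \emph{negative} definite on the real span of $\Delta^\natural$, one has $(\xi|\nu)\le 0$ for dominant $\nu$, and the correct extremal statement is $-(\xi|\nu)=\max_{\gamma\in\Delta'}(\gamma|\nu)$ (attained at $\gamma=-\xi$), not $(\xi|\nu)=\max_\gamma(\nu|\gamma)$. More importantly, your heuristic that $2m(k+h^\vee)\le k+h^\vee<0$ ``pushes $2(\nu+\rho^\natural|\gamma)+2m(k+h^\vee)$ away from $0$'' fails precisely when $(\nu+\rho^\natural|\gamma)>0$ is comparable in size: for $m=\tfrac12$ and $\gamma=-\xi$ the two squares are in fact \emph{equal}, so the inequality \eqref{1261} is tight there and any slack argument collapses. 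The paper's proof of \eqref{1261} is the long part of the lemma: it uses the identity $2\max_\gamma(\rho^\natural|\gamma)+h^\vee=1$ (checked from a table) together with non-extremality, then splits into several sign cases, and in the case where both arguments of the absolute values are nonnegative it must show that this forces $m=\tfrac12$ and $\gamma=-\xi$ --- which is done by a per-algebra verification for $psl(2|2)$, $spo(2|m)$, $D(2,1;a)$, $F(4)$, $G(3)$. By contrast, \eqref{1262} in the paper is short and essentially case-free: one reduces to the minimal admissible pair $(\epsilon m,n)=(1,1/\epsilon)$, then splits on the sign of $(k+1)-2(\xi|\nu)$, using $(\xi|\nu)\le 0$ for one sign and non-extremality (your $N_i\ge 1$) for the other. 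So your organization ``(2) case-free \eqref{1261}; (3) per-algebra \eqref{1262}'' should be swapped, and you should expect the bulk of the work in \eqref{1261} to be a case analysis rather than a clean estimate.
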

\begin{proof} First we prove \eqref{1262}. Plugging \eqref{1244} into \eqref{1262} we get
$$\frac{( \epsilon  m
  (k+h^\vee)-n)^2 - (k+1)^2 + 2(\nu |\nu
  +2\rho^{\natural})}{4(k+h^\vee)} \leq \frac{(\nu|\nu+2\rho^\natural)}{2(k+h^\vee)}+\frac{(\xi|\nu)}{k+h^\vee}((\xi|\nu)-k-1),$$
  which is equivalent to 
\begin{equation}\label{prova}n-\epsilon  m(k+h^\vee)\ge |(k+1) -2(\xi|\nu)|.\end{equation}
Since $k+h^\vee<0$, it is enough to check \eqref{prova} with $\e m=1,\ n=1/\e$. In the case $(k+1) \le2(\xi|\nu),$ \eqref{prova} reads
\begin{equation}\label{parz}1/\e-h^\vee\ge 2(\xi|\nu)-1.\end{equation}
Looking at the values of $h^\vee$ in Table 2, we see that the L.H.S. of \eqref{parz} is non-negative. Now we prove that $(\xi|\nu)\leq 0$. Indeed, from Table 1 we deduce that  the restriction of $(. |.)$ to the real span of $\D^\natural$ is negative definite.
 From Tables 1 and 3 one checks that $\xi$ is a   linear combination with non-negative coefficients of simple roots of $\g^\natural$; since $\nu$ is dominant, if $\a\in\D^\natural$ is a simple root then $\nu(\a^\vee)\ge 0$, hence $(\nu|\a)\le0$ since $(\a|\a)<0$.
In the case  $(k+1) \ge 2(\xi|\nu)$ we have to prove that 
%\begin{equation}\label{parz2}1/\e-k-h^\vee\ge k -2(\xi|\nu)+1.\end{equation}
\begin{equation}\label{parziale} k+\tfrac{h^\vee}{2} \le (\xi|\nu) +\tfrac{1-\e}{2\e}.\end{equation}
The non-extremality condition means that $(\nu+\xi)(\theta_i^\vee)\le \frac{2}{(\theta_i|\theta_i)}\left(k+\frac{h^\vee-\bar h^\vee_i}{2}\right)$ or
\begin{equation}\label{nonextr}k+\tfrac{h^\vee}{2}\le  (\nu+\xi|\theta_i)+\tfrac{\bar h^\vee_i}{2},\end{equation}
hence it is enough to prove that 
\begin{equation}	\label{daprovare}(\nu+\xi|\theta_i)+\tfrac{\bar h^\vee_i}{2}\leq  (\xi|\nu) +\tfrac{1-\e}{2\e}.\end{equation}
Note that  $\theta_i=\xi+\beta_i$, where, as above, $\beta_i$  is a linear combination with non-negative coefficients of simple roots of $\g^\natural$. Therefore \eqref{daprovare} can be written as
$$(\nu|\beta_i )+(\xi|\xi+\beta_i)\leq   \tfrac{1-\e}{2\e}-\tfrac{\bar h^\vee_i}{2},$$
%which is clearly verified if $\e=1$, since the left hand side is negative and  $\bar h^\vee_i\leq 0$. The two remaining cases, in which $\e=2$, are dealt with a direct analysis.
which is clearly verified, since the left hand side is negative and the right hand side is positive (use the data in Table 2). 

Now we prove \eqref{1261}.  Substituting  \eqref{1244} in it  we obtain 
$$
  (2(\nu + \rho^{\natural}|\gamma)+2m(k+h^\vee))^2 - \left((k+1) -2(\xi|\nu)\right)^2\ge 0,$$
  which is equivalent to 
\begin{equation}\label{qwee}
|(2(\nu + \rho^{\natural}|\gamma)+2m(k+h^\vee)|\geq |(k+1) -2(\xi|\nu)|.\end{equation}
%It suffices to prove \eqref{qwee} choosing the maximal weight $\xi$ so 
Recall that, even though $\g_{-1/2}$ can be reducible as a $\g^\natural$--module, all irreducible components have the same highest weight $\xi$. It follows that 
\begin{equation}\label{ximax}-(\xi|\nu)=\max_{\gamma\in\D'} (\gamma|\nu).\end{equation} 
A direct check on Table 4 shows that 
\begin{equation}\label{e11}2\max_{\gamma\in\D'}(\rho^{\natural}|\gamma)+h^\vee=1.\end{equation}
\begin{table}
{\scriptsize
\begin{tabular}{c |  c| c |c | c }
$\g$& $\e$& $\rho^\natural$&  $\max(\rho^\natural |\gamma)$& $h^\vee$\\
\hline
$psl(2|2)$& $1$&
$\tfrac{1}{2}(\d_1-\d_2)$ & $1/2$& $0$
\\\hline
$spo(2|2m), m\ge 3$& $1$&
$(m-1)\e_1+(m-2)\e_2+\ldots+\e_{m-1}$& $(m-1)/2$ & $2-m$
\\\hline
$spo(2|2m+1), m\ge 1 $& $2$&
$\tfrac{2m-1}{2}\e_1+\tfrac{2m-3}{2}\e_2+\ldots+\tfrac{1}{2}\e_m$& $(2m-1)/4$  & $3/2-m$
\\\hline
$D(2,1;a)$& $1$&  $\e_2+\e_3$ & $\tfrac{1}{2}$ & $0$\\\hline
%$\frac{\e_1-\e_2}{2}+\d$ 
 $F(4)$& $1$& $\tfrac{5}{2}\e_1+\tfrac{3}{2}\e_2+\tfrac{1}{2}\e_3$& $3/2$ & $-2$
\\\hline
$G(3)$& $2$& $2\e_1+3\e_2$& $5/4$& $-3/2$\\
\end{tabular}
}
\vskip5pt
\caption{Data employed in the proof of Lemma \ref{sign2}}

\end{table}
Note that, by \eqref{ximax} and \eqref{e11}
\begin{equation}\label{valesempre} (k+h^\vee)+2(\nu + \rho^{\natural}|\gamma)\leq (k+1)- 2(\xi|\nu).\end{equation}
Therefore, if   $(k+1) \leq 2(\xi|\nu)$ then \begin{align*}
&2(\nu + \rho^{\natural}|\gamma)+2m(k+h^\vee)=\\
&2(\nu + \rho^{\natural}|\gamma)+(k+h^\vee)+(2m-1)(k+h^\vee)\leq 2(\nu + \rho^{\natural}|\gamma)+(k+h^\vee)\leq (k+1) -2(\xi|\nu)\leq 0,\end{align*}
and \eqref{qwee} reads
$$2(\nu + \rho^{\natural}|\gamma)+2m(k+h^\vee)\leq (k+1) -2(\xi|\nu), $$
which is clearly true.

Now consider the case 
\begin{equation}\label{geq}(k+1) \geq 2(\xi|\nu), \quad -2(\nu + \rho^{\natural}|\gamma)-2m(k+h^\vee)\geq 0.\end{equation} The inequality  \eqref{qwee} becomes
\begin{equation*}\label{voo}
-2(\nu + \rho^{\natural}|\gamma)-2m(k+h^\vee)\geq (k+1) -2(\xi|\nu).
\end{equation*}
which is implied by 
\begin{equation}\label{vo}
-2(\nu + \rho^{\natural}|\gamma)-(k+h^\vee)\geq (k+1) -2(\xi|\nu).
\end{equation}
If  $\gamma=-\xi$, then the left hand side of \eqref{vo} is
$$-2(\nu + \rho^{\natural}|-\xi)-(k+h^\vee)=2(\nu|\xi)+h^\vee-1-(k+h^\vee)=2(\nu|\xi)-k-1,$$
hence \eqref{geq} implies that both members of \eqref{vo} are zero.
\par
If  $\gamma\ne-\xi$, then  \eqref{vo}  is equivalent to 
\begin{equation}
k+\tfrac{h^\vee}{2}\leq -\tfrac{1}{2} +(\xi|\nu)-(\nu + \rho^{\natural}|\gamma),\end{equation}
hence, by \eqref{nonextr}, we are done if we prove that 
\begin{equation}\label{dap3}
 (\nu+\xi|\theta_i)+\tfrac{\bar h^\vee_i}{2}\leq  -\tfrac{1}{2} +(\xi|\nu)-(\nu + \rho^{\natural}|\gamma).
\end{equation}
 Remark that,  since $\gamma\ne -\xi$, then $\xi-\gamma=\a\in \D_+^\natural\cup\{0\}$. If $\g^\natural$ is simple, then $(\nu|\theta_i)\leq (\nu|\xi-\gamma)$, hence
 $$ (\nu+\xi|\theta_i)+\tfrac{\bar h^\vee_i}{2}\leq (\nu|\xi-\gamma)+(\xi|\theta_i)+\tfrac{\bar h^\vee_i}{2},$$
and therefore \eqref{vo} is implied by 
$$(\nu|\xi-\gamma)+(\xi|\theta_i)+\tfrac{\bar h^\vee_i}{2}\leq -\tfrac{1}{2} +(\xi|\nu)-(\nu + \rho^{\natural}|\gamma),$$
 or
 \begin{equation}\label{dap4}
(\xi|\theta_i)+\tfrac{\bar h^\vee_i}{2}\leq -\tfrac{1}{2} -(\rho^{\natural}|\gamma).
 \end{equation}
The minimum of the left hand side of \eqref{dap4} is obtained when $(\rho^{\natural}|\gamma)$ is maximum, hence, by \eqref{e11}, we are left with proving that 
  \begin{equation}\label{dap44}
(\xi|\theta_i)+\tfrac{\bar h^\vee_i}{2}\leq \tfrac{h^\vee}{2}-1.
 \end{equation}
 This relation is checked using the data in Tables 1,2,3.  When $\g^\natural$ is not simple, i.e. $\g=D(2,1;a)$, relation \eqref{vo} is proven directly.
 We have $\nu=r\e_2+s\e_3,\,r,s\in\mathbb Z_+\,\ \gamma=\pm\e_2\pm\e_3,\ \xi=\e_2+\e_3$; if we exclude $\gamma=-\xi$,  \eqref{vo}  translates into 
\begin{equation}\label{dap5}
k\leq 0,\quad k\leq -\tfrac{r+1}{1+a},\quad k\leq -\tfrac{(s+1)a}{1+a},
\end{equation}
according to whether  $\gamma=\e_2+\e_3,\,\e_2,\,\e_3$. The non extremality conditions are 
\begin{equation}\label{dap6}
k\leq -\tfrac{r+2}{1+a},\quad k\leq -\tfrac{(s+2)a}{1+a},
\end{equation}
so that \eqref{dap6} implies 	\eqref{dap5}.

\par
We are left with  proving  \eqref{qwee} when both arguments in the absolute values are non-negative, i.e.
\begin{equation}\label{qweebis}
2(\nu + \rho^{\natural}|\gamma)+2m(k+h^\vee)\geq (k+1) -2(\xi|\nu).\end{equation}
We claim that the conditions $2(\nu + \rho^{\natural}|\gamma)+2m(k+h^\vee)\geq 0$ combined with \eqref{nonextr} force $m=1/2$
and $\gamma=-\xi$.
 Taking this fact for granted,  \eqref{qweebis} reads
  $$
 -2(\nu + \rho^{\natural}|\xi)+h^\vee-1\geq -2(\xi|\nu),
$$
 which holds by \eqref{ximax} and \eqref{e11}.\par
To prove our claim, assume that there is $m>1/2$ such that  
%\eqref{qweebis} as 
$$k\geq \frac{1 -2mh^\vee-2(\xi|\nu)-2(\nu + \rho^{\natural}|\gamma)}{2m-1},$$
or
$$k+\tfrac{h^\vee}{2}\geq \frac{2 -(2m+1)h^\vee-4(\xi|\nu)-4(\nu + \rho^{\natural}|\gamma)}{2(2m-1)}.$$
Taking \eqref{nonextr} into account, we are done if we prove that  
\begin{equation}\label{finalissima}
\frac{2 -(2m+1)h^\vee-4(\xi|\nu)-4(\nu + \rho^{\natural}|\gamma)}{2(2m-1)}> 
(\nu+\xi|\theta_i)+\tfrac{\bar h^\vee_i}{2}.\end{equation}
We have 
\begin{align}\notag\text{L. H. S. of \eqref{finalissima}}&\geq 
\frac{2 -(2m+1)h^\vee-4(\xi|\nu)-4(\nu |\gamma)+2(h^\vee-1)}{2(2m-1)}\\&=-\tfrac{h^\vee}{2}+\frac{-4(\xi|\nu)-4(\nu |\gamma)}{2(2m-1)}\geq 
-\tfrac{h^\vee}{2}\geq \tfrac{\bar h^\vee_i}{2} \geq (\nu+\xi|\theta_i)+\tfrac{\bar h^\vee_i}{2}.\label{??}
\end{align}
The next to last inequality in \eqref{??} follows from Table 2; more precisely, the strict inequality holds in all cases except for  $spo(2|3)$. The  last  inequality  in \eqref{??}
uses that $(\nu+\xi|\theta_i)\leq 0$. For $\g=spo(2|3)$ the last inequality in \eqref{??} is strict, hence \eqref{finalissima} is proven in all cases.

Hence we have necessarily $m=1/2$ in \eqref{qweebis}. 
%We have to prove that $\gamma=-\xi$. 
We now  prove that if 
\begin{align}\label{c1}&2(\nu + \rho^{\natural}|\gamma)+(k+h^\vee)\geq 0,\\&(k+1) -2(\xi|\nu)\geq 0,\label{c2}\end{align}
hold, then  \eqref{nonextr} implies $\gamma=-\xi$.
%\begin{equation}\label{F}
%2(\nu |\xi+\gamma)\geq 1-h^\vee-2 (\rho^{\natural}|\gamma).
%\end{equation}
We proceed case by case.

$\bullet$ $\g=psl(2|2)$ or  $spo(2|3).$ Since $\gamma\in\{0,\pm\xi\}$, relation  \eqref{c1} forces $\gamma=-\xi$.
%), and \eqref{F} becomes $0\geq 1-h^\vee+2(\rho^\natural|\xi)=0$ by \eqref{ximax}.

$\bullet$  $\g=spo(2|m),\,m>4.$ In this case $\nu=\sum_i n_i\e_i, n_1\ge n_2\ge\ldots\ge 0,\ \gamma=\pm \e_i,\ \rho^\natural=\sum_i (\tfrac{m}{2}-i)\e_i,$ $ \xi=\e_1.$
Then \eqref{c1} reads 
$$2(\sum_i (n_i+\tfrac{m}{2}-i)|\pm \e_j)+k+2-\tfrac{m}{2}\geq 0,
$$
or
$$\mp(n_j+\tfrac{m}{2}-j)+k+2-\tfrac{m}{2}\geq 0.
$$
Since $k+2-\tfrac{m}{2}\le 0$, we have 
$$n_j-j+k+2\geq 0.
$$
By \eqref{nonextr} 
$$
k\le  -\tfrac{1}{2}n_1-\tfrac{1}{2}n_2-1,
$$
therefore
$$
0\leq n_j-j+k+2\leq -\tfrac{1}{2}n_1-\tfrac{1}{2}n_2+n_j-j+1.
$$
This relation can be written as 
$$
0\leq\tfrac{n_j-n_1}{2}+\tfrac{n_j-n_2}{2}-j+1,
$$
which holds only if $j=1$, since the $n_j$ are non-increasing half integers. If $j=1$ then $\gamma=-\epsilon_1=-\xi$.

$\bullet$  $\g=D(2,1;a).$ In this case  $\nu=r\e_2+s\e_3,\,r,s\in\mathbb Z_+,\, \gamma=\pm\e_2\pm\e_3,\ 
 \rho^\natural=\e_2+\e_3,\ \xi=\e_2+\e_3,$
and in this case \eqref{c1} becomes 
$$
2((r+1)\e_2+(s+1)\e_3|\pm\e_2\pm\e_3)+k\geq 0,$$
which gives
\begin{equation}\label{asd}\mp(r+1)\mp(s+1)a+(1+a)k\geq 0.\end{equation}
Condition \eqref{nonextr} is 
$$
k\le((r+1)\e_2+(s+1)\e_3|2\e_2)-\tfrac{1}{1+a},\quad k\le  ((r+1)\e_2+(s+1)\e_3|2\e_3)-\tfrac{a}{1+a},$$
%$$
%k\le -(r+2)\tfrac{1}{1+a},\quad k\le  -(s+2)\tfrac{a}{1+a}$$
or
\begin{equation}\label{dsa}
(1+a)k\le -(r+2),\quad (1+a)k\le  -(s+2)a.
\end{equation}
The only possibility to fulfill \eqref{asd} and \eqref{dsa} at the same time is to take $\gamma=-\e_2-\e_3=-\xi$. 

$\bullet$  $\g=F(4).$ In this case $\nu=n_1\e_1+n_2\e_2+n_3\e_3,\,n_1\ge n_2\ge n_3\ge 0,\ \rho^\natural=\tfrac{5}{2}\e_1+\tfrac{3}{2}\e_2+\tfrac{1}{2}\e_3,$ $ \gamma=\tfrac{1}{2}(\pm\e_1\pm\e_2\pm\e_3), \  \xi=\tfrac{1}{2}(\e_1+\e_2+\e_3)$.
Then \eqref{c1} reads 
\begin{equation}\label{bnm}
-\tfrac{2}{3}(\pm(n_1+\tfrac{5}{2})\pm(n_2+\tfrac{3}{2})\pm(n_3+\tfrac{1}{2}))+k-2\geq 0.
\end{equation}
By \eqref{nonextr} we have
\begin{equation}\label{mnb}k\le -\tfrac{2}{3}(n_1+n_2)-\tfrac{4}{3}.\end{equation}
Write now \eqref{bnm} using \eqref{mnb}
\begin{align*}0&\leq -\tfrac{2}{3}(\pm(n_1+\tfrac{5}{2})\pm(n_2+\tfrac{3}{2})\pm(n_3+\tfrac{1}{2}))+k-2\\&\leq-\tfrac{2}{3}(\pm(n_1+\tfrac{5}{2})\pm(n_2+\tfrac{3}{2})\pm(n_3+\tfrac{1}{2})) -\tfrac{2}{3}(n_1+n_2)-\tfrac{10}{3}\\
&\leq-\tfrac{2}{3}(\pm n_1-\tfrac{5}{2}\pm n_2-\tfrac{3}{2} \pm n_3-\tfrac{1}{2})-\tfrac{2}{3}(n_1+n_2)-\tfrac{10}{3}\\
%&\leq-\tfrac{2}{3}(\pm n_1\pm n_2 \pm n_3)-\tfrac{2}{3}(n_1+n_2)-\tfrac{1}{3}\\
&=-\tfrac{2}{3}(\pm n_1\pm n_2)-\tfrac{2}{3}(n_1+n_2\pm n_3)-\tfrac{1}{3}.
\end{align*}
This inequality holds if and only if the minus sign is taken in all occurrences of $\pm$, i.e. $\gamma=-\xi$. 

$\bullet$  $\g=G(3).$ In this case $\nu=m(\e_1+\e_2)+n(\e_1+2\e_2),\,m,n\in\mathbb Z_+,\gamma\in\{0, \pm\e_1,\pm\e_2,\pm(\e_1+\e_2)\},$ $\rho^\natural=2\e_1+3\e_2,\ \xi=\e_1+\e_2.$
Then \eqref{c1} reads 
\begin{align}\label{fgh}&2((m+n+2)\e_1+(m+2n+3)\e_2|\gamma)+k-\tfrac{3}{2}\geq 0.\end{align}
and we can  confine ourselves to consider $\gamma\in\{-\e_1,-\e_2,-\e_1-\e_2\}$. The  inequalities corresponding to $\gamma=-\e_1, \gamma =-\e_2$ are 
\begin{align}
\label{11111}&k+\tfrac{m}{2}-1\geq 0,\\
\label{2222}&k+\tfrac{m+3n+1}{2}\ge 0,\end{align}
respectively.
Relation \eqref{hgf} gives 
$$k\le  ((m+n+1)\e_1+(m+2n+1)\e_2|\e_1+2\e_2)-\tfrac{3}{4}.$$
or
\begin{equation}\label{hgf}k\le -\tfrac{3}{4}(m + 2 n)-\tfrac{3}{2}\end{equation}
Substituting 	\eqref{11111}, \eqref{2222}, into \eqref{hgf}  we obtain
\begin{align}
&0\leq k+\tfrac{m}{2}-1\leq -\tfrac{1}{4}m-\tfrac{3}{2}n-\tfrac{5}{2},\label{ee}\\
&0\leq k+\tfrac{m+3n+1}{2}\leq -\tfrac{m}{4}-1,\label{-}\end{align}
respectively.  Inequalities \eqref{ee}, \eqref{-} are never verified. Once again we conclude that $\gamma=-\xi$.
\end{proof}
Let $H_0$ denote the quantum Hamiltonian reduction functor, from the category $\mathcal O$ of $\ga$-modules of level $k$ to the category of 
$W^k_{\min}(\g)$-modules. Recall that, for a $\ga$--module $M$, $H_0(M)$ is the zeroth homology  of the complex $(M\otimes F(\g,x,f),d_0)$ defined in \cite{KRW}. Recall that the functor $H_0$ maps Verma modules to Verma modules \cite[Theorem 6.3]{KW1} and it is exact  
\cite[Corollary 6.7.3]{Araduke}.
By \cite[Lemma 7.3 (b)]{KW1}, if $M$ is a highest weight module over $\ga$ of highest weight $\L\in\ha^*$, $H_0(M)$ is  either zero or a  highest weight module over $W^k_{\min}(\g)$ of highest weight $(\nu,\ell)$ with
\begin{equation}\label{pgen}\nu=\L_{|\h^\natural},\quad \ell=\frac{(\L|\L+2\widehat \rho)}{2(k+h^\vee)}-\L(x+d).\end{equation}
\begin{remark}\label{nonzero} Let $L(\L)$ denote the irreducible $\ga$-module of highest weight $\L\in\ha^*$. By Arakawa's theorem \cite[Main Theorem]{Araduke} $H_0(L(\L))$ is either irreducible or zero, and it is zero if and only if  $(\L|\a_0)=\tfrac{n}{2}(\a_0|\a_0),\,n\in \mathbb Z_+$. 
 In particular, if \eqref{gi} holds, then  $H_0(\overline M(\widehat \nu_h))$ is a non-zero highest weight module of highest weight 
$(\nu, \ell(h))$, where
\begin{equation}\label{p}
\ell(h)=\frac{(\widehat \nu_h|\widehat \nu_h+2\widehat \rho)}{2(k+h^\vee)}-h.
\end{equation}
\end{remark}

%\begin{remark}\label{abuseofnotation}
For $\L\in\ha^*$, by a slight abuse of notation, we set $M^W(\L)=H_0(M(\L))$, where $M(\L)$ is the Verma module over $\ga$ of highest weight $\L$. Note that  
$M^W(\L)=M^W(\nu,\ell)$, where $\nu,\,\ell$ are given by \eqref{pgen}. 
%\end{remark}

From now on we assume
\begin{itemize}
\item $k$ is in the unitarity range;
\item $\nu\in P^+_k$;
\item $\ell(h)\in\R$.
\end{itemize}

\begin{lemma}\label{dmtheta} Let $h,h'$ be the solutions of the equation $\ell(h)=\ell_0$. If  $(\widehat\nu_h+\widehat\rho|\d-\theta)=n$, $n\in \nat$, then
$(\widehat\nu_{h'}+\widehat\rho|\d-\theta)\notin \nat$.
\end{lemma}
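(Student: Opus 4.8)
The plan is to make the equation $\ell(h)=\ell_0$ explicit as a quadratic polynomial in $h$, to read off the relation between its two solutions, and to record $(\widehat\nu_h+\widehat\rho\,|\,\delta-\theta)$ as an affine function of $h$; the statement then drops out of elementary algebra.

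First I would evaluate the invariant form on $\ha^*$ appearing in \eqref{p}. Recall from \eqref{nuh} that $\widehat\nu_h=k\Lambda_0+\nu+h\theta$, and write $\widehat\rho=\rho+h^\vee\Lambda_0$ up to a multiple of $\delta$ (which contributes nothing to what follows). Using $(\Lambda_0|\Lambda_0)=(\delta|\delta)=0$, $(\Lambda_0|\delta)=1$, $(\theta|\theta)=2$, the orthogonality of $\C x$ and $\mathfrak h^\natural$ (so that $(\nu|\theta)=(\rho^\natural|\theta)=0$), and the identity $(\rho|\theta)=h^\vee-1$ --- which follows from $(\widehat\rho|\alpha_0^\vee)=1$ for the even simple coroot $\alpha_0^\vee=\delta-\theta$, with $\alpha_0=\delta-\theta$ a root satisfying $(\alpha_0|\alpha_0)=2$ --- one obtains
\[
(\widehat\nu_h\,|\,\widehat\nu_h+2\widehat\rho)=(\nu|\nu+2\rho^\natural)+2h^2+2(h^\vee-1)h ,
\]
and hence, by \eqref{p} and $\ell(h)=\ell_0$,
\[
\ell(h)=\frac{(\nu|\nu+2\rho^\natural)}{2(k+h^\vee)}+\frac{h^2-(k+1)h}{k+h^\vee}.
\]
Since $k$ lies in the unitarity range, $k+h^\vee\neq0$ (indeed $k+h^\vee<0$ by Corollary \ref{uur}), so $\ell$ is a genuine quadratic in $h$ with leading coefficient $1/(k+h^\vee)$. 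If $h\neq h'$ are the two solutions of $\ell(\,\cdot\,)=\ell_0$, then subtracting the two equations and dividing by $h-h'$ gives $h+h'=k+1$.

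Next I would compute $(\widehat\nu_h+\widehat\rho\,|\,\delta-\theta)$. From $(\widehat\rho\,|\,\delta-\theta)=(\widehat\rho|\alpha_0^\vee)=1$ and $(\widehat\nu_h\,|\,\delta-\theta)=(k\Lambda_0+\nu+h\theta\,|\,\delta-\theta)=k-2h$, we get
\[
(\widehat\nu_h+\widehat\rho\,|\,\delta-\theta)=k+1-2h .
\]
Finally, assume $(\widehat\nu_h+\widehat\rho\,|\,\delta-\theta)=n\in\nat$, i.e.\ $h=\tfrac12(k+1-n)$. Because $n\geq1$ this forces $h\neq\tfrac12(k+1)$, so $h$ is not the vertex of the parabola $\ell$; there is then a genuinely distinct second solution $h'=k+1-h=\tfrac12(k+1+n)$, and $(\widehat\nu_{h'}+\widehat\rho\,|\,\delta-\theta)=k+1-2h'=-n<0$, hence $-n\notin\nat$.

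The whole argument is elementary; the only step requiring any care is the bookkeeping of the invariant form on $\ha^*$, in particular justifying $(\rho|\theta)=h^\vee-1$ and the orthogonality relations with $\theta$. I do not anticipate a genuine obstacle.
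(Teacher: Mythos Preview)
Your proof is correct and follows essentially the same route as the paper's: both compute $\ell(h)$ explicitly as a quadratic in $h$, deduce $h+h'=k+1$, evaluate $(\widehat\nu_h+\widehat\rho\,|\,\delta-\theta)=k+1-2h$, and conclude that the value at $h'$ is $-n$. Your version is slightly more detailed in justifying the form computations and in noting that $n\ge1$ guarantees $h\neq h'$, but the argument is the same.
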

\begin{proof} Recalling that 
$$
\ell(h)=\frac{(\widehat \nu_h|\widehat \nu_h+2\widehat \rho)}{2(k+h^\vee)}-h=\frac{(\nu |\nu+2\rho^\natural)}{2(k+h^\vee)}+\frac{h(h-k-1)}{k+h^\vee}
$$
we see that $h'=k+1-h$. If
$
(\widehat\nu_h+\widehat\rho|\d-\theta)=n\in\nat
$, then 
$$
((k+h^\vee)\L_0+h\theta+\nu+\rho|\d-\theta)=k+1-2h=n,
$$
hence
$h=(k+1-n)/2
$ and  $h'=(k+n+1)/2$ so that 
$$
(\widehat\nu_{h'}+\widehat\rho|\d-\theta)=k+1-2h'=-n.
$$
\end{proof}

\begin{theorem}\label{chW} If $\ell(h)>A(k,\nu)$, then $H_0(\overline M(\widehat \nu_h))$ is an irreducible $W^k_{\min}(\g)$-module  and its character is 
\begin{equation}\label{chfW}
ch\,H_0(\overline M(\widehat\nu_h))=\sum_{w\in \widehat W^\natural}det(w)ch\,M^W(w.\widehat\nu_h).\end{equation}
\end{theorem}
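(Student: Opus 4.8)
The plan is to derive the statement from Proposition~\ref{irr} together with the basic properties of the reduction functor $H_0$: it is exact, it sends Verma modules to Verma modules, and (Arakawa's theorem, see Remark~\ref{nonzero}) it sends an irreducible module to an irreducible module or to zero. Everything will follow once I have checked that the hypothesis $\ell(h)>A(k,\nu)$ forces the genericity condition \eqref{gi} on $\widehat\nu_h=k\L_0+\nu+h\theta$ for every root of $\Dap\setminus\Dap(\g^\natural)$ with the possible exception of $\a_0=\delta-\theta$, and then that the possibly remaining case $(\widehat\nu_h+\widehat\rho\,|\,\delta-\theta)\in\NN$ can be dealt with by passing to the second solution of $\ell(\cdot)=\ell(h)$.

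The crucial first step is: \emph{if $\ell(h)>A(k,\nu)$ then $(\widehat\nu_h+\widehat\rho\,|\,\a)\ne\tfrac n2(\a|\a)$ for all $n\in\NN$ and all $\a\in\Dap\setminus\Dap(\g^\natural)$ other than $\a_0$.} I would argue by contraposition. If such a relation holds then, since $\nu\in P^+_k$ is not extremal, the reductions made in the proof of Proposition~\ref{irr} (its properties (1)--(3)) show that $\a$ is an odd (isotropic or non-isotropic) positive root or a positive root of the affine $sl_2=\langle e,f,x\rangle$. For each of these families one solves the reducibility equation for $h$ and substitutes into \eqref{p}; a direct computation identifies the resulting value $\ell(h)$ with one of the numbers $h_{n',\epsilon m'}(k,\nu)$ of \eqref{1244} when $\a$ is an affine $sl_2$ root $m'\delta\pm\theta$ with $\epsilon m'\ge 1$, and with one of the numbers $h_{m',\gamma}(k,\nu)$ of \eqref{123} (including $m'=\tfrac12$) when $\a$ comes from $\g_{\pm 1/2}$. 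By Lemma~\ref{sign2} all of these are $\le A(k,\nu)$, contradicting $\ell(h)>A(k,\nu)$. The unique root for which this matching cannot be performed is $\a_0=\delta-\theta$ (it would require the forbidden value $\epsilon m'=0$), so \eqref{gi} holds in full except possibly when $(\widehat\nu_h+\widehat\rho\,|\,\delta-\theta)\in\NN$.

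Assume first $(\widehat\nu_h+\widehat\rho\,|\,\delta-\theta)\notin\NN$. Then \eqref{gi} holds completely, so by Proposition~\ref{irr} the module $\overline M(\widehat\nu_h)$ is irreducible, equal to $L(\widehat\nu_h)$, with character \eqref{chf}. Since \eqref{gi} for $\a_0$ is exactly the condition $(\widehat\nu_h\,|\,\a_0)\notin\ZZ_+$, Remark~\ref{nonzero} shows $H_0(\overline M(\widehat\nu_h))=H_0(L(\widehat\nu_h))$ is nonzero, hence irreducible; its highest weight datum is $(\nu,\ell(h))$ by \eqref{pgen}. For the character, $\overline M(\widehat\nu_h)$ is integrable over $\ga^\natural$ (as in the proof of Proposition~\ref{irr}), so it admits the relative BGG resolution by direct sums of Verma modules $M(w.\widehat\nu_h)$, $w\in\Wa^\natural$, with Euler characteristic equal to the right-hand side of \eqref{chf}; applying the exact functor $H_0$ and using $H_0(M(w.\widehat\nu_h))=M^W(w.\widehat\nu_h)$ yields a resolution of $H_0(\overline M(\widehat\nu_h))$ by the $M^W(w.\widehat\nu_h)$, giving \eqref{chfW}.

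It remains to treat $(\widehat\nu_h+\widehat\rho\,|\,\delta-\theta)=n\in\NN$. I would pass to the second root $h'=k+1-h$ of $\ell(\cdot)=\ell(h)$; by Lemma~\ref{dmtheta} one has $(\widehat\nu_{h'}+\widehat\rho\,|\,\delta-\theta)\notin\NN$, so the previous paragraph applies to $h'$, and $H_0(\overline M(\widehat\nu_{h'}))=L^W(\nu,\ell(h'))$ with character $\sum_{w\in\Wa^\natural}\det(w)\,ch\,M^W(w.\widehat\nu_{h'})$. A short check gives $\ell(h')=\ell(h)$ and $w.\widehat\nu_h-w.\widehat\nu_{h'}=(h-h')\theta$ for $w\in\Wa^\natural$, so $M^W(w.\widehat\nu_h)$ and $M^W(w.\widehat\nu_{h'})$ have equal highest weights, hence equal characters, and the two character formulas coincide. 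Moreover $s_{\a_0}.\widehat\nu_h=\widehat\nu_{h'}-n\delta$, and $H_0$ annihilates every $\ga$-module in category $\mathcal O$ on which the root vectors $x_{\pm\a_0}$ act locally nilpotently (Remark~\ref{nonzero}, \cite{Araduke}); this identifies $H_0(\overline M(\widehat\nu_h))$ with $H_0(\overline M(\widehat\nu_{h'}))$. Since the character of $H_0(\overline M(\widehat\nu_h))$ then equals that of the irreducible module $L^W(\nu,\ell(h))$, linear independence of irreducible characters forces $H_0(\overline M(\widehat\nu_h))\cong L^W(\nu,\ell(h))$. The main obstacle is this first step together with the exceptional case: carrying out the root-by-root identification of the affine reducibility conditions with the functions $h_{n,\epsilon m}$ and $h_{m,\gamma}$ so that Lemma~\ref{sign2} applies, and analysing the genuinely singular situation $(\widehat\nu_h+\widehat\rho\,|\,\delta-\theta)\in\NN$, where $\overline M(\widehat\nu_h)$ is reducible over $\ga$ and one must route through the conjugate weight $h'$ and the behaviour of $H_0$ in the $\a_0$-direction.
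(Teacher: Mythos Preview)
Your overall plan matches the paper's: use Lemma~\ref{sign2} to exclude all affine reducibility conditions except possibly the one at $\a_0=\delta-\theta$, handle $\a_0$ via Lemma~\ref{dmtheta}, then invoke Proposition~\ref{irr} and Arakawa's theorem. Two points, however, deserve comment.

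\textbf{Character formula.} You assert that $\overline M(\widehat\nu_h)$, being $\ga^\natural$-integrable, admits a relative BGG resolution by Verma modules $M(w.\widehat\nu_h)$, and then push this through the exact functor $H_0$. The paper does not establish any BGG resolution (and in the affine \emph{super} setting this is not a result one can simply quote). Instead it uses only the character identity \eqref{chf} from Proposition~\ref{irr}(ii), combined with the vanishing $H^j=0$ for $j\ne 0$ (Theorem~6.2 of \cite{KW1}) and an Euler--Poincar\'e argument: the Euler characteristic of the Hamiltonian reduction complex depends only on the character of the input, so one may compute it term by term from \eqref{chf}, using $H_0(M(w.\widehat\nu_h))=M^W(w.\widehat\nu_h)$ and the same higher vanishing for Verma modules. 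This gives \eqref{chfW} directly, with no resolution needed. Your approach would work if a BGG resolution were available, but as written it invokes an unproved ingredient.

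\textbf{The case $(\widehat\nu_h+\widehat\rho\,|\,\delta-\theta)\in\NN$.} The paper handles this simply by replacing $h$ with $h'$: since the theorem is used downstream (Lemma~\ref{basisfree}) only after \emph{choosing} a solution of $\ell(\cdot)=\ell_0$, one is free to pick the value for which \eqref{gi} holds at $\a_0$, and nothing more is needed. Your attempt to prove the statement for the bad $h$ as well has a real gap. From $s_{\a_0}.\widehat\nu_h=\widehat\nu_{h'}-n\delta$ and the vanishing of $H_0$ on $\a_0$-integrable modules you cannot conclude $H_0(\overline M(\widehat\nu_h))\cong H_0(\overline M(\widehat\nu_{h'}))$: you would need an explicit short exact sequence relating $\overline M(\widehat\nu_h)$ to (a twist of) $\overline M(\widehat\nu_{h'})$ with $\a_0$-integrable kernel/cokernel, and none has been produced. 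Moreover, even granting equality of characters, ``linear independence of irreducible characters'' does not by itself force an arbitrary module to be irreducible; you would still need to know $H_0(\overline M(\widehat\nu_h))$ is a highest weight module with a unique irreducible quotient of the right character. The paper sidesteps all of this by the swap.
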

\begin{proof} If $\ell(h)>A(k,\nu)$, then,  by Lemma \ref{sign2}
\begin{equation}\label{factorform1}
\ell(h)\ne h_{n,\epsilon m}(k,\nu)\quad\text{and}\quad \ell(h)\ne h_{m,\gamma}(k,\nu).
\end{equation}
 By \cite[Lemma 7.3 (c)]{KW1},  \eqref{factorform1} implies that
 $(\widehat\nu_h+\widehat\rho|\a)\ne \tfrac{n}{2}(\a|\a)$ for all $\a\in\Dap\setminus(\Dap(\g^\natural)\cup\{\d-\theta\})$. By exchanging $h$ and $h'$ if $h\in \nat$ and applying Lemma \ref{dmtheta}, we find that one can choose $h$ so that \eqref{gi} is satisfied.
Hence,   by  Propositions \ref{boundary} and  \ref{irr}, $\overline M(\widehat\nu_h)$ is irreducible. By Remark \ref{nonzero}, $H_0(\overline M(\widehat \nu_h))$ is irreducible
and non-zero. On the other hand, by Theorem 6.2 of \cite{KW1}, we find that $H^j((\overline M(\widehat\nu_h)\otimes F(\g,x,f)))=0$ if $j\ne0$. Thus, using Euler-Poincar\'e character, the fact that $H_0$ maps Verma modules over $\ga$ to Verma modules over $W^k_{\min}(\g)$,  and (ii) in Proposition \ref{irr}, we find that \eqref{chfW} holds.
 \end{proof}
 Recall from Section \ref{freeb} the Heisenberg algebra $\mathcal H$.   Let $y$ be an indeterminate. Define an action of $\mathcal H_0=\C a+\C K$ on $\C[y]$ by letting $K$ act as the identity and $a$ act by multiplication by $y$. Let $M(y)$ be the corresponding Verma module. This module can be regarded as a $V^1(\C a)$--module by means of the field $ Y(a,z)$ defined by setting, for $m\in M(y)$,
 $$
 Y(a,z)m=\sum_{j\in\ZZ}(\tau^{j}\otimes a)\cdot m\,z^{-j-1}.
 $$
Note also that  $M(y)$ is free over $\C[y]$ with basis 
\begin{equation}\label{basisy}
\{(\tau^{-j_1}\otimes a)^{i_1} \cdots (\tau^{-j_r}\otimes a)^{i_r}(1\otimes 1)\mid j_1>\cdots >j_r>0\}.
\end{equation}
  
Recall from Section \ref{8} the free field realization $\Psi:W^k_{\min}(\g)\to \mathcal V^k=V^1(\C a)\otimes V^{\a_k}(\g^\natural)\otimes F(\g_{1/2})$. If $\nu\in P^+_k$ is not extremal, recall that we denoted by  $L^\natural(\nu)$ the integrable $V^{\a_k}(\g^\natural)$--module of highest weight $\nu$. We also let $v_\nu$ be a highest weight vector of $L^\natural(\nu)$. Then
$$
M(y)\otimes L^\natural(\nu)\otimes F(\g_{1/2})
$$
is a $\mathcal V^k$--module, hence, by means of $\Psi$, a $W^k_{\min}(\g)$--module. Set
$$
N(y,\nu)=\Psi(W^k_{\min}(\g))\cdot (1\otimes \C[y]\otimes v_\nu\otimes\vac)\subset M(y)\otimes L^\natural(\nu)\otimes F(\g_{1/2}).
$$
Since $M(y)\otimes L^\natural(\nu)\otimes F(\g_{1/2})$ is free as a $\C[y]$--module, $N(y,\nu)$ is also free. 
If $\mu\in\C$, set also
$$
N(\mu,\nu)=\left(\C[y]/(y-\mu)\right)\otimes_{\C[y]} N(y,\nu).
$$

By construction $N(\mu,\nu)$ is clearly a highest weight module for $W^k_{\min}(\g)$. As shown in Section \ref{ur}, its highest weight  is $(\nu,\ell_0)$ with
$$
\ell_0=\tfrac{1}{2}\mu^2-s_k\mu+\frac{(\nu|\nu+2\rho^\natural)}{2(k+h^\vee)}.
$$
Since we are looking for unitary representations, we will always assume that $\ell_0\in \R$.
\begin{lemma}\label{basisfree}\ 
 If $\ell_0>A(k,\nu)$ then $N(\mu,\nu)$ is an irreducible $W^k_{\min}(\g)$--module.
\end{lemma}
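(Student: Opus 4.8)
The plan is to compare $N(\mu,\nu)$ with the module $H_0(\overline M(\widehat\nu_h))$ constructed above. First I would recall that by Proposition \ref{FCW} (the generalized Fairlie construction), $N(\mu,\nu)$ carries a $\phi$--invariant Hermitian form whenever $\mu\in\R$, and more importantly that $N(\mu,\nu)$ is a highest weight $W^k_{\min}(\g)$--module of highest weight $(\nu,\ell_0)$ with $\ell_0$ the value displayed just before the Lemma; solving $\ell_0 = \tfrac12\mu^2 - s_k\mu + \frac{(\nu|\nu+2\rho^\natural)}{2(k+h^\vee)}$ for $\mu$ gives (at most) two roots $\mu,\mu'$, and I would note that this is the same quadratic relation in disguise as $\ell(h)=\ell_0$ under a linear change of variable $\mu = \mu(h)$ — indeed both $\ell(h)$ and $\ell_0(\mu)$ are monic (up to the sign of $k+h^\vee$) quadratics taking the same value $\frac{(\nu|\nu+2\rho^\natural)}{2(k+h^\vee)}$ at their critical points. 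This identifies the two pairs $\{h,h'\}$ and $\{\mu,\mu'\}$.

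Next I would invoke Theorem \ref{chW}: since $\ell_0 = \ell(h) > A(k,\nu)$ for one of the two values of $h$ (by Lemma \ref{dmtheta} we may choose $h$ so that the genericity condition \eqref{gi} holds), $H_0(\overline M(\widehat\nu_h))$ is an irreducible $W^k_{\min}(\g)$--module with highest weight $(\nu,\ell(h)) = (\nu,\ell_0)$, hence $H_0(\overline M(\widehat\nu_h)) = L^W(\nu,\ell_0)$. It then remains to show $N(\mu,\nu)$ is also $\cong L^W(\nu,\ell_0)$, i.e. that $N(\mu,\nu)$ is irreducible. The natural route is a character/dimension count: I would compare the graded dimensions of $N(\mu,\nu)$ and of $L^W(\nu,\ell_0)$. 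The graded dimension of $N(\mu,\nu)$ can be read off from the free field realization — it is the image under $\Psi$ of $W^k_{\min}(\g)$ acting on $1\otimes v_\nu\otimes\vac$ inside $M(\mu)\otimes L^\natural(\nu)\otimes F(\g_{1/2})$, and by the strong generation of $W^k_{\min}(\g)$ one gets an a priori upper bound for $\operatorname{ch} N(\mu,\nu)$ matching the ``free'' character (the RHS of \eqref{chfW} expanded via $\operatorname{ch} M^W(w.\widehat\nu_h)$). Since $N(\mu,\nu)$ surjects onto $L^W(\nu,\ell_0)$ and $\operatorname{ch} L^W(\nu,\ell_0)$ is given by \eqref{chfW}, equality of characters forces $N(\mu,\nu) = L^W(\nu,\ell_0)$, hence irreducibility.

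Alternatively — and this is probably cleaner — I would argue directly that $N(y,\nu)$ is flat over $\C[y]$ (which was noted: it is free), so its specializations $N(\mu,\nu)$ all have the same graded character; for generic $\mu$ (equivalently generic $h$ avoiding the finite bad set) one has $\ell_0 \gg 0$ and there $N(\mu,\nu)$ is a Verma-type module $M^W(\nu,\ell_0)$ whose character is the ``free'' one, while simultaneously Theorem \ref{chW} identifies $L^W(\nu,\ell_0)$ with $H_0(\overline M(\widehat\nu_h))$ having that same character; comparing, $M^W(\nu,\ell_0) = L^W(\nu,\ell_0)$ in the range $\ell_0 > A(k,\nu)$, and by flatness this persists for all $\mu$ with $\ell_0 > A(k,\nu)$, giving irreducibility of $N(\mu,\nu)$.

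The main obstacle I anticipate is making the character comparison rigorous: one must show that the surjection $M^W(\nu,\ell_0) \twoheadrightarrow N(\mu,\nu)$ (coming from $N(\mu,\nu)$ being highest weight) combined with the free-field upper bound $\operatorname{ch} N(\mu,\nu) \le \operatorname{ch}(\text{free module})$ pins down $\operatorname{ch} N(\mu,\nu)$ exactly, and then that this exact character coincides with $\operatorname{ch} L^W(\nu,\ell_0)$ given by \eqref{chfW} precisely when $\ell(h) > A(k,\nu)$. This is where Lemma \ref{sign2} (the inequalities $h_{n,\epsilon m}\le A(k,\nu)$, $h_{m,\gamma}\le A(k,\nu)$) does the real work, guaranteeing that in the range $\ell_0 > A(k,\nu)$ no singular vectors of the relevant types \eqref{1244}, \eqref{123} can appear, so that $M^W(\nu,\ell_0)$ has no proper submodule meeting the minimal energy space and the two characters agree. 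The bookkeeping of which value of $h$ (versus $h' = k+1-h$) satisfies \eqref{gi}, handled by Lemma \ref{dmtheta}, must be threaded through carefully so that the irreducibility of $H_0(\overline M(\widehat\nu_h))$ genuinely applies.
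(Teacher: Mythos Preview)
Your flatness-of-$N(y,\nu)$ idea is exactly the mechanism the paper uses, but there is a genuine gap in how you anchor the argument at the ``generic'' end. You claim that for $\ell_0\gg 0$ the module $N(\mu,\nu)$ is ``a Verma-type module $M^W(\nu,\ell_0)$ whose character is the `free' one.'' This is false: $N(\mu,\nu)$ is cut out inside $M(\mu)\otimes L^\natural(\nu)\otimes F(\g_{1/2})$, and $L^\natural(\nu)$ is the \emph{integrable} $\widehat\g^\natural$-module, not a Verma module. Consequently $N(\mu,\nu)$ is strictly smaller than $M^W(\nu,\ell_0)$ for every $\mu$. Likewise, strong generation only gives the coarse upper bound $\operatorname{ch} N(\mu,\nu)\le \operatorname{ch} M^W(\nu,\ell_0)$, which is a single Verma character, not the alternating sum in \eqref{chfW}; so your first approach does not pin down the character either.

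What is missing is the input from Proposition~\ref{sufficient}: for $\ell_0\gg 0$ (in fact $\ell_0\ge B(k,\nu)$) the module $N(\mu,\nu)$ is \emph{unitary}, hence irreducible, hence equals $L^W(\nu,\ell_0)$; now Theorem~\ref{chW} gives $\operatorname{ch} N(\mu,\nu)=\text{RHS of \eqref{chfW}}$ for those $\mu$. To propagate this to all $\mu$ with $\ell_0>A(k,\nu)$ you need flatness on the left (which you have) \emph{and} a verification that the RHS of \eqref{chfW} has graded multiplicities independent of $h$ (up to the overall $q$-shift); this is a separate computation---the paper carries it out as \eqref{noth} by showing $\ell_0(w,h)=\ell_0(w,0)+\tfrac{2h^2+(h^\vee-1)h}{2(k+h^\vee)}-h$ uniformly in $w\in\Wa^\natural$. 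Only then does the equality of characters hold for every $\mu$, and comparison with Theorem~\ref{chW} yields $N(\mu,\nu)\simeq H_0(\overline M(\widehat\nu_h))$ for all $\ell_0>A(k,\nu)$.
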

\begin{proof} Choose  $h\in\C$  such that  $\ell_0=\ell(h)$. By Theorem \ref{chW},  $H_0(\overline M(\widehat \nu_h))$ is  an irreducible $W^k_{\min}(\g)$-module, hence 
 there is an onto map $N(\mu,\nu)\to H_0(\overline M(\widehat \nu_h))=L^W(\ell_0,\nu)$. If $\ell_0\gg 0$, by the proof of Proposition \ref{sufficient}, $N(\mu,\nu)=L^W(\ell_0,\nu)$. Observe that, since $\ell_0>A(k,\nu)$, by Lemma \ref{sign2}, relations \eqref{gi} hold for our chosen $h$. It follows from \eqref{chfW} that
 \begin{equation}\label{charNmu}
ch\,N(\mu,\nu)=\sum_{w\in \widehat W^\natural}det(w)ch\,M^W(w.\widehat\nu_h)\text{ for $\ell_0\gg 0$}.
\end{equation}
By \eqref{pgen},   the highest weight of $M^W(w.\widehat\nu_h)$ is
$(\nu(w,h),\ell_0(w,h))$ where
$$
\nu(w,h)=(w.\widehat\nu_h)_{|\h^\natural},\ \ell_0(w,h)=\frac{\Vert w(\widehat\nu_h+\widehat\rho)\Vert^2-\Vert\widehat\rho\Vert^2}{2(k+h^\vee)}-(w.\widehat\nu_h)(x+d).
$$
Since $w\in\widehat W^\natural$, $(w.\widehat\nu_h)(x)=h$ and $(w.\widehat\nu_h)(d)$ as well as $\nu(w,h)$ do not depend on $h$. 
We can therefore write
\begin{align*}
\ell_0(w,h)&=\frac{\Vert w(\widehat\nu_h+\widehat\rho)\Vert^2-\Vert\widehat\rho\Vert^2}{2(k+h^\vee)}
-(w.\widehat\nu_0)(d)-h\\&=\frac{\Vert (\widehat\nu_0+\widehat\rho)\Vert^2-\Vert\widehat\rho\Vert^2}{2(k+h^\vee)}
-(w.\widehat\nu_0)(d+x)+\frac{\Vert (\widehat\nu_h+\widehat\rho)\Vert^2-\Vert\widehat\nu_0+\widehat\rho\Vert^2}{2(k+h^\vee)}-h
\\&=\frac{\Vert w (\widehat\nu_0+\widehat\rho)\Vert^2-\Vert\widehat\rho\Vert^2}{2(k+h^\vee)}
-(w.\widehat\nu_0)(d+x)+\frac{2h^2+(h^\vee-1)h}{2(k+h^\vee)}-h\\
&=\ell_0(w,0)+\frac{2h^2+(h^\vee-1)h}{2(k+h^\vee)}-h.
\end{align*}
It follows that
$$
ch\,M^W(w.\widehat\nu_h)=ch\,M^W(w.\widehat\nu_0)e^{(0,\tfrac{2h^2+(h^\vee-1)h}{2(k+h^\vee)}-h)}
$$
and
\begin{equation}\label{noth}
\sum_{w\in \widehat W^\natural}det(w)ch\,M^W(w.\widehat\nu_h)=\left(\sum_{w\in \widehat W^\natural}det(w)ch\,M^W(w.\widehat\nu_0)\right)e^{(0,\tfrac{2h^2+(h^\vee-1)h}{2(k+h^\vee)}-h)}.
\end{equation}
In particular, if  $\ell_0\gg 0$, then
$$
ch\,N(\mu,\nu)=\left(\sum_{w\in \widehat W^\natural}det(w)ch\,M^W(w.\widehat\nu_0)\right)e^{(0,\tfrac{2h^2+(h^\vee-1)h}{2(k+h^\vee)}-h)}.
$$
Since $N(y,\nu)$ is a free $\C[y]$--module, the dimensions  of the weight spaces of $N(\mu,\nu)$ do not depend on $\mu$. By \eqref{noth}, the coefficents of both sides of \eqref{charNmu} do not depend on $\mu$. It follows that \eqref{charNmu} holds for all $\mu$. In particular, if $\ell_0>A(k,\nu)$, by Theorem \ref{chW},  
$$
ch\, N(\mu,\nu)=ch\,H_0(\overline M(\widehat\nu_h)),
$$
hence $N(\mu,\nu)\simeq H_0(\overline M(\widehat\nu_h))$ is irreducible.
\end{proof}

The lowest energy space of $N(\mu,\nu)$ is $1\otimes1\otimes V^\natural(\nu)\otimes\vac$ with $L_0$ acting by multiplication by $\ell_0$.  This space admits a $\omega$--invariant Hermitian form hence there exists a $\phi$--invariant Hermitian form $H(\cdot,\cdot)$ on  $N(\mu,\nu)$. 
 
 If $\widehat\zeta(y)\in Hom_{\C[y]}(\C[y]\otimes\ha,\C[y])$ is a weight of $N(y,\nu)$, fix a basis  $\mathcal B_{\widehat\zeta(y)}$ of $N(y,\nu)_{\widehat\zeta(y)}$. Set $\widehat\zeta=\widehat\zeta(\mu)$. Then $1\otimes \mathcal B_{\widehat\zeta(y)}$ gives a basis $\mathcal B_{\widehat\zeta}$ of $N(\mu,\nu)_{\widehat\zeta}=\left(\C[y]/(y-\mu)\right)\otimes_{\C[y]}N(y,\nu)_{\widehat\zeta(y)}$. Let 
$det_{\widehat\zeta}(\ell_0)$ be the determinant of the matrix in this basis of the Hermitian form $H(\cdot,\cdot)$ restricted to $N(\mu,\nu)_{\widehat\zeta}$. Note that $det_{\widehat\zeta}(\ell_0)$ is a polynomial in  $\ell_0$.

%{lemma}\label{124} If     $\ell_0=\mu^2-s_k\mu+\frac{(\nu|\nu+2\rho^\natural)}{2(k+h^\vee)}>A(k,\nu)$,
%then $\det_{\widehat\zeta}(\ell_0)\ne 0$ for any weight $\widehat\zeta$ of the $W^k_{\min}(\g)$--module $N(\mu,\nu)$. 
%\end{lemma}
%\begin{proof} By Lemma \ref{basisfree}, $N(\mu,\nu)$ is irreducible.
%It follows that $det_{\widehat\zeta}(\ell_0)\ne0$.
%\end{proof}

%\begin{lemma}\label{sign} Let $k$ be in the unitarity range. Then $h_{m,\gamma }(k,0) \leq 0$ and $h_{n,\epsilon m} (k,0)\leq 0$.
%\end{lemma}
%%\begin{proof} First consider $h_{n,\epsilon m} (k,\nu)$. Since $k+h^\vee<0$, it suffices to prove that $$ (\epsilon m(k+h^\vee)-n)^2 - (k+1)^2\geq 0, $$
%or, since $k+1<0$,  $\epsilon m(k+h^\vee)-n \le k+1$, which is  clearly verified if $h^\vee\leq 0$. In the  remaining case $\g=spo(2|3)$ the inequality is 
%$2m(k+\tfrac{1}{2})< k+1=k+\tfrac{1}{2}+\tfrac{1}{2}$, which holds since $k+\tfrac{1}{2}<0$.

%Consider now $h_{m,\gamma }(k,0)$, and as above we have to show that 
%\begin{equation}\label{qwe}(2(\rho^{\natural}|\gamma)+2m(k+h^\vee))^2
%  -(k+1)^2  \ge 0.
%\end{equation}
%A direct check on Table 4 shows that 
%\begin{equation}\label{e1}2\max_{\gamma\in\D'}(\rho^{\natural}|\gamma)+h^\vee=1.\end{equation}
%Relation  \eqref{e1} implies  
%$$2(\rho^{\natural}|\gamma)+2m(k+h^\vee)\leq 2\max_{\gamma\in\D'}(\rho^{\natural}|\gamma)+h^\vee +k
%\le k+1,$$
 %hence inequality \eqref{qwe} holds.

%\end{proof}

\begin{proof}[End of proof of Theorem \ref{u??} and Corollary \ref{u?} ]
%Recall  the definition \eqref{Aknu}  of $A(k,\nu)$.
%. and  \eqref{bknu} of $B(k,\nu)$. 
%A direct calculation shows that 
%\begin{equation}\label{bma} B(k,\nu)-A(k,\nu)=-\tfrac{(1+k-2(\nu|\xi))^2}{4(k+h^\vee)}\ge 0.\end{equation}
 We may assume that the level is not collapsing, so that 
$M_i(k)+\chi_i\in\mathbb Z_+$ by Remark \ref{7.4}. Then, by Proposition \ref{sufficient}, the Hermitian form on $L^W(\nu,\ell_0)$ is positive definite for $\ell_0\gg 0$. 
By Lemma \ref{basisfree}, $N(\mu,\nu)=L^W(\nu,\ell_0)$ if $\ell_0=\tfrac{1}{2}\mu^2-s_k\mu+\frac{(\nu|\nu+2\rho^\natural)}{2(k+h^\vee)}>A(k,\nu)$, hence  $det_{\widehat\zeta}(\ell_0)\ne0$ for all  weights  $\widehat\zeta$  of $N(\mu,\nu)$. It follows that  the Hermitian form is positive definite for $\ell_0>A(k,\nu)$, hence positive semidefinite for $\ell_0=A(k,\nu)$.

Corollary \ref{u?} follows from  Proposition \ref{810} and Theorem \ref{u??} in the case $\nu=0$,  since $A(k,0)=0$, and Remark 
\ref{7.4}.
\end{proof}
\section{Explicit  necessary conditions and  sufficient conditions of 
unitarity}\label{summary}
Looking for the pairs $(\nu,\ell_0),\ \nu \in \widehat{P}^+_k,\ \ell_0\in \mathbb R,$ such that $L^W(\nu,\ell_0)$ is a unitary
$W^k_{\min}(\g)$--module for $k$ in the unitarity range, 
we rewrite for each case (excluding the trivial case (1)) the conditions  in terms of the parameters $M_i=M_i(k)$ from Table 2.  
Namely, we provide the necessary and sufficient conditions of unitarity of $L^W(\nu,\ell_0)$ for a non-extremal weight $\nu$, given by Theorem
\ref{u??},  and the necessary condition of unitarity for an extremal weight $\nu$, given by Proposition \ref{boundary}.  We also provide  explicit expressions for the cocycle $\a_k$ and the central charge $c$ of $L$. Recall 
the invariant bilinear form $(.| .)^\natural_i$  on $\g^\natural_i$, introduced in Section \ref{7}.

\subsection{$psl(2|2)$}\label{1111} In this case $\g^\natural=sl(2)$, $M_1\in\nat$ and $\a_k=(M_1-1)(\,.\,|\,.\,)^\natural_1$. If   $\nu=r\theta_1/2$, with $r\in \mathbb Z_{\ge 0}$ (i.e. 
$\nu$ is dominant integral), and $r\le M_1-1$, 
then the necessary and sufficient condition for unitarity is
$$
\ell_0\ge \frac{r}{2}.
$$
If $r=M_1$, then then necessary condition is $\ell_0=M_1/2$.\par\noindent
The central charge is $c=-6(k+1)=6M_1$.

\subsection{$spo(2|3)$}\label{spo23} In this case $\g^\natural=sl(2)$, $M_1\in\nat$ and $\a_k=(M_1-2)(\,.\,|\,.\,)^\natural_1$. If   $\nu=r\theta_1/2=r\a/2$, with $r\in \mathbb Z_{\ge 0}, r\le M_1-2$, 
then the necessary and  sufficient condition for unitarity is
$$
\ell_0\ge \frac{r}{4}.
$$
If $M_1-1\leq r \leq M_1$, then then necessary condition is $\ell_0=r/4$.
\par\noindent
The central charge is $c=-6 k-\frac{7}{2}=\tfrac{3}{2}M_1-\tfrac{1}{2}$.

\subsection{$spo(2|m)$, $m>4$}In this case $\g^\natural=so(m)$, $M_1\in\nat$ and $\a_k=(M_1-1)(\,.\,|\,.\,)^\natural_1$. If $\nu$ is dominant integral, $\nu(\theta_1^\vee)\le M_1-1$, 
then the necessary and  sufficient condition for unitarity is
\begin{equation}\label{1a}
\ell_0\ge\frac{(\nu|\nu+2\rho^\natural)^\natural}{2(M_1+m-3)}+\frac{r (M_1-r-1)}{2 (m+M_1-3)}=-\frac{(\nu|\nu+2\rho^\natural)^\natural-r (2k+r+2)}{2 (2k-m+4)},
\end{equation}
where $r=(\omega_1|\nu)^\natural$, and  $\omega_1$ is the highest weight of the standard representation of 
$so(m)$.\par\noindent
If $\nu(\theta_1^\vee)=M_1$, the necessary condition is that equality must hold in \eqref{1a}.
\par\noindent
The central charge is $c=\frac{M_1 \left(m^2+6 M_1-10\right)}{2 (m+M_1-3)}=-\frac{(2 k+1) \left(12 k-m^2+16\right)}{4 k-2 m+8}$.

\subsection{$D(2,1;\frac{m}{n})$, $m,n\in\mathbb N$,  $m,n$ {\rm coprime}}

In this case $\g^\natural=\g^\natural_1\oplus \g^\natural_2$ with $\g^\natural_i\simeq sl(2)$, 
%$(M_1,M_2)\in (m,n) \nat$ 
and
$$
\a_k(b,c)=(M_i(k)-1)(b|c)^\natural_i\ \text{ if $b,c\in \g^\natural_i$.}$$
 If  $\nu=\tfrac{r_1}{2}\theta_1+\tfrac{r_2}{2}
 \theta_2$ is dominant integral with $r_i\le M_i(k)-1$,
then the necessary and  sufficient condition for unitarity is
\begin{equation}\label{1b}
\ell_0\ge\frac{2 (M_1+1) r_2+2 (M_2+1) r_1+(r_1-r_2)^2}{4
   (M_1+M_2+2)}=\frac{2 (a+1) k (a r_2+r_1)-a
   (r_1-r_2)^2}{4 (a+1)^2 k}
\end{equation}
If $r_i=M_i$ for some $i$, then the necessary condition is that equality must hold in \eqref{1b}.
%In the special case $r_1=r_2=r$ the necessary condition becomes
%$$
%\ell_0\ge r/2.
%$$
The central charge is $c=6\frac{(M_1+1)(M_2+1)}{M_1+M_2+2}-3=-3(1+2k)$.

\subsection{$F(4)$}

In this case $\g^\natural=so(7)$, $M_1\in\nat$ and 
$\a_k=(M_1-1)(\,.\,|\,.\,)^\natural.$
 If   $\nu(\theta_1^\vee)\le M_1-1$, 
then the necessary and  sufficient condition for unitarity is 
\begin{align}\label{1c}
\ell_0&\ge 
   \frac{r_1 (M_1+7)+r_2 (M_1+4)+r_3
   (M_1+1)+r_1^2+r_2^2+r_3^2-r_1r_2-r_1r_3-r_2r_3}{3 (M_1+4)}\\
  &= \frac{r_1 (6-\tfrac{3}{2}k)+r_2 (3-\tfrac{3}{2}k)+r_3
   (-\tfrac{3}{2}k)+r_1^2+r_2^2+r_3^2-r_1r_2-r_1r_3-r_2r_3}{3 (3-\tfrac{3}{2}k)},\notag
\end{align}
where we write $\nu=r_1\epsilon_1+r_2\epsilon_2+r_3\epsilon_3$ with $\epsilon_i$ as in Table 1.
If   $\nu(\theta_1^\vee)=M_1$, then  the necessary condition is that equality must hold in \eqref{1c}.\par\noindent
The central charge is $c=\frac{2 M_1 (2 M_1+11)}{M_1+4}=-\frac{2 (k-3) (3 k+2)}{k-2}$.

\subsection{$G(3)$}\label{fff}

In this case $\g^\natural=G_2$, $M_1\in\nat$ and 
$\a_k=(M_1-1)(\,.\,|\,.\,)^\natural.$
 If  $\nu(\theta_1^\vee)\le M_1-1$, 
then the necessary and  sufficient condition for unitarity is
\begin{equation}\label{1d}
\ell_0\ge\frac{r_1 (3 M_1+1)+r_2 (3 M_1+7)+3(r_1-r_2)^2}{12
   (M_1+3)}=\frac{r_1 (-2-4k)+r_2 (4-4k)+3(r_1-r_2)^2}{8
   (3-2k)},
\end{equation}
where we write $\nu=r_1\epsilon_1+r_2\epsilon_2$ with $\epsilon_i$ as in Table 1.
If   $\nu(\theta_1^\vee)=M_1$, then  the necessary condition is that equality must hold in \eqref{1d}.\par\noindent
The central charge is $c=\frac{M_1 (9 M_1+31)}{2 (M_1+3)}=\frac{-24 k^2+26 k+33}{4 k-6}$.

\section{Unitarity for extremal modules over the $N=3,\, N=4$ and $\text{ big }N=4$ superconformal algebras}
A module $L^W(\nu,\ell_0)$ for $W^k_{\min}(\g)$ is called {\it extremal} if the weight $\nu$ is extremal (see Definition \ref{extr}). In this section we give a partial solution of Conjecture 2 for some $\g$. Namely,   $\g$ will be either  $spo(2|3)$, or $psl(2|2)$, or $D(2,1;a)$, so that $W^k_{\min}(\g)$ is  related to the   $N=3,\,N=4$ and  big $N=4$ superconformal algebra, respectively. Recall from \cite[Section 8]{KW1} that in these cases, up to adding a suitable 
number of bosons and fermions, it is always possible to make the $\l$--brackets  between the generating fields linear, hence the span of their
Fourier coefficients gets endowed with a Lie superalgebra structure, called the $N=3,\, N=4$ and big $N=4$ superconformal algebra respectively. \par
Recall that, by Proposition \ref{boundary}, for each extremal weight $\nu$ there is at most one $\ell_0$ for which the extremal module $L^W(\nu,\ell_0)$ is unitary, hence for each extremal $\nu$ it suffices to construct one such unitary module.\par
\subsection{$\g=spo(2|3)$} Consider $W^k_{\min}(spo(2|3))$ and the  Lie  conformal superalgebra  $R=(\C[\partial]\otimes \aa)\oplus \C K$, where $\aa$ is an 8--dimensional superspace with basis $\tilde L, \tilde G^\pm,\tilde G^0, J^\pm, J^0,\Phi, $ where $\tilde L, J^\pm, J^0$ are even and $\tilde G^\pm,\tilde G^0,\Phi$ are odd, and the following
  $\l$--brackets 
\begin{align*}
  & [{J^0}{}_{\lambda}\tilde{G}^0] = -2\lambda
       \Phi \, , \, [{J^+}{}_{\lambda}\tilde{G}^-]
       =-2\tilde{G}^0 +2\lambda \Phi \, , \,
 [{J^-}{}_{\lambda}\tilde{G}^+] =
        \tilde{G}^0 +\lambda \Phi \, , \,
        [{\tilde{G}^{\pm}}{}_{\lambda}\tilde{G}^{\pm}]=0\, , \\
 &[{\tilde{G}^+}{}_{\lambda} \tilde{G}^-] = \tilde{L} +\tfrac{1}{4}
        (\partial +2\lambda)J^0 -
       \lambda^2K \, ,
        [{\tilde{G}^+}{}_{\lambda}\tilde{G}^0] =
        \tfrac{1}{4}(\partial +2\lambda)J^+ \, ,
         [{\tilde{G}^0}{}_{\lambda}\tilde{G}^0]=\tilde{L}-\lambda^2 K \, , \\
&[{\tilde{G}^-}{}_{\lambda} \tilde{G}^0] =-\tfrac{1}{2}
        (\partial +2\lambda) J^- \, , \,
        [{\tilde{G}^+}{}_{\lambda} \Phi ] = \tfrac{1}{4}J^+ \, , \,
        [{\tilde{G}^-}{}_{\lambda}\Phi]  =\tfrac{1}{2} J^- \, , \,
        [{\tilde{G}^0}{}_{\lambda}\Phi] =-\tfrac{1}{4} J^0 \, \\    
        &[\Phi_{\lambda}\Phi]=-K, [J^+_\l J^-]=J^0-4\l K, , [J^0_\l J^\pm]=\pm2J^\pm, [J^0_\l J^0]=-8\l K,\\
        &[ \tilde{L}_\l \tilde{L}]=\partial  \tilde{L}+2\l \tilde{L}-\tfrac{\l^3}{2}K.
\end{align*}
Furthermore $\tilde G^\pm,\tilde G^0, J^\pm, J^0,\Phi$ are primary for $\tilde L$ of  conformal weight  $\tfrac{3}{2},\tfrac{3}{2},1,1,\tfrac{1}{2}$, respectively.

The $N=3$ superconformal algebra $\mathcal W_{N=3}^k$ is $V(R)/(K-(k+\tfrac{1}{2})\vac)$,
where $V(R)$ is the universal enveloping vertex algebra of $R$. Let $F_\Phi$ be the fermionic vertex algebra generated by an odd element $\Phi$, with $\l$--braket $[{\Phi}_{\l}\Phi]=-(k+\tfrac{1}{2})\vac$.
Then there is a conformal vertex algebra embedding
$$\mathcal W_{N=3}^k\hookrightarrow W^k_{\min}(spo(2|3))\otimes F_\Phi $$
given by (cf \cite[\S 8.5]{KW1})
\begin{eqnarray*}
  &\tilde{L} \mapsto L-\tfrac{1}{2k+1}:\partial \Phi \Phi :\, , \,
  \tilde{G}^+ \mapsto\tfrac{\sqrt{-1}}{\sqrt{k+1/2}}G^+ -\tfrac{1}{4k+2}:J^+\Phi : \, , \,\\
 & \tilde{G}^- \mapsto \tfrac{-\sqrt{-1}}{\sqrt{k+1/2}}G^- -\tfrac{1}{2k+1}:J^- \Phi : \, , \,
  \tilde{G}^0 \mapsto\tfrac{-\sqrt{-1}}{\sqrt{k+1/2}}G^0 +\tfrac{1}{4k+2}:J^0 \Phi : \, .\\
  &\Phi\mapsto \Phi,\, J^\pm\mapsto J^\pm,\,J^0\mapsto J^0.
\end{eqnarray*}
%
%Then $\tilde{L}$ is a Virasoro field with central charge
%
%\begin{displaymath}
%  \tilde{c}  =-6k-3 \, ,
%\end{displaymath}
 Extend the conjugate linear involution $\phi$ to $W^k_{\min}(spo(2|3))\otimes F_\Phi$ setting $\phi(\Phi)=-\Phi$. Recall from \cite{KMP} that   the unique $\phi$--invariant Hermitian form on $F_\Phi$  is positive definite.
Also recall that the tensor product of  invariant Hermitian forms is still invariant; in particular if we prove that $L^W(\nu,\ell_0)\otimes F_\Phi$ is unitary for $\mathcal W_{N=3}^k$, then
$L^W(\nu,\ell_0)$  is a unitary  $W^k_{\min}(spo(2|3))$--module. 
Recall that, for  $a,b\in V(R)$, the modes of $a,b$ have a Lie superalgebra structure given by 
$$[a_r,b_s]=\sum_{j\in \mathbb Z_+}\binom{\D_a+r-1}{j} (a_{(j)}b)_{r+s}.$$
 Observe that the span $\mathcal L$ of $\tilde L_n, \tilde G^\pm_m,\tilde G^0_m, J_n^\pm, J^0_n,\Phi_m, K,\,n\in\mathbb Z, m\in \tfrac{1}{2}+\mathbb Z,$ is a Lie superalgebra. If $M$  (resp. $M'$)  are modules 
 for $\mathcal W_{N=3}^k$ (resp. $\mathcal W_{N=3}^{k'}$), then $M\otimes M'$ inherits an action of $\mathcal L$ which makes $M\otimes M'$ a $\mathcal W_{N=3}^{k+k'+\tfrac{1}{2}}$--module. Clearly, if both $M,M'$ are unitary, then 
 $M\otimes M'$ is unitary. The argument used in the next proposition generalizes the one used for the oscillator representation of the  Virasoro algebra in 	\cite[\S 3.4]{KR}.
 \begin{prop} Let $M_1=-4k-2\in \mathbb N$.  Then the extremal $W^k_{\min}(spo(2|3))$--modules $L^W(\tfrac{M_1-1}{2}\a,\tfrac{M_1-1}{4})$, $L^W(\tfrac{M_1}{2}\a,\tfrac{M_1}{4})$ are both unitary, where $\a$ is the simple root of $\g^\natural=sl_2$.
 \end{prop}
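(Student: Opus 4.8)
The plan is to realize the two extremal modules for $W^k_{\min}(spo(2|3))$, tensored with the fermion vertex algebra $F_\Phi$, as summands of tensor products of "small" unitary $\mathcal W^{k'}_{N=3}$-modules, exploiting the fact that the modes of $\mathcal W^{k'}_{N=3}$ close into a Lie superalgebra $\mathcal L$ whose action on a tensor product $M\otimes M'$ of a $\mathcal W^{k_1}_{N=3}$-module and a $\mathcal W^{k_2}_{N=3}$-module yields a $\mathcal W^{k_1+k_2+1/2}_{N=3}$-module, with unitarity preserved under this operation. Concretely, set $M_1 = -4k-2 \in \nat$; the idea is that the cases $M_1=1$ (i.e. $k=-3/4$) and $M_1=2$ (i.e. $k=-1$) — or some small base cases — give manifestly unitary modules, and then one tensors $M_1$ copies (or an appropriate number) of a fixed elementary unitary module to climb up to arbitrary $M_1 \in \nat$. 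This is the free-field/oscillator philosophy of \cite[\S3.4]{KR} transplanted to $N=3$.

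First I would pin down, using \eqref{pgen} and the explicit data in Section \ref{spo23}, the highest weights $(\nu,\ell_0)$ of the two extremal modules in question: for $\nu = \tfrac{M_1-1}{2}\a$ one has $\ell_0 = \tfrac{M_1-1}{4}$ and for $\nu = \tfrac{M_1}{2}\a$ one has $\ell_0 = \tfrac{M_1}{4}$, and by Proposition \ref{boundary} these are the only candidate values. Next I would identify the elementary building block: a single unitary $\mathcal W^{k_0}_{N=3}$-module at the smallest admissible level $k_0$ (with $M_1(k_0)=1$), which should itself be constructible directly — e.g.\ via the free field realization $\Psi$ of Section \ref{8} applied to a highest weight module $N(\mu,\nu_0)$ with $\nu_0$ non-extremal of minimal positive integral label, or from the $V_1(sl_2)$ collapsing phenomenon noted in Theorem \ref{oldresults}(2)(b) for $spo(2|3)$ when $k=-3/4$. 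Then, iterating the tensor-product construction $M\otimes M' \rightsquigarrow \mathcal W^{k+k'+1/2}_{N=3}$-module and decomposing into irreducibles, I would extract the summand of the correct highest weight; tracking the $J^0_0$-eigenvalue and the $\tilde L_0$-eigenvalue (equivalently the $sl_2$-weight $\nu$ and the energy $\ell_0$) through the tensor product and checking they match $\tfrac{M_1-1}{2}\a,\tfrac{M_1-1}{4}$ (respectively $\tfrac{M_1}{2}\a,\tfrac{M_1}{4}$) is the key bookkeeping. Finally, unitarity of the summand is automatic since a submodule of a unitary module with respect to an invariant positive-definite Hermitian form is unitary, and unitarity for $W^k_{\min}(spo(2|3))$ itself follows because $F_\Phi$ is unitary (its $\phi$-invariant form is positive definite, \cite{KMP}) and the tensor product of invariant positive-definite forms is invariant and positive definite.

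The main obstacle I anticipate is twofold. First, one must guarantee that the desired extremal weight actually appears as an irreducible summand in the tensor power and is not killed or absorbed — this requires either an explicit lowest-energy-space computation (the lowest energy space of the tensor product is the tensor product of the lowest energy spaces, and one decomposes it as an $sl_2$-module, which is elementary) or a character/branching argument; the delicate point is ensuring the energy $\ell_0$ of that summand is exactly $A(k,\nu)$ and not strictly larger, which is where the precise additivity $\ell_0 = \sum \ell_0^{(j)}$ combined with the shift by $s_k$ from \eqref{sfix} must be handled with care. Second, one must treat the two extremal weights $\tfrac{M_1-1}{2}\a$ and $\tfrac{M_1}{2}\a$ together: the latter is the "fully extremal" weight $\nu(\theta_1^\vee)=M_1$, and it may require tensoring in one extra copy of a half-integral-spin elementary module (reflecting the odd reflection/$\xi$-shift structure), so the base of the induction and the elementary blocks must be chosen to cover both parities of the top $sl_2$-weight.

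\medskip

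\noindent\emph{Remark.} An alternative, and perhaps cleaner, route would be to run the argument of Section \ref{ur}--\ref{summary} at the boundary value $\ell_0 = A(k,\nu)$ directly: use the free field module $N(\mu,\nu)$ with $\nu$ non-extremal but close to extremal, let $\mu\to 0$ along real values so that $\ell_0 \downarrow A(k,\nu)$, and show the limiting Hermitian form on the irreducible quotient stays positive definite by a determinant-continuity argument as in the end of the proof of Theorem \ref{u??}. However, for extremal $\nu$ the module $\overline M(\widehat\nu_h)$ has an extra singular vector (the one of weight $\widehat\nu_h - \text{(odd root)}$ giving $G^{\{u\}}_{-1/2}v_{\nu,\ell_0}$), so one cannot simply invoke Proposition \ref{irr}; the oscillator/tensor-product construction sidesteps this by building the extremal module from the outside rather than as a reduction, which is why it is the preferred approach here.
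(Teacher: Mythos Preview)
Your proposal is essentially the paper's approach: induction on $M_1$ via the tensor-product-of-$\mathcal L$-modules construction, with base case $M_1=1$ at the collapsing level $k=-3/4$ where $W_k^{\min}(spo(2|3))=V_1(sl_2)$ and both extremal modules are the two integrable $V_1(sl_2)$-modules. Your anticipated obstacles are not real difficulties: there is no need to decompose the tensor product or run a branching argument---one simply takes the cyclic $W^k_{\min}(spo(2|3))$-submodule generated by the tensor product $v_{\frac{M_1-2}{2}\a,\frac{M_1-2}{4}}\otimes\vac\otimes v_{\frac{\a}{2},\frac{1}{4}}\otimes\vac$ of highest weight vectors, whose weight is manifestly $(\tfrac{M_1-1}{2}\a,\tfrac{M_1-1}{4})$ since $\tilde L_0$ and $J^0_0$ act diagonally (no shift: the fermion contribution $-\tfrac{1}{2k+1}:\partial\Phi\Phi:_0$ vanishes on $\vac$), and a highest weight submodule of a unitary module is automatically irreducible and unitary; the second extremal weight $\tfrac{M_1}{2}\a$ is obtained in the same step by starting the tensor product from the other extremal module $L(k_1,\tfrac{M_1-1}{2}\a,\tfrac{M_1-1}{4})$ at level $k_1$, so the induction handles both cases simultaneously.
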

 \begin{proof} To make the argument  more transparent we make explicit the dependence on $k$, so we write $L(k,\nu,\ell_0)$ for the $W^k_{\min}(spo(2|3))$--module $L^W(\nu,\ell_0)$. Recall that $\nu=r\a/2$.
\par
 We proceed by induction on $M_1$. The base case $M_1=1$ corresponds to the collapsing level $k=-3/4$, when $W_{-3/4}^{\min}(spo(2|3))=V_1(sl_2)$. Recall that  $V_1(sl_2)$ has only two irreducible modules $N_1$ and $N_2$, which are both unitary and have highest weights $\nu=0$ and $\nu=\a/2$ respectively.
Recall from \S \ref{spo23} that if $M_1-1\leq r \leq M_1$, then the necessary condition for unitarity is $\ell_0=M_1/4$. Hence $N_1$ and $N_2$ are  $L(-3/4,0,0)$ and $L(-3/4,\a/2,1/4)$. Set $k_1=-\tfrac{M_1+1}{4}$. Assume by induction that 
 $L(k_1,\tfrac{M_1-2}{2}\a,\tfrac{M_1-2}{4})$ and  $L(k_1,\tfrac{M_1-1}{2}\a,\tfrac{M_1-1}{4})$ are unitary. Then  $M=L(k_1,\tfrac{M_1-2}{2}\a,\tfrac{M_1-2}{4})\otimes F_\Phi$ is unitary for 
 $\mathcal W_{N=3}^{k_1}$ and $M'=L(-3/4,\a/2,1/4)\otimes F_\Phi$ is unitary for $\mathcal W_{N=3}^{-3/4}$. Therefore  $M\otimes M'$ is unitary for 
 $\mathcal W_{N=3}^{k_2},\,k_2=k_1-\tfrac{3}{4}+\tfrac{1}{2}=-\tfrac{M_1+1}{4}-\tfrac{3}{4}+\tfrac{1}{2}=-\tfrac{M_1}{4}-\tfrac{1}{2}=k$. In particular, the
 $W^{k}_{\min}(spo(2|3))$--module generated by $v_{\tfrac{M_1-2}{2}\a,\tfrac{M_1-2}{4}}\otimes \vac\otimes v_{\tfrac{\a}{2},\tfrac{1}{4}}\otimes \vac$ is unitary, and its weight is 
 $(\tfrac{M_1-1}{2}\a,\tfrac{M_1-1}{4})$, as required. 
 %Similarly, tensoring $L(k_1,\tfrac{M_1-1}{2}\a,\tfrac{M_1-1}{4})\otimes F_\Phi$ with $L(-3/4,0,0)\otimes F_\Phi$ and proceding as above, we prove the unitarity of 
 %$L(k,\tfrac{M_1}{2}\a,\tfrac{M_1}{4})$.
 Repeating this argument with  $L(k_1,\tfrac{M_1-1}{2}\a,\tfrac{M_1-1}{4})\otimes F_\Phi$ proves the unitarity of 
 $L(k,\tfrac{M_1}{2}\a,\tfrac{M_1}{4})$.
 \end{proof}
 \subsection{$\g=psl(2|2)$}  We choose  strong generators $J^0,J^\pm,G^\pm,\bar G^\pm,L$ for $W^{k}_{\min}(psl(2|2))$ as in \cite[\S 8.4]{KW1}. We can choose the generators so that, if $\phi$ is the almost compact involution corresponding to the real form described in Section \ref{4}, then
\begin{equation}\label{N4phi}
\phi(L)=L,\ \phi(J^+)=-J^-,\   \phi(J^0)=-J^0,\ \phi(G^+)=\bar G^-,\ \phi(G^-)=\bar G^+.
\end{equation}
 The   $\l$--brackets among these generators are linear, hence their Fourier coefficients span the $N=4$ superconformal algebra. 
 It is therefore enough to prove unitarity of the extremal module  $L^W(\theta_1/2,1/2)$ at level $k=-2$, since all the other extremal modules 
 at level $k<-2$ are obtained by iterated tensor product of $L^W(\theta_1/2,1/2)$. \par  
 The unitarity of $L^W(\theta_1/2,1/2)$ is proved by constructing this module as a submodule of a manifestly unitary module.
 This is achieved by using the free field realization  of   $W^{-2}_{\min}(psl(2|2))$ given in  \cite{ET1},  in terms of  four bosonic fields and four fermionic fields, which we now describe. Let  $\mathcal F$ be the vertex algebra  generated by four even fields $a^i, 1\leq i \leq 4$ and four odd fields $b^i, 1\leq i \leq 4$ with $\l$--bracket
$$
[{a_i}_\l a_j]=\d_{ij}\l,\ [{b_i}_\l b_j]=\d_{ij},\ [{a_i}_\l b_j]=0.
$$
There is an homomorphism    $FFR:W^{-2}_{\min}(psl(2|2))\to\mathcal F$ given by 
%with $\l$-brackets $[{a^i}_\l a^j]=\d_{ij}\l, [{b^i}_\l b^j]=\d_{ij}$
\begin{align*}L&\mapsto\tfrac{1}{2}\sum_{i=1}^4 (:a^ia^i:+:Tb^i b^i:)\\
J^+&\mapsto-\tfrac{1}{2}:b^1b^3:-\tfrac{1}{2}\sqrt{-1} :b^1b^4:-\tfrac{1}{2}\sqrt{-1}:b^2b^3:+\tfrac{1}{2}:b^2b^4:\\
J^-&\mapsto\tfrac{1}{2}:b^1b^3:-\tfrac{1}{2}\sqrt{-1} :b^1b^4:-\tfrac{1}{2}\sqrt{-1}:b^2b^3:-\tfrac{1}{2}:b^2b^4:\\
J^0&\mapsto-\sqrt{-1}:b^1b^2:-\sqrt{-1} :b^3b^4:\\
G^+&\mapsto\tfrac{1}{2}:(a^1+\sqrt{-1}a^2)(b^3+\sqrt{-1}b^4):-\tfrac{1}{2} :(a^3+\sqrt{-1}a^4)(b^1+\sqrt{-1}b^2):\\
G^-&\mapsto\tfrac{1}{2}:(a^1+\sqrt{-1}a^2)(b^1-\sqrt{-1}b^2):+\tfrac{1}{2} :(a^3+\sqrt{-1}a^4)(b^3-\sqrt{-1}b^4):\\
\bar G^+&\mapsto\tfrac{1}{2}:(a^1-\sqrt{-1}a^2)(b^1+\sqrt{-1}b^2):+\tfrac{1}{2} :(a^3-\sqrt{-1}a^4)(b^3+\sqrt{-1}b^4):\\
\bar G^-&\mapsto\tfrac{1}{2}:(a^1-\sqrt{-1}a^2)(b^3-\sqrt{-1}b^4):-\tfrac{1}{2} :(a^3-\sqrt{-1}a^4)(b^1-\sqrt{-1}b^2):
\end{align*}
We define a conjugate linear  involution $\psi$  on $\mathcal F$ by
$$a_i\mapsto  -a_i,\ b_i\mapsto  -b_i
$$
so that, according to \cite[\S 5.1,5.2]{KMP}, there is a $\psi$--invariant positive definite Hermitian form $H_{\mathcal F}$ on $\mathcal F$. 
It is  clear from \eqref{N4phi} that $\psi\circ FFR=FFR\circ \phi$. 
Using $FFR$ we can define an action of $W^{-2}_{\min}(psl(2|2))$ on $\mathcal F$. Since $H_{\mathcal F}$ is invariant with respect to the conformal vector $FFR(L)$, it follows that $\mathcal F$ is a unitary $W^{-2}_{\min}(psl(2|2))$--module.
An easy calculation shows that $v=b^1+\sqrt{-1}b^2$ is a singular vector for $W^{-2}_{\min}(psl(2|2))$, thus $v$ generates a unitary highest weight representation $L^W(\nu,\ell_0)$ of $W^{-2}_{\min}(psl(2|2))$. Clearly $FFR(L)_0v=\half v$, while $J^0v=v$, hence $\nu=\half\theta_1$ and $\ell_0=\half$. This proves that the highest weight module corresponding the extremal weight $\nu=\half\theta_1$ is indeed unitary.
 \subsection{$\g=D(2,1; \tfrac{m}{n})$} 
 %The big $N=4$ superconformal algebra is obtained from $W^k_{\min}(\g)$ by tensoring the latter vertex algebra by the vertex algebra strongly generated by four (odd) free fermions 
% $\s^{--},\s^{-+},\s^{+-},\s^{++}$ with non-zero $\l$-brackets $[{\s^{--}}_\l\s^{++}]=[{\s^{-+}}_\l\s^{+-}]=k$; and one (even) free boson $\xi$  with $\l$-bracket $[\xi_\l\xi]=k$. 
In this case we are able to prove unitarity only
 in the very special  case when  either $m=1$ of $n=1$.  
 
If $n=1$,  then the unitarity range is $\{-\tfrac{m}{m+1}N\mid N\in \nat\}$. Take $N=1$ and observe that $W^{-\tfrac{m}{m+1}}_{\min}(D(2,1; m))$ collapses to $V_{m-1}(sl(2))$. In this case there is only one extremal weight $\nu=\tfrac{m-1}{2}\a_2$, which gives rise to a unitary representation since it is  integrable. The case $m=1$ is dealt with in a similar way, switching the roles of $\a_2,\a_3$.

\section{Characters of the irreducible unitary $W^k_{\min}(\g)$--modules}\label{14}

Recall  that, for $\L\in\ha^*$, we denoted by  $M^W(\L)$ the Verma module $M^W(\nu,\ell)$, where $(\nu,\ell)$ is given by \eqref{pgen}. It follows from  \cite[(6.11)]{KW1}, that 
\begin{align}
  \label{eq:6.111}
 &ch\, M^W(\L) =e^\nu q^{\ell}F^{NS}(q),
 \end{align}
 where  $q=e^{(0,1)}$ and
 \begin{align}
  \label{dden}
      F^{NS}(q)= \prod^{\infty}_{n=1}\frac{\prod_{\alpha \in \Delta_{1/2}}\!
  (1+q^{n-\frac{1}{2}}e^{-\alpha })}{ (1-q^n)^{rank\g^\natural+1}\prod_{\alpha \in \Delta^\natural_{+}}\,( 
        (1- q^{n-1} e^{-\alpha })
        (1-q^n e^{\alpha})))}.
\end{align}
In particular,
\begin{align}
  \label{eq:6.11}
 &ch\, M^W(\widehat\nu_h) =e^\nu q^{\ell(h)}F^{NS}(q),
 \end{align}
 where $\ell(h)$ is given by \eqref{p}.
 
The characters of unitary $W^k_{\min}(\g)$-modules $L^W(\nu,\ell_0)$ are computed by applying the quantum Hamiltonian reduction to the irreducible highest weight $\ga$-modules 
$L(\widehat \nu_h)$, where $\nu\in P^+_k$ and $\ell_0=\ell(h)$,  and using the argument in the proof of Theorem \ref{chW}, which is based on Remark \ref{nonzero}. There are two cases to consider in computation of their characters. First, if the weight $\widehat \nu_h$ is typical, i.e. 
conditions \eqref{gi} hold, then $ch\,L(\widehat \nu_h)$ is given by the R.H.S. of  \eqref{chf}, by Proposition \ref{irr}.\par
The second case occurs when the weight $\widehat \nu_h$ satisfies the condition 
$$(\widehat \nu_h+\widehat \rho|\a)=0\text{ for all $\a\in\Pi_{\bar 1}$,}$$
where $\Pi_{\bar 1}$ denotes the set of simple isotropic roots of $\g$. Then the weight $\widehat\nu_h$ is maximally atypical, and $L(\widehat \nu_h)$ is integrable, hence the following formula is a special case of \cite[Formula (14)]{GK2} if $\g\ne D(2,1;\frac{m}{n})$
and of \cite[Section 6.1]{GK2} if $\g= D(2,1;\frac{m}{n})$ and $\nu=0$:
\begin{equation}\label{KWC}
e^{\widehat \rho}\, R\, ch\, L(\widehat \nu_h)=\sum_{w\in \Wa^\natural} det(w) w \frac{ e^{\widehat \nu_h+\widehat \rho}}{\prod_{\beta\in \Pi_{\bar 1}}(1+e^{-\beta})},
\end{equation}
where $R$ equals the character of the Verma module $M(0)$ over $\widehat\g$ with highest weight $0$.
%In the proof of the next theorem we verify formula \eqref{KWC}  when $\nu=0$.
\begin{theorem}\label{characters}
Let $k$ be in the unitary range 
and let $\nu\in P^+_k$. 
%be a non-extremal weight. 
Let $L^W(\nu,\ell_0)$ be a unitary irreducible $W^k_{\min}(\g)$--module. Choose $h$ so that $\ell(h)=\ell_0$ and let, as before,
$$
\widehat\nu_h=k\L_0+\nu+h\theta.
$$
(i) If $\ell_0>A(k,\nu)$, then 
\begin{equation}\label{ff1}ch L^W(\nu,\ell_0)=\sum_{w\in\widehat W^\natural}det(w)ch M^W(w.\widehat\nu_h).\end{equation}
(ii) If  $\ell_0=A(k,\nu)$, and $\nu=0$ if $\g=D(2,1;\frac{m}{n})$, then
\begin{equation}\label{ff2}
ch L^W(\nu,\ell_0)=\sum\limits_{w\in\widehat W^\natural}\sum_{\gamma\in \ZZ_+\Pi_{\bar 1}}(-1)^{\gamma}det(w)ch M^W(w.(\widehat\nu_h-\gamma)),
\end{equation}
 where $\Pi_{\bar 1}=\{\gamma_1,\gamma_2,\ldots\}$ is the set of isotropic simple roots for $\g$, and for $\gamma=n_1\gamma_1+\cdots$,  we write $(-1)^\gamma=(-1)^{n_1+\cdots}$.
\end{theorem}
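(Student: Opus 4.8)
\textbf{Proof proposal for Theorem \ref{characters}.}
The plan is to obtain both character formulas by applying the quantum Hamiltonian reduction functor $H_0$ to the irreducible highest weight $\ga$-module $L(\widehat\nu_h)$ and invoking Arakawa's theorem (Remark \ref{nonzero}), which tells us that $H_0(L(\widehat\nu_h))$ is irreducible and equals $L^W(\nu,\ell(h))$ provided $(\widehat\nu_h|\a_0)\ne\tfrac{n}{2}(\a_0|\a_0)$ for all $n\in\mathbb Z_+$. First I would handle part (i), the massive (typical) case $\ell_0>A(k,\nu)$. By Lemma \ref{sign2} together with \cite[Lemma 7.3 (c)]{KW1}, the strict inequality forces $(\widehat\nu_h+\widehat\rho|\a)\ne\tfrac{n}{2}(\a|\a)$ for all $\a\in\Dap\setminus(\Dap(\g^\natural)\cup\{\d-\theta\})$; as in the proof of Theorem \ref{chW}, exchanging $h$ with $h'=k+1-h$ via Lemma \ref{dmtheta} if necessary, one arranges that the full typicality condition \eqref{gi} holds. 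Then Proposition \ref{irr} gives $ch\,\overline M(\widehat\nu_h)=\sum_{w\in\Wa^\natural}\det(w)\,ch\,M(w.\widehat\nu_h)$ and that $\overline M(\widehat\nu_h)=L(\widehat\nu_h)$ is irreducible. Applying $H_0$, which is exact, maps Verma modules to Verma modules (\cite[Theorem 6.3]{KW1}), and kills the higher cohomology (\cite[Theorem 6.2]{KW1}), so the Euler--Poincar\'e principle yields \eqref{ff1}, with $ch\,M^W(w.\widehat\nu_h)$ given explicitly by \eqref{eq:6.111}.

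For part (ii), the massless (maximally atypical) case $\ell_0=A(k,\nu)$, I would first check that the boundary condition $\ell(h)=A(k,\nu)$ together with the formulas \eqref{Aknu}, \eqref{p} and Proposition \ref{boundary} forces $\widehat\nu_h$ to be maximally atypical, i.e. $(\widehat\nu_h+\widehat\rho|\a)=0$ for every $\a\in\Pi_{\bar 1}$; this is the crux of the reduction to the Gorelik--Kac setting. Indeed the equality case of \eqref{eh} says $(\xi|\nu)$ sits exactly at the value where $G^{\{v\}}_{-1/2}v_{\nu,\ell_0}$ has zero norm, and one translates this through \eqref{Aknu} into the vanishing of the pairing of $\widehat\nu_h+\widehat\rho$ with the odd simple roots listed in Table 3 (one needs to take the appropriate odd reflection picture, exactly as in Lemma \ref{Submodule}, so that $\Pi_{\bar 1}$ becomes a set of simple isotropic roots). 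Once $\widehat\nu_h$ is recognized as maximally atypical and $L(\widehat\nu_h)$ as integrable over $\ga^\natural$ (argued as in Proposition \ref{irr} using non-extremality of $\nu$), the KW-type character formula \eqref{KWC} applies: it is \cite[Formula (14)]{GK2} for $\g\ne D(2,1;\tfrac{m}{n})$ and \cite[Section 6.1]{GK2} for $\g=D(2,1;\tfrac{m}{n})$ with $\nu=0$. Expanding the denominator $\prod_{\be\in\Pi_{\bar 1}}(1+e^{-\be})^{-1}=\sum_{\gamma\in\mathbb Z_+\Pi_{\bar 1}}(-1)^\gamma e^{-\gamma}$ rewrites the right-hand side of \eqref{KWC} as a signed sum of Verma characters $\sum_{w\in\Wa^\natural}\sum_{\gamma}(-1)^\gamma\det(w)\,ch\,M(w.(\widehat\nu_h-\gamma))$ over $\ga$. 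Applying $H_0$ termwise (again exactness, Verma-to-Verma, and vanishing of higher cohomology) gives \eqref{ff2}.

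The main obstacle I anticipate is the bookkeeping in part (ii): verifying carefully that the boundary value $\ell_0=A(k,\nu)$ is \emph{precisely} the maximal atypicality condition for $\widehat\nu_h$ in the correct system of simple roots, and that $H_0$ does not annihilate $L(\widehat\nu_h)$ there (i.e. checking $(\widehat\nu_h|\a_0)\ne\tfrac n2(\a_0|\a_0)$ so Remark \ref{nonzero} gives a nonzero, hence irreducible, reduction). This requires care because the odd-reflection manipulation changes both $\widehat\rho$ and which roots are simple, and because one must ensure the resolution \eqref{KWC}, valid for $L(\widehat\nu_h)$ over $\ga$, survives application of $H_0$ without collapse — here the key input is that $H_0$ sends the relevant $\ga$-Verma modules to genuine (nonzero) $W^k_{\min}(\g)$-Verma modules, which follows from \cite[Lemma 7.3 (b),(c)]{KW1} combined with the estimates of Lemma \ref{sign2}. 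A secondary point to watch is the normalization of the $\rho$-shift and the $L_0$-eigenvalue under $H_0$ given by \eqref{pgen}, so that the exponents $q^{\ell}$ in \eqref{eq:6.111} match on both sides; this is routine but must be done consistently with \eqref{p}.
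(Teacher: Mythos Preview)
Your proposal follows essentially the same strategy as the paper: part (i) is exactly Theorem \ref{chW} (the paper simply cites \eqref{chfW}), and part (ii) proceeds by recognizing $\widehat\nu_h$ as maximally atypical, invoking the KW-formula \eqref{KWC} from \cite{GK2}, expanding the denominator as a geometric series, and applying $H_0$ termwise.

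Two points where the paper is cleaner than what you sketch. First, no odd-reflection gymnastics are needed to verify maximal atypicality: the paper works throughout with the original $\Pi$ of Table~1, solves $\ell(h)=A(k,\nu)$ directly as the quadratic $h(h-1-k)=(\xi|\nu)((\xi|\nu)-k-1)$, takes the root $h=(\xi|\nu)$, and then computes $(\widehat\nu_h+\widehat\rho|\a)=(\xi|\nu)+(\a|\nu)=0$ for each $\a\in\Pi_{\bar 1}$ using that $\a_{|\h^\natural}=-\xi$ and $(\theta|\a)=1$. Second, your appeal to Proposition~\ref{irr} (and to non-extremality) for integrability of $L(\widehat\nu_h)$ is misplaced: Proposition~\ref{irr} assumes the typicality condition \eqref{gi}, which fails here, and part (ii) must also cover extremal $\nu$. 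The integrability needed to apply \eqref{KWC} is instead part of the hypotheses handled in \cite{GK2}. The nonvanishing check $H_0(L(\widehat\nu_h))\ne 0$ is done exactly as you anticipate, via $(\widehat\nu_h|\a_0)<0$.
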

%\begin{remark} \label{nu=0}Note that the condition $(\xi|\nu)=0$ is equivalent to $\nu=0$. The non-trivial implication follows from a direct computation using the fact that $\nu\in P^+_k$;  the $\h^\natural$-weights $\xi$ are  the restrictions to $\h^\natural$ of the maximal odd roots listed in Table 3.
%$\end{remark}
\begin{proof}
%[Proof of Theorem \ref{characters}]
Formula \eqref{ff1} follows from \eqref{chfW}.
%Observe that, using \eqref{MIK} and \eqref{chii},
%$$(\widehat\nu_h|\eta_i^\vee)=\tfrac{2}{u_i}k-\nu(\theta_i^\vee)=M_i(k)-\chi_i-\nu(\theta_i^\vee)\in \mathbb N,
%$$
%since $\nu\in P^+_k$ and $\chi_i<0$.
Formula  \eqref{ff2} follows from \eqref{KWC} by applying quantum Hamiltonian reduction to the $\ga$--module $L(\widehat\nu_h)$. 
%Observe that this module is integrable with respect to $\widehat\g^\natural$ and maximally atypical with the maximal subset of pairwise orthogonal isotropic roots consists of the set of isotropic simple roots. Hence we can apply Kac-Wakimoto character formula proved, in most of the cases, in \cite{GK2}.
In order to use \eqref{KWC}, write explicitly the relation $\ell_0=\ell(h)=A(k,\nu)$. We have 
$$\frac{(k\L_0+h\theta+\nu|k\L_0+h\theta+\nu+2h^\vee\L_0+ 2\rho)}{2(k+h^\vee)}-h= \frac{(\nu|\nu+2\rho^\natural)}{2(k+h^\vee)}+\frac{(\xi|\nu)}{k+h^\vee}((\xi|\nu)-k-1),$$
or
%$$\frac{2h^2+2(h^\vee-1)h+(\nu|\nu+2\rho^\natural)}{2(k+h^\vee)}-h= \frac{(\nu|\nu+2\rho^\natural)}{2(k+h^\vee)}+\frac{(\xi|\nu)}{k+h^\vee}((\xi|\nu)-k-1)$$
%or
%$$h^2+(h^\vee-1)h-(k+h^\vee)h= (\xi|\nu)((\xi|\nu)-k-1)$$
%or
$$h(h-1-k)= (\xi|\nu)((\xi|\nu)-k-1).$$
Hence either $h=(\xi|\nu)$ or $h=1+k-(\xi|\nu)$. We observe that if $\alpha\in \Pi_{\bar{1}}$, then, restricted to $\h^\natural$, 
it coincides with $-\xi$, hence $(\xi|\nu)=-(\alpha|\nu)$,
and also $(\theta|\alpha)=1$. Therefore, for $h=(\xi|\nu)$ we have
$$(\widehat\nu_h+\widehat\rho|\a)=((k+h^\vee)\L_0+(\xi|\nu)\theta+\nu+\rho|\a)=(\xi|\nu)+(\a|\nu)=0.$$
Hence we may apply \eqref{KWC}. Note that $H_0(L(\widehat\nu_h))\ne 0$ since $(\widehat\nu_h|\a_0)<0$, so that we can apply Remark \ref{nonzero}.\end{proof}

\begin{remark}\label{massless} It is still an open problem whether in the case 
$\g=D(2,1;\tfrac{m}{n})$  formula \eqref{KWC} holds for an arbitrary $\nu \in P^+_k$.
\end{remark}

\begin{remark} For the $N=4$ superconformal algebra, formula \eqref{ff1} appears,  in a different form,  in \cite[formula (14)]{ET2}, where it has been derived in a non-rigorous way.  To establish a dictionary to match the two formulas first observe that a parameter $y$ occurs in the formulas of \cite{ET2} corresponding to an extra $U(1)$--symmetry that we do not consider, hence, to compare the formulas, we set $y=1$. Next recall that in this case  $\Wa^\natural$ is of type $A_1^{(1)}$, hence its elements are of the form 
$u_i=\underbrace{s_0s_1\cdots}_{i \text{ factors}}$ or $u'_i=\underbrace{s_1s_0\cdots}_{i \text{ factors}}$ 
(set $u_0=u'_0=Id$). In the notation of \cite{ET2}, the pairs $(a_n,b_n)$ corresponding to the $\a$-series (resp. $\beta$-series)  in formula (12)  of \cite{ET2}  match  exactly the pairs $(\nu,\ell)$ given in \eqref{pgen} for the weight $\L=u_i.\widehat \nu_h$ (resp.   $\L=u'_i.\widehat \nu_h$).
The factor $F^{NS}(\theta,1)$ translates precisely to \eqref{eq:6.11} according to the dictionary
$$e^{\d_1-\d_2}\leftrightarrow e^{\sqrt{-1}\theta}.
$$
The character formula  \eqref{ff2} corresponds to the formula (26) in \cite{ET2} for the character of ``massless'' representations. To show this, we first remark that, if $\gamma\in\ZZ_+\Pi_{\bar 1}$, then
$$M^W(w.(\widehat\nu_h-\gamma))=M^W(\nu,\ell),$$
where $(\nu,\ell)$ is given by \eqref{pgen}. In particular
\begin{align*}\ell&=\frac{(w.(\widehat\nu_h-\gamma)|w.(\widehat\nu_h-\gamma)+2\widehat \rho)}{2(k+h^\vee)}-(w.(\widehat\nu_h-\gamma))(x+d)\\&=\frac{||\widehat\nu_h-\gamma+\widehat \rho||^2-||\widehat \rho||^2}{2(k+h^\vee)}-(w.(\widehat\nu_h-\gamma))(x+d)\\
&=\frac{||\widehat\nu_h+\widehat \rho||^2-||\widehat \rho||^2}{2(k+h^\vee)}-w.(\widehat\nu_h)(x+d)+w(\gamma)(x+d)\\
&=\ell(h)+(\widehat\nu_h+\widehat\rho)(x+d)-w(\widehat\nu_h+\widehat\rho)(x+d)+w(\gamma)(x+d),
\end{align*}
hence, using formula \eqref{eq:6.111},
$$
ch\,M^W(w.(\widehat\nu_h-\gamma))=q^{\ell(h)} F^{NS}(q)e^{(w.\widehat\nu_h)_{\vert\h^\natural}}q^{\left(\widehat \nu_h+\widehat\rho-w(\widehat \nu_h+\widehat\rho)\right)(x+d)}e^{-(w\gamma)_{\vert\h^\natural}}q^{w(\gamma)(x+d)},
$$
and we obtain that
\begin{equation}\label{chMW1}
\sum_{\gamma\in\ZZ_+\Pi_{\bar 1}}(-1)^\gamma ch\,M^W(w.(\widehat\nu_h-\gamma))=
 q^{\ell(h)}F^{NS}(q)\,
\frac{e^{(w.{(\widehat\nu_h)})_{|\h^\natural}}q^{\left(\widehat \nu_h+\widehat\rho-w(\widehat \nu_h+\widehat\rho)\right)(x+d)}}{\prod_{\a\in\Pi_{\bar 1}}(1+e^{-w(\a)_{|\h^\natural}}q^{w(\a)(x+d)})}.
\end{equation}
Since $\theta$ is orthogonal to $(\widehat \h^\natural)^*$ (where $\widehat \h^\natural=\C K+\C d+\h^\natural$), we can apply the formulas of  \cite[Chapter 6]{VB} to
$ \widehat\g^\natural$ and its Weyl group. Since, in our case, $\widehat\nu_h+\widehat\rho=k\L_0+(h-\half)\theta+(r+\half)\eta_1$, $r\in\half\ZZ_+$, we have, for $m\in\ZZ$,
$$
(s_0s_1)^m(\widehat \nu_h+\widehat\rho)=k\L_0+(h-\half)\theta+(r-km+\half)\eta_1-(m (-km+2 r+1))\d.
$$
and, if $\a\in\Pi_{\bar1}$,
$$
(s_0s_1)^m(\a)=\a+m\delta.
$$
Since $s_1=s_{\eta_1}$, it follows that
$$
((s_0s_1)^m.\widehat \nu_h)_{|\h^\natural}=(r-km)\eta_1,\ ((s_1(s_0s_1)^m).\widehat \nu_h)_{|\h^\natural}=-(r-km+1)\eta_1,
$$
\begin{align*}
\widehat \nu_h+\widehat\rho-(s_0s_1)^m(\widehat \nu_h+\widehat\rho)(x+d)&=\widehat \nu_h+\widehat\rho-s_1(s_0s_1)^m(\widehat \nu_h+\widehat\rho)(x+d)\\
&=(m (-km+2 r+1))=-km^2 +(2r+1)m,
\end{align*}
and
\begin{align*}
&(s_0s_1)^m(\a)_{|\h^\natural}=-\half\eta_1,\ s_1(s_0s_1)^m(\a)_{|\h^\natural}=\half\eta_1,\\ 
&(s_0s_1)^m(\a)(x+d)=s_1(s_0s_1)^m(\a)(x+d)=(m+\half).
\end{align*}
Substituting \eqref{chMW1} into \eqref{ff2}, recalling that $M_1(k)=-k-1$, we obtain
\begin{align*}
&ch\, L^W(r\eta_1,\ell_0)\\
&=q^{\ell_0}F^{NS}(q)\sum_{m\in\ZZ}\left(\frac{e^{(r+m(M_1(k)+1))\eta_1}}{(1+e^{\frac{1}{2}\eta_1}q^{m+\frac{1}{2}})^2}-\frac{e^{-(r+m(M_1(k)+1)+1)\eta_1}}{(1+e^{-\frac{1}{2}\eta_1}q^{m+\frac{1}{2}})^2}\right)q^{m^2( M_1(k)+1)+(2r+1)m}
\end{align*}
which, under our dictionary, corresponds to  formula (26) of  \cite{ET2}  in the NS sector.

 For $W^k_{\min}(spo(2|3))$, formula \eqref{ff1} appears (with a non-rigorous proof)  in \cite[formula (4.3)]{M}. Again, in this case  $\Wa^\natural$ is of type $A_1^{(1)}$and  its elements are of the form 
$u_i$ or $u'_i$ 
(notation as above). The pairs $(l_n,h_n)$ displayed in \cite[(4.2.a),(4.2.b)]{M},  corresponding to the $A$-series (resp. $B$-series),   match  exactly the pairs $(\nu,\ell)$ given in \eqref{pgen} for the weight $\L=u_i.\widehat \nu_h$ (resp.   $\L=u'_i.\widehat \nu_h$).
The denominator $F^{NS}(q,z)$ in \cite[(3.15.i)]{M} translates precisely to \eqref{eq:6.11} according to the dictionary
$$e^{\e_1}\leftrightarrow z.
$$
In the massless case, the character formula \eqref{ff2}  corresponds to formula (4.6.1) in \cite{M}, hence Theorem \ref{characters} provides a proof of it,
 since formula \eqref{KWC} holds in this case,
due to \cite[Subsection 12.3]{GK2}.
.\end{remark}

\noindent {\bf Acknowledgements}.  V.K. is partially supported by the Stephen Berenson mathematical exploration  fund
 and 
the Simons collaboration grant. The authors thank Drazen Adamovi\'c,  Thomas Creutzig and Anne Taormina for correspondence. The  authors are  grateful to Maria Gorelik for many very useful and enlightening discussions.

%\section*{Declarations}
%\noindent{\bf Competing Interests.} The authors have no competing interests to declare that are relevant to the content of this article.

%\noindent {\bf Data Availability Statement.} Data sharing not applicable to this article as no datasets were generated or analysed during the current study.

\vskip5pt
    \footnotesize{
\noindent{\bf V.K.}: Department of Mathematics, MIT, 77
Mass. Ave, Cambridge, MA 02139;\newline
{\tt kac@math.mit.edu}
\vskip5pt
\noindent{\bf P.M-F.}: Politecnico di Milano, Polo regionale di Como,
Via Anzani 42, 22100, Como, Italy;\newline {\tt pierluigi.moseneder@polimi.it}
\vskip5pt
\noindent{\bf P.P.}: Dipartimento di Matematica, Sapienza Universit\`a di Roma, P.le A. Moro 2,
00185, Roma, Italy;\newline {\tt papi@mat.uniroma1.it}, Corresponding author
}


\begin{thebibliography}{10}

\bibitem{AKMPP}
{\sc D.~Adamovi\'{c}, V.~G. Kac, P.~M\"{o}seneder~Frajria, P.~Papi, and
  O.~Per\v{s}e}, {\em Conformal embeddings of affine vertex algebras in minimal
  {$W$}-algebras {I}: structural results}, J. Algebra, \textbf{500} (2018),
 117--152.

\bibitem{Araduke}
{\sc T.~Arakawa}, {\em Representation theory of superconformal algebras and the
  {K}ac-{R}oan-{W}akimoto conjecture}, Duke Math. J., \textbf{130} (2005), 435--478.

\bibitem{ET1}
{\sc T.~Eguchi and A.~Taormina}, {\em Unitary representations of the {$N=4$}
  superconformal algebra}, Phys. Lett. B, \textbf{196} (1987), 75--81.

\bibitem{ET2}
\leavevmode\vrule height 2pt depth -1.6pt width 23pt, {\em Character formulas
  for the {$N=4$} superconformal algebra}, Phys. Lett. B, \textbf{200} (1988),
  315--322.

\bibitem{ET3}
\leavevmode\vrule height 2pt depth -1.6pt width 23pt, {\em On the unitary
  representations of {$N=2$} and {$N=4$} superconformal algebras}, Phys. Lett.
  B, \textbf{210} (1988), 125--132.

%\bibitem{G4}
%{\sc M.~Gorelik}, {\em Weyl denominator identity for finite-dimensional {L}ie
%  superalgebras}, in Highlights in {L}ie algebraic methods, vol.~295 of Progr.
%  Math., Birkh\"{a}user/Springer, New York, 2012, 167--188.

\bibitem{GK}
{\sc M.~Gorelik and V.~G. Kac}, {\em On simplicity of vacuum modules}, Adv.
  Math., \textbf{211} (2007), 621--677.

\bibitem{GK2}
\leavevmode\vrule height 2pt depth -1.6pt width 23pt, {\em Characters of
  (relatively) integrable modules over affine {L}ie superalgebras}, Jpn. J.
  Math., \textbf{10} (2015), 135--235.

\bibitem{GKMP}
{\sc M.~Gorelik, V.~G. Kac, P.~M\"{o}seneder~Frajria, and P.~Papi}, {\em
  Denominator identities for finite-dimensional {L}ie superalgebras and {H}owe
  duality for compact dual pairs}, Jpn. J. Math., \textbf{7} (2012), 41--134.

\bibitem{YK}
{\sc K.~Iohara and Y.~Koga}, {\em Central extensions of {L}ie superalgebras},
  Comment. Math. Helv., \textbf{76} (2001), 110--154.

\bibitem{J}
{\sc J.~C. Jantzen}, {\em Kontravariante {F}ormen auf induzierten
  {D}arstellungen halbeinfacher {L}ie-{A}lgebren}, Math. Ann., \textbf{226} (1977),
 53--65.

\bibitem{Kacsuper}
{\sc V.~G. Kac}, {\em Lie superalgebras}, Adv. Math., \textbf{26} (1977),
  8--96.

\bibitem{VB}
{\sc V.~G. Kac}, {\rm Infinite-dimensional {L}ie algebras}, Cambridge
  University Press, Cambridge, third~ed., 1990.

\bibitem{KK} {\sc V.~G. Kac and  D.~A. Kazhdan}, {\em  
Structure of representations with highest weight of infinite-dimensional Lie algebras,}
Adv. in Math., 	\textbf{34} (1979), no. 1, 97--108.
\bibitem{wrong}
{\sc V.~G. Kac, P.~M\"{o}seneder~Frajria, and P.~Papi}, {\em Unitarity of
  minimal $W$--algebras}.
\newblock arXiv:2012.14643.

\bibitem{Y}
\leavevmode\vrule height 2pt depth -1.6pt width 23pt, {\em Yangians versus
  minimal {$W$}-algebras: a surprising coincidence}, Commun. Contemp. Math., \textbf{23}
  (2021), Paper No. 2050036, 36 pp.

\bibitem{KMP}
\leavevmode\vrule height 2pt depth -1.6pt width 23pt, {\em Invariant
  {H}ermitian forms on vertex algebras}, Commun. Contemp. Math., \textbf{24} (2022),
  Paper No. 2150059, 41 pp.

\bibitem{KR}
{\sc V.~G. Kac and A.~K. Raina}, {\rm Bombay lectures on highest weight
  representations of infinite-dimensional {L}ie algebras}, vol.~2 of Advanced
  Series in Mathematical Physics, World Scientific Publishing Co., Inc.,
  Teaneck, NJ, 1987.

\bibitem{KRW}
{\sc V.~G. Kac, S.-S. Roan, and M.~Wakimoto}, {\em Quantum reduction for affine
  superalgebras}, Comm. Math. Phys., \textbf{241} (2003), 307--342.

\bibitem{KWNT}
{\sc V.~G. Kac and M.~Wakimoto}, {\em Integrable highest weight modules over
  affine superalgebras and number theory}, in Lie theory and geometry, vol.~123
  of Progr. Math., Birkh\"{a}user Boston, MA, 1994, 415--456.

\bibitem{KW1}
\leavevmode\vrule height 2pt depth -1.6pt width 23pt, {\em Quantum reduction
  and representation theory of superconformal algebras}, Adv. Math., \textbf{185}
  (2004), 400--458.

\bibitem{M}
{\sc K.~Miki}, {\em The representation theory of the {${\rm SO}(3)$} invariant
  superconformal algebra}, Internat. J. Modern Phys. A, \textbf{5} (1990), 1293--1318.

\bibitem{SERG}
{\sc V.~V. Serganova}, {\em Automorphisms of simple {L}ie superalgebras}, Izv.
  Akad. Nauk SSSR Ser. Mat., \textbf{48} (1984), 585--598.

\end{thebibliography}
   \end{document}